\documentclass[11pt, leqno]{amsart}
\usepackage[utf8]{inputenc}
\usepackage{geometry}\geometry{margin=1in}
\usepackage{etoolbox}
\usepackage{lipsum}
\usepackage[english]{babel}
\usepackage{csquotes}
\usepackage{mathrsfs}
\usepackage{mathtools}
\usepackage{xcolor}
\usepackage{yfonts}
\usepackage{thmtools}
\usepackage[makeroom]{cancel}
\usepackage[toc,page]{appendix}
\usepackage{tocvsec2}

\usepackage[breaklinks=true]{hyperref}
\usepackage{cleveref}

\newcommand{\supp}[0]{\mathrm{supp}}
\newcommand{\conv}[0]{\mathrm{conv}}
\newcommand{\height}[0]{\mathrm{ht}}
\newcommand{\charac}[0]{\mathrm{char}}
\newcommand{\wt}[0]{\mathrm{wt}}
\newcommand{\ad}[0]{\mathrm{ad}}

\usepackage{amsmath, amsthm, amssymb}

\theoremstyle{plain}
\newtheorem{theorem}{Theorem}[section]
\newtheorem{lemma}[theorem]{Lemma}
\newtheorem{prop}[theorem]{Proposition}
\newtheorem{cor}[theorem]{Corollary}
\newtheorem{thmx}{Theorem}[section]
\newtheorem{question}[theorem]{Question}

\theoremstyle{definition}
\newtheorem{definition}[theorem]{Definition}
\newtheorem{example}[theorem]{Example}
\newtheorem*{p-psp}{parabolic-PSP}

\newtheorem{rem}[theorem]{Remark}
\newtheorem{observation}[theorem]{Observation}
\newtheorem*{claim}{Claim}
\newtheorem*{abbr}{Abbreviation}
\newtheorem*{convn}{Conventions}

\numberwithin{equation}{section}

\newtheoremstyle{problem}{5pt}{5pt}{}{}{\normalfont}{\textbf{.}}{.5em}{}
\theoremstyle{problem}
\newtheorem{problem}[theorem]{\textbf{Problem}}

\begin{document}

	\title[Weak faces of weights of highest weight modules and root systems]{Weak faces of highest weight modules\\ and root systems}
	
	\author{G. Krishna Teja}
	\begin{abstract}
		Chari and Greenstein [\textit{Adv.\ Math.}\ 2009] introduced certain combinatorial subsets of the roots of a finite-dimensional simple Lie algebra $\mathfrak{g}$, which were important in studying Kirillov--Reshetikhin modules over $U_q(\widehat{\mathfrak{g}})$ and their specializations. Later, Khare [\textit{J.\ Algebra.}\ 2016] studied these subsets for large classes of highest weight $\mathfrak{g}$-modules (in finite type), under the name of weak-$\mathbb{A}$-faces \big(for a subgroup $\mathbb{A}$ of $(\mathbb{R},+)$\big), and more generally, $(\{ 2 \}; \{ 1, 2 \})$-closed subsets. These notions extend and unify the faces of Weyl polytopes as well as the aforementioned combinatorial subsets.
		
		In this paper, we consider these `discrete' notions for an arbitrary Kac--Moody Lie algebra $\mathfrak{g}$, in four distinguished settings: (a) the weights of an arbitrary highest weight $\mathfrak{g}$-module $V$; (b) the convex hull of the weights of $V$; (c) the weights of the adjoint representation; (d) the roots of $\mathfrak{g}$. For (a) \big(respectively, (b)\big) for all highest weight $\mathfrak{g}$-modules $V$, we show that the weak-$\mathbb{A}$-faces and $(\{ 2 \}; \{ 1, 2 \})$-closed subsets agree, and equal the sets of weights on exposed faces (respectively, equal the exposed faces) of the convex hull of weights $\conv_{\mathbb{R}}\wt V$. This completes the partial progress of Khare in finite type, and is novel in infinite type. Our proofs are type-free and self-contained.
		
		For (c) and (d) involving the root system, we similarly achieve complete classifications. For all Kac--Moody $\mathfrak{g}$---interestingly, other than $\mathfrak{sl}_3(\mathbb{C}), \widehat{\mathfrak{sl}_3(\mathbb{C})}$---we show the weak-$\mathbb{A}$-faces and $(\{ 2 \}; \{ 1, 2 \})$-closed subsets agree, and equal Weyl group translates of the sets of weights in certain `standard faces' (which also holds for highest weight modules). This was proved by Chari and her coauthors for root systems in finite type, but is novel for other types.
	\end{abstract}
	\date{\today}
	\subjclass[2010]{Primary: 17B10; Secondary: 17B20, 17B22, 17B67, 52B20, 52B99.}
	\keywords{Kac--Moody algebra, root system, highest weight module, (\{2\};\{1,2\})-closed subset, and weak-$\mathbb{A}$-face.}
	\maketitle
	\settocdepth{section}
	\tableofcontents
	\section{Introduction}\allowdisplaybreaks
	Throughout this paper, $\mathfrak{g}$ stands for a Kac--Moody Lie algebra over $\mathbb{C}$ with a fixed Cartan subalgebra $\mathfrak{h}$, triangular decomposition $\mathfrak{n}^-\oplus\mathfrak{h}\oplus\mathfrak{n}^+$, root system $\Delta$, the set of simple roots and simple co-roots $\Pi$ and $\Pi^{\vee}$ respectively, the set of vertices/nodes in the Dynkin diagram $\mathcal{I}$, Weyl group $W$, the lattice of integral weights $P$, and the subset of dominant integral weights $P^+$. For a weight module $V$ over $\mathfrak{g}$, let $\wt V$ denote the set of weights of $V$. Let $I_V$ denote the integrability of $V$, i.e., the set of all nodes $i\in\mathcal{I}$ such that the negative simple root vectors $f_i$ act locally nilpotently on $V$. Fix $\lambda\in\mathfrak{h}^*$. We define $J_{\lambda}:=\{i\in\mathcal{I}$ $|$ $\langle\lambda,\alpha_i^{\vee}\rangle\in\mathbb{Z}\cap[0,\infty)\}$. Let $M(\lambda)$ and $L(\lambda)$ stand for the Verma module and the simple highest weight module over $\mathfrak{g}$ with highest weight $\lambda$, respectively. Let $M(\lambda,J)$ stand for the parabolic Verma module with highest weight $\lambda$ and with integrability $J\subseteq J_{\lambda}$. For $J\subseteq\mathcal{I}$, we define $\Pi_J:=\{\alpha_j\in \Pi$ $|$ $j\in J\}$, and denote the parabolic subgroup of $W$ corresponding to $J$ by $W_J$. For a subset $B$ of a real vector space, let $\conv_{\mathbb{R}}B$ denote the convex hull of $B$ over $\mathbb{R}$. In this section, to state various results from the literature related to the objects of our interest, unless otherwise mentioned we assume that $\mathfrak{g}$ is semisimple.
	
	In the past, \textit{root polytopes} ($\conv_{\mathbb{R}}\Delta$ for a finite root system $\Delta$) and \textit{Weyl polytopes} \big($\conv_{\mathbb{R}}(W\lambda)$ for $\lambda\in P^+$ and finite Weyl group $W$\big) were extensively studied in various contexts, and for many applications, by many mathematicians. To list but a few, in the context of compactifications of symmetric spaces through papers by Satake \cite[\S 2.3]{Satake} and Casselman \cite[\S 3]{Casselman}, in the study of algebraic groups by Borel--Tits \cite[\S 12]{Borel}, and also by Vinberg \cite[\S 3]{Vinberg} and Cellini--Marietti \cite{Cellini}. In these works, the faces of Weyl polytopes were classified as follows: 
	\begin{theorem}[Satake \cite{Satake}, Borel--Tits \cite{Borel}, Vinberg \cite{Vinberg}, Casselman \cite{Casselman}]\label{T1.1}
		Given $\lambda\in P^+$, the faces of $\conv_{\mathbb{R}}(W\lambda)$ are of the form $w\big(\conv_{\mathbb{R}}(W_J\lambda)\big)$ with $w\in W$ and $J\subseteq\mathcal{I}$.
	\end{theorem}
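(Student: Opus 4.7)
My plan is to work directly with the polytope $P := \conv_{\mathbb{R}}(W\lambda)$, which is a finite-dimensional convex polytope (as $W$ is finite here) with vertex set $W\lambda$, and use the fact that every face of a polytope is exposed. Thus every face has the form
\[
F_\eta := \{x \in P : \langle \eta, x\rangle = \max_{y \in P} \langle \eta, y\rangle\}
\]
for some $\eta$ in the dual of $\mathfrak{h}^*_{\mathbb{R}}$, which I identify with $\mathfrak{h}_\mathbb{R}$ via a $W$-invariant nondegenerate symmetric bilinear form. The first step is to observe that since $wP = P$, we have $wF_\eta = F_{w\eta}$; after replacing $\eta$ by a $W$-translate, I may assume $\eta$ is dominant, i.e., $\langle \alpha_i, \eta\rangle \geq 0$ for every $i \in \mathcal{I}$. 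This reduction also shows that to prove both inclusions it suffices to treat faces exposed by dominant $\eta$.

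Next I would identify $F_\eta$ for dominant $\eta$. The classical fact that $\lambda - w\lambda \in \sum_{i\in\mathcal{I}} \mathbb{Z}_{\geq 0}\alpha_i$ for all $w \in W$ (valid since $\lambda \in P^+$) immediately yields $\langle \eta, \lambda\rangle \geq \langle \eta, w\lambda\rangle$, so the maximum is attained at $\lambda$. Setting $J := \{i \in \mathcal{I} : \langle \alpha_i, \eta\rangle = 0\}$, the stabilizer $W_J$ fixes $\eta$, so $W_J\lambda \subseteq F_\eta$. For the reverse, if $w\lambda \in F_\eta$ then expanding $\lambda - w\lambda = \sum_i c_i \alpha_i$ with $c_i \geq 0$ and using $\langle \eta, \lambda - w\lambda\rangle = 0$ forces $c_i = 0$ for $i \notin J$, i.e., $\lambda - w\lambda \in \sum_{j\in J}\mathbb{Z}_{\geq 0}\alpha_j$.

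The main obstacle will be to upgrade this last condition to the assertion $w\lambda \in W_J\lambda$ (the crucial step in getting $F_\eta = \conv_{\mathbb{R}}(W_J\lambda)$). The plan is to use the $W$-invariant form: one has $(w\lambda, w\lambda) = (\lambda, \lambda)$, and combined with $w\lambda \in \lambda - \mathbb{Z}_{\geq 0}\Pi_J$ this identifies $w\lambda$ as a norm-maximal weight of the finite-dimensional irreducible module over the Levi $\mathfrak{l}_J$ with highest weight $\lambda$. Such extremal weights are exactly $W_J\lambda$, either by induction on the height of $\lambda - w\lambda$ using simple reflections $s_j$ for $j \in J$ (pushing $w\lambda$ toward the $W_J$-dominant chamber while staying inside $W\lambda$), or by the standard fact that the extremal weights of a finite-dimensional irreducible module form a single Weyl-group orbit.

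Finally I would verify the converse: for each pair $(w,J)$, the set $w\conv_{\mathbb{R}}(W_J\lambda)$ is a face. It suffices (by $W$-symmetry) to exhibit a dominant exposing functional for $\conv_{\mathbb{R}}(W_J\lambda)$, and one may take $\eta := \sum_{i \notin J}\omega_i^{\vee}$ (the sum of the fundamental coweights indexed outside $J$): this $\eta$ is dominant with vanishing locus on $\Pi$ exactly $\Pi_J$, so by the argument above $F_\eta = \conv_{\mathbb{R}}(W_J\lambda)$. Combining the two directions completes the classification.
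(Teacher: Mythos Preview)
The paper does not give its own proof of this statement: Theorem~1.1 is quoted in the introduction as a classical result due to Satake, Borel--Tits, Vinberg, and Casselman, and serves as motivation for the paper's generalizations (Theorems~A and~B). So there is no ``paper's proof'' to compare against directly; your argument is essentially the classical one and is correct in outline.

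One small point deserves care. In the step where you pass from $\lambda - w\lambda \in \mathbb{Z}_{\geq 0}\Pi_J$ to $w\lambda \in W_J\lambda$, you invoke the fact that $w\lambda$ is a norm-maximal weight of $L_J(\lambda)$, but you have not checked that $w\lambda$ is a weight of $L_J(\lambda)$ in the first place. This is true, and follows because for every $v \in W_J$ one has $\lambda - v(w\lambda) \in \mathbb{Z}\Pi_J \cap \mathbb{Z}_{\geq 0}\Pi = \mathbb{Z}_{\geq 0}\Pi_J$ (the first containment since $v \in W_J$ and $\lambda - w\lambda \in \mathbb{Z}\Pi_J$, the second since $v(w\lambda) \in W\lambda$ and $\lambda$ is dominant); this is exactly the standard characterization of $\wt L_J(\lambda)$. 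Alternatively, you can bypass the module entirely: replace $w\lambda$ by a $W_J$-translate $\mu$ that is $J$-dominant (still in $\lambda - \mathbb{Z}_{\geq 0}\Pi_J$ by the same argument), and then $0 = (\lambda,\lambda) - (\mu,\mu) = (\lambda-\mu,\lambda+\mu) = \sum_{j \in J} c_j(\alpha_j,\lambda+\mu)$ with each summand nonnegative forces $(\lambda-\mu,\lambda)=0$, whence $(\lambda-\mu,\lambda-\mu)=0$ and $\mu=\lambda$.

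If one insists on extracting a proof from the paper itself, Theorem~B specialized to $V = L(\lambda)$ with $\lambda \in P^+$ (so $I_V = \mathcal{I}$ and $\conv_{\mathbb{R}}\wt V = \conv_{\mathbb{R}}(W\lambda)$) recovers Theorem~1.1. That route, however, passes through the full classification of $212$-closed subsets of $\wt V$ (the eleven-step proof of Theorem~A) and then Theorem~2.3, which is vastly more machinery than your direct exposed-face argument requires. Your approach is the appropriate one for this specific polytope statement; the paper's contribution is to show the same conclusion persists for arbitrary highest weight modules over Kac--Moody algebras, where your polytope-based reasoning is unavailable.
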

	In \cite{KhRi}, Khare and Ridenour studied the convex hulls of the weights of parabolic Verma modules modules over $\mathfrak{g}$. Generalizing the above results for root polytopes and Weyl polytopes, they classified all the faces of $\conv_{\mathbb{R}}(\wt M(\lambda,J))$ as follows:
	\begin{theorem}[Khare--Ridenour \cite{KhRi}]\label{T1.2}
		Let $\lambda\in\mathfrak{h}^*$, $J\subseteq J_{\lambda}$, and let $F\subseteq\conv_{\mathbb{R}}(\wt  M(\lambda,J))$. Then $F$ is a face of $\conv_{\mathbb{R}}(\wt M(\lambda,J))$ if and only if there exist a subset $I\subseteq\mathcal{I}$ and $w\in W_J$, such that $wF=\conv_{\mathbb{R}}\big((\lambda-\mathbb{Z}_{\geq 0}\Pi_I)\cap \wt M(\lambda,J)\big)$.
	\end{theorem}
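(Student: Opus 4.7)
The plan is to establish both implications by exhibiting linear functionals whose maxima on $\conv_{\mathbb{R}}(\wt M(\lambda, J))$ cut out the stated sets, and then to reduce an arbitrary face to a ``standard'' one (one containing $\lambda$) via the $W_J$-action. Two structural inputs drive the argument: (a) as a quotient of $M(\lambda)$, the set $\wt M(\lambda, J)$ sits inside $\lambda - \mathbb{Z}_{\geq 0}\Pi$, so every weight lies below $\lambda$ in the root-lattice order; (b) since $M(\lambda, J)$ is integrable over the Levi associated to $J$, the set $\wt M(\lambda, J)$ is $W_J$-stable, and hence so is $P := \conv_{\mathbb{R}}(\wt M(\lambda, J))$.

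For the ``if'' direction, given $I \subseteq \mathcal{I}$ and $w \in W_J$, I would first treat the standard face $C := \conv_{\mathbb{R}}\bigl((\lambda - \mathbb{Z}_{\geq 0}\Pi_I) \cap \wt M(\lambda, J)\bigr)$. Choose $\psi \in \mathfrak{h}$ with $\langle \psi, \alpha_i\rangle = 0$ for $i \in I$ and $\langle \psi, \alpha_i\rangle > 0$ for $i \in \mathcal{I}\setminus I$. By input (a), any $\mu = \lambda - \beta \in \wt M(\lambda, J)$ with $\beta \in \mathbb{Z}_{\geq 0}\Pi$ satisfies $\langle \psi, \mu\rangle = \langle \psi, \lambda\rangle - \langle \psi, \beta\rangle$, and this is maximized precisely when $\beta \in \mathbb{Z}_{\geq 0}\Pi_I$; thus $\psi$ exposes $C$. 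By input (b), the Weyl-translated functional $w^{-1}\psi$ then exposes $w^{-1}C$ as a face of $P$, which gives the required $F$ satisfying $wF = C$.

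For the ``only if'' direction, let $F$ be a face of $P$ exposed by some functional $\varphi$. The first step is to replace $F$ by a $W_J$-translate containing $\lambda$. Since $\lambda$ is a vertex of $P$ (it is the unique weight maximizing any functional strictly positive on $\Pi$), I would argue that $\varphi$ can be $W_J$-conjugated to a $J$-dominant functional $\varphi'$, i.e., $\langle \varphi', \alpha_j\rangle \geq 0$ for all $j \in J$; such a $\varphi'$ attains its maximum on $W_J \lambda$ at $\lambda$ itself, so its face contains $\lambda$. After renaming, we may assume $\lambda \in F$ and $\varphi$ is $J$-dominant. Define $I := \{i \in \mathcal{I} : \langle \varphi, \alpha_i\rangle = 0\}$. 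Then any $\mu = \lambda - \beta \in F \cap \wt M(\lambda, J)$ must have $\beta$ supported in $\Pi_I$, else $\langle \varphi, \mu\rangle < \langle \varphi, \lambda\rangle$ would contradict $\mu \in F$; conversely, every $\mu \in (\lambda - \mathbb{Z}_{\geq 0}\Pi_I) \cap \wt M(\lambda, J)$ saturates $\varphi$ at the maximum and hence lies in $F$. Passing to convex hulls yields the claimed equality.

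The principal obstacle is the $W_J$-reduction in the backward direction. In finite (semisimple) type, $W_J$ is finite and a standard Tits-cone/chamber argument replaces $\varphi$ by a $J$-dominant Weyl translate, ensuring $\lambda$ is a maximizer on $W_J\lambda$. In the general Kac--Moody setting $W_J$ may be infinite and $P$ need not be a genuine polytope; one must then argue that the supremum of $\varphi$ on the (possibly infinite) orbit $W_J\lambda$ is both attained and attained at a single $w\lambda$, so that conjugating by $w^{-1}$ puts the face into standard position. This attainment/$J$-dominance step, together with verifying that the resulting $I$ is independent of choices, is the technical heart of the argument; once it is in hand, the rest of the classification reduces to the simple functional computation above.
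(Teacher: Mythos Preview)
The paper does not give its own proof of Theorem~\ref{T1.2}: this is quoted as a result of Khare--Ridenour, stated in the introduction under the blanket assumption that $\mathfrak{g}$ is semisimple. What the paper does prove is the generalization Theorem~\ref{thmB}, and there the route is quite different from yours. The paper shows $(4)\Rightarrow(1)$ by writing down the functional $\psi=\sum_{i\in J^c}w\omega_i^{\vee}$ (essentially your ``if'' direction), but for the converse it does \emph{not} conjugate the exposing functional into a dominant chamber. Instead it passes through the combinatorial hierarchy of Observation~\ref{O2.3}: an exposed face is a 212-closed subset of $\conv_{\mathbb{R}}\wt V$, Theorem~\ref{T2.3} reduces this to a 212-closed subset of $\wt V$, and Theorem~\ref{thmA} then identifies that with a $W_{I_V}$-translate of a standard face. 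So the heavy lifting is done by the classification of 212-closed subsets of $\wt V$, not by any chamber argument on functionals.

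Your direct approach is sound in the semisimple setting that Theorem~\ref{T1.2} addresses, but two points deserve to be made explicit. First, you begin the ``only if'' direction with ``let $F$ be a face of $P$ exposed by some functional $\varphi$''; you should note that $\conv_{\mathbb{R}}(\wt M(\lambda,J))$ is a polyhedron (Minkowski sum of the polytope $\conv_{\mathbb{R}}\wt L_J(\lambda)$ with a finitely generated cone), so every face is exposed. Second, after $W_J$-conjugating $\varphi$ to a $J$-dominant $\varphi'$, you need $\langle\varphi',\alpha_i\rangle\geq 0$ for \emph{all} $i\in\mathcal{I}$, not just $i\in J$, in order to conclude that $\lambda$ maximizes $\varphi'$ on all of $P$ (not merely on the orbit $W_J\lambda$). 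This follows because boundedness of $\varphi$ on $P$ forces $\langle\varphi,\alpha\rangle\geq 0$ for every $\alpha\in\Delta^+\setminus\Delta_J^+$, and $W_J$ permutes $\Delta^+\setminus\Delta_J^+$; hence the same holds for $\varphi'$, in particular for each simple $\alpha_i$ with $i\notin J$. With these two remarks your argument goes through, and it has the advantage of being self-contained and elementary compared to the paper's 212-closed machinery; the paper's approach, on the other hand, yields the result uniformly for all highest weight modules and all Kac--Moody $\mathfrak{g}$ without needing the Tits-cone reduction you flag as the principal obstacle.
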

	\big(Here, $\mathbb{Z}_{\geq 0}\Pi_I$ denotes the set of non-negative integer linear combinations of the simple roots from $\Pi_I$, and $\lambda-\mathbb{Z}_{\geq 0}\Pi_I$ is the Minkowski difference between the sets $\{\lambda\}$ and $\mathbb{Z}_{\geq 0}\Pi_I$.\big)
	\begin{definition}\label{D1.3}
		Let $V$ be a highest weight module over a Kac--Moody algebra $\mathfrak{g}$, with highest weight $\lambda\in\mathfrak{h}^*$. In the spirit of the above theorem, we will call the subsets of the form $(\lambda-\mathbb{Z}_{\geq 0}\Pi_I)\cap\wt V$, $\forall$ $I\subseteq\mathcal{I}$, as \textit{standard faces} of $\wt V$. 
	\end{definition}
	Khare et al. \cite{KhRi} looked more generally at the convex hulls of the weights of parabolic Verma modules uniformly over all subfields of $\mathbb{R}$. Alongside, they formally introduced \textit{weak}-$\mathbb{F}$-\textit{faces} (defined presently) and studied them for the sets of weights of parabolic Verma modules, over arbitrary subfields $\mathbb{F}$ of $\mathbb{R}$. In this, they were motivated to extend to the weights of highest weight modules  the following combinatorial condition/property, which was introduced and studied by Chari and Greenstein \cite{Chari_Ad, ChGr} for finite irreducible root systems over semisimple $\mathfrak{g}$.\\
	\textit{Fix }$\xi\in\mathfrak{h}^*\setminus\{0\}$,\textit{ and define }$\Psi(\xi):=\Big\{\alpha\in\Delta\text{ }\big|\text{ }\xi(\alpha)=\max\limits_{\gamma\in\Delta}\text{ }\xi(\gamma)\Big\}$.\textit{ Assume that }$\Psi(\xi)\subseteq\Delta^+$.
	\begin{align*}\tag{P1}\label{property1}
	&\textit{Suppose }\sum\limits_{\alpha\in\Delta}m_{\alpha}\alpha=\sum\limits_{\beta\in\Psi(\xi)}n_{\beta}\beta\text{ }\textit{ for some } m_{\alpha},n_{\beta}\in\mathbb{Z}_{\geq0}\text{ }\forall\text{ }\alpha\in\Delta,\textit{ }\beta\in\Psi(\xi).\\&\textit{Then }\sum\limits_{\beta\in\Psi(\xi)}n_{\beta}\leq \sum\limits_{\alpha\in\Delta}m_{\alpha},\textit{ with equality if and only if } m_{\alpha}=0\text{ }\forall\text{ }\alpha\in\Delta\setminus\Psi(\xi).
	\end{align*}
	
	In \cite{Chari_Ad, ChGr}, Chari and Greenstein used the above property \eqref{property1} of $\Psi(\xi)$ to construct Koszul algebras of any finite global dimension and also obtain a (graded) character formula for the specialization at $q=1$ of a Kirillov--Reshetikhin module over an untwisted quantum affine algebra. Briefly, what Chari et al. do and achieve is as follows. (For the notation and account in this paragraph, see \cite{Chari_Ad, ChGr}.) In \cite{Chari_Ad}, using $\Psi(\xi)$ they first define a partial order $\leq_{\Psi(\xi)}$ on $P$, and then construct a category $\mathcal{G}_2[\leq_{\Psi(\xi)}\lambda]$, for $\lambda\in P^+$, of finite-dimensional $\mathbb{Z}_{+}$-graded $\mathfrak{g}\ltimes \mathfrak{g}_{\ad}$-modules whose irreducible constituents have their highest weights below $\lambda$ with respect to $\leq_{\Psi(\xi)}$. Then they construct a projective generator in this category and show that its endomorphism algebra is Koszul, with global dimension $\leq |\Psi(\xi)|$ (respectively, $=|\Psi(\xi)|$ for suitable dominant $\lambda$). Property \eqref{property1} of $\Psi(\xi)$ is heavily used in showing each of these results. Next in \cite{ChGr}, using $\Psi(\xi)$ for $\xi\in P^+$, the authors construct a category of $\mathbb{Z}_+$-graded $\mathfrak{g}\ltimes\mathfrak{g}_{\ad}$-modules, in which the irreducible objects have their highest weights coming from a ``ray/ half-closed interval'' corresponding to some element in $P^+\times\mathbb{Z}_+$. These rays/intervals are defined with respect to $\leq_{\Psi(\xi)}$. Making use of property (P1), they show the existence of projective covers for all simple objects in the category, and then find character formulas for these projective covers. Then they show that a Kirillov--Reshetikhin module, upon specializing at $q=1$, becomes isomorphic to the projective cover of some simple object whose highest weight is a positive integer multiple of a fundamental dominant weight, and thereby obtain a character formula for Kirillov--Reshetikhin modules. Later, Chari et al. \cite{ChKh} extended the above results of \cite{Chari_Ad} using the sets of weights falling on the faces of Weyl polytopes.
	
	In \cite{ChDo}, Chari and her co-authors introduced another property of these maximizer subsets $\Psi(\xi)$.
	\begin{align*}
	\tag{P2}\label{property2}
	&\textit{Suppose there are roots }\alpha_1,\alpha_2\in \Psi(\xi)\textit{ and }\beta_1,\beta_2\in\Delta\textit{ such that }\alpha_1+\alpha_2=\beta_1+\beta_2.\\
	&\textit{Then }\beta_1,\beta_2\in \Psi(\xi). \textit{ Furthermore, } \gamma_1+\gamma_2\notin \Delta\text{ }\forall \text{ }\gamma_1,\gamma_2\in \Psi(\xi).\\&\textit{Equivalently,}\quad
	\big[\Psi(\xi)+\Psi(\xi)\big]\cap\big[\Delta+\big(\Delta\setminus\Psi(\xi)\big)\big]=\emptyset\quad\textit{and}\quad\big[\Psi(\xi)+\Psi(\xi)\big]\cap\Delta=\emptyset.
	\end{align*}
	They also showed that if a subset $\Psi\subseteq\Delta$ satisfies either \eqref{property1} or \eqref{property2}, then $\Psi$ is the maximizer subset in $\Delta$ for the linear functional $\left(\sum_{\alpha\in\Psi}\alpha,-\right)$, where $(.,.)$ is the Killing form on $\mathfrak{h}^*$. Using this result, they proved that for simple $\mathfrak{g}$, every parabolic subalgebra contains a unique irreducible \textit{ad-nilpotent} ideal determined by some subset of $\Delta$ having property \eqref{property2}.    
	\begin{rem}
		Property \eqref{property2}, which we presently formalise as ``212-closed subsets $Y \subseteq X$'' of a real vector space, has a natural combinatorial interpretation when $X$ is the set of lattice points in a lattice polytope---i.e., $X$ is the intersection of a lattice in the vector space, with $\conv_{\mathbb{R}} X$, such that the lattice contains the vertices of $\conv_{\mathbb{R}}X$. Suppose $Y$ denotes a subset of `colored' (or in a more contemporary spirit, `infected') lattice points in $X$, with the property that if $y \in Y$ is the average of two other points in $X$, then the `color' or `infection' spreads to both of those points from~$y$. (More precisely, the property is that if two pairs of---not necessarily distinct---points have~the same average, and one pair is colored, then the color spreads from that pair to the other pair.) We would like to understand the extent to which the spread happens. A `continuous' variant of the problem involves working with the entire convex hull itself, rather than the lattice points in $X$.
		
		Two of our main results, Theorems \ref{thmA} and \ref{thmB} (see also Theorem \ref{T2.3}), show that the color/infection spreads precisely to the smallest face of $\conv_{\mathbb{R}} X$ that contains $Y$. In fact, we address not just lattice polytopes, which arise from integrable representations of semisimple Lie algebras, but $\wt V$ and $\conv_{\mathbb{R}} \wt V$ for \textit{all} highest weight modules $V$, and over arbitrary Kac--Moody Lie algebras $\mathfrak{g}$.
	\end{rem}
	Next, we present some recent studies along these directions that further motivated this paper. For the rest of this section, we fix an arbitrary non-trivial additive subgroup $\mathbb{A}$ of $(\mathbb{R},+)$. Recall, $\lambda$ is said to be \textit{simply regular} if $\langle\lambda,\alpha_i^{\vee}\rangle\neq 0$ $\forall$ $i\in\mathcal{I}$.
	
	Recently, Khare \cite{Khare_JA} extensively studied the convex hulls of the weights of highest weight modules over semisimple $\mathfrak{g}$, and solved several problems of a classical nature and achieved the following: i)~$\conv_{\mathbb{R}}\wt V$ is the same for all highest weight modules $V$ having the same integrability; ii)~thereby, the classification result above on faces of $\conv_{\mathbb{R}}\wt V$ extends more generally to all $V$; iii)~for any $\lambda\in\mathfrak{h}^*$, $\wt L(\lambda)$ can be retrieved from $\conv_{\mathbb{R}}\wt L(\lambda)$ by intersecting it with $\lambda-\mathbb{Z}_{\geq 0}\Pi$; iv)~thereby, various formulas were obtained for $\wt L(\lambda)$ $\forall$ $\lambda\in\mathfrak{h}^*$. Alongside, Khare studied using combinatorial approaches, in full generality, the weak-$\mathbb{A}$-faces of $\wt V$; see Definition \ref{D1.4}. He also extended property \eqref{property2} to subsets of $\wt V$, and called such subsets as $(\{2\};\{1,2\})$-closed subsets. 
	\begin{definition}\label{D1.4}
		Let $\mathbb{A}$ be a fixed non-trivial additive subgroup of $(\mathbb{R},+)$, and $Y\subseteq X$ be two subsets of a real vector space. Let $\mathbb{A}_{\geq 0}:=\mathbb{A}\cap [0,\infty)$.\\
		(1) $Y$ is said to be a \textit{weak}-$\mathbb{A}$-\textit{face of} $X$ if and only if 
		\[
		\begin{rcases*}
		\sum\limits_{i=1}^{n}r_iy_i=\sum\limits_{j=1}^{m}t_jx_j\text{ and }\sum\limits_{i=1}^{n}r_i=\sum\limits_{j=1}^{m}t_j >0\qquad
		\text{for }m,n\in\mathbb{N},\\
		y_i\in Y,\text{ }
		x_j\in X,\text{ }r_i,t_j\in \mathbb{A}_{\geq 0}\text{ }\forall\text{ }1\leq i\leq n,1\leq j\leq m
		\end{rcases*}
		\implies\text{ }x_j\in Y\text{ }\forall\text{ }j\text{ such that }t_j\neq 0.
		\]
		By \textit{weak faces} of $X$---as mentioned in the title of the paper for instance---we mean the collection of all weak-$\mathbb{A}$-faces of $X$ for all additive subgroups $\mathbb{A}\subseteq (\mathbb{R},+)$.\smallskip\\ 
		(2) $Y$ is said to be a $(\{2\};\{1,2\})$-\textit{closed subset of} $X$ if and only if
		\[
		(y_1)+(y_2)=(x_1)+(x_2)\qquad \text{ for some }y_1,y_2\in Y\text{ and }x_1,x_2\in X\quad\implies\quad x_1,x_2\in Y.
		\]
		Note that in the above equation $y_1,y_2$ (similarly, $x_1,x_2$) need not be distinct. Also, 1)~sometimes we will simply say that $Y$ is $(\{2\};\{1,2\})$-closed in $X$ to denote ``$Y$ is a $(\{2\};\{1,2\})$-closed subset of $X$'', and 2)~we will refer to the property of $Y$ of being $(\{2\};\{1,2\})$-closed (in $X$) as the $(\{2\};\{1,2\})$-closedness of $Y$ (in $X$). We call $Y$ a proper weak-$\mathbb{A}$-face (similarly, proper $(\{2\};\{1,2\})$-closed subset) of $X$ if $Y\subsetneqq X$ and $Y$ is a weak-$\mathbb{A}$-face (respectively, $(\{2\};\{1,2\})$-closed subset) of $X$. 
	\end{definition}
	\begin{abbr} In view of the extensive usage, for simplicity, from here on we abbreviate the term ``$(\{2\};\{1,2\})$-closed'' as 212-closed.
	\end{abbr}
	More generally, Khare \cite[Theorem C]{Khare_JA} showed for $V$ belonging to a large class of highest weight modules in finite type---which contains $L(\lambda)$ $\forall$ $\lambda\in\mathfrak{h}^*$, as well as all $V$ with simply regular highest weights among others---that the weak-$\mathbb{A}$-faces, 212-closed subsets, and the standard faces of $\wt V$ are all the same up to conjugation by an element of the \textit{integrable Weyl group} $W_{I_V}$ of $V$ (defined in \eqref{defn IV-JY}). This extends the aforementioned result of Chari et al. for root polytopes and Weyl polytopes, to all highest weight modules over semisimple $\mathfrak{g}$ with ``generic'' highest weights. Furthermore, it unifies all of the above notions: weak-$\mathbb{A}$-faces, 212-closed subsets, and standard faces of $\wt V$ (therefore, also classical faces of $\conv_{\mathbb{R}}\wt V$), for all highest weight modules $V$ with ``generic'' highest weights. This leaves open the question of classifying the weak-$\mathbb{A}$-faces and 212-closed subsets of $\wt V$ and $\conv_{\mathbb{R}}\wt V$, for the `more interesting' highest weight modules with highest weights lying on one or more simple root hyperplanes. Our results in this paper cover this case as well (see the next section), and are proved in a uniform, type-free manner. 
	
	Dhillon and Khare \cite{Khare_Ad, Dhillon_arXiv} extended all the results of \cite{Khare_JA} mentioned in points i)--iv) prior to Definition \ref{D1.4}, to all highest weight modules over general Kac--Moody algebras. In these papers, for any highest weight module $V$, they once again classified the faces of $\conv_{\mathbb{R}}\wt V$ to be all the $W_{I_V}$-conjugates of standard faces of $\wt V$. Furthermore, they also studied the dimensions of the faces. Namely, they constructed the \textit{f-polynomial} for $\conv_{\mathbb{R}}\wt V$ whose coefficients give the number of faces of each dimension.        
	
	However, to our knowledge, these combinatorial objects---weak-$\mathbb{A}$-faces and 212-closed subsets of $\wt V$---have not been investigated either beyond the works of Khare \cite{Khare_JA} in finite type, nor even for the adjoint and simple integrable highest weight representations in the infinite setting. This is where our present paper begins and takes over, attracted by the following intriguing problems:
	\begin{itemize}
		\item[a)] Can one extend the results classifying the weak-$\mathbb{A}$-faces of $\wt V$ in \cite{Khare_JA} to infinite type, and to all $\lambda$ in finite type?
		\item[b)] In the semisimple case, \cite{Khare_JA} shows that for $V$ a parabolic Verma module, the weak-$\mathbb{A}$-faces, 212-closed subsets, and standard faces of $\wt V$ all agree modulo $W_{I_V}$-conjugation. Can this result be shown for all highest weight modules $V$? How about for $\mathfrak{g}$ of infinite type? 
	\end{itemize}
	
	In the present paper, we completely solve the above problems for all highest weight modules (and more) over general Kac--Moody algebras. In this, we are also motivated by the various connections and applications these weak-$\mathbb{A}$-faces and 212-closed subsets (of $\wt V$) have to some of the important sets studied in representation theory in the literature; see the survey part in \cite[\S 2]{Khare_JA}. Let $\mathfrak{g}$ be a general Kac--Moody algebra with root system $\Delta$. Briefly, what we achieve in this paper is as follows:
	\begin{itemize}
		\item[(1)] We classify all the 212-closed subsets of: i) $\wt V$ and $\conv_{\mathbb{R}}\wt V$ for every highest weight $\mathfrak{g}$-module $V$ (see Theorems \ref{thmA} and \ref{thmB}); ii) $\wt \mathfrak{g}=\Delta\sqcup\{0\}$; iii) the root system $\Delta$ \big(both ii) and iii) are in Theorem \ref{thmC}\big).
		\item[(2)] We show that the 212-closed subsets of $\wt V$ are the same as the $W_{I_V}$-conjugates of standard faces of $\wt V$---so when $\mathfrak{g}$ is of finite type, the 212-closed subsets of $\Delta\sqcup\{0\}$ are the same~as the $W$-conjugates of standard faces of $\Delta\sqcup\{0\}$. Our proofs for these are very much direct, uniform over all types of $\mathfrak{g}$, and case free. In general, 212-closed subsets of $\Delta$ have nice descriptions (see Theorem \ref{thmC} and Proposition \ref{P2.7}). When $\Delta$ is indecomposable, the 212-closed subsets of $\Delta$ are the same as those of $\Delta\sqcup\{0\}$, except just in two cases---where $\mathfrak{g}=\mathfrak{sl}_3(\mathbb{C})$ or $\widehat{\mathfrak{sl}_3(\mathbb{C})}$.
		\item[(3)] The weak-$\mathbb{A}$-faces and 212-closed subsets for each of $\wt V$, $\conv_{\mathbb{R}}\wt V$ and $\Delta\sqcup\{0\}$ are the same. The analogous result holds for $\Delta$ when none of its indecomposable components equal the root systems of $\mathfrak{sl}_3(\mathbb{C})$ and $\widehat{\mathfrak{sl}_3(\mathbb{C})}$. For $\mathfrak{g}=\mathfrak{sl}_3(\mathbb{C})$ or $\widehat{\mathfrak{sl}_3(\mathbb{C})}$, the weak-$\mathbb{A}$-faces of $\Delta$, 212-closed subsets of $\Delta\sqcup\{0\}$ and weak-$\mathbb{A}$-faces of $\Delta\sqcup\{0\}$ all agree. By point (1) therefore, we classify the weak-$\mathbb{A}$-faces of all the sets in point (1), which is the main goal of this paper.  
		\item[(4)] For $\mathfrak{g}=\mathfrak{sl}_3 (\mathbb{C})$, we will also give the list of all 212-closed subsets for $\Delta$ which do not equal any $W$-conjugate of the standard faces of $\Delta\sqcup\{0\}$.
		\item[(5)] Additionally, we also describe the weak-$\mathbb{A}$-faces and 212-closed subsets of every hull $\conv_{\mathbb{R}}S$ in terms of the analogous notions for $S$ (see Theorem \ref{T2.3}). These results in particular apply to $\conv_{\mathbb{R}}\wt V$ over Kac--Moody algebras, extending the similar results on weak-$\mathbb{R}$-faces in \cite{KhRi} for the convex hulls of the weights of parabolic Verma modules in finite type. 
	\end{itemize}
	\subsection{Organization of the paper} The paper is organized as follows. In Section 2, we state the three main results of this paper: 1)~Theorem \ref{thmA}, classifying, and also showing the equivalence of the notions, 212-closed subsets and weak-$\mathbb{A}$-faces of $\wt V$ for arbitrary highest weight modules $V$ over Kac--Moody $\mathfrak{g}$ and for arbitrary non-trivial $\mathbb{A}\subseteq(\mathbb{R},+)$; 2)~Theorem \ref{thmB}, extending the results in point 1) to convex hulls of $\wt V$; 3)~Theorem \ref{thmC}, extending the results in point 1) to $\Delta$ and also $\Delta\sqcup\{0\}$. All the necessary notations for the paper are given in Subsections 2.1 and 3.1. In Section~3, we quote some definitions and preliminary results. In Section 4, we build necessary tools for the paper: 1)~starting from stating useful remarks on 212-closed subsets of $\wt V$; 2)~finding roots which upon subtracting from the highest weight of $V$ give rise to weights in $\wt V$; 3)~introducing and motivating an important object used in the proof of Theorem \ref{thmA}, namely $P(Y)$ for a 212-closed subset $Y$ of $\wt V$; 4)~finding necessary and sufficient conditions for a 212-closed subset of $\wt V$ to equal a standard face of $\wt V$; 5)~studying the maximal elements of 212-closed subsets of $\wt V$. The entirety of Section 5 is devoted to prove Theorem \ref{thmA}. (This proof is very long and runs over several pages of involved arguments; thus, it is divided into eleven steps, for ease of exposition.) Section 6 contains the proof of Theorem \ref{thmB}. Section 7 is occupied by the (separate) proofs of the four parts of Theorem \ref{thmC}, and it concludes with an answer to the problem, mentioned at the end of Section 2, of finding 212-closed subsets of root systems consisting only of positive roots.        
	\section{Main results}
	In this section, we state the main results of the paper. We begin by developing notation.
	\subsection{Root and weight notation} Let $\mathbb{N}$, $\mathbb{Z}$,  $\mathbb{R}$ and $\mathbb{C}$ stand for the set of natural numbers, integers, real numbers and complex numbers, respectively. We denote the set of non-negative, positive, non-positive and negative real numbers by $\mathbb{R}_{\geq0}, \mathbb{R}_{>0}$, $\mathbb{R}_{\leq 0}$ and $\mathbb{R}_{<0}$, respectively. Similarly, for any $S \subseteq \mathbb{R}$ we define $S_*:=S\cap \mathbb{R}_*$ for $*$ any of $\geq 0,>0,\leq0$ and $<0$. For $n\in\mathbb{N}$, we denote the set $\{1,\ldots,n\}$ by $[n]$. For $\emptyset\neq S\subseteq\mathbb{R}$ and a subset $B$ of an $\mathbb{R}$-vector space, $SB$ stands for $\big\{\sum_{j=1}^{n}r_jb_j\text{ }\big|\text{ }n\in \mathbb{N},\text{ } b_j\in B, r_j\in S\text{ }\forall  \text{ }j\in [n]\big\}$, the set of $S$-linear combinations of elements~from~$B$. We define the $\mathbb{R}$-convex hull of $B$ to be	\[
	\conv_{\mathbb{\mathbb{R}}}B:=\left\{\sum\limits_{j=1}^{n}c_j b_j \text{ }\Bigg|\text{ } n\in\mathbb{N},\text{ }b_j\in B,\text{ } c_j \in \mathbb{R}_{\geq 0}\text{ } \forall\text{ } 1\leq j\leq n \text{ and } \sum\limits_{j=1}^{n}c_j=1\right\}
	\]
	For any two subsets $C$ and $D$ of an abelian group, $C\pm D:=\{c\pm d$ $|$ $c \in C,\text{ }d\in D\}$ the Minkowski sum of $C$ and $\pm D$, respectively. When $C=\{x\}$ is singleton, we denote $C\pm D$ by $x\pm D$~for~simplicity.
	
	Let $\mathfrak{g}=\mathfrak{g}(A)$ denote the Kac--Moody algebra over $\mathbb{C}$ corresponding to a generalized Cartan matrix $A$ with the realisation $(\mathfrak{h},\Pi ,\Pi^{\vee})$, triangular decomposition $\mathfrak{n}^-\oplus\mathfrak{h}\oplus\mathfrak{n}^+$, universal enveloping algebra $U(\mathfrak{g})$, and root system $\Delta$. Whenever we make additional assumptions on $\mathfrak{g}$, such as $\mathfrak{g}$ is indecomposable or semisimple or of finite/affine/indefinite type, we will clearly mention them. Let $\Pi =\{\alpha_i $ $|$ $i\in \mathcal{I} \}$ be the set of simple roots, and $\Pi^{\vee}=\{\alpha_i^{\vee}$ $|$ $i\in\mathcal{I}\}$ be the set of simple co-roots, where $\mathcal{I}$ is a  fixed indexing set for the simple roots for the entire paper. $\mathcal{I}$ also stands for the set of vertices/nodes in the Dynkin diagram for $A$ or $\mathfrak{g}$. Throughout the paper we assume that $\mathcal{I}$ is finite. Let $\Delta^+$ and $\Delta^-$ denote the set of positive and negative roots of $\Delta$, respectively. Let $e_i ,f_i,\alpha_i^{\vee}$, $\forall$ $ i\in\mathcal{I}$ be the Chevalley generators for $\mathfrak{g}$, and $W$ denote the Weyl group of $\mathfrak{g}$ generated by the simple reflections $\{s_i $ $|$ $i\in \mathcal{I}\}$. Let $\mathfrak{g}':=[\mathfrak{g},\mathfrak{g}]$ be the derived subalgebra of $\mathfrak{g}$, which is generated by $e_i,f_i,\alpha_i^{\vee}$, $\forall$ $i\in\mathcal{I}$. Let $\Delta^{re}$ denote the set of real roots of $\Delta$. When $\mathfrak{g}$ is symmetrizable, we fix a standard non-degenerate symmetric invariant bilinear form on $\mathfrak{h}^*$ and denote it by (.,.).
	
	For $\emptyset \neq I\subseteq \mathcal{I}$, we define $\Pi_{I}:=\{\alpha_i$ $|$ $i\in I\}$ and $\Pi_I^{\vee}:=\{\alpha_i^{\vee}$ $|$ $i\in I\}$. We define $\mathfrak{g}_I:=\mathfrak{g}(A_{I\times I})$ to be the Kac--Moody algebra corresponding to the submatrix $A_{I\times I}$ of $A$ with realisation $(\mathfrak{h}_I,\Pi_I,\Pi_I^{\vee})$, where $\mathfrak{h}_I \subseteq \mathfrak{h}$, and the Chevalley generators $e_i ,f_i ,\alpha^{\vee}_i$, $\forall$ $i\in I$. By \cite[Exercise 1.2]{Kac}, $\mathfrak{g}_I$ can be thought of as a subalgebra of $\mathfrak{g}$, and the subroot system $\Delta_{I}:=\Delta\cap\mathbb{Z}\Pi_I$ of $\Delta$ coincides with the root system of $\mathfrak{g}_I$. Let $W_I$ denote the parabolic subgroup of $W$ generated by the simple reflections $\{s_i $ $|$ $i \in I\}$. When $I=\emptyset$, for completeness we define (i)~$\Pi_I,\Pi_I^{\vee}$ and $\Delta_{I}$ to be $\emptyset$; (ii)~$\mathfrak{g}_I$ and $\mathfrak{h}_I$ to be $\{0\}$; (iii)~$W_I$ to be the trivial subgroup $\{1\}$ of $W$. 
	
	Given an $\mathfrak{h}$-module $M$ and
	$\mu\in\mathfrak{h}^*$, we denote the $\mu$-weight space and the set of
	weights of $M$ by
	\[ 
	M_{\mu}=\{m\in M\text{ }|\text{ }h\cdot m= \mu(h)m\text{ }\forall\text{ }h\in \mathfrak{h}\}\quad\text{and}\quad \wt M=\big\{\mu\in\mathfrak{h}^*\text{ }|\text{ }M_{\mu}\neq\{0\}\big\}.
	\]
	We say that $M$ is a weight module over $\mathfrak{h}$ if $M=\bigoplus\limits_{\mu\in\wt M}M_{\mu}$. When each weight space of a weight module $M$ is finite-dimensional, we define $\text{char}M:=\sum_{\mu\in\wt M}\dim(M_{\mu})e^{\mu}$ to be the formal character of $M$. A $\mathfrak{g}$-module $M$ is called a weight module if it is a weight module over $\mathfrak{h}$.
	
	For $\lambda \in \mathfrak{h}^*$, let $M(\lambda)$ and $L(\lambda)$ denote the Verma module over $\mathfrak{g}$ with highest weight $\lambda$ and its unique simple quotient, respectively. By $M(\lambda)\twoheadrightarrow V$ \big($M(\lambda)$ surjecting onto $V$\big) we denote a non-trivial highest weight $\mathfrak{g}$-module $V$ with highest weight $\lambda$.
	
	For $\lambda\in\mathfrak{h}^*$, $M(\lambda)\twoheadrightarrow V$, and $I\subseteq\mathcal{I}$, we define
	\begin{equation}\label{E1.2}
	J_{\lambda}:=\left\{i \in \mathcal{I} \text{ }|\text{ } \langle \lambda ,\alpha^{\vee}_i\rangle \in \mathbb{Z}_{\geq0}\right\},\qquad\qquad
	\wt_I V:=\wt V\cap (\lambda-\mathbb{Z}_{\geq0}\Pi_I).
	\end{equation}
	
	For a vector $x=\sum_{i\in\mathcal{I}}c_i\alpha_i\in\mathbb{C}\Pi$, we define $\supp(x):=\{i\in\mathcal{I}$ $|$ $c_i\neq 0\}$; $\supp(0):=\emptyset$. The following notations are important to note. For $M(\lambda)\twoheadrightarrow{ }V$ and $\emptyset\neq Y\subseteq \wt V$, we define
	\begin{align}\label{defn IV-JY}
	\begin{aligned}
	&I_V:=\left\{i\in \mathcal{I}\text{ }\Big|\text{ }\langle\lambda,\alpha_i^{\vee}\rangle\in\mathbb{Z}_{\geq0}\text{ and }f_i^{\langle\lambda,\alpha_i^{\vee}\rangle+1}V_{\lambda}=\{0\}\right\},\\
	&\mathcal{I}_V:=\{i\in\mathcal{I}\text{ }|\text{ } \lambda-\alpha_i\in\wt V\},\\
	&\mathcal{I}_Y:=\bigcup_{\mu\in Y}\supp(\lambda-\mu),\\
	\end{aligned}\quad\text{ }
	\begin{aligned}
	&W_{I_V}:=\big\langle s_i\text{ }|\text{ }i\in I_V\big\rangle,\\
	&\mathcal{J}_V:=\mathcal{I}\setminus\mathcal{I}_V,\\
	&\mathcal{J}_Y:=\{j\in \mathcal{I}_Y\text{ }|\text{ } \lambda-\alpha_j \notin \wt V\}.
	\end{aligned}
	\end{align}

	Throughout the paper, we use the two symbols $\mathcal{I}_Y$ and $\mathcal{J}_Y$, with the module $V$ always being clear from context. Note that $Y\subseteq \wt_{\mathcal{I}_Y}V$ and $\mathcal{J}_Y\subseteq \mathcal{J}_V$. We call $I_V$ the \textit{integrability} of $V$, or the set of integrable directions in $V$ (or $\wt V$). By the definition of $I_V$, notice that $V$ is $\mathfrak{g}_{I_V}$-integrable.
	
	\subsection{Weak faces of weights, and their convex hulls, of highest weight modules}
	In the remainder of this section, we describe our main results. We begin with the condition \eqref{property1} used by Chari and Greenstein \cite{Chari_Ad, ChGr} (and later by Chari--Khare--Ridenour \cite{ChKh}) in constructing Koszul algebras from truncated current algebras. This condition \eqref{property1} is a ``discrete'' version of the notion of the face of a convex polyhedron. Namely, it is a folklore fact (see e.g. \cite[Theorem 4.3]{KhRi}) that if one replaces $\mathbb{Z}_{\geq 0}$ by $\mathbb{R}_{\geq 0}$ in \eqref{property1}, applied to the subsets $Y=\Psi(\xi) \subseteq X$ where $X$ is a convex polyhedron on which $\xi$ has a positive maximum value, then $Y$ is necessarily a face of $X$. With this in mind, Khare--Ridenour (and Khare) \cite{KhRi, Khare_JA} defined various notions of ``discrete faces''---weak-$\mathbb{A}$-faces \big(for arbitrary non-trivial additive subgroups $\mathbb{A}$ of $(\mathbb{R},+)$\big) and 212-closed subsets. See Definition \ref{D1.4}. At a basic level, these notions are related as follows:
	\begin{observation}\label{O2.3}
		It is clear from the definitions that for any pair of subsets $Y \subseteq X$ of a real vector space, and all non-trivial $\mathbb{A}
		\subseteq (\mathbb{R},+)$, each of the following statements implies the next:
		\begin{enumerate}
			\item There exists a linear functional $\psi\in X^*$ such that $\psi(x)\leq \psi(y)$ for all $x\in X$ and $y\in Y$, i.e., $Y$ maximizes $\psi$ in $X$.
			\item $Y$ is a weak-$\mathbb{R}$-face of $X$.
			\item $Y$ is a weak-$\mathbb{A}$-face of $X$.
			\item $Y$ is a 212-closed subset of $X$.
		\end{enumerate}
	\end{observation}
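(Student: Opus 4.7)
The plan is to verify the three implications in sequence, each being a short check from the definitions.

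For (1)$\implies$(2), suppose $Y$ is the argmax of $\psi$ on $X$, with common value $c := \max_X \psi$. Given any relation $\sum_{i=1}^n r_i y_i = \sum_{j=1}^m t_j x_j$ with $\sum r_i = \sum t_j > 0$ and coefficients in $\mathbb{R}_{\geq 0}$, I would apply $\psi$ to both sides: the left becomes $c \sum r_i$, while the right is $\sum t_j \psi(x_j) \leq c \sum t_j = c \sum r_i$ since $\psi(x_j)\leq c$ for every $x_j \in X$. Equality forces $\psi(x_j)=c$ for every $j$ with $t_j>0$, placing such $x_j$ in $Y$.

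For (2)$\implies$(3), this is immediate because $\mathbb{A}_{\geq 0}\subseteq\mathbb{R}_{\geq 0}$, so every instance of the weak-$\mathbb{A}$-face condition is a special instance of the weak-$\mathbb{R}$-face condition; a weak-$\mathbb{R}$-face therefore automatically satisfies the weaker weak-$\mathbb{A}$-face requirement.

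For (3)$\implies$(4), given a $212$-closed relation $y_1 + y_2 = x_1 + x_2$, I would pick a positive element $a \in \mathbb{A}_{>0}$, which exists because $\mathbb{A}$ is a non-trivial additive subgroup of $(\mathbb{R},+)$, and rescale to $a y_1 + a y_2 = a x_1 + a x_2$. The coefficients now lie in $\mathbb{A}_{\geq 0}$ with common positive total $2a$, so the weak-$\mathbb{A}$-face condition of $Y$ forces $x_1, x_2 \in Y$.

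No step poses any genuine obstacle; the observation is indeed ``clear from the definitions''. The two small subtleties worth noting are: (i)~the implicit reading of (1), namely that $Y$ must be taken as the full argmax of $\psi$ on $X$ rather than merely a subset on which $\psi$ is constant and equal to the max (otherwise (1) can fail to imply (2), e.g.\ on a three-point set where $\psi$ is constant); and (ii)~the use of non-triviality of $\mathbb{A}$ in the last step, which guarantees a positive rescaling factor $a$ and hence promotes the $212$-closed relation into a legitimate weak-$\mathbb{A}$-face relation.
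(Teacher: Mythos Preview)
Your proof is correct and matches the paper's approach: the paper gives no proof at all beyond the phrase ``clear from the definitions,'' and your three verifications are precisely the routine checks one would expect. Your remark~(i) about reading condition~(1) as $Y$ being the \emph{full} argmax of $\psi$ on $X$ is a genuine and valid clarification, consistent with the paper's later usage of ``maximizer subsets'' in Theorem~\ref{thmB}.
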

	In this paper, we focus only on certain distinguished sets $X$ in Lie theory---namely, the set $X = \wt V$ (and $X=\conv_{\mathbb{R}}\wt V$) of weights of (respectively, the convex hull of weights of) an arbitrary highest weight module $V$ over a Kac--Moody Lie algebra $\mathfrak{g}$; as well as for $X$ the root system $\Delta$ of $\mathfrak{g}$ \big(and $X = \Delta \sqcup \{ 0 \}$, the weights of the adjoint representation\big). The main results of the paper classify the above notions for these subsets $X$---in a sense, the ``discrete faces'' of $X$ (in parallel to the classification of faces of $\conv_{\mathbb{R}} \wt V$ by Satake, Borel--Tits, and others listed above). In particular, we will show that the above notions all agree for these distinguished sets $X$. 
	
	The first main result of this paper classifies all of the above notions for all highest weight $\mathfrak{g}$-modules. To state it, we use the notation $J_{\lambda}$, $\wt_I V$, $I_V$, $\mathcal{I}_V,\text{ }\mathcal{J}_V,\text{ }\mathcal{I}_Y\text{ and }\mathcal{J}_Y$ from \eqref{E1.2} and \eqref{defn IV-JY}.\allowdisplaybreaks 
	\begin{thmx}\label{thmA}
		Let $\mathfrak{g}$ be a general Kac--Moody algebra, $\lambda\in\mathfrak{h}^*$, $M(\lambda)\twoheadrightarrow V$, and $Y$ be a non-empty 212-closed subset of $\wt V$. Then there exists $\omega\in W_{I_V}$ such that $\omega Y=\wt_{\mathcal{I}_{\omega Y}}V$ (the standard face of $\wt V$ corresponding to $\mathcal{I}_{\omega Y}\subseteq\mathcal{I}$); in particular, when $\lambda\in Y$ such an element $\omega$ exists in $W_{\mathcal{J}_Y}\subseteq W_{\mathcal{J}_V}$. Therefore, it follows that all of the following notions are equivalent:
		\begin{itemize}
			\item $W_{I_V}$-conjugates of the standard faces of $\wt V$.
			\item Weak $\mathbb{A}$-faces of $\wt V$ over an arbitrary non-trivial additive subgroup $\mathbb{A}\subseteq (\mathbb{R},+)$.
			\item 212-closed subsets of $\wt V$.
		\end{itemize}
	\end{thmx}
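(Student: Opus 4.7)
The plan is first to dispatch the two easy implications in Observation 2.3 and then to attack the main classification. For the easy direction, each standard face $\wt_I V = \wt V \cap (\lambda - \mathbb{Z}_{\geq 0} \Pi_I)$ is the set of maximizers in $\wt V$ of any linear functional on $\mathfrak{h}^*$ that vanishes on $\Pi_I$ and is strictly positive on $\Pi_{\mathcal{I} \setminus I}$, so $\wt_I V$ is a weak-$\mathbb{R}$-face; since $W_{I_V}$ preserves $\wt V$ as well as the classes of weak-$\mathbb{A}$-faces and of 212-closed subsets, every $W_{I_V}$-conjugate of a standard face is a weak-$\mathbb{A}$-face and is 212-closed. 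This settles all the implications except the substantive one: that each non-empty 212-closed $Y \subseteq \wt V$ equals $\omega^{-1}\,\wt_{\mathcal{I}_{\omega Y}} V$ for some $\omega \in W_{I_V}$.

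For this converse, the central tool is a \emph{splitting principle} available when $\lambda \in Y$: given $\mu = \lambda - \gamma \in Y$ and any decomposition $\gamma = \gamma_1 + \gamma_2$ with $\gamma_i \in \mathbb{Z}_{\geq 0}\Pi$ and $\lambda - \gamma_i \in \wt V$, the identity $\lambda + \mu = (\lambda - \gamma_1) + (\lambda - \gamma_2)$ combined with 212-closedness forces $\lambda - \gamma_i \in Y$. Iterating this peels off simple roots, so an induction on the height of $\lambda - \mu$ yields the two-sided inclusion $\wt_{\mathcal{I}_Y \setminus \mathcal{J}_Y} V \subseteq Y \subseteq \wt_{\mathcal{I}_Y} V$. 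The case $\lambda \in Y$ thus reduces to a descent that conjugates $Y$ by an element of $W_{\mathcal{J}_Y}$ to arrange $\mathcal{J}_{\omega Y} = \emptyset$. First one shows $\mathcal{J}_Y \subseteq I_V$: each $j \in \mathcal{J}_Y$ has $\alpha_j \in \supp(\lambda-\mu)$ for some $\mu \in Y$ yet $\lambda - \alpha_j \notin \wt V$, which forces $\langle \lambda, \alpha_j^{\vee}\rangle = 0$ and $f_j v_\lambda = 0$, so $s_j \in W_{I_V}$ and $s_j$ fixes $\lambda$. For the general case $\lambda \notin Y$, one produces $\omega' \in W_{I_V}$ with $\lambda \in \omega' Y$ by choosing a suitably maximal element of $Y$ (with respect to a partial order refining height) and walking along reflections $s_i$ with $i \in I_V$, and then applies the $\lambda \in Y$ analysis to $\omega' Y$.

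The main obstacle is the descent that kills $\mathcal{J}_Y$. A priori, applying $s_j$ for $j \in \mathcal{J}_Y$ can enlarge $\mathcal{I}_Y$ through the action $s_j \alpha_k = \alpha_k - a_{jk} \alpha_j$, so a naive induction on $|\mathcal{J}_Y|$ need not terminate. One must instead track the pair $(\mathcal{I}_Y, \mathcal{J}_Y)$ along the $W_{\mathcal{J}_Y}$-orbit of $Y$ and exhibit a well-founded invariant—perhaps by minimizing over this orbit some quantity such as $\sum_{\mu \in Y}\mathrm{ht}(\lambda - \mu)$ restricted to a canonical subset, or by a carefully chosen reduced word for $\omega$—and simultaneously verify that any minimizer has $\mathcal{J} = \emptyset$. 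The analogous task in the general case, of constructing $\omega' \in W_{I_V}$ that carries some element of $Y$ to $\lambda$, is likely equally delicate because one must control how the supports $\supp(\lambda - s_i\mu)$ evolve under successive integrable reflections. This combinatorial bookkeeping, with the interplay between the integrability set $I_V$, the blocked directions $\mathcal{J}_Y$, and the splitting principle, is where I expect the bulk of the work to lie.
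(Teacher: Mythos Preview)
Your splitting principle is correctly stated, but the conclusion $\wt_{\mathcal{I}_Y \setminus \mathcal{J}_Y} V \subseteq Y$ does not follow from it and is in fact false. Take $\mathfrak{g} = \mathfrak{sl}_3$, $V = L(\omega_1)$, so $\wt V = \{\omega_1,\ \omega_1 - \alpha_1,\ \omega_1 - \alpha_1 - \alpha_2\}$, and set $Y = \{\omega_1,\ \omega_1 - \alpha_1 - \alpha_2\}$ (this is the paper's Example~\ref{E3.8}). This $Y$ is 212-closed and contains $\lambda = \omega_1$; here $\mathcal{I}_Y = \{1,2\}$, $\mathcal{J}_Y = \{2\}$, so $\mathcal{I}_Y \setminus \mathcal{J}_Y = \{1\}$, yet $\omega_1 - \alpha_1 \in \wt_{\{1\}} V$ does not lie in $Y$. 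Your splitting identity cannot be applied to $\mu = \omega_1 - \alpha_1 - \alpha_2$ with $\gamma_1 = \alpha_1$, $\gamma_2 = \alpha_2$ because $\lambda - \alpha_2 \notin \wt V$: the hypothesis that \emph{both} $\lambda - \gamma_i$ be weights is exactly what fails in the presence of $\mathcal{J}_Y$-directions. So even the preliminary inclusion $\lambda - \Pi_{\mathcal{I}_Y \setminus \mathcal{J}_Y} \subset Y$ already requires a $W_{\mathcal{J}_Y}$-conjugation (Lemma~\ref{L3.9}), and the argument cannot proceed by a bare height induction on $Y$ itself.

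The paper's route when $\lambda \in Y$ is to introduce the finite set $P(Y)$ of weights $\lambda - \eta$ with $\eta \in \Delta_{\mathcal{I}_Y \setminus \mathcal{J}_Y,1}$ a sum of distinct simple roots and $\lambda - \eta' \in Y$ for every root $\eta' \preceq \eta$ in $\Delta_{\mathcal{I}_Y \setminus \mathcal{J}_Y,1}$. One shows (Proposition~\ref{P3.11}) that $P(Y) = P(\wt_{\mathcal{I}_Y} V)$ forces $Y = \wt_{\mathcal{I}_Y} V$, and then minimizes $|P(\wt_{\mathcal{I}_Y} V)| - |P(Y)|$ over those $W_{\mathcal{J}_Y}$-conjugates of $Y$ that still contain $\lambda - \Pi_{\mathcal{I}_Y \setminus \mathcal{J}_Y}$. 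A positive minimum is ruled out by producing a chain of reflections $s_{j_1},\ldots,s_{j_d}\in W_{\mathcal{J}_Y}$ that keeps $P(\,\cdot\,)$ fixed while strictly decreasing an auxiliary height, until a contradiction with 212-closedness appears. Your instinct that a well-founded invariant over the $W_{\mathcal{J}_Y}$-orbit is needed is correct, but the specific invariant and the mechanism that makes it decrease are the substance of the proof and are not supplied by the splitting principle.

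For the case $\lambda \notin Y$ your sketch is also too optimistic. The paper first produces (Theorem~\ref{T4.1}) a sequence of simple reflections in $W_{J_\lambda}$---not in $W_{I_V}$---with the extra property that every tail product keeps $Y$ 212-closed \emph{in $\wt V$}; this is not automatic because $s_j\wt V \neq \wt V$ when $j \notin I_V$, so one must argue directly that $s_j Y \subseteq \wt V$ and is 212-closed. Only afterwards (Steps~1--4) is a minimal such sequence shown to lie in $W_{I_V}$, via an argument using $\wt L(s_{i_k}\bullet\lambda) \subset \wt V$ to eliminate any non-integrable reflection.
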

	\begin{observation}\label{note1}
		If $Y$ is 212-closed in $\wt V$, then $wY$ is 212-closed in $\wt V$ for any $w\in W_{I_V}$. This can be easily seen as follows. Suppose $(z_1)+(z_2)=(x_1)+(x_2)$ for some $z_1,z_2\in wY$ and $x_1,x_2\in \wt V$. Apply $w^{-1}$ on both sides of the equation to get $(w^{-1}z_1)+(w^{-1}z_2)=(w^{-1}x_1)+(w^{-1}x_2)$. Notice, $w^{-1}z_1,w^{-1}z_2\in Y$ and $w^{-1}x_1,w^{-1}x_2\in \wt V$ by the $W_{I_V}$-invariance of $\wt V$. Now, the 212-closedness of $Y$ implies that $w^{-1}x_1,w^{-1}x_2\in Y$, and therefore $x_1,x_2\in wY$ as required. A similar argument shows that if $Y$ is a weak-$\mathbb{A}$-face of $\wt V$, then so is every $wY$ for $w\in W_{I_V}$.     
	\end{observation}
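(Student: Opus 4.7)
The plan is to reduce both claims to a single transport-by-$w^{-1}$ argument, using the single key fact that $\wt V$ is stable under the subgroup $W_{I_V} \subseteq W$. This stability is standard: by construction of $I_V$ in \eqref{defn IV-JY}, the simple root vectors $f_i$ (and $e_i$) for $i \in I_V$ act locally nilpotently on $V$, so $V$ is $\mathfrak{g}_{I_V}$-integrable; then $\wt V$ is permuted by the reflections $s_i$ for $i \in I_V$, hence is $W_{I_V}$-invariant. I would record this as a preliminary remark and then invoke it freely.

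For the 212-closedness statement, I would fix $w \in W_{I_V}$ and suppose we are given witnesses $z_1, z_2 \in wY$ and $x_1, x_2 \in \wt V$ with $z_1 + z_2 = x_1 + x_2$. Applying the linear map $w^{-1}$ to this equation gives $w^{-1}z_1 + w^{-1}z_2 = w^{-1}x_1 + w^{-1}x_2$. By construction $w^{-1}z_1, w^{-1}z_2 \in Y$, and by the $W_{I_V}$-invariance of $\wt V$ we also have $w^{-1}x_1, w^{-1}x_2 \in \wt V$. Hence the 212-closedness of $Y$ in $\wt V$ forces $w^{-1}x_i \in Y$, and applying $w$ gives $x_i \in wY$ as required.

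For the weak-$\mathbb{A}$-face statement, the same transport works verbatim, with more summands. Given $z_i \in wY$, $x_j \in \wt V$, and coefficients $r_i, t_j \in \mathbb{A}_{\geq 0}$ satisfying $\sum_i r_i z_i = \sum_j t_j x_j$ together with the mass-balance condition $\sum_i r_i = \sum_j t_j > 0$, I would apply $w^{-1}$ to the vector equation (the scalar equation is untouched) to obtain $\sum_i r_i (w^{-1}z_i) = \sum_j t_j (w^{-1}x_j)$. As before, $w^{-1}z_i \in Y$ and $w^{-1}x_j \in \wt V$, so the weak-$\mathbb{A}$-face property of $Y$ yields $w^{-1}x_j \in Y$ whenever $t_j \neq 0$, and translating back by $w$ gives $x_j \in wY$ for those $j$.

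There is really no main obstacle here beyond cleanly isolating the $W_{I_V}$-invariance of $\wt V$; once that is stated, both conclusions are formal consequences of the fact that $w^{-1}$ is a linear bijection of $\mathfrak{h}^*$ preserving both $Y \subseteq wY$ (by definition) and $\wt V$ (by the invariance). The only place to be careful is not to assume invariance of $\wt V$ under all of $W$, which may fail for highest weight modules that are not fully integrable; it is essential that $w$ lies in $W_{I_V}$, matching exactly the group appearing in the statement.
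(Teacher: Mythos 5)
Your proposal is correct and follows essentially the same route as the paper's own argument: transport the witnessing equation by $w^{-1}$, use the $W_{I_V}$-invariance of $\wt V$ to keep the right-hand terms inside $\wt V$, apply the 212-closedness (respectively, the weak-$\mathbb{A}$-face property) of $Y$, and translate back by $w$. Your extra care in isolating why $\wt V$ is $W_{I_V}$-invariant, and in noting that invariance under all of $W$ may fail, matches the paper's implicit use of \cite[Theorem 1.1]{Dhillon_arXiv}.
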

	Recall, the equivalence of the three notions in Theorem \ref{thmA} was shown when $\mathfrak{g}$ is semisimple and $\lambda-\Pi\subset\wt V$ by Khare; see \cite[Theorem 3.4]{Khare_JA}. This particular assumption that $\lambda-\Pi\subset\wt V$ paved the way for our understanding of the 212-closed subsets of $\wt V$, and also motivated our results in Section \ref{S4} on 212-closed subsets of the weights of parabolic Verma and simple highest weight modules over Kac--Moody $\mathfrak{g}$.
	
	Our second main result, Theorem \ref{thmB}, is the analogue of Theorem \ref{thmA} (which is for $X=\wt V$) in the ``continuous setting'', i.e., for $X=\conv_{\mathbb{R}}\wt V$. Theorem \ref{thmB}, similar to Theorem \ref{thmA}, shows that the above notions all agree, and recovers precisely the (exposed) faces of the convex hull: 
	\begin{thmx} \label{thmB}
		In the notation of Theorem \ref{thmA}, if $X = \conv_{\mathbb{R}} \wt V$, then the following notions are equivalent:
		\begin{itemize}
			\item[(1)] exposed faces of $X$, i.e., maximizer subsets of $X$ with respect to a linear functional.
			\item[(2)] weak-$\mathbb{R}$-faces of $X$.
			\item[(3)] 212-closed subsets of $X$.
			\item[(4)] subsets of the form $\conv_{\mathbb{R}}\big(w[(\lambda-\mathbb{Z}_{\geq 0}\Pi_J)\cap\wt V]\big)$ for $w \in W_{I_V}$ and $J\subseteq \mathcal{I}$. 
		\end{itemize}
	\end{thmx}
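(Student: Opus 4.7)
The plan is to close the cycle of implications $(1) \Rightarrow (2) \Rightarrow (3) \Rightarrow (4) \Rightarrow (1)$. The first two arrows are immediate from Observation \ref{O2.3}: every exposed face is a weak-$\mathbb{R}$-face, and every weak-$\mathbb{R}$-face is 212-closed. So the real content lies in $(4) \Rightarrow (1)$ and $(3) \Rightarrow (4)$.

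For $(4) \Rightarrow (1)$, fix $J \subseteq \mathcal{I}$ and $w \in W_{I_V}$; I will show that $F := w\,\conv_{\mathbb{R}}\bigl((\lambda - \mathbb{Z}_{\geq 0}\Pi_J) \cap \wt V\bigr)$ is exposed in $X$. By elementary linear algebra, choose an $\mathbb{R}$-linear functional $\psi$ on $\mathbb{R}\lambda + \mathbb{R}\Pi$ satisfying $\psi(\alpha_i) = 0$ for $i \in J$ and $\psi(\alpha_i) > 0$ for $i \in \mathcal{I} \setminus J$. For any $\mu = \lambda - \sum_i n_i \alpha_i \in \wt V$ with $n_i \in \mathbb{Z}_{\geq 0}$, one has $\psi(\mu) = \psi(\lambda) - \sum_i n_i \psi(\alpha_i) \leq \psi(\lambda)$, with equality exactly when $n_i = 0$ for every $i \notin J$, i.e.\ exactly when $\mu \in (\lambda - \mathbb{Z}_{\geq 0}\Pi_J) \cap \wt V$. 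Hence $\psi$ attains its maximum on $X$ precisely on $\conv_{\mathbb{R}}\bigl((\lambda - \mathbb{Z}_{\geq 0}\Pi_J) \cap \wt V\bigr)$, and then $\psi \circ w^{-1}$ exposes $F$ in $X$, using the $W_{I_V}$-invariance of $\wt V$ (and so of $X$).

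The crux is $(3) \Rightarrow (4)$. Let $F \subseteq X$ be a non-empty 212-closed subset. First I show $F$ is convex: given $f_1, f_2 \in F$ and $t \in [0,1]$, the identity $f_1 + f_2 = \bigl[tf_1 + (1-t)f_2\bigr] + \bigl[(1-t)f_1 + tf_2\bigr]$ has both summands in $X$, and 212-closedness forces both into $F$. Next let $Y := F \cap \wt V$; this is 212-closed in $\wt V$, since any 212-instance over $Y \subseteq \wt V$ is, \emph{a fortiori}, a 212-instance over $F \subseteq X$. Invoking Theorem \ref{T2.3}---the continuous-to-discrete bridge promised in point (5) of the introduction---one concludes $F = \conv_{\mathbb{R}} Y$. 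Applying Theorem \ref{thmA} to $Y$ then produces $\omega \in W_{I_V}$ and $J = \mathcal{I}_{\omega Y} \subseteq \mathcal{I}$ with $\omega Y = (\lambda - \mathbb{Z}_{\geq 0}\Pi_J) \cap \wt V$; whence $F = \omega^{-1}\conv_{\mathbb{R}}\bigl((\lambda - \mathbb{Z}_{\geq 0}\Pi_J) \cap \wt V\bigr)$, which is of the form in (4).

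The main obstacle is the identity $F = \conv_{\mathbb{R}}(F \cap \wt V)$ used in the previous step: convexity of $F$ only yields the inclusion $\supseteq$, whereas $\subseteq$ requires that every $f \in F$ admit a convex decomposition whose summands all lie in $F \cap \wt V$. The easy case occurs when a decomposition $f = \sum_i c_i \mu_i$ with $\mu_i \in \wt V$ has some $c_i \geq 1/2$: the midpoint identity $2f = \mu_i + (2f - \mu_i)$ exhibits both $\mu_i$ and its reflection as elements of $X$, and 212-closedness drops both into $F$. Iterating this bootstrap to arbitrary convex combinations, where every $c_i$ may be small, is the heart of Theorem \ref{T2.3}, handled there once and for all; with Theorem \ref{T2.3} in hand, Theorem \ref{thmB} becomes a short deduction from it and Theorem \ref{thmA}.
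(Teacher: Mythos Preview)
Your proof is correct and follows essentially the same route as the paper: the cycle $(1)\Rightarrow(2)\Rightarrow(3)$ via Observation~\ref{O2.3}, $(4)\Rightarrow(1)$ by constructing an exposing functional vanishing on $\Pi_J$ and positive on $\Pi_{J^c}$, and $(3)\Rightarrow(4)$ by invoking Theorem~\ref{T2.3} to get $F=\conv_{\mathbb{R}}(F\cap\wt V)$ and then Theorem~\ref{thmA} on the discrete part. The only cosmetic difference is that the paper builds its exposing functional via dual elements $\omega_i^{\vee}\in\mathfrak{h}$ (so that $\psi$ is automatically defined on all of $\mathfrak{h}^*$), whereas you define $\psi$ on $\mathbb{R}\lambda+\mathbb{R}\Pi$ and implicitly extend; and the paper packages your convexity argument for $F$ separately as Lemma~\ref{L5.1}(a).
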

	Thus, Theorem \ref{thmB} also classifies the weak-$\mathbb{A}$-faces of $\conv_{\mathbb{R}}\wt V$ for all $\{0\}\subsetneqq \mathbb{A}\subseteq (\mathbb{R},+)$, by Observation \ref{O2.3}. This result was previously known in finite type for parabolic Verma modules, see \cite[Theorem 4.3]{KhRi}, and hence for certain classes of highest weight modules by \cite{Khare_Ad}, \cite{Dhillon_arXiv}~and~\cite[Theorem~C]{Khare_JA}. Theorem \ref{thmB} completes this classification in finite type, and extends it to arbitrary Kac--Moody $\mathfrak{g}$. The key result used in the proof of Theorem \ref{thmB} is Theorem \ref{T2.3} (below), which studies 212-closed subsets and weak faces of arbitrary convex subsets of vector spaces over any subfield of $\mathbb{R}$.
	
	We now state Theorem \ref{T2.3} in its most general form. Let $\mathbb{F}$ be a subfield of $\mathbb{R}$ and $S$ be an arbitrary subset of an $\mathbb{F}$-vector space. We assert that the 212-closed subsets, and therefore the weak faces, of the $\mathbb{F}$-\textit{convex hull} of $S$ \big($\conv_{\mathbb{F}}S$\big) are given by those of $S$. We call a subset $X$ of an $\mathbb{F}$-vector space to be $\mathbb{F}$-\textit{convex} (or simply \textit{convex} for $\mathbb{F}=\mathbb{R}$) if $rx+(1-r)y\in X$ $\forall$ $x,y\in X$, $r\in [0,1]\cap\mathbb{F}$.
	\begin{theorem}\label{T2.3}
		Let $\mathbb{F}$ be an arbitrary subfield of $\mathbb{R}$, and $X$ an $\mathbb{F}$-convex subset of an $\mathbb{F}$-vector space. Fix two non-empty subsets $S$ and $D$ of the vector space. Let $Y$ be a non-empty 212-closed subset \big(for instance, a weak-$\mathbb{A}$-face over a non-trivial subgroup $\mathbb{A}\subseteq(\mathbb{F},+)$\big) of $X$. \begin{itemize}
			\item[(a)] Suppose $X=\conv_{\mathbb{F}}S$. Then $Y=\conv_{\mathbb{F}}(Y\cap S)$. Note, $Y\cap S$ is 212-closed in $S$.
			\item[(b)] More generally, suppose $X=\conv_{\mathbb{F}}S+\mathbb{F}_{\geq 0}D$, the Minkowski sum of the $\mathbb{F}$-convex hull of the points in $S$ and the cone generated by the directions given by $D$ over $\mathbb{F}_{\geq 0}$. Then $Y$ has the following Minkowski sum decomposition: \[Y=\conv_{\mathbb{F}}(Y\cap S)+\mathbb{F}_{\geq 0}\big\{d\in D\text{ }|\text{ }(d+Y)\cap Y\neq\emptyset\big\}.\] 
		\end{itemize}
	\end{theorem}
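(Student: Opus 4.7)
The plan is to first establish that every 212-closed subset $Y$ of the $\mathbb{F}$-convex set $X$ is itself $\mathbb{F}$-convex, then handle part (a) via a ``doubling'' lemma, and finally bootstrap to part (b) using part (a) together with a translation-invariance argument. For the convexity of $Y$: given $y_1,y_2\in Y$ and $r\in[0,1]\cap\mathbb{F}$, the identity
\[y_1+y_2=\bigl[ry_1+(1-r)y_2\bigr]+\bigl[(1-r)y_1+ry_2\bigr]\]
exhibits two elements of $X$ (by $\mathbb{F}$-convexity of $X$), so 212-closedness forces both into $Y$. This immediately gives $\conv_{\mathbb{F}}(Y\cap S)\subseteq Y$ in part (a), while 212-closedness of $Y\cap S$ in $S$ is immediate since $S\subseteq X$.

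For part (a), the main step is the following lemma: if $y=cu+(1-c)v\in Y$ with $u,v\in X$ and $c\in(0,1)\cap\mathbb{F}$, then $u,v\in Y$. Granting this, for any $y\in Y$ with a convex representation $y=\sum_{i=1}^{n}c_{i}s_{i}$ (with $c_{i}>0$, $s_{i}\in S$, $\sum c_{i}=1$), applying the lemma to the splittings $y=c_{j}s_{j}+(1-c_{j})z_{j}$, where $z_{j}:=(1-c_{j})^{-1}\sum_{i\neq j}c_{i}s_{i}\in X$, puts each $s_{j}\in Y\cap S$, hence $y\in\conv_{\mathbb{F}}(Y\cap S)$. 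For the lemma itself, the base case $c\geq\tfrac{1}{2}$ is direct: $2y-u=(2c-1)u+2(1-c)v$ is a convex combination in $X$, so 212-closedness applied to $y+y=u+(2y-u)$ yields $u\in Y$. In general I would iterate. Set $y^{(k)}:=2^{k}y-(2^{k}-1)u=\bigl(1-2^{k}(1-c)\bigr)u+2^{k}(1-c)v$; as long as $2^{k}(1-c)\leq 1$, this lies in $X$, and $y^{(k-1)}+y^{(k-1)}=u+y^{(k)}$ combined with 212-closedness yields $u,y^{(k)}\in Y$ inductively. Choosing the largest admissible $k$ makes the $v$-coefficient of $y^{(k)}$ lie in $(\tfrac{1}{2},1]$, so one more application of the base case with $u,v$ swapped yields $v\in Y$; when $c<\tfrac{1}{2}$ one runs the same argument starting from $v$.

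For part (b), set $D':=\{d\in D\mid(d+Y)\cap Y\neq\emptyset\}$. The inclusion $\conv_{\mathbb{F}}(Y\cap S)+\mathbb{F}_{\geq 0}D'\subseteq Y$ follows from translation invariance: for $d\in D'$ with witnesses $y_{0},y_{0}+d\in Y$ and any $y\in Y$, the identity $y+(y_{0}+d)=(y+d)+y_{0}$ (with $y+d\in X$ since $X$ is closed under adding elements of $D$) plus 212-closedness give $y+d\in Y$; doubling and $\mathbb{F}$-convexity of $Y$ then extend this to $y+cd\in Y$ for all $c\in\mathbb{F}_{\geq 0}$. For the reverse inclusion, take $y\in Y$ and \emph{any} decomposition $y=p+\sum_{k}c_{k}d_{k}$ with $p\in\conv_{\mathbb{F}}S$, $c_{k}>0$, $d_{k}\in D$. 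Because $X$ is the Minkowski sum with the cone $\mathbb{F}_{\geq 0}D$, both $y\pm c_{k}d_{k}$ lie in $X$, so the central identity $y+y=(y-c_{k}d_{k})+(y+c_{k}d_{k})$ combined with 212-closedness gives $y\pm c_{k}d_{k}\in Y$; the doubling trick then produces $y+2^{n}c_{k}d_{k}\in Y$ for every $n$, and $\mathbb{F}$-convexity of $Y$ yields $y+d_{k}\in Y$, showing $d_{k}\in D'$. Iteratively subtracting every $c_{k}d_{k}$ from $y$ via the same central identity keeps us in $Y$ and produces $p\in Y\cap\conv_{\mathbb{F}}S$; applying part (a) to this 212-closed subset of the $\mathbb{F}$-convex set $\conv_{\mathbb{F}}S$ places $p\in\conv_{\mathbb{F}}(Y\cap S)$, finishing the decomposition.

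The main obstacle is that 212-closedness is an a priori restrictive two-term condition, while the desired conclusions concern arbitrary convex combinations and full Minkowski sum decompositions. The doubling identity $y+y=(y-a)+(y+a)$, together with the $\mathbb{F}$-convexity of $Y$ extracted at the outset, is the uniform combinatorial tool that bridges this gap in both parts.
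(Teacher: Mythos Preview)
Your proof is correct and follows essentially the same approach as the paper. Both arguments establish the convexity of $Y$ via the identity $y_1+y_2=[ry_1+(1-r)y_2]+[(1-r)y_1+ry_2]$, prove the same ``interior point forces endpoints'' lemma (the paper's Lemma~\ref{L5.1}(b)) via iterated doubling, and handle the cone directions in part~(b) through repeated applications of $2y=(y-a)+(y+a)$ combined with the convexity of $Y$. The only organizational differences are minor: for the inclusion $Y\subseteq Y'$ in part~(b), the paper extracts $y_s\in Y$ in a single step via $2y=(y_s)+(2y-y_s)$, whereas you subtract the cone directions one at a time and then invoke part~(a) applied to the 212-closed subset $Y\cap\conv_{\mathbb{F}}S$ of $\conv_{\mathbb{F}}S$; both routes are equally short.
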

	Theorem \ref{T2.3} might be interesting in its own right; moreover, it can be easily seen that the implications (2) $\implies$ (4) and (3) $\implies$ (4) of Theorem \ref{thmB} immediately follow from Theorem \ref{T2.3} in view of Theorem \ref{thmA}.
	\subsection{Weak faces of root systems}
	Our third main result involves classifying the weak-$\mathbb{A}$-faces and 212-closed subsets of the root system $\Delta$, and of $\Delta \sqcup \{ 0 \}$, for arbitrary Kac--Moody $\mathfrak{g}$. In this case, we show that all of these four notions agree for every indecomposable Kac--Moody $\mathfrak{g}$, with precisely two exceptions: if $X = \Delta$, and $\mathfrak{g} = \mathfrak{sl}_3(\mathbb{C})\text{ or } \widehat{\mathfrak{sl}_3(\mathbb{C})}$. Before writing the complete classification in all cases, we mention some notation required in the finite and affine cases.
	
	For a finite type root system $\Delta$, we denote the set of short roots, long roots, and the highest root of $\Delta$ respectively by $\Delta_s$, $\Delta_l$, and $\theta$; with the understanding that $\Delta_s=\Delta_l$ when $\Delta$ is simply laced. For the rest of this paragraph, assume that $\Delta$ is of affine type. Let $\ell\in \mathbb{N}$, $\mathcal{I}=\{0,\ldots,\ell\}$, and let $\delta$ be the smallest positive imaginary root of $\Delta$. Recall: (1)~the subroot system $\mathring{\Delta}$ generated by $\alpha_i$, $\forall$ $i\in \{1,\ldots\ell\}$ is of finite type from tables Aff 1--Aff 3 and Subsection 6.3 of Kac's book~\cite{Kac}; (2)~the roots in $\Delta$ are explicitly describable in terms of the roots in $\mathring{\Delta}$ by the well known result \cite[Proposition 6.3]{Kac}. Let the Weyl group of $\mathring{\Delta}$ be denoted by $\mathring{W}$. For $\Delta$ of type $X_{\ell}^{(r)}$, where $r\in\{1,2,3\}$---see \cite[Tables Aff 1--Aff 3]{Kac}---and for $Y\subseteq \Delta\sqcup\{0\}$, we define
	\begin{equation}\label{E2.4}
	Y_s:=\begin{cases}
	(Y\cap\mathring{\Delta}_s)+\mathbb{Z}\delta &\text{if } Y\cap \mathring{\Delta}_s\neq\emptyset,\\
	\emptyset &\text{if }Y\cap\mathring{\Delta}_s=\emptyset,
	\end{cases}\qquad\quad  Y_l:=\begin{cases}
	(Y\cap\mathring{\Delta}_l)+r\mathbb{Z}\delta &\text{if } Y\cap \mathring{\Delta}_l\neq\emptyset,\\
	\emptyset &\text{if }Y\cap\mathring{\Delta}_l=\emptyset.
	\end{cases}
	\end{equation}   
\begin{rem}
    Let $\Delta$ be the root system of a Kac--Moody algebra. Note that $\Delta\sqcup\{0\}$ is 212-closed in itself, but not in $\Delta$ clearly. Similarly, observe that $\Delta$ is 212-closed in itself but not in $\Delta\sqcup\{0\}$, as the following equation by the definition of a 212-closed subset implies $0\in\Delta$ which is absurd.
    \[(\xi)+(-\xi)=2(0).\]
    In view of these, throughout the paper, we only study the equivalence of the proper 212-closed subsets (and the proper weak-$\mathbb{A}$-faces) of both $\Delta$ and $\Delta\sqcup\{0\}$. 
\end{rem}	
	We now present `most of' the classification of the weak-$\mathbb{A}$-faces and 212-closed sets, of both $\Delta$ and $\Delta \sqcup \{ 0 \}$, in two results for ease of exposition.
	\begin{theorem}
		Let $\mathfrak{g}$ be an indecomposable Kac--Moody algebra with root system $\Delta$.
		\begin{itemize}
			\item[(a)] Assume that $\mathfrak{g}$ is of finite type and $\mathfrak{g}\neq\mathfrak{sl}_3(\mathbb{C})$. Then the proper 212-closed subsets of $\Delta$ are the same as those of $\Delta\sqcup\{0\}$. Therefore, the proper 212-closed subsets of $\Delta$ are of the form $w\big[(\theta-\mathbb{Z}_{\geq 0}\Pi_I)\cap \Delta\big]$ for $w\in W$ and $I\subsetneqq\mathcal{I}$.
			\item[(b)] When $\mathfrak{g}$ is of affine type, the following are equivalent: 
			\begin{itemize}
				\item[1.] $Y$ is a proper 212-closed subset of $\Delta$ (respectively of $\Delta\sqcup\{0\}$). 	          \item[2.] $Y=Y_s\cup Y_l$, and $Y\cap\mathring{\Delta}$ \big(respectively $Y\cap( \mathring{\Delta}\sqcup\{0\})$\big) is a proper 212-closed subset of $\mathring{\Delta}$ \big(respectively of $\mathring{\Delta}\sqcup\{0\}$\big). 
			\end{itemize}
			\item[(c)] When $\mathfrak{g}$ is of indefinite type, $\Delta$ (respectively $\Delta\sqcup\{0\}$) is the only 212-closed subset of $\Delta$ (respectively of $\Delta\sqcup\{0\}$).
		\end{itemize}
	\end{theorem}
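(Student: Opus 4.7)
The plan is to handle the three parts in turn, routing each through Theorem~\ref{thmA} applied to a suitable highest weight module and supplementing with direct root-system combinatorics.

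For part~(a), the adjoint representation $V=\mathfrak{g}$ is integrable of highest weight $\theta$ with $\wt V=\Delta\sqcup\{0\}$, $I_V=\mathcal{I}$ and $W_{I_V}=W$. Indecomposability of $\mathfrak{g}$ gives $\supp(\theta)=\mathcal{I}$, so for $I\subsetneqq\mathcal{I}$ the standard face $(\theta-\mathbb{Z}_{\geq 0}\Pi_I)\cap(\Delta\sqcup\{0\})$ avoids $0$ and lies in $\Delta$; Theorem~\ref{thmA} thus classifies the proper 212-closed subsets of $\Delta\sqcup\{0\}$ as $W$-conjugates of these standard faces, in the asserted form. The inclusion ``212-closed in $\Delta\sqcup\{0\}\Rightarrow$ 212-closed in $\Delta$'' is immediate. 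The converse amounts to showing $(Y+Y)\cap(\Delta\sqcup\{0\})=\emptyset$ for every proper 212-closed $Y\subsetneqq\Delta$. The case $0\in Y+Y$ is killed at once: if $y,-y\in Y$, then $y+(-y)=\gamma+(-\gamma)$ for every $\gamma\in\Delta$ would force $Y=\Delta$. The harder case $(Y+Y)\cap\Delta\neq\emptyset$ is where the exclusion of $\mathfrak{sl}_3(\mathbb{C})$ is essential: for any $\gamma=y_1+y_2\in\Delta$ with $y_i\in Y$ I would exhibit an alternate root decomposition $\gamma=x_1+x_2$ with some $x_j\notin Y$, contradicting 212-closedness of $Y$. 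Producing such alternate decompositions uniformly in all indecomposable finite root systems of rank~$\geq 2$ other than $A_2$ is the main combinatorial obstacle of part~(a), and I would rely on a case analysis of short/long root-string configurations.

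For part~(b) (affine $\mathfrak{g}$ of type $X_\ell^{(r)}$), the key structural input is that for every $\alpha\in\mathring{\Delta}_s$ all shifts $\alpha+\mathbb{Z}\delta$ are real roots in $\Delta$, and for every $\alpha\in\mathring{\Delta}_l$ all shifts $\alpha+r\mathbb{Z}\delta$ are real roots. Applying the 212-closed condition to $2\alpha=(\alpha+k\delta)+(\alpha-k\delta)$, with $k=1$ or $r$ according to the length of $\alpha$, forces $\alpha\pm k\delta\in Y$ whenever $\alpha\in Y$, yielding $Y=Y_s\cup Y_l$ by iteration in the notation of~\eqref{E2.4}. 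Direct restriction shows $Y\cap\mathring{\Delta}$ is 212-closed in $\mathring{\Delta}$. Conversely, given $Z\subsetneqq\mathring{\Delta}$ proper 212-closed (classified by part~(a), or for $\mathring{\Delta}$ of type $A_2$ by the separate $\mathfrak{sl}_3$-analysis promised in the overview), I would verify $Z_s\cup Z_l$ is 212-closed in $\Delta$: any putative equation $y_1+y_2=x_1+x_2$ in $\Delta$ projects modulo~$\mathbb{R}\delta$ to an equation in $\mathring{\Delta}$, and the $\delta$-components balance by the very definition of $Z_s,Z_l$, reducing the check to the 212 condition for $Z$. Imaginary roots $k\delta$ with $k\neq 0$ cannot lie in a proper $Y\subseteq\Delta\sqcup\{0\}$, as $k\delta=\alpha+(k\delta-\alpha)$ for suitable real $\alpha$ would propagate $Y$ over all of $\Delta^{re}$.

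For part~(c), the aim is to show no proper non-empty 212-closed subset of $\Delta$ or of $\Delta\sqcup\{0\}$ exists when $\mathfrak{g}$ is indecomposable of indefinite type. Given $\alpha\in Y$ and $\beta\in\Delta\setminus Y$, the plan is to exploit the richness of indefinite root systems---imaginary roots of negative norm, arbitrarily long real-root strings, and denseness of the Weyl orbit on the interior of the light cone---to produce a chain of 212-decompositions from $\alpha$ to $\beta$ that forces $\beta\in Y$. Packaging this propagation uniformly over all indecomposable indefinite Kac--Moody types, with no finite classification to fall back on, is the principal obstacle I anticipate in the entire theorem. A natural route is to first settle rank-$2$ indefinite (hyperbolic) subroot systems $\Delta_J$ with $|J|=2$ by direct computation, then bootstrap to arbitrary indefinite $\mathfrak{g}$ via connectedness of the Dynkin diagram and the presence of a rank-$2$ indefinite (or, failing that, affine) sub-diagram inside every indecomposable indefinite one.
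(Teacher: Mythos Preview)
Your outline for parts (a) and (b) is in the right spirit, but each has a gap, and part (c) has a genuine error.

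For part (a), the phrasing ``exhibit an alternate decomposition $\gamma=x_1+x_2$ with some $x_j\notin Y$'' is circular: 212-closedness of $Y$ in $\Delta$ forces every such $x_j$ into $Y$, so you cannot know $x_j\notin Y$ in advance. What is actually needed is to iterate 212-equations until $Y$ contains $\pm\alpha$ for some root $\alpha$, whence $(\alpha)+(-\alpha)=(\beta)+(-\beta)$ gives $Y=\Delta$. The paper does precisely this: for rank $\geq 3$ it conjugates so that $\gamma$ becomes a simple root $\alpha_i$ at a \emph{leaf}, then uses an adjacent node $j$ and a third node $k$ to build $\xi$ with $\alpha_i+\xi,\beta_1+\xi\in\Delta$, and three successive 212-equations force $\pm(\alpha_i+\xi)\in Y$. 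The rank-$2$ cases $A_1,B_2,G_2$ are handled by direct inspection. Your ``case analysis of short/long root-string configurations'' does not yet isolate this leaf trick.

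For part (b), the reduction modulo $\mathbb{R}\delta$ is correct, but you have omitted the twisted type $A_{2\ell}^{(2)}$, whose real roots include $\tfrac{1}{2}\big[(2q-1)\delta+\eta\big]$ with $\eta\in\mathring{\Delta}_l$. These project to $\tfrac{1}{2}\mathring{\Delta}_l$, not to $\mathring{\Delta}\sqcup\{0\}$, so your ``projects to an equation in $\mathring{\Delta}$'' step breaks down; the paper handles this with a separate explicit $C_\ell$ computation.

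The real problem is part (c). Your bootstrap via rank-$2$ subdiagrams cannot work: an indecomposable indefinite Kac--Moody algebra need not contain any rank-$2$ subdiagram of affine or indefinite type. Any simply-laced indefinite diagram (e.g.\ $E_n$ for $n\geq 10$) has every $2\times 2$ principal submatrix of finite type $A_2$ or $A_1\times A_1$. The paper's argument is short and uniform and uses a different idea: by \cite[Theorem~5.6]{Kac} there exists an imaginary root $\gamma$ with $\langle\gamma,\alpha_t^\vee\rangle<0$ for every $t\in\mathcal{I}$. Conjugating a given $\beta\in Y$ to a simple root $w\beta$ and choosing an adjacent simple $\alpha$, one gets $\langle s_\alpha(\gamma),(w\beta)^\vee\rangle\leq -2$, hence $s_\alpha(\gamma)+2w\beta\in\Delta$; then $2(w\beta)=\big(s_\alpha(\gamma)+2w\beta\big)+\big(-s_\alpha(\gamma)\big)$ forces the imaginary root $-s_\alpha(\gamma)$ into $wY$, and Remark~\ref{R5.2} gives $Y=\Delta$. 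The existence of such a strictly-negative imaginary root is the ingredient your plan is missing.
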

	\begin{cor}\label{C2.5}
		Let $\mathfrak{g}$ be an indecomposable Kac--Moody algebra. 
		\begin{itemize}
			\item[(1)] The 212-closed subsets and weak-$\mathbb{A}$-faces of $\Delta\sqcup\{0\}$ are the same. Further, the proper 212-closed subsets (respectively proper weak-$\mathbb{A}$-faces) of $\Delta\sqcup\{0\}$ are also 212-closed subsets (respectively proper weak-$\mathbb{A}$-faces) of $\Delta$. 
			\item[(2)] For $\mathfrak{g}\neq \mathfrak{sl}_3(\mathbb{C})$ or $\widehat{\mathfrak{sl}_3(\mathbb{C})}$, the 212-closed subsets and weak-$\mathbb{A}$-faces of $\Delta$ are the same.   
		\end{itemize}
	\end{cor}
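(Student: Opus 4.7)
The plan is to combine Theorem~\ref{thmC} with Observation~\ref{O2.3} and to exhibit every proper 212-closed subset as the maximizer of a carefully chosen linear functional: by Observation~\ref{O2.3} this forces the weak-$\mathbb{A}$-face property for every $\mathbb{A}$, while the forward direction ``weak-$\mathbb{A}$-face $\Rightarrow$ 212-closed'' is automatic, so only the converses in~(1) and~(2) require proof.

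For part~(1), let $Y\subsetneqq \Delta\sqcup\{0\}$ be non-empty and 212-closed. I would first show $0\notin Y$: otherwise, for every $\alpha\in\Delta$ the relation $0+0=\alpha+(-\alpha)$ together with 212-closedness would force $\alpha,-\alpha\in Y$, yielding $Y=\Delta\sqcup\{0\}$. Hence $Y\subseteq\Delta$; moreover, any test equation $y_1+y_2=x_1+x_2$ with $x_1,x_2\in\Delta$ is \emph{a fortiori} a test equation in $\Delta\sqcup\{0\}$, so 212-closedness (and analogously the weak-$\mathbb{A}$-face property) passes from $\Delta\sqcup\{0\}$ to $\Delta$, giving the second assertion of~(1). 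To prove that $Y$ is a maximizer in $\Delta\sqcup\{0\}$, I would argue case-by-case on the type of $\mathfrak{g}$ afforded by Theorem~\ref{thmC}. In indefinite type, Theorem~\ref{thmC}(c) leaves no proper 212-closed subset, so the claim is vacuous. In finite type, Theorem~\ref{thmC}(a) (combined with Theorem~\ref{thmA} applied to the adjoint representation to cover the remaining $\mathfrak{sl}_3(\mathbb{C})$ case, since the adjoint of a simple Lie algebra is an integrable highest weight module with $I_V=\mathcal{I}$ and $W_{I_V}=W$) writes $Y=w\bigl((\theta-\mathbb{Z}_{\geq 0}\Pi_I)\cap\Delta\bigr)$ for some $w\in W$ and $I\subsetneqq\mathcal{I}$; I would then define $\psi\in\mathfrak{h}^*$ by $\psi(\alpha_i):=0$ for $i\in I$ and $\psi(\alpha_i):=1$ for $i\in\mathcal{I}\setminus I$, and a short check (using $\psi(\theta)>0=\psi(0)$ and $\psi\leq 0$ on $\Delta^-$) shows that $\psi$ attains its maximum on $\Delta\sqcup\{0\}$ exactly on $(\theta-\mathbb{Z}_{\geq 0}\Pi_I)\cap\Delta$, so that $\psi\circ w^{-1}$ realises $Y$ as a maximizer subset.

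In affine type, Theorem~\ref{thmC}(b) gives $Y=\bar Y_s\cup\bar Y_l$ where $\bar Y:=Y\cap(\mathring\Delta\sqcup\{0\})$ is a proper 212-closed subset of $\mathring\Delta\sqcup\{0\}$. I would apply the finite-type case just established to obtain $\bar\psi\in\mathring{\mathfrak{h}}^*$ realising $\bar Y$ as a maximizer in $\mathring\Delta\sqcup\{0\}$, and then extend to $\psi\in\mathfrak{h}^*$ by $\psi(\delta):=0$ (and arbitrarily on a complement). Invoking Kac's explicit description \cite[Proposition~6.3]{Kac} of the real roots of $\Delta$ as $\mathring\Delta$-parts shifted by $\mathbb{Z}\delta$ (respectively $r\mathbb{Z}\delta$ on long roots in twisted types $X_\ell^{(r)}$, $r\in\{2,3\}$) and of the imaginary roots as $\mathbb{Z}\delta\setminus\{0\}$, one checks that the maximum of $\psi$ on $\Delta\sqcup\{0\}$ is attained exactly on the layered set $\bar Y_s\cup\bar Y_l$ of~\eqref{E2.4}.

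For part~(2), assume $\mathfrak{g}\neq\mathfrak{sl}_3(\mathbb{C}),\widehat{\mathfrak{sl}_3(\mathbb{C})}$ and let $Y\subsetneqq\Delta$ be 212-closed. In finite (respectively affine) type, Theorem~\ref{thmC}(a) (respectively~(b)) identifies the proper 212-closed subsets of $\Delta$ with those of $\Delta\sqcup\{0\}$---this is precisely where the exclusion of $\mathfrak{sl}_3(\mathbb{C})$ and $\widehat{\mathfrak{sl}_3(\mathbb{C})}$ is needed. Part~(1) then realises $Y$ as the maximizer of some $\psi$ on $\Delta\sqcup\{0\}$ with maximum value strictly greater than $\psi(0)=0$, so the same $\psi$ realises $Y$ as the maximizer subset of $\Delta$, hence a weak-$\mathbb{A}$-face of $\Delta$. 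The indefinite case is again vacuous by Theorem~\ref{thmC}(c). The main obstacle I anticipate is the verification in the affine case that the extended functional $\psi$ picks out precisely the layered set $\bar Y_s\cup\bar Y_l$ in~\eqref{E2.4}; handling the twisted diagrams $X_\ell^{(r)}$ with $r\in\{2,3\}$ requires some care because of the asymmetric $r\mathbb{Z}\delta$-versus-$\mathbb{Z}\delta$ shifts on long versus short roots, but beyond routine bookkeeping no essentially new idea is needed.
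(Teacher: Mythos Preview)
Your argument is correct. In the paper, Corollary~\ref{C2.5} receives no separate proof: it is simply subsumed by Theorem~\ref{thmC}, whose parts~(\ref{thmC}1)--(\ref{thmC}4) already assert the coincidence of proper 212-closed subsets and proper weak-$\mathbb{A}$-faces of $\Delta$ and of $\Delta\sqcup\{0\}$ in each type. Since you invoke Theorem~\ref{thmC} anyway, the explicit maximizer functionals you build are, strictly speaking, redundant at the level of the corollary---the equivalences are part of that theorem's statement. That said, your construction is a legitimate alternative to how the paper establishes these equivalences \emph{inside} the proof of Theorem~\ref{thmC}. In finite type your functional $\psi(\alpha_i)=\mathbf{1}_{i\notin I}$ is exactly the one from the proof of Theorem~\ref{thmB} (implication (4)$\Rightarrow$(1)), so nothing new there. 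In affine type, however, the paper (Step~3 of the proof of~(\ref{thmC}3)) verifies the weak-$\mathbb{A}$-face property of $Y_s\cup Y_l$ by a somewhat intricate direct check using that $Y\cap\mathring\Delta$ is a weak-$\frac{1}{2}\mathbb{A}$- and weak-$\frac{1}{4}\mathbb{A}$-face; your idea of extending the finite-type functional by $\psi(\delta)=0$ is cleaner and sidesteps that bookkeeping entirely. Your concern about the twisted cases is unwarranted: since $\psi(\delta)=0$, the $r\mathbb{Z}\delta$-versus-$\mathbb{Z}\delta$ asymmetry is invisible to $\psi$, and on the $A_{2\ell}^{(2)}$ half-roots one has $\psi\bigl(\tfrac12((2q{-}1)\delta+\eta_2)\bigr)=\tfrac12\bar\psi(\eta_2)\le \tfrac12 M<M$. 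Two cosmetic points: the parts of Theorem~\ref{thmC} are labelled (\ref{thmC}1)--(\ref{thmC}4), not (a)--(c); and $\psi$ is a linear functional on $\mathfrak{h}^*$ (equivalently an element of $\mathfrak{h}$), not an element of $\mathfrak{h}^*$.
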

	With these results to provide an `initial impression', we now present our final main result, which subsumes both of them, and completes the above classification.
	\begin{thmx}\label{thmC}
		Let $\mathfrak{g}$ be an indecomposable Kac--Moody algebra with root system $\Delta$. Fix a non-zero additive subgroup $\mathbb{A}$ of $(\mathbb{R},+)$. 
		\begin{itemize}
			\item[(\ref{thmC}1)] Assume that $\mathfrak{g}$ is of finite type and $\mathfrak{g}\neq \mathfrak{sl}_3(\mathbb{C})$. Then the proper 212-closed subsets of $\Delta$, the proper 212-closed subsets of $\Delta\sqcup\{0\}$, the proper weak-$\mathbb{A}$-faces of $\Delta$ and the proper weak-$\mathbb{A}$-faces of $\Delta\sqcup\{0\}$ are all the same, and moreover they are of the form $w\big[(\theta-\mathbb{Z}_{\geq 0}\Pi_I)\cap \Delta\big]$ for $w\in W$ and $I\subsetneqq\mathcal{I}$.
			\item[(\ref{thmC}2)] Assume that $\mathfrak{g}=\mathfrak{sl}_3(\mathbb{C})$, and let $\mathcal{I}=\{1,2\}$. Then the proper 212-closed subsets of $\Delta\sqcup\{0\}$, the proper weak-$\mathbb{A}$-faces of $\Delta$ and the proper weak-$\mathbb{A}$-faces of $\Delta\sqcup\{0\}$ are all the same, and they are of the form $w\big[(\theta-\mathbb{Z}_{\geq 0}\Pi_I)\cap \Delta\big]$ for $w\in W$ and $I\subsetneqq\mathcal{I}$. Furthermore, every such subset is 212-closed in $\Delta$, and all the $W$-conjugates of the following three sets comprise the rest of the proper 212-closed subsets of $\Delta$.    
			\[\Pi=\{\alpha_1,\alpha_2\},\qquad\qquad\Delta^+=\{\alpha_1,\alpha_2,\alpha_2+\alpha_1\},\qquad\qquad \{-\alpha_1,-\alpha_2,\alpha_2+\alpha_1\}.\]
			\item[(\ref{thmC}3)] Assume that $\mathfrak{g}$ is of affine type. Then the following two statements a) and b) are equivalent: 
			\begin{itemize}
				\item[a)] $Y$ is a proper 212-closed subset of $\Delta$ (respectively of $\Delta\sqcup\{0\}$). 	          \item[b)] There exists $Z\subset\mathring{\Delta}$ such that $Z$ is a proper 212-closed subset of $\mathring{\Delta}$ \big(respectively of $\mathring{\Delta}\sqcup\{0\}$\big) and $Y=Z_s\cup Z_l$. 
			\end{itemize}
			When $\mathfrak{g}\neq \widehat{\mathfrak{sl}_3(\mathbb{C})}$, in view of part (\ref{thmC}1), the proper 212-closed subsets of $\Delta$, the proper 212-closed subsets of $\Delta\sqcup\{0\}$, the proper weak-$\mathbb{A}$-faces of $\Delta$ and the proper weak-$\mathbb{A}$-faces of $\Delta\sqcup\{0\}$ are all the same. When $\mathfrak{g}=\widehat{\mathfrak{sl}_3(\mathbb{C})}$---very similar to part (\ref{thmC}2)---the proper 212-closed subsets of $\Delta\sqcup\{0\}$, the proper weak-$\mathbb{A}$-faces of $\Delta$ and the proper weak-$\mathbb{A}$-faces of $\Delta\sqcup\{0\}$ are all the same, and additionally all of these subsets are also 212-closed in $\Delta$.
			\item[(\ref{thmC}4)] Assume that $\mathfrak{g}$ is of indefinite type. Then $\Delta$ (respectively $\Delta\sqcup\{0\}$) is the only 212-closed subset of $\Delta$ (respectively of $\Delta\sqcup\{0\}$). Similarly, $\Delta$ (respectively $\Delta\sqcup\{0\}$) is the only weak-$\mathbb{A}$-face of $\Delta$ (respectively of $\Delta\sqcup\{0\}$).
		\end{itemize}
	\end{thmx}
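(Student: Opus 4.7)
The strategy is to leverage Theorem \ref{thmA} on the adjoint representation for the finite-type case, to reduce affine type to the underlying finite subsystem via Kac's explicit description of roots, and to handle indefinite type by a direct contradiction argument exploiting the abundance of imaginary roots.

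For part (\ref{thmC}1), the adjoint representation $\mathfrak{g}=L(\theta)$ (in finite type) is a highest weight module with $\wt \mathfrak{g}=\Delta\sqcup\{0\}$, integrability $I_V=\mathcal{I}$, and therefore $W_{I_V}=W$. Theorem \ref{thmA} yields that every proper 212-closed subset (equivalently, weak-$\mathbb{A}$-face) of $\Delta\sqcup\{0\}$ is of the form $w[(\theta-\mathbb{Z}_{\geq 0}\Pi_I)\cap(\Delta\sqcup\{0\})]$ for $w\in W$ and $I\subsetneqq\mathcal{I}$; since $\theta$ has full support in the simple roots, $0\notin \theta-\mathbb{Z}_{\geq 0}\Pi_I$ when $I\subsetneqq\mathcal{I}$, and these reduce to the stated form. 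To upgrade this to $\Delta$ itself, the one nontrivial direction is to show a proper 212-closed $Y\subseteq\Delta$ is also 212-closed in $\Delta\sqcup\{0\}$; this can fail only if some $y_1+y_2\in\Delta$ for $y_1,y_2\in Y$, for then the decomposition $y_1+y_2=(y_1+y_2)+0$ forces $0\in Y$, which is absurd. I would therefore show $(Y+Y)\cap\Delta=\emptyset$ for every proper 212-closed $Y\subseteq\Delta$ in all finite types other than $\mathfrak{sl}_3$, by combining the Theorem \ref{thmA} description of $Y$ (as a $W$-conjugate of a standard face) with an analysis inside rank-two subsystems---the $A_2$ obstruction being precisely what singles out $\mathfrak{sl}_3$.

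Part (\ref{thmC}2) is then a finite enumeration: one verifies that $\Pi$, $\Delta^+$, and $\{-\alpha_1,-\alpha_2,\alpha_1+\alpha_2\}$ are each 212-closed in $\Delta$ (pair-sums landing in $\Delta$ occur only for $\alpha_1+\alpha_2$, with the unique decomposition $\alpha_1+\alpha_2$) but not in $\Delta\sqcup\{0\}$ (failing via $(\alpha_1+\alpha_2)+0$), and a case check on subsets of the six roots confirms these $W$-orbits exhaust the additional 212-closed subsets of $\Delta$. For part (\ref{thmC}3), \cite[Proposition 6.3]{Kac} identifies $\Delta$ with $(\mathring{\Delta}_s+\mathbb{Z}\delta)\cup(\mathring{\Delta}_l+r\mathbb{Z}\delta)$ (with a small twist in type $A_{2\ell}^{(2)}$); the identity $(\beta+m\delta)+(\gamma-m\delta)=\beta+\gamma$ together with 212-closedness forces $Y$ to be $\delta$-string-closed within the appropriate lattice once it meets any such string, giving a decomposition $Y=Z_s\cup Z_l$ for some $Z\subseteq\mathring{\Delta}$. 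Transferring 212-closedness then shows $Z$ is 212-closed in $\mathring{\Delta}$; the reverse implication is a direct check that sums of affine roots lie in $\Delta$ only in controlled ways, so part (\ref{thmC}1) supplies the complete classification (with $\widehat{\mathfrak{sl}_3(\mathbb{C})}$ inheriting the exceptional families of part (\ref{thmC}2)).

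Part (\ref{thmC}4) on indefinite type is the expected main obstacle, since the adjoint representation is no longer a highest weight module and Theorem \ref{thmA} does not apply directly. I would argue by contradiction: fix a proper 212-closed $Y\subsetneqq\Delta$ and $\alpha\in\Delta\setminus Y$. The indefinite hypothesis supplies indices $i,j\in\mathcal{I}$ with $\langle\alpha_i,\alpha_j^\vee\rangle\langle\alpha_j,\alpha_i^\vee\rangle\geq 4$, whence the rank-two subsystem $\Delta_{\{i,j\}}$ is infinite and contains imaginary roots in its positive cone. Using Weyl invariance to move $\alpha$ into convenient position, one produces decompositions $y_1+y_2=\alpha+\gamma$ with $y_1,y_2\in Y$ and $\gamma\in\Delta$, contradicting $\alpha\notin Y$; the delicate subcase is when $\supp(\alpha)$ generates a finite-type subalgebra, where I would bootstrap from part (\ref{thmC}1) applied to that subalgebra and then bridge to an adjacent indefinite pair using imaginary-root decompositions. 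Across all four parts, the weak-$\mathbb{A}$-face statements follow once the 212-closed classification is established, via Observation \ref{O2.3} combined with the explicit $W$-orbit description.
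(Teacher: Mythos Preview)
Your proposal contains two genuine gaps.

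\textbf{Part (\ref{thmC}1): circular reasoning.} You write that to show a proper 212-closed $Y\subseteq\Delta$ satisfies $(Y+Y)\cap\Delta=\emptyset$, you would use ``the Theorem~\ref{thmA} description of $Y$ (as a $W$-conjugate of a standard face)''. But Theorem~\ref{thmA} applies to $\wt V=\Delta\sqcup\{0\}$, not to $\Delta$. At this point you do not yet know that a 212-closed subset of $\Delta$ is also 212-closed in $\Delta\sqcup\{0\}$---that is precisely what you are trying to prove. So you cannot invoke the standard-face description for $Y$. The paper avoids this circularity entirely: it argues directly from 212-closedness in $\Delta$. For rank $\geq 3$ it assumes $\gamma_1+\gamma_2=x+0$ with $\gamma_1,\gamma_2\in Y$ and $x\in\Delta$, conjugates $x$ to a simple root at a leaf of the Dynkin diagram, and then chains together several 212-closed equations using two further adjacent nodes to force $\pm(\alpha_i+\xi)\in Y$ for some root, whence $Y=\Delta$. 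For $B_2$ and $G_2$ it classifies all proper 212-closed subsets of $\Delta$ by hand. None of this presupposes the standard-face form.

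\textbf{Part (\ref{thmC}4): the premise is false.} You claim that indefinite type supplies $i,j\in\mathcal{I}$ with $\langle\alpha_i,\alpha_j^\vee\rangle\langle\alpha_j,\alpha_i^\vee\rangle\geq 4$. This fails already for simply-laced indefinite algebras---e.g.\ the rank-3 hyperbolic algebra with Cartan matrix having all off-diagonal entries $-1$, or $E_{10}$---where every such product equals $1$ and every rank-two subsystem is finite. So your proposed mechanism for producing imaginary roots inside a rank-two subsystem does not exist in general. The paper instead uses \cite[Theorem~5.6~C]{Kac}: in indefinite type there is an imaginary root $\gamma$ with $\langle\gamma,\alpha_t^\vee\rangle<0$ for all $t\in\mathcal{I}$. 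Given any $\beta\in Y$ (necessarily real), one conjugates $w\beta$ to a simple root, picks an adjacent simple root $\alpha$, and computes $\langle s_\alpha(\gamma),w\beta^\vee\rangle\leq -2$; then $2(w\beta)=\bigl(s_\alpha(\gamma)+2w\beta\bigr)+\bigl(-s_\alpha(\gamma)\bigr)$ forces the imaginary root $-s_\alpha(\gamma)$ into $wY$, hence $Y=\Delta$.

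A smaller point: in parts (\ref{thmC}2) and (\ref{thmC}3), the claim that ``weak-$\mathbb{A}$-face statements follow via Observation~\ref{O2.3}'' is incomplete. Observation~\ref{O2.3} only gives weak-$\mathbb{A}$-face $\Rightarrow$ 212-closed. In the affine case $\Delta\sqcup\{0\}$ is not $\wt V$ for any highest weight module, so Theorem~\ref{thmA} does not furnish the reverse implication; the paper's Step~3 reduces to the finite subsystem $\mathring{\Delta}$ and handles the $\delta$-shifts and the $A_{2\ell}^{(2)}$ half-roots explicitly. Similarly, in (\ref{thmC}2) you need a separate argument (one equation in the paper) to show $\Pi$, $\Delta^+$, and $\{-\alpha_1,-\alpha_2,\alpha_1+\alpha_2\}$ are \emph{not} weak-$\mathbb{A}$-faces of $\Delta$.
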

	\begin{observation}\label{R5.1}\hfill
	\begin{itemize}
	\item[(1)] Let $\mathfrak{g}$ and $\mathbb{A}$ be as in Theorem \ref{thmC}, and $\emptyset \neq Y$ be a proper weak-$\mathbb{A}$-face of $\Delta$. Then:
	\begin{equation}\label{E'2.4}
	\sum\limits_{i=1}^nr_iy_i=\sum\limits_{j=1}^mt_jx_j\quad\text{for some }r_i,t_j\in\mathbb{A}_{\geq 0},\text{ }y_i\in Y,\text{ }x_j\in \Delta\implies \sum\limits_{i=1}^nr_i\leq \sum\limits_{j=1}^m t_j.
	\end{equation}\allowdisplaybreaks
	 \big(Recall, this implication is similar to the one in Property \eqref{property1}.\big) This can be easily verified as follows. Suppose on the contrary that  $\sum\limits_{i=1}^nr_i> \sum\limits_{j=1}^m t_j$. Since $Y$ is also a proper weak-$\mathbb{A}$-face of $\Delta\sqcup\{0\}$ by Theorem \ref{thmC}, the following implication contradicts $Y\subset \Delta$.\smallskip
	 \[
	 \sum\limits_{i=1}^nr_iy_i= \left(\sum\limits_{i=1}^nr_i- \sum\limits_{j=1}^m t_j\right)(0)+ \sum\limits_{j=1}^m t_jx_j\implies 0\in Y.
	 \]
		\item[(2)] When $\Delta$ is of affine type, by part (\ref{thmC}3) of Theorem \ref{thmC}, the 212-closed subsets of $\Delta$ (or of $\Delta\sqcup\{0\}$) are in bijective correspondence with those of $\mathring{\Delta}$ (respectively those of $\mathring{\Delta}\sqcup\{0\}$). 
		\end{itemize}
	\end{observation}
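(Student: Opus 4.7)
The Observation consists of two assertions that both stand directly on Theorem \ref{thmC}; the plan is to treat them in turn, each as a short deduction from the classification therein.

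For part (1), I would argue by contradiction, following the hint displayed right after \eqref{E'2.4}. Fix a relation $\sum_{i=1}^n r_i y_i = \sum_{j=1}^m t_j x_j$ as in the hypothesis and assume toward contradiction that $s := \sum_i r_i - \sum_j t_j > 0$. Since $\mathbb{A}$ is an additive subgroup of $(\mathbb{R},+)$, we have $s\in \mathbb{A}_{>0}$; rewriting the identity as
\[
\sum_{i=1}^n r_i y_i \;=\; \sum_{j=1}^m t_j x_j \;+\; s\cdot 0
\]
yields an equality inside $\Delta\sqcup\{0\}$ in which both sides now have coefficient-sum equal to $\sum_i r_i > 0$. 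The key input is that, by Theorem \ref{thmC} (parts (\ref{thmC}1), (\ref{thmC}2), or (\ref{thmC}3), according to the type of $\mathfrak{g}$), every proper weak-$\mathbb{A}$-face of $\Delta$ is also a proper weak-$\mathbb{A}$-face of $\Delta\sqcup\{0\}$; applying Definition \ref{D1.4}(1) to the displayed relation, the non-zero coefficient $s$ forces $0\in Y$, contradicting $Y\subseteq\Delta$ (since $0\notin\Delta$). Part (\ref{thmC}4) is vacuous because in indefinite type $\Delta$ admits no proper weak-$\mathbb{A}$-face at all.

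For part (2), the plan is to verify that in affine type the map $Z\mapsto Z_s\cup Z_l$ is a bijection from the set of proper 212-closed subsets of $\mathring{\Delta}$ (resp.\ $\mathring{\Delta}\sqcup\{0\}$) onto the set of proper 212-closed subsets of $\Delta$ (resp.\ $\Delta\sqcup\{0\}$). Surjectivity is exactly the implication b)~$\Rightarrow$~a) of Theorem \ref{thmC}(\ref{thmC}3). For injectivity I would recover $Z$ from $Z_s\cup Z_l$ by intersecting with $\mathring{\Delta}$: using the definitions in \eqref{E2.4} and the fact that $\delta$ is linearly independent from $\mathring{\Delta}$ (see \cite[Proposition 6.3]{Kac}), non-zero integer multiples of $\delta$ do not lie in $\mathring{\Delta}$, whence $Z_s\cap\mathring{\Delta} = Z\cap\mathring{\Delta}_s$ and $Z_l\cap\mathring{\Delta} = Z\cap\mathring{\Delta}_l$, so $(Z_s\cup Z_l)\cap\mathring{\Delta} = Z$.

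The main obstacle has already been cleared by Theorem \ref{thmC} itself; once that classification is in hand, part (1) reduces to the one-line balancing-of-coefficients trick above, and part (2) is a brief bookkeeping check that the explicit construction in \eqref{E2.4} admits $\cdot\cap\mathring{\Delta}$ as a two-sided inverse.
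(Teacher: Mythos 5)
Your argument for part (1) is exactly the paper's: pad the right-hand side with the term $\bigl(\sum_i r_i - \sum_j t_j\bigr)(0)$, invoke Theorem \ref{thmC} to view $Y$ as a proper weak-$\mathbb{A}$-face of $\Delta\sqcup\{0\}$, and derive the contradiction $0\in Y$. Part (2) is stated in the paper as an immediate consequence of (\ref{thmC}3) without further detail; your explicit check that $Z\mapsto Z_s\cup Z_l$ has $\,\cdot\cap\mathring{\Delta}$ as inverse is a correct and welcome elaboration of the same point, so overall the proposal is correct and follows the paper's route.
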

	Observe that so far we have only provided the classification results when $\Delta$ is indecomposable. The following proposition extends these to all Kac--Moody $\mathfrak{g}$. 
	\begin{prop}\label{P2.7}
		Let $\Delta$ be not necessarily indecomposable, and let $\Delta_1\sqcup\cdots\sqcup\Delta_n$ be its decomposition into indecomposable subroot systems ($n=1$ when $\Delta$ is indecomposable). Fix $\emptyset\neq Y\subseteq\Delta$.
		\begin{itemize}
			\item[(a)] If $Y$ is 212-closed in $\Delta$, then $Y\cap\Delta_i$ is 212-closed in $\Delta_i$ for each $1\leq i\leq n$.
			\item[(b)] More generally, $Y$ is proper 212-closed in $\Delta$ if and only if $Y\cap\Delta_i$ is proper 212-closed in $\Delta_i$ for each $1\leq i\leq n$.
		\end{itemize}
		The analogous statements about the weak-$\mathbb{A}$-faces of $\Delta$, 212-closed subsets of $\Delta\sqcup\{0\}$, and weak-$\mathbb{A}$-faces of $\Delta\sqcup\{0\}$ are also true. 
	\end{prop}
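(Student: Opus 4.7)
The plan is to leverage the direct-sum decomposition of the root lattice induced by the connected-component partition $\mathcal{I} = I_1 \sqcup \cdots \sqcup I_n$ of the Dynkin diagram. Under this decomposition, $\mathbb{Z}\Pi = \bigoplus_i \mathbb{Z}\Pi_{I_i}$, every root of $\Delta$ is supported on exactly one $I_i$ (hence lies in exactly one $\Delta_i$), and there are canonical projections $\pi_i \colon \mathbb{Z}\Pi \twoheadrightarrow \mathbb{Z}\Pi_{I_i}$. Part (a) is then immediate: any 212-relation among elements of $\Delta_i$ is \emph{a fortiori} a 212-relation in $\Delta$, and the conclusion $x_1,x_2\in Y$ coming from 212-closedness in $\Delta$ automatically places $x_1,x_2$ in $Y\cap\Delta_i$.

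For part (b), the forward direction uses (a) for 212-closedness on each component; for properness, if some $Y\cap\Delta_i=\Delta_i$, then for any $\alpha\in\Delta_i$ the relation $\alpha+(-\alpha)=x+(-x)$ forces $x,-x\in Y$ for every $x\in\Delta$, giving $Y=\Delta$, a contradiction. For the reverse direction, I would first note (via the same argument applied inside $\Delta_i$) that a proper 212-closed subset of $\Delta_i$ cannot contain both $\alpha$ and $-\alpha$. Then, given $y_1+y_2=x_1+x_2$ with $y_1,y_2\in Y$ and $x_1,x_2\in\Delta$, I would split into the cases ``$y_1,y_2$ both in $\Delta_i$'' and ``$y_1\in\Delta_i,\ y_2\in\Delta_j$ with $i\neq j$'', and apply all of the $\pi_k$ to pin down the components of $x_1,x_2$: the first case forces $x_1,x_2\in\Delta_i$ (using $y_1+y_2\neq 0$ from the preliminary observation) and reduces to 212-closedness of $Y\cap\Delta_i$ in $\Delta_i$; the second case forces $\{x_1,x_2\}=\{y_1,y_2\}\subseteq Y$.

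The three analogous statements follow the same strategy with two adjustments. For the $\Delta\sqcup\{0\}$ versions, the relation $0+0=\alpha+(-\alpha)$ (respectively $a\cdot 0+a\cdot 0=a\alpha+a(-\alpha)$ for a fixed $a\in\mathbb{A}_{>0}$) shows that $0\in Y$ would force $Y=\Delta\sqcup\{0\}$; hence any proper $Y$ already satisfies $Y\subseteq\Delta$, and the projection argument carries over with one extra case-check ruling out $x_k=0$. The main obstacle is the weak-$\mathbb{A}$-face variant: after projecting, the component equations $\sum_{y_k\in\Delta_i}r_k y_k=\sum_{x_l\in\Delta_i}t_l x_l$ need not have matching weighted sums. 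I would resolve this by invoking Observation \ref{R5.1} componentwise — legitimate because each $\Delta_i$ is indecomposable, so Theorem \ref{thmC} applies — to get $R_i:=\sum_{y_k\in\Delta_i}r_k\leq T_i:=\sum_{x_l\in\Delta_i}t_l$, then summing and comparing with $\sum r_k=\sum t_l$ to deduce $R_i=T_i$ for every $i$, whereupon the weak-$\mathbb{A}$-face property of $Y\cap\Delta_i$ in $\Delta_i$ concludes.
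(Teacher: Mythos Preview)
Your proposal is correct and follows essentially the same route as the paper's proof: part (a) by restriction, the forward direction of (b) via the $(\alpha)+(-\alpha)=(\beta)+(-\beta)$ trick, the reverse direction of (b) by a component-by-component case split, and the weak-$\mathbb{A}$-face version via projections together with Observation~\ref{R5.1} to match up the coefficient sums. One small point worth noting: in the reverse direction of (b) you are explicit about the preliminary observation that a proper 212-closed $Y\cap\Delta_i$ cannot contain a pair $\pm\alpha$, and you use it to rule out the degenerate configuration $y_1=-y_2\in\Delta_i$, $x_1=-x_2\in\Delta_j$ with $i\neq j$; the paper's stated dichotomy (``all four in the same $\Delta_k$'' or ``$(\alpha_1,\alpha_2)=(\beta_1,\beta_2)$ up to swap'') tacitly relies on this exclusion without spelling it out, so your version is in fact slightly more careful on this point.
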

	\begin{rem}
		Let $\Delta$ and $\Delta_i$, $\forall$ $i \in [n]$, and $Y$ be as in Proposition \ref{P2.7}.	a) The converse to~part~(1) of the proposition fails to hold, e.g. for $\Delta = \Delta_1 \sqcup \Delta_2$ and $Y = \Delta_1 \sqcup Y_2$, where $Y_2$ is some proper 212-closed subset of $\Delta_2$. b) If $Y$ is proper 212-closed in $\Delta$, in view of Theorem \ref{thmC} part (\ref{thmC}4), necessarily $Y\cap \Delta_i=\emptyset$ whenever $\Delta_i$ is of indefinite type. 
	\end{rem}
	\begin{proof}[\textnormal{\textbf{Proof of Proposition \ref{P2.7}}}] We prove the assertions in the proposition for 212-closed subsets and for weak-$\mathbb{A}$-faces of $\Delta$ separately in two steps below. The same results about the 212-closed subsets and weak-$\mathbb{A}$-faces of $\Delta\sqcup\{0\}$ hold by the same proofs in Steps 1 and 2, with $\Delta\sqcup\{0\}$ and $\Delta_i\sqcup\{0\}$ in place of $\Delta$ and $\Delta_i$, respectively, for all $i$.  \smallskip\\
		\textbf{Step 1:} Assume that $Y$ is non-empty and 212-closed in $\Delta$. Now $\Delta=\Delta_1\sqcup\cdots\sqcup\Delta_n$ implies that $Y=\big(Y\cap\Delta_1\big)\sqcup\cdots\sqcup\big(Y\cap \Delta_n\big)$. By the definition of 212-closed subsets, it follows that $(Y\cap \Delta_i)$ is 212-closed in $\Delta_i$ for each $i\in [n]$, which proves part (a). Next, regarding the proper 212-closed subsets of $\Delta$, suppose $\Delta_j\subset Y$ for some $j\in [n]$. Then by the 212-closedness of $Y$, the equations
		\[(\alpha)+(-\alpha)=(\beta)+(-\beta),\qquad \text{for all }\text{ }\beta\in\Delta\text{ and }\alpha\in\Delta_j\]
		imply $Y=\Delta$. This proves the forward implication in part (b). For the reverse implication, let $Y_i\subset\Delta_i$ and $Y_i$ be proper 212-closed in $\Delta_i$ for each $i\in [n]$. We show that $Y_1\sqcup\cdots\sqcup Y_n$ is proper 212-closed in $\Delta$, which implies the result. Notice, $Y_1\sqcup\cdots \sqcup Y_n\subsetneqq\Delta$. Assume that 
		\[(\alpha_1)+(\alpha_2)=(\beta_1)+(\beta_2)\qquad \text{for some }\alpha_1,\alpha_2\in Y_1\cup\cdots\cup Y_n\text{ and }\beta_1,\beta_2\in\Delta. \]
		 Showing $\beta_1,\beta_2\in Y$ proves $Y_1\sqcup\cdots\sqcup Y_n$ is 212-closed in $\Delta$, as $\alpha_1,\alpha_2,\beta_1,\beta_2$ are arbitrary. For this, recall that $\mathbb{R}\Delta_k\cap\mathbb{R}\Delta_l=\{0\}$ for any $1\leq k\neq l\leq n$, as $\Delta_1,\ldots,\Delta_n$ are the irreducible components of $\Delta$. This leads to two cases: i)~all the four roots $\alpha_1,\alpha_2,\beta_1\text{ and }\beta_2$ belong to $\Delta_k$ for some $k\in [n]$, or ii)~the pair $(\alpha_1,\alpha_2)=(\beta_1,\beta_2)$ or $(\beta_2,\beta_1)$. If ii) holds, then $\beta_1,\beta_2\in Y_1\sqcup\cdots\sqcup Y_n$ trivially. Else if i) holds, then $\beta_1,\beta_2\in Y_1\sqcup\cdots\sqcup Y_n$ by the 212-closedness of $Y_k$ in $\Delta_k$.\smallskip\\
		\textbf{Step 2:} Let $\{0\}\subsetneqq \mathbb{A}$ be a subgroup of $(\mathbb{R},+)$. The proofs of the analogous assertion in part~(a)~and the forward implication in part (b) for weak-$\mathbb{A}$-faces of $\Delta$ are similar to those of for the 212-closed~subsets of $\Delta$ in Step 1. For each $i\in [n]$, let $Y_i$ be a proper weak-$\mathbb{A}$-face of $\Delta_i$. We show that $Y_1\sqcup\cdots\sqcup Y_n$ is a weak-$\mathbb{A}$-face of $\Delta$, which would finish the proof of part (b) for the weak-$\mathbb{A}$-faces of $\Delta$. For simplicity, we will assume that $n=2$; when $n>2$, the result can be proved in a similar manner. Let
	 	\begin{align}\label{E2.5}
		\begin{aligned}[c]
		\sum_{i=1}^p r_i y_i=\sum_{j=1}^q t_j x_j\quad\text{and}\quad\sum_{i=1}^p r_i =\sum_{j=1}^q t_j>0 
		\end{aligned}\quad\quad
		\begin{aligned}[c]
		&\text{for some }y_i\in Y_1\sqcup Y_2,\text{ }x_j\in\Delta,\\
		&r_i,t_j\in\mathbb{A}_{\geq 0},\text{ }
		 p,q\in\mathbb{N}.
		\end{aligned}
		\end{align}
		We are required to show $x_j\in Y_1\sqcup Y_2$ $\forall$ $j$ such that $t_j>0$. Observe that $\mathbb{R}\Delta=\mathbb{R}\Delta_1\oplus\mathbb{R}\Delta_2$ leads to two cases: i) for some $t\in \{1,2\}$, $y_i,x_j$ belong to $\Delta_t$ for all 
      $i\in [p]$, $j\in [q]$, or ii) we will have four non-empty subsets $P_1,P_2\subset [p]\text{ and }Q_1,Q_2\subset [q]$ as follows: 
      \begin{equation} 
      P_k:=\{i\in [p]\text{ }|\text{ }y_i\in Y_k\},\text{ } Q_k:=\{j\in [q]\text{ }|\text{ }x_j\in \Delta_k\},\text{ }\text{so that }\sum\limits_{i\in P_k}r_iy_i=\sum\limits_{j\in Q_k}t_jx_j\text{ }\forall \text{ }k\in [2].\end{equation}
    If i) holds, then we are done as $Y_t$ is a weak-$\mathbb{A}$-face of $\Delta_t$. Else if ii) holds, then as each $Y_k$ is a proper weak-$\mathbb{A}$-face of $\Delta_k$, \eqref{E'2.4} results in $\sum\limits_{i\in P_k}r_i\leq \sum\limits_{j\in Q_k}t_j$ for each $k\in [2]$. Now, the definitions~of $P_k$ and $Q_k$, and the equality $\sum\limits_{i\in [p]}r_i=\sum\limits_{j\in [q]}t_j$ together force $\sum\limits_{i\in P_k}r_i=\sum\limits_{j\in Q_k}t_j$ $\forall$ $k\in [2]$. Finally, again as each $Y_k$ is a weak-$\mathbb{A}$-face of $\Delta_k$, $x_j\in Y_1\sqcup Y_2$ $\forall$ $j\in Q_k$ such that $t_j>0$, completing~the~proof. 
    \end{proof}
	In view of Proposition \ref{P2.7} and Theorem \ref{thmC}, we have therefore fully solved the problems of classifying the $(\{2\};\{1,2\})$-closed subsets and weak-$\mathbb{A}$-faces of $\Delta$ and also $\Delta\sqcup\{0\}$ for arbitrary (not necessarily indecomposable) Kac--Moody root systems $\Delta$.
	
	The following remark extends the above main results of this paper to related algebras $\tilde{\mathfrak{g}}$.
	\begin{rem}[Results over related Kac--Moody algebras] Given a generalized Cartan matrix $A$, let $\bar{\mathfrak{g}}(A)$ denote the Lie algebra generated by $e_i, f_i, \mathfrak{h}$ modulo only the Serre relations, and let the Kac--Moody Lie algebra $\mathfrak{g}=\mathfrak{g}(A)$ be the further quotient of $\overline{\mathfrak{g}}(A)$ by the largest ideal intersecting $\mathfrak{h}$ trivially. The above results---and results in the later sections---are stated and proved over the Kac--Moody algebra $\mathfrak{g}(A)$, but they hold equally well over the Kac--Moody algebra $\overline{\mathfrak{g}}(A)$ and hence uniformly over any `intermediate' algebra $\overline{\mathfrak{g}}(A)\twoheadrightarrow\tilde{\mathfrak{g}}\twoheadrightarrow\mathfrak{g}=\mathfrak{g}(A)$. This is because as was clarified in \cite{Dhillon_arXiv}, the results there hold over all such Lie algebras $\tilde{\mathfrak{g}}$; similarly, the root system and the weights of highest weight modules (of a fixed highest weight $\lambda$) remain unchanged over all $\tilde{\mathfrak{g}}$. \end{rem}
	As a final touch, in Section 7, we address the following problem of determining the 212-closed subsets of $\Delta\sqcup\{0\}$ consisting only of positive roots in the semisimple case.
	\begin{problem}\label{prob1} Let $\mathfrak{g}$ be semisimple with root system $\Delta$. Let $Y$ be a proper non-empty 212-closed subset of $\Delta\sqcup\{0\}$. Determine the set $\{w\in W$ $|$ $w Y\subset \Delta^+\}$.
	\end{problem}
	This problem needs attention because Chari and Greenstein studied properties \eqref{property1} and \eqref{property2} (see the Introduction) for subsets of $\Delta\sqcup\{0\}$ consisting only of positive roots, when $\Delta$ is finite. Also, notice that the above problem is stated only for semisimple $\mathfrak{g}$. This is because when $\mathfrak{g}$ is not semisimple, by the explicit descriptions for the 212-closed subsets of $\Delta\sqcup\{0\}$---see for affine and indefinite types from Theorem \ref{thmC}---they will contain negative roots.
	\section{Preliminaries}
	The goal of this section is to state some preliminary results which are needed in the paper. Before we proceed, we give (more) notation, in addition to the notation given in Subsection 2.1, which is needed here and also later. Throughout this section, unless otherwise stated, we will assume that $\mathfrak{g}$ is a Kac--Moody algebra with a fixed Cartan subalgebra $\mathfrak{h}$, root system $\Delta$, and simple roots $\Pi$.  
	\subsection{Notation} 	
	For $h\in\mathfrak{h}$ and $\mu\in\mathfrak{h}^*$, we define $\langle \mu ,h \rangle$ to be the evaluation of $\mu$ at $h$, which we also denote by $\mu(h)$. We define $P$ and $P^+$ to be the sets of integral and dominant integral weights, respectively. For a real root $\alpha$, we define $\mathfrak{sl}_{\alpha}:=\mathfrak{g}_{-\alpha}\oplus \mathbb{C}\alpha^{\vee}\oplus\mathfrak{g}_{\alpha}\simeq \mathfrak{sl}_2(\mathbb{C})$. We occasionally use $s_{\alpha}$ for the reflection in $W$ about the hyperplane perpendicular to $\alpha$. For any non-zero element $x\in U(\mathfrak{g})$, we treat $x^0$ as $1\in U(\mathfrak{g})$.

	Fix $\emptyset\neq I\subseteq\mathcal{I}$. Let $\mathfrak{l}_I= \mathfrak{g}_{I}+\mathfrak{h}$ and $\mathfrak{p}_I=\mathfrak{g}_I +\mathfrak{h}+\mathfrak{n}^+$ be the standard Levi and parabolic Lie subalgebras of $\mathfrak{g}$ corresponding to $I$, respectively. Suppose $V$ is a highest weight $\mathfrak{p}_I$ or $\mathfrak{l}_I$-module with highest weight $\lambda\in\mathfrak{h}^*$. Then $V$ becomes a highest weight $\mathfrak{g}_I$-module with highest weight $\lambda\big|_{\mathfrak{h}_I}$ the restriction of $\lambda$ to $\mathfrak{h}_I$. But for simplicity, throughout the paper we also denote the highest weight of $V$ for the $\mathfrak{g}_I$-action by $\lambda$.
	
	Let $\prec$ be the usual partial order on $\mathfrak{h}^*$, under which $x\prec y \in \mathfrak{h}^* \iff y -x \in \mathbb{Z}_{\geq 0}\Pi$. Fix $\emptyset\neq I\subseteq\mathcal{I}$, $n\in\mathbb{Z}$, and a vector $x=\sum\limits_{i\in \mathcal{I}}c_i \alpha_i$ for some $c_i \in \mathbb{C}$. We define 
	\begin{align}\label{I-Notation}
	\begin{aligned}
	&\supp(x):=\{i \in \mathcal{I} \text{ }|\text{ } c_i \neq 0\},\\
	&\supp_I(x):=\{i\in I \text{ }|\text{ } c_i \neq 0\},\\
	&\height(x):=\sum_{i\in \mathcal{I}}c_i,\quad \height_{I}(x):=\sum_{i\in I}c_i,
	\end{aligned}
	\qquad\qquad
	\begin{aligned}
	&\Delta_{I,n}:=\{\beta \in \Delta \text{ }|\text{ } \height_I(\beta)=n\},\text{ and}\\
	&\text{for convenience, }\text{ }\supp(0)=\Delta_{\emptyset,n }:=\emptyset\\
	&\text{and }\mathbb{Z}\Delta_{\emptyset,n}=\mathbb{R}\Delta_{\emptyset,n}:=\{0\}.\\
	&\text{Note that }\Delta_{I,0}=\Delta_{I^c}=\Delta\cap\mathbb{Z}\Pi_{I^c}.
	\end{aligned}
	\end{align}
	
	For $\alpha\in\Delta,\text{ } k\in \mathbb{Z}_{\geq0}, \text{ and } \mu\in\mathfrak{h}^*$, we define \begin{equation}\label{INTVL} [\mu-k\alpha,\mu]:=\{\mu-j\alpha \text{ }|\text{ } j=0,\ldots,k\},\quad\text{ } (\mu-k\alpha,\mu):=[\mu-k\alpha,\mu]\setminus\{\mu-k\alpha,\mu\},\quad\text{ }  (\mu,\mu)=\emptyset.\end{equation}
	The notation $[\mu-k\alpha,\mu]$ should not be confused with the Lie bracket. Recall, we used (.,.) to denote the invariant form on $\mathfrak{g}$ when $\mathfrak{g}$ is symmetrizable, while in the above equation, we used it to denote open intervals. Nevertheless, in the course of the paper, it will be clear from the context as to which one (.,.) refers to, i.e., the invariant form or an open interval. 
	
	Throughout the paper, for $\lambda_1,\ldots,\lambda_n\in\mathfrak{h}^*$, $n\geq 2$, and $U\subset\mathfrak{h}^*$, the notation
	\begin{equation}\label{E3.3}
	\lambda_1\prec\cdots\prec\lambda_i\prec\cdots\prec\lambda_n\in\text{ }U\text{ }\forall i\text{ }\text{ }
	\text{denotes}: (1) \text{ }\lambda_{i-1}\prec\lambda_{i}\text{ }\forall\text{ }i>1,\text{ and }(2)\text{ }\lambda_i\in U\text{ }\forall\text{ }i\in [n].
	\end{equation}
	\subsection{Preliminaries} We begin by listing some useful results for our paper about root systems and highest weight modules over Kac--Moody algebras, which we will use without further mention. 
	\begin{itemize}
		\item[(R1)]\label{(R1)} Let $\Delta$ be the root system of $\mathfrak{g}$, and fix $\emptyset\neq I\subseteq\mathcal{I}$. From \cite{Teja}, recall the \textit{parabolic partial sum property (parabolic-PSP)} of $\Delta$: every root $\beta$ with $\height_{I}(\beta)>0$ can be written as an ordered sum of roots from $\Delta_{I,1}$ such that each partial sum (of the ordered sum) is also a root. In particular, $\mathbb{Z}_{\geq 0}\left(\Delta^+\setminus\Delta^+_{I^c}\right)=\mathbb{Z}_{\geq 0}\Delta_{I,1}$; see \eqref{I-Notation} for the notation. When $I=\mathcal{I}$, $\Delta_{I,1}$ equals $\Pi$, and the parabolic-PSP in this case is the same as the usual \textit{partial sum property}. 
		\item[(R2)]\label{(R2)} Let $\mathfrak{g}$ be a Kac--Moody algebra, $\lambda\in\mathfrak{h}^*$, $M(\lambda)\twoheadrightarrow V$, and $0\neq v_{\lambda}\in V_{\lambda}$ be a highest weight vector. Then for $\mu\in\wt V$ the weight space $V_{\mu}$ is spanned by the weight vectors of the form $f_{i_1}\cdots f_{i_n}v_{\lambda}$ such that $f_{i_j}\in\mathfrak{g}_{-\alpha_{i_j}}$, $i_j\in\mathcal{I}$ and $\alpha_{i_j}\in\Pi$ for each $j\in [n]$, and $\sum\limits_{j=1}^{n}\alpha_{i_j}=\lambda-\mu$.
	\end{itemize}
	\begin{lemma}\label{L3.1}
		Let $\mathfrak{g}$ be a Kac--Moody algebra, and $V$ be a highest weight $\mathfrak{g}$-module. Fix $\mu\in\wt V$. Suppose $\langle\mu,\alpha_i^{\vee}\rangle\geq 0$ for some $\alpha_i\in\Pi$, $i\in\mathcal{I}$. Then $\big[\mu-\lceil\langle\mu,\alpha_i^{\vee}\rangle\rceil\alpha_i,\mu\big]\subseteq\wt V$; where $\lceil \cdot\rceil$ denotes the ceiling function, and for the interval notation see \eqref{INTVL}. In particular, $s_i\mu=\mu-\langle\mu,\alpha_i^{\vee}\rangle\alpha_i$ belongs to $\wt V$ when $\langle\mu,\alpha_i^{\vee}\rangle\in\mathbb{Z}_{\geq 0}$.
	\end{lemma}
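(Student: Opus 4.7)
My plan is to reduce the lemma to standard $\mathfrak{sl}_2$ representation theory applied to the subalgebra $\mathfrak{sl}_{\alpha_i}\simeq\mathfrak{sl}_2(\mathbb{C})$ acting on $V$. I would start by fixing a nonzero weight vector $v\in V_\mu$ and noting that, because $V$ is a highest weight $\mathfrak{g}$-module, $e_i$ acts locally nilpotently on $V$ (a routine PBW/induction argument on the length of a monomial in the $f_j$'s applied to the highest weight vector $v_\lambda$, using that $[e_i,f_j]\in\mathbb{C}\alpha_i^\vee\cup\{0\}$). Hence there is a maximal $N\geq 0$ with $w:=e_i^N v\neq 0$, and this vector satisfies $e_i w=0$, has weight $\mu+N\alpha_i$, and therefore generates an $\mathfrak{sl}_{\alpha_i}$-highest weight submodule $M:=U(\mathfrak{sl}_{\alpha_i})\,w\subseteq V$ with $\mathfrak{sl}_2$-highest weight $t+2N$, where $t:=\langle\mu,\alpha_i^\vee\rangle\geq 0$.

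The next step is to pin down, via the classification of highest weight $\mathfrak{sl}_2$-modules, which powers $f_i^j w$ are nonzero. If $t\notin\mathbb{Z}_{\geq 0}$, then $t+2N$ is likewise not a non-negative integer, the Verma module $M_{\mathfrak{sl}_2}(t+2N)$ is irreducible, and $M$ must coincide with it, so $f_i^j w\neq 0$ for every $j\geq 0$. If $t\in\mathbb{Z}_{\geq 0}$, then $M$ is a quotient of $M_{\mathfrak{sl}_2}(t+2N)$ (either this Verma itself, or the simple of dimension $t+2N+1$), and in either case $f_i^j w\neq 0$ for $0\leq j\leq t+2N$. A short numeric check--using $\lceil t\rceil=t\leq t+N$ in the integral case (trivially) and the ``all $j$'' statement in the non-integral case--shows that $f_i^{N+k}w\neq 0$ for every integer $0\leq k\leq\lceil t\rceil$. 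Since this vector has weight $\mu+N\alpha_i-(N+k)\alpha_i=\mu-k\alpha_i$, I conclude that $\mu-k\alpha_i\in\wt V$ for all such $k$, establishing $[\mu-\lceil t\rceil\alpha_i,\mu]\subseteq\wt V$. The ``in particular'' clause is then immediate: when $t\in\mathbb{Z}_{\geq 0}$ we have $\lceil t\rceil=t$, so $s_i\mu=\mu-t\alpha_i$ sits inside the chain.

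The main delicate point to handle is the dichotomy between integral and non-integral values of $\langle\mu,\alpha_i^\vee\rangle$: the non-integral case genuinely needs the irreducibility of a generic $\mathfrak{sl}_2$ Verma module to force $f_i$ never to annihilate $w$, while the integral case only needs the weaker fact that in any highest weight $\mathfrak{sl}_2$-module of integer highest weight $t+2N$, the top $t+2N+1$ weight spaces are nonzero. Once this dichotomy is split cleanly, the remaining work is just bookkeeping of heights and the identification of the weight of $f_i^{N+k}w$.
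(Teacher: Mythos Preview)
Your proposal is correct and follows essentially the same approach as the paper's proof: both pass to the $\mathfrak{sl}_{\alpha_i}$-highest weight vector $e_i^N v$ (the paper writes $e_i^n x$), split into the cases $\langle\mu,\alpha_i^\vee\rangle\in\mathbb{Z}_{\geq 0}$ versus not, and invoke the standard $\mathfrak{sl}_2$ structure theory to conclude that $f_i^{N+k}(e_i^N v)\neq 0$ for $0\leq k\leq\lceil\langle\mu,\alpha_i^\vee\rangle\rceil$. The only differences are cosmetic (your explicit PBW justification of local nilpotency of $e_i$, and your naming of the two possible quotients in the integral case), not substantive.
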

	\begin{proof}[\textnormal{\textbf{Proof}}]
		Let $\mu$ and $i$ be as in the statement. Fix a non-zero vector $x\in V_{\mu}$. Recall, $e_i$ acts locally nilpotently on $V$, in particular, $e_i$ acts nilpotently on $x$. So, there exists $n\in\mathbb{Z}_{\geq 0}$ such that $e_i^n x\neq 0$ and $e_i^{n+1}x=0$. It follows that $U(\mathfrak{g}_{-\alpha_i})(e_i^{n}x)$ is a highest weight module over $\mathfrak{sl}_{\alpha_i}:=\mathfrak{g}_{-\alpha_i}\oplus \mathbb{C}\alpha_i^{\vee}\oplus\mathfrak{g}_{\alpha_i}$, with highest weight $\mu+n\alpha_i$, or more precisely $\big(\mu+n\alpha_i\big)\big|_{\mathbb{C}\alpha_i^{\vee}}$. Since $\langle\mu,\alpha_i^{\vee}\rangle\geq 0$, note that if $\langle\mu+n\alpha_i,\alpha_i^{\vee}\rangle=\langle\mu,\alpha_i^{\vee}\rangle+2n\in\mathbb{Z}_{\geq 0}$, then $\langle\mu,\alpha_i^{\vee}\rangle\in\mathbb{Z}_{\geq 0}$, which implies $\lceil\langle\mu,\alpha_i^{\vee}\rangle\rceil=\langle\mu,\alpha_i^{\vee}\rangle$. By $\mathfrak{sl}_2$-theory, observe that $f_i^{k}(e_i^nx)\neq 0$ for: (1)~any $k\in\mathbb{Z}_{\geq 0}$, if $\langle\mu,\alpha_i^{\vee}\rangle+2n\notin \mathbb{Z}_{\geq 0}$; (2)~any $0\leq k\leq \langle\mu,\alpha_i^{\vee}\rangle+2n$, if $\langle\mu,\alpha_i^{\vee}\rangle+2n\in\mathbb{Z}_{\geq 0}$. In both of these cases, in particular,
		\[f_i^k(e_i^nx)\neq 0\quad \text{for all }n\leq k\leq n+\lceil \langle\mu,\alpha_i^{\vee}\rangle\rceil \implies \big[\mu-\lceil\langle\mu,\alpha_i^{\vee}\rangle\rceil\alpha_i,\mu\big]\subseteq \wt V.\]
		This completes the proof of the lemma.
	\end{proof}
	
	In \cite{Teja}, it has been emphasized that the study of/ working with the sets $\Delta_{I,1}$ for $\emptyset\neq I\subseteq\mathcal{I}$ has many applications to representation theory, for instance to the weights of arbitrary highest weight $\mathfrak{g}$-modules. In the present paper, we deal with these sets at many places in the special cases $I=\mathcal{I}_V\text{ and }\mathcal{I}_Y\setminus\mathcal{J}_Y$, for non-empty 212-closed subsets $Y$ of $\wt V$.
	
	Next, we define parabolic Verma modules over $\mathfrak{g}$. Parabolic Verma modules, also known as generalized Verma modules, were introduced and studied by Lepowsky in a series of papers; see~\cite{L_JA} and the references therein. For a detailed list of their properties, see \cite[Chapter 9]{Hump_BGG}. Recall the definition of $J_{\lambda}$ from \eqref{E1.2}.\allowdisplaybreaks
	\begin{definition}\label{D2.1}
		\begin{itemize}
			\item[(1)] Let $\lambda\in\mathfrak{h}^*$, $\emptyset\neq J\subseteq J_{\lambda}$, and $M(\lambda)$ denote the Verma module over $\mathfrak{g}$ with highest weight $\lambda$. Throughout the paper, let $0\neq m_{\lambda}\in M(\lambda)_{\lambda}$ denote a highest weight vector of $M(\lambda)$. Let $L^{\max}_J(\lambda)$ denote the largest integrable highest weight module over $\mathfrak{g}_J$---or equivalently over $\mathfrak{p}_J$, via the natural action of $\mathfrak{h}$ and trivial action of $\bigoplus\limits_{\alpha\in\Delta^+\setminus\Delta^+_J} \mathfrak{n}^+_{\alpha}$---with highest weight $\lambda$. Let $L_J(\lambda)$ denote the simple highest weight module over $\mathfrak{g}_J$ (or equivalently $\mathfrak{p}_J$) with highest weight $\lambda$. Note that $L_J(\lambda)$ is integrable over $\mathfrak{g}_J$. Recall that when $\mathfrak{g}_J$ is symmetrizable, by \cite[Corollary 10.4]{Kac} $L^{\max}_J(\lambda)=L_J(\lambda)$.
			\item[(2)] For $J\subseteq J_{\lambda}$, we denote the
			parabolic Verma module corresponding to $J$ (and $\lambda$) by
			\[
			M(\lambda,J):=U(\mathfrak{g})\otimes_{U(\mathfrak{p}_J)}L^{\max}_J(\lambda)
			\simeq M(\lambda)\Bigg/\Bigg(\sum_{j\in
				J}U(\mathfrak{g})f_{j}^{\langle\lambda,\alpha_j^{\vee}\rangle+1}m_{\lambda}\Bigg).\]
			Note, $L^{\max}_J(\lambda)$ is the
			parabolic Verma module over $\mathfrak{g}_J$
			corresponding to $J\subseteq J_{\lambda}$ (and $\lambda$).
			\item[(3)] By definition and by the paragraph below, notice that $M(\lambda,J)$ is $\mathfrak{g}_J$-integrable, and therefore $\wt M(\lambda,J)$ is $W_J$-invariant.
		\end{itemize} 
	\end{definition}
	
	For $\lambda\in\mathfrak{h}^*$, $J_{\lambda}$ has the property: if $j\in J_{\lambda}$, then for any $M(\lambda)\twoheadrightarrow{ }V$ and $\mu\in\wt V$, $\langle\mu,\alpha_j^{\vee}\rangle\in\mathbb{Z}$. We now look more closely at the integrability of a $\mathfrak{g}$-module, which plays a very important role in the paper. Given $J\subseteq \mathcal{I}$, recall that a weight module $M$ of $\mathfrak{g}$ is said to be $\mathfrak{g}_J$-integrable if $e_j$ and $f_j$ act locally nilpotently on $M$ $\forall$ $j\in J$.\\ 
	Let $M(\lambda)\twoheadrightarrow V$ and $J\subseteq J_{\lambda}$. Then $V$ is $\mathfrak{g}_J$-integrable if $f_j$ acts locally nilpotently on $V$ $\forall$ $j\in J$. 
	\begin{lemma}\label{L2.2}
		\begin{itemize}
			\item[(1)] Let $j\in \mathcal{I}$. Then $f_j$ acts locally nilpotently on $V$ if and only if $f_j$ acts nilpotently on the highest weight space $V_{\lambda}$.
			\item[(2)] $V$ is $\mathfrak{g}_J$-integrable if and only if $\charac V$ is $W_J$-invariant.
		\end{itemize}
	\end{lemma}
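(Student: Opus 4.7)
My plan is to handle the two parts of the lemma in turn, with the first part providing the key input needed for the second.

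For part (1), the forward implication is trivial since $V_\lambda = \mathbb{C} v_\lambda$ is one-dimensional. For the converse, I would assume $f_j^{N_0} v_\lambda = 0$ for some $N_0 \in \mathbb{Z}_{\geq 0}$ and introduce
\[ V' := \{ v \in V \mid f_j^N v = 0 \text{ for some } N \in \mathbb{Z}_{\geq 0} \}. \]
Since $V' \ni v_\lambda$ and $V = U(\mathfrak{g}) v_\lambda$, it suffices to prove that $V'$ is a $\mathfrak{g}$-submodule. Stability under $\mathfrak{h}$ and under $f_j$ itself is immediate. For $f_i$ with $i \neq j$, I would invoke the standard commutator identity
\[ f_j^N f_i = \sum_{p=0}^{N} \binom{N}{p} (\ad f_j)^p(f_i)\, f_j^{N-p}, \]
combined with the Serre relation $(\ad f_j)^{1-a_{ji}}(f_i) = 0$, which truncates the sum to $0 \leq p \leq -a_{ji}$; then for $v \in V'$ with $f_j^k v = 0$, taking $N \geq k - a_{ji}$ forces every surviving $f_j^{N-p} v$ to vanish. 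For $e_i$ with $i \neq j$, we have $[e_i, f_j] = 0$, so $f_j^N e_i v = e_i f_j^N v = 0$ once $N \geq k$; for $i = j$, the $\mathfrak{sl}_2$-commutation $[e_j, f_j^N]\, v = N\bigl(\langle \mathrm{wt}(v), \alpha_j^\vee\rangle - (N-1)\bigr) f_j^{N-1} v$ on a weight vector $v$ shows that $f_j^N e_j v$ is a linear combination of $e_j f_j^N v$ and $f_j^{N-1} v$, both of which vanish for $N$ large enough.

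For part (2), the forward direction is classical: for each $j \in J$, $\mathfrak{g}_j$-integrability allows one to decompose $V$ as a direct sum of finite-dimensional $\mathfrak{sl}_{\alpha_j}$-modules, each of which has $s_j$-symmetric weight multiplicities by ordinary $\mathfrak{sl}_2$-theory; this gives $\dim V_\mu = \dim V_{s_j \mu}$ for every $\mu$, and since $W_J = \langle s_j \mid j \in J \rangle$ this produces the $W_J$-invariance of $\charac V$. For the converse I would argue by contradiction: if $V$ is not $\mathfrak{g}_j$-integrable for some $j \in J$, then by part (1) $f_j^n v_\lambda \neq 0$ for every $n \in \mathbb{Z}_{\geq 0}$, so $\lambda - n\alpha_j \in \wt V$ for all such $n$. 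The $W_J$-invariance of $\charac V$ then forces
\[ s_j(\lambda - n\alpha_j) \;=\; \lambda + \bigl(n - \langle \lambda, \alpha_j^\vee \rangle\bigr)\alpha_j \;\in\; \wt V; \]
but $\wt V \subseteq \lambda - \mathbb{Z}_{\geq 0}\Pi$ forces the $\alpha_j$-coefficient of the right-hand side to be non-positive, which fails as soon as $n > \langle \lambda, \alpha_j^\vee \rangle$, giving the required contradiction.

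The principal technical hurdle will be the Serre-relation commutator bookkeeping in part (1): one must control the choice of $N$ simultaneously as a function of the killing exponent $k$ for $v$ and of the Cartan entry $a_{ji}$, and check that the truncated binomial expansion annihilates $v$ term by term. The remaining ingredients ($\mathfrak{sl}_2$-theory, Weyl symmetry of finite-dimensional $\mathfrak{sl}_2$-characters, and the highest-weight cone condition $\wt V \subseteq \lambda - \mathbb{Z}_{\geq 0}\Pi$) are standard and should present no real difficulty.
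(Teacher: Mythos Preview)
The paper states this lemma without proof, treating it as standard background (essentially Kac, \emph{Infinite-dimensional Lie algebras}, Lemmas 3.4--3.5 and Proposition 3.7). Your argument is correct and is exactly the standard one: for (1) you show that the set of $f_j$-nilpotent vectors is a submodule using the Leibniz formula for $(\ad f_j)^N$ together with local nilpotence of $\ad f_j$ on $\mathfrak{g}$ (the Serre relations), and for (2) you combine $\mathfrak{sl}_2$-character symmetry with the cone constraint $\wt V\subseteq\lambda-\mathbb{Z}_{\ge 0}\Pi$. One small remark: in the $e_j$-step of (1) you restrict to weight vectors $v$; this is harmless because $V'$ is visibly graded by weight (if $f_j^N v=0$ then each weight component of $v$ is killed by $f_j^N$), but it is worth saying explicitly. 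Similarly, in (2) your contradiction ``$n>\langle\lambda,\alpha_j^\vee\rangle$'' tacitly uses that $\langle\lambda,\alpha_j^\vee\rangle$ is real; in fact applying $s_j$ to $\lambda$ itself already forces $\langle\lambda,\alpha_j^\vee\rangle\in\mathbb{Z}_{\ge 0}$, after which your large-$n$ argument goes through cleanly.
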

	Recall, the integrability of $V$ is denoted by $I_V$ and was defined in \eqref{defn IV-JY}, and $W_{I_V}$ is called the \textit{integrable Weyl group} of $V$. So, $V$ is always $\mathfrak{g}_{I_V}$-integrable. 
	
	The results in the following section about 212-closed subsets of the weights of parabolic Verma modules hold true for a large class of highest weight modules---namely, the class of highest weight modules whose sets of weights coincide with those of parabolic Verma modules. By Theorems 2.8 and 2.9 of Dhillon--Khare \cite{Dhillon_arXiv}, this class contains: i)~all the simple highest weight modules and parabolic Verma modules; ii)~in particular, every highest weight module $V$ with highest weight $\lambda$ and integrability $I_V$ such that the Dynkin subdiagram on $J_{\lambda}\setminus I_V$ is ``complete''. 
	
	While working with $\wt M(\lambda,J)$ \big(and $\wt L(\lambda)$\big), we will extensively make use of the $\mathfrak{g}_{J}$-integrability of $M(\lambda,J)$ \big(and respectively the $\mathfrak{g}_{I_{L(\lambda)}}$-integrability of $L(\lambda)$\big) as well as the following formulas for their weights. See e.g. Theorem 2.8 and Proposition 3.7 of \cite{Dhillon_arXiv} for the proofs and consequences of these formulas. In the notation of our paper, note that $J_{\lambda}=I_{L(\lambda)}$.
	\begin{align}\label{E2.1}
	\begin{aligned}[t]
	\text{Minkowski decomposition:}
	\end{aligned}\qquad
	\begin{aligned}[t]
	\wt M(\lambda,J)&=\wt L_J(\lambda)-\mathbb{Z}_{\geq0}(\Delta^+\setminus\Delta^+_J)\\&=\wt L_J(\lambda)-\mathbb{Z}_{\geq0}\Delta_{J^c,1}\quad\text{(by parabolic-PSP)}.
	\end{aligned}
	\end{align}
	\begin{equation}\label{E2.2}
	\text{Integrable slice decomposition:}\qquad\wt M(\lambda,J)= \bigsqcup\limits_{\xi\in\mathbb{Z}_{\geq0}\Pi_{J^c}}\wt L_J(\lambda-\xi).
	\end{equation}
	\begin{equation}\label{E2.3}
	\wt L(\lambda)\text{ }=\text{ }\wt M(\lambda,I_{L(\lambda)})\text{ }=\text{ }\wt L_{J_{\lambda}}(\lambda)-\mathbb{Z}_{\geq0}(\Delta^+\setminus\Delta^+_{J_{\lambda}})\text{ }=\text{ }\wt L_{J_{\lambda}}(\lambda)-\mathbb{Z}_{\geq0}\Delta_{J_{\lambda}^c,1},
	\end{equation}
	where the final equality was shown in previous work \cite{Teja}, via the parabolic-PSP. The following are some conventions which will be followed in the whole course of the paper.
	\begin{convn} Unless otherwise indicated, $Y$ will always denote a (conjectural) non-empty weak-$\mathbb{A}$-face or a non-empty 212-closed subset of $X$, and $X$ will equal $\wt V$ or $\conv_{\mathbb{R}}\wt V$ or $\wt \mathfrak{g}=\Delta\sqcup\{0\}$ or $\Delta$. At many places in the paper, to show $Y$ (similarly, some subset) is a weak-$\mathbb{A}$-face or a 212-closed subset of $X$, we write equations similar to the ones in Definition \ref{D1.4}, such as:
		\[
		\sum_{i=1}^nr_iy_i=\sum_{j=1}^mt_jx_j,\qquad\qquad (y_1)+(y_2)=(x_1)+(x_2).
		\]
		In such equations, we will always write elements from $Y$ (respectively from $X$) on the left side (respectively on the right side) of the equations. We use parentheses around each term, as in the second equation above, while dealing with 212-closed subsets. This is to distinguish the two terms on each side of the equations, so as to avoid any confusion that might arise; for instance, in the second equation above in the particular case when $y_1+y_2$ (similarly, $x_1+x_2$) is already an element of $Y$ (respectively, of $X$). 
	\end{convn}
	\section{Useful results about 212-closed subsets of $\wt V$}\label{S4}
	In this section, we assume $\mathfrak{g}$ to be a Kac--Moody algebra with root system $\Delta$ and the set of nodes in its Dynkin diagram $\mathcal{I}$, $M(\lambda)\twoheadrightarrow V$ to be an arbitrary (non-zero) highest weight $\mathfrak{g}$-module, and $Y$ to be a non-empty 212-closed subset of $\wt V$. Our goals in this section are: 1)~to get hands-on experience of working with 212-closed subsets of $\wt V$ by observing various consequences of the defining condition for them; 2)~develop necessary tools for proving the main results of this paper, notably, to introduce and study the object $P(Y)$ defined in \eqref{EP(Y)}; 3)~discuss the factors that lead to the close relationship between the 212-closed subsets and the standard faces of $\wt V$. We will use without further mention, the fact that if $j\in J_{\lambda}$, then for any $M(\lambda)\twoheadrightarrow{ }V$ and $\mu\in\wt V$, $\langle\mu,\alpha_j^{\vee}\rangle\in\mathbb{Z}$.
	\subsection{Useful remarks about 212-closed subsets of $\wt V$}
	We begin by noting the following important remarks, Remarks \ref{R3.1}--\ref{R3.4}, about 212-closed subsets $Y$ of $\wt V$. These are deduced from the defining conditions/equations for 212-closed subsets. These remarks, and the equations in them, will be referred to in most of the proofs of the paper. We recommend that the reader thoroughly recall the definitions of $J_{\lambda}$, $\wt_I V$, $\mathcal{I}_V,\text{ }\mathcal{J}_V,\text{ }\mathcal{I}_Y\text{ and }\mathcal{J}_Y$ from \eqref{E1.2} and \eqref{defn IV-JY}.
	\begin{rem}\label{R3.1}
		Let $\alpha\in\Delta$, and $k>1$ be an integer. Fix $\mu\in\wt V$, and set $\mu'=\mu-k\alpha$. Assume that the interval $[\mu',\mu]\subset\wt V$. Suppose either a)~both $\mu,\mu'\in Y$, or b)~$(\mu',\mu)\cap Y\neq \emptyset$. Then $[\mu',\mu]\subset Y$, since for each $j\in (0,k)$,
		\begin{eqnarray}
		(\mu)+(\mu')&=&(\mu-j\alpha)+\big(\mu-(k-j)\alpha\big)\label{E''6.2},\\
		2(\mu-j\alpha)&=&\big(\mu-(j-1)\alpha\big)+\big(\mu-(j+1)\alpha\big).\label{E''6.3}
		\end{eqnarray} 
	\end{rem}
	\begin{rem}\label{R3.2}
		Let $\alpha\in\Delta$, and fix three (not necessarily distinct) weights $\mu,\mu',\mu''\in Y$.\\
		a) If $\mu'-\alpha,\mu+\alpha\in\wt V$, then they lie in $Y$. Similarly, if $\mu-\alpha,\mu''+\alpha\in\wt V$, then they lie in $Y$. These two lines hold true by the following equations.
		\begin{eqnarray}
		(\mu)+(\mu')&=&(\mu+\alpha)+(\mu'-\alpha),\label{E''6.4}\\
		(\mu)+(\mu'')&=&(\mu-\alpha)+(\mu''+\alpha).\label{E''6.5}
		\end{eqnarray}
		b) If $\mu'-\alpha,\mu''+\alpha\in\wt V$, then $(y+\mathbb{Z}\alpha)\cap\wt V\subseteq Y$ $\forall$ $y\in Y$, by equations (\ref{E''6.4}) and (\ref{E''6.5}) above (with $y$ in place of $\mu$).
	\end{rem}
	In Section 5, the above two remarks (which are for $\wt V$) will be extended to the `continuous' setting of this paper, i.e., for general convex sets, for e.g. $\conv_{\mathbb{R}}\wt V$.
	
	Next, for $\emptyset\neq Z\subseteq\wt V$, we discuss an important property of $\mathcal{J}_Z$ (see \eqref{defn IV-JY} for the notation). 
	\begin{rem}\label{R3.3} Let $\lambda\in\mathfrak{h}^*$, $0\neq m_{\lambda}$ be a highest weight vector of $M(\lambda)$, and $M(\lambda)\twoheadrightarrow{ }V$. For any $i\in\mathcal{I}$, it is an easy check that $e_i(f_i m_{\lambda})=\langle\lambda,\alpha_i^{\vee}\rangle m_{\lambda}$. This in particular proves that if $j\in \mathcal{J}_V$, then $\langle\lambda,\alpha_j^{\vee}\rangle$ must be 0. The vanishing of $V_{\lambda-\alpha_j}$ implies that $V$ is $\mathfrak{g}_{\{j\}}$-integrable. Therefore, $\mathcal{J}_V\subseteq I_V$, the integrability of $V$. Recall, by \cite[Theorem 1.1]{Dhillon_arXiv} both $\wt V$ and $\charac V$ are $W_{I_V}$-invariant. So, both $\wt V$ and $\charac V$ are in particular $W_{\mathcal{J}_V}$-invariant. Now, for any $Z\subseteq\wt V$, as $\mathcal{J}_Z\subseteq \mathcal{J}_V$, the previous sentence implies that $\wt V$, and more generally $\wt_{\mathcal{I}_Z}V$, is $W_{\mathcal{J}_Z}$-invariant.
	\end{rem}
	The following remark is about the connected components in the Dynkin subdiagram on $\mathcal{I}_Z$.
	\begin{rem}\label{R3.4}
		Assume that $Z\subseteq \wt V$ and $\mathcal{I}_Z\neq\emptyset$. Then every connected component of $\mathcal{I}_Z$ contains a node from $\mathcal{I}_Z\setminus \mathcal{J}_Z$. This can be verified as follows. Suppose $\emptyset\neq K\subseteq \mathcal{J}_Z$, and $K$ corresponds to a connected component in $\mathcal{I}_Z$. Pick a weight $\mu\in Z$ such that $K\cap\supp(\lambda-\mu)\neq~\emptyset$, which exists as $K\subseteq \mathcal{I}_Z$. Now, observe that $U\big(\mathfrak{n}^-_{\mathcal{I}_Z}\big)=U\big(\mathfrak{n}^-_{\mathcal{I}_Z\setminus K}\big)\otimes U\big(\mathfrak{n}^-_{K}\big)$, $K\subseteq \mathcal{J}_Z$, and $\mathfrak{n}^-_{\mathcal{J}_Z} V_{\lambda}=~\{0\}$ together force that every vector in $V_{\mu}$ must be 0. But this contradicts $\mu$ is a weight of $V$. 
	\end{rem}
	The above remarks more or less achieve our first goal of this section. Now, we proceed to our second goal, of developing the necessary tools for proving the main results. 
	\subsection{Roots which give rise to weights in $\wt V$} 
	The goal of this subsection is to answer the following question, which is quite fundamental in nature.
	\begin{question}\label{Q1}
		Let $\lambda\in\mathfrak{h}^*$, $M(\lambda)\twoheadrightarrow{ }V$, and $\gamma$ be a (positive) root. When is $\lambda-\gamma$ a weight of $V$?
	\end{question}
	The answer to this interesting question is crucial for proving many results of this paper, and is given in the following result.
	\begin{prop}\label{L3.6}
		Let $\mathfrak{g}$ be a Kac--Moody algebra, $\lambda\in\mathfrak{h}^*$, and $M(\lambda)\twoheadrightarrow V$. Then \[(\lambda-\Delta)\cap \wt V=\lambda-(\Delta^+\setminus\Delta_{\mathcal{J}_V}^+).\]
		In particular when $\lambda$ is simply regular (i.e., $\langle\lambda,\alpha_i^{\vee}\rangle\neq 0$ $\forall$ $i\in\mathcal{I}$), $\lambda-\Delta^+\subset\wt V$. More generally, for any $\emptyset\neq Y\subseteq \wt V$, $\lambda-\Delta_{\mathcal{I}_Y\setminus \mathcal{J}_Y,1}\subset \wt V$.
	\end{prop}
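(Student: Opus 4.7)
The plan is to prove the main equality $(\lambda - \Delta) \cap \wt V = \lambda - (\Delta^+ \setminus \Delta_{\mathcal{J}_V}^+)$ by two mutual inclusions, after which both ``in particular'' assertions will follow by inspection: simply regular $\lambda$ forces $\mathcal{J}_V = \emptyset$ (each $\langle\lambda,\alpha_i^\vee\rangle \neq 0$ already guarantees $f_i v_\lambda \neq 0$, so $i \in \mathcal{I}_V$), and for arbitrary $\emptyset \neq Y \subseteq \wt V$ we have $\mathcal{I}_Y \setminus \mathcal{J}_Y \subseteq \mathcal{I}_V$, so $\Delta_{\mathcal{I}_Y \setminus \mathcal{J}_Y, 1} \subseteq \Delta^+ \setminus \Delta_{\mathcal{J}_V}^+$ and the claim transfers directly.

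For the forward inclusion, I would fix $\gamma \in \Delta$ with $\lambda - \gamma \in \wt V$ and first note $\gamma \in \Delta^+$ (because $\wt V \subseteq \lambda - \mathbb{Z}_{\geq 0}\Pi$). To rule out $\gamma \in \Delta_{\mathcal{J}_V}^+$, I would establish the stronger assertion $\wt V \cap (\lambda - \mathbb{Z}_{\geq 0}\Pi_{\mathcal{J}_V}) = \{\lambda\}$: by Remark \ref{R3.3}, each $j \in \mathcal{J}_V$ satisfies $\langle\lambda,\alpha_j^\vee\rangle = 0$ and $j \in I_V$, so $f_j v_\lambda = f_j^{0+1}v_\lambda = 0$ in $V$; the spanning fact (R2) then forces any potential weight vector at $\lambda - \xi$ with $\xi \in \mathbb{Z}_{\geq 0}\Pi_{\mathcal{J}_V} \setminus \{0\}$ to be a combination of PBW monomials $f_{j_1}\cdots f_{j_n}v_\lambda$ with all $j_k \in \mathcal{J}_V$, each of which vanishes because its innermost factor $f_{j_n}$ already kills $v_\lambda$.

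The substantive direction is the reverse inclusion. For $\gamma \in \Delta^+ \setminus \Delta_{\mathcal{J}_V}^+$, I would invoke the parabolic-PSP (R1) with $I = \mathcal{I}_V$ to decompose $\gamma = \gamma_1 + \cdots + \gamma_n$ with each $\gamma_k \in \Delta_{\mathcal{I}_V, 1}$ and every partial sum a root, then induct on $n$. The base case $n = 1$ is where most of the work lies: write $\gamma_1 = \alpha_i + \eta$ with $i \in \mathcal{I}_V$ the unique $\mathcal{I}_V$-node in $\supp(\gamma_1)$ and $\eta \in \mathbb{Z}_{\geq 0}\Pi_{\mathcal{J}_V}$. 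The vector $f_i v_\lambda \in V_{\lambda - \alpha_i}$ is nonzero (by definition of $\mathcal{I}_V$), and for every $j \in \mathcal{J}_V$ we compute $e_j(f_i v_\lambda) = [e_j, f_i]v_\lambda + f_i e_j v_\lambda = 0$, using the Chevalley--Serre relation $[e_j, f_i] = 0$ and $e_j v_\lambda = 0$; hence $f_i v_\lambda$ is a $\mathfrak{g}_{\mathcal{J}_V}$-highest-weight vector. The submodule $W_i := U(\mathfrak{g}_{\mathcal{J}_V})(f_i v_\lambda) \subseteq V$ is therefore an integrable (by Remark \ref{R3.3}) highest-weight $\mathfrak{g}_{\mathcal{J}_V}$-module with highest weight $\lambda - \alpha_i$, which is dominant integral over $\mathfrak{g}_{\mathcal{J}_V}$ since $\langle\lambda - \alpha_i, \alpha_j^\vee\rangle = -a_{ji} \geq 0$ for all $j \in \mathcal{J}_V$. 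The weight set of its simple quotient $L_{\mathcal{J}_V}(\lambda - \alpha_i)$ equals $(\lambda - \alpha_i - \mathbb{Z}_{\geq 0}\Pi_{\mathcal{J}_V}) \cap \conv_{\mathbb{R}}\bigl(W_{\mathcal{J}_V}(\lambda - \alpha_i)\bigr)$ by the standard description of weights of integrable Kac--Moody simple modules, so showing $\lambda - \gamma_1 \in \wt W_i \subseteq \wt V$ reduces to $\alpha_i + \eta \in \conv_{\mathbb{R}}(W_{\mathcal{J}_V}\alpha_i)$; this is a well-known root-system fact, valid for every root (real or imaginary) of $\mathfrak{g}_{\mathcal{J}_V \cup \{i\}}$ with $\alpha_i$-coefficient one.

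For the inductive step, assuming $\lambda - \delta \in \wt V$ for $\delta = \gamma_1 + \cdots + \gamma_{n-1}$ (with $\delta + \gamma_n \in \Delta$ by the partial-sum-in-$\Delta$ property), I would extend to $\lambda - \gamma = \lambda - \delta - \gamma_n \in \wt V$ via a parallel argument, combining the $W_{\mathcal{J}_V}$-invariance of $\wt V$ (Remark \ref{R3.3}) with Lemma \ref{L3.1} on $\alpha_{i_n}$-strings, and---where the string analysis stalls---the same convex-hull technique applied to a $\mathfrak{g}_{\mathcal{J}_V}$-submodule of $V$ generated by an appropriate weight vector at $\lambda - \delta$. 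I expect the principal obstacle to be the base case's convex-hull claim $\alpha_i + \eta \in \conv_{\mathbb{R}}(W_{\mathcal{J}_V}\alpha_i)$, uniformly across finite, affine, and indefinite Kac--Moody types: the affine case is most delicate because imaginary roots $\alpha_i + \eta$ fixed by $W_{\mathcal{J}_V}$ can appear and must be shown to lie in the convex hull despite not themselves belonging to the orbit $W_{\mathcal{J}_V}\alpha_i$.
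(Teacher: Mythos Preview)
Your forward inclusion and the base case $n=1$ of the reverse inclusion are correct, and in fact cleaner than you feared: the convex-hull claim $\gamma_1\in\conv_{\mathbb{R}}(W_{\mathcal{J}_V}\alpha_i)$ is immediate in all types, because for every $w\in W_{\mathcal{J}_V}$ the root $w\gamma_1$ keeps its $\alpha_i$-coefficient equal to $1$ (reflections in $\mathcal{J}_V$ alter only $\Pi_{\mathcal{J}_V}$-coordinates), hence $w\gamma_1\in\Delta^+$ and $w\gamma_1\succeq\alpha_i$; so $w(\lambda-\gamma_1)=\lambda-w\gamma_1\preceq\lambda-\alpha_i$ for all $w$, and Kac's criterion for weights of an integrable simple module gives $\lambda-\gamma_1\in\wt L_{\mathcal{J}_V}(\lambda-\alpha_i)$. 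No special treatment of the affine or indefinite case is required.

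The genuine gap is your inductive step, not the base case. You need to pass from $\lambda-\delta\in\wt V$ to $\lambda-\delta-\gamma_n\in\wt V$ with $\gamma_n\in\Delta_{\mathcal{I}_V,1}$, but the tools you list do not assemble into an argument. The phrase ``the same convex-hull technique applied to a $\mathfrak{g}_{\mathcal{J}_V}$-submodule of $V$ generated by an appropriate weight vector at $\lambda-\delta$'' cannot work as stated: a weight vector at $\lambda-\delta$ is typically \emph{not} $\mathfrak{g}_{\mathcal{J}_V}$-maximal, and even if it were, the $\mathfrak{g}_{\mathcal{J}_V}$-submodule it generates moves only in $\Pi_{\mathcal{J}_V}$-directions, whereas $\gamma_n$ contributes an $\alpha_{i_n}$-component lying outside $\mathcal{J}_V$. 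Already in type $B_2$ with $\mathcal{I}_V=\{1\}$, $\mathcal{J}_V=\{2\}$, and $\gamma=2\alpha_1+\alpha_2=\alpha_1+(\alpha_1+\alpha_2)$, the direct $\alpha_1$-string from $\lambda-\alpha_1$ fails because $\langle\lambda-\alpha_1,\alpha_1^\vee\rangle<0$; one must first apply $s_2$ and then string down. Making this work in general forces you into a case analysis on $\langle\lambda-\delta,\alpha_{i_n}^\vee\rangle$, on whether certain reflected roots stay in $\Delta^+\setminus\Delta^+_{\mathcal{J}_V}$, etc.---essentially the analysis the paper performs.

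The paper sidesteps your inductive difficulty by inducting on $\height(\beta)$ rather than on the number of parabolic-PSP pieces: it peels off a single \emph{simple} root $\alpha\in\Pi_{J_\lambda}$ with $\beta-\alpha\in\Delta^+_{J_\lambda}\setminus\Delta^+_{\mathcal{J}_V}$ (via Lemma~3.4 of \cite{Teja}), then handles the descent $\lambda-\beta+\alpha\rightsquigarrow\lambda-\beta$ by $\mathfrak{sl}_2$-string arguments (Lemma~\ref{L3.1}) together with, in the stubborn subcase $\langle\lambda,\alpha^\vee\rangle=0$, the inclusion $\wt L(\lambda-\alpha)\subseteq\wt V$ and the Minkowski formula for $\wt L(\lambda-\alpha)$. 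Your base-case idea is a nice alternative proof that $\lambda-\Delta_{\mathcal{I}_V,1}\subset\wt V$, but to reach roots of higher $\mathcal{I}_V$-height you will need an argument of comparable depth to the paper's.
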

	Before we prove this proposition, we note the following result, which describes $\wt V$ in terms of $\mathbb{Z}_{\geq0}\Delta_{\mathcal{I}_V,1}$ for arbitrary $M(\lambda)\twoheadrightarrow{ }V$ (see \eqref{I-Notation} for the notation). 
	\begin{lemma}\label{L3.5}
		Let $\mathfrak{g}$ and $V$ be as in Proposition \ref{L3.6}. Let $0\neq v_{\lambda}$ be a highest weight vector in $V_{\lambda}$. For any $\mu\in\wt V$, the corresponding weight space $V_{\mu}$ is spanned by the weight vectors of the form: 
		\[
		\left(\prod\limits_{i=1}^{n}f_{\gamma_i}^{c_i}\right)v_{\lambda}\quad\text{such that }\text{ }c_i\in\mathbb{Z}_{\geq 0},\text{ }\gamma_i\in\Delta_{\mathcal{I}_V,1},\text{ }\text{ } \lambda-\sum\limits_{i=1}^{n}\gamma_i=\mu,\text{ }\text{ and }\text{ }0\neq f_{\gamma_i}\in\mathfrak{g}_{-\gamma_i}\text{ }\forall\text{ }1\leq i\leq n.\]
		Hence, $\wt V\subseteq \lambda-\mathbb{Z}_{\geq 0}\Delta_{\mathcal{I}_V,1}$.
	\end{lemma}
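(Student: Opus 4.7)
The plan is to invoke (R2) to reduce to monomials in negative simple root vectors, and then iteratively absorb each factor $f_j$ with $j \in \mathcal{J}_V$ via commutators into combined root vectors whose roots lie in $\Delta_{\mathcal{I}_V, 1}$. The driving observation is that $j \in \mathcal{J}_V$ forces $V_{\lambda-\alpha_j} = \{0\}$, hence $f_j v_\lambda = 0$, so any monomial in which some $f_j$ (with $j \in \mathcal{J}_V$) ends up adjacent to $v_\lambda$ contributes zero.

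By (R2), $V_\mu$ is spanned by vectors $f_{i_1}\cdots f_{i_n} v_\lambda$ with $i_l \in \mathcal{I}$ and $\sum_l \alpha_{i_l} = \lambda - \mu$. If all $i_l \in \mathcal{I}_V$, each $\alpha_{i_l} \in \Delta_{\mathcal{I}_V, 1}$ and we are already done. Otherwise I induct on the number $m$ of positions $l$ with $i_l \in \mathcal{J}_V$. For the inductive step, I choose the rightmost such position $l$, so that $i_{l+1},\ldots,i_n \in \mathcal{I}_V$, and push $f_j := f_{i_l}$ rightward one step at a time using $f_j f_\eta = f_\eta f_j + [f_j, f_\eta]$. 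Each commutation splits into two terms: one where $f_j$ has moved one step to the right, and one where the commutator lies in $\mathfrak{g}_{-(\alpha_j + \eta)}$ (which vanishes if $\alpha_j + \eta \notin \Delta$). The crucial invariant is that at every stage in this push, the neighbour $f_\eta$ immediately to the right of $f_j$ is a root vector whose root $\eta$ satisfies $\height_{\mathcal{I}_V}(\eta) = 1$---either because $\eta = \alpha_{i_{l+k}}$ is a simple root with $i_{l+k} \in \mathcal{I}_V$, or because $\eta \in \Delta_{\mathcal{I}_V, 1}$ was produced as a combined root vector at an earlier stage. Since $\height_{\mathcal{I}_V}(\alpha_j) = 0$, the combined root $\alpha_j + \eta$ retains $\height_{\mathcal{I}_V} = 1$, and therefore lies in $\Delta_{\mathcal{I}_V, 1}$ whenever it is a root.

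Pushing $f_j$ all the way to the rightmost position, the terminal term has $f_j$ adjacent to $v_\lambda$ and hence vanishes, while each surviving term has absorbed $f_j$ into a single root vector for some root in $\Delta_{\mathcal{I}_V, 1}$. Every resulting monomial thus contains one fewer $\mathcal{J}_V$-factor, so the inductive hypothesis applies and rewrites it as a linear combination of vectors of the desired form $\prod_i f_{\gamma_i}^{c_i} v_\lambda$ with $\gamma_i \in \Delta_{\mathcal{I}_V, 1}$. The main obstacle I anticipate is the bookkeeping needed to verify that no push ever creates a root of $\mathcal{I}_V$-height exceeding $1$---breaking the invariant---which is precisely avoided by always selecting the rightmost $\mathcal{J}_V$-factor and exploiting the additivity of $\height_{\mathcal{I}_V}$. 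Finally, the inclusion $\wt V \subseteq \lambda - \mathbb{Z}_{\geq 0}\Delta_{\mathcal{I}_V, 1}$ is an immediate consequence: for $\mu \in \wt V$, nontriviality of $V_\mu$ yields at least one nonzero spanning monomial, which forces $\lambda - \mu = \sum_i c_i \gamma_i \in \mathbb{Z}_{\geq 0}\Delta_{\mathcal{I}_V, 1}$.
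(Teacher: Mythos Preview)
Your argument is correct and takes a more hands-on route than the paper's. The paper invokes \cite[Corollary 3.2]{Teja}, a PBW-type spanning result for $U(\mathfrak{n}^-)$ asserting that it is spanned by ordered monomials $\big(\prod_i f_{\gamma_i}^{c_i}\big)\big(\prod_j f_{\beta_j}^{d_j}\big)$ with $\gamma_i \in \Delta_{\mathcal{I}_V,1}$ and $\beta_j \in \Delta_{\mathcal{J}_V}^+$; since $\mathfrak{n}^-_{\mathcal{J}_V} v_\lambda = 0$, the second product must be trivial on any nonzero vector. You instead carry out the reordering explicitly via commutator pushing from the simple-root monomials of (R2), which is self-contained and avoids the external citation. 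One expository point: your induction is stated on monomials in \emph{simple} root vectors, but after the first push the surviving terms carry a non-simple $\Delta_{\mathcal{I}_V,1}$-factor, so the inductive hypothesis as stated does not literally apply. The fix is to run the induction on monomials $f_{\eta_1}\cdots f_{\eta_N} v_\lambda$ with each $\eta_i \in \Pi_{\mathcal{J}_V} \sqcup \Delta_{\mathcal{I}_V,1}$, inducting on the number of $\Pi_{\mathcal{J}_V}$-factors; your invariant (``$\eta \in \Delta_{\mathcal{I}_V,1}$ was produced as a combined root vector at an earlier stage'') shows you already have this broader class in mind, so the issue is purely one of phrasing.
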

	\begin{proof}[\textnormal{\textbf{Proof}}]
		By \cite[Corollary 3.2]{Teja} (with $I=\mathcal{I}_V$ in it), $U(\mathfrak{n}^-)$ is spanned by the monomials of the form \begin{align*} 
		\begin{aligned}
		\left(\prod\limits_{i=1}^nf_{\gamma_i}^{c_i}\right)\left(\prod\limits_{ j=1}^mf_{\beta_j}^{d_j}\right)
		\end{aligned}\qquad
		\begin{aligned}
		&\text{where }m,n\in\mathbb{N},\text{ }\gamma_i\in\Delta_{\mathcal{I}_V,1},\text{ } \beta_j\in\Delta_{\mathcal{J}_V}^+,\text{ }f_{\gamma_i}\in\mathfrak{g}_{-\gamma_i} ,\\
		&f_{\beta_j}\in\mathfrak{g}_{-\beta_j},\text{ }
		(c_i,d_j)\in\left(\mathbb{Z}_{\geq 0}\right)^2\text{ }\forall\text{ }(i,j)\in [n]\times [m]. 
		\end{aligned}
		\end{align*}
		So, $V$ is spanned by the weight vectors of the form $\left(\prod_{i=1}^n f_{\gamma_i}^{c_i}\right)\left(\prod_{j=1}^m f_{\beta_j}^{d_j}\right)v_{\lambda}$. In particular, for $\mu\in\wt V$, $V_{\mu}$ is spanned by the weight vectors as in the previous line with $\sum\limits_{i=1}^nc_i\gamma_i+\sum\limits_{j=1}^md_j\beta_j=\lambda-\mu$. Observe that for $\left(\prod_{i=1}^n f_{\gamma_i}^{c_i}\right)\left(\prod_{j=1}^m f_{\beta_j}^{d_j}\right)v_{\lambda}$ to be non-zero, necessarily $d_j=0$ for all $j\in [m]$, as $\mathfrak{n}^-_{\mathcal{J}_V}v_{\lambda}=\{0\}$. This implies the result. 
	\end{proof}
	\begin{proof}[\textnormal{\textbf{Proof of Proposition \ref{L3.6}}}]
		Firstly, if $\mathcal{J}_V= \mathcal{I}$, then by the definition of $\mathcal{J}_V$, $\lambda$ and $V$ must be $0\in\mathfrak{h}^*$ and $L(0)$, respectively, and so the result is true in this case. So, we assume throughout the proof that $\mathcal{J}_V\subsetneqq\mathcal{I}$. The forward inclusion that $(\lambda-\Delta)\cap\wt V\subseteq \lambda-(\Delta^+\setminus\Delta_{\mathcal{J}_V}^+)$ is obvious, since $\mathfrak{n}^-_{\mathcal{J}_V}V_{\lambda}=\{0\}$. For the reverse inclusion, we prove:
		\[ 
		\beta\in\Delta^+\setminus\Delta^+_{\mathcal{J}_V}\implies\lambda-\beta\in\wt V\qquad\text{via induction on}\quad\height(\beta)\geq 1.\]
		In the base step $\height(\beta)=1$, $\beta$ must belong to $\Pi_{\mathcal{I}_{V}}$. So, $\lambda-\beta\in\wt V$ by the definition of $\mathcal{I}_V$.\\
		Induction step: Assume that $\height(\beta)>1$. Recall, $J_{\lambda}:=\{j\in \mathcal{I}$ $|$ $\langle\lambda,\alpha_{j}^{\vee}\rangle\in\mathbb{Z}_{\geq0} \}$. The Minkowski difference formula for $\wt L(\lambda)$ (see \eqref{E2.3}) and $M(\lambda)\twoheadrightarrow V\twoheadrightarrow L(\lambda)$ together imply
		\begin{equation}\label{E3.5}
		\lambda-\mathbb{Z}_{\geq0}(\Delta^+\setminus\Delta_{J_{\lambda}}^+)\subseteq\wt L(\lambda)\subseteq\wt V.
		\end{equation}
		Therefore, the result is true when $\mathcal{J}_V= J_{\lambda}$, or when $\beta\in \Delta^+\setminus\Delta^+_{J_{\lambda}}$.
		
		So, we assume for the rest of the proof that $\mathcal{J}_V\subsetneqq J_{\lambda}$ and $\beta\in\Delta_{J_{\lambda}}^+\setminus\Delta^+_{\mathcal{J}_V}$. By~\cite[Lemma~3.4]{Teja}, pick $\alpha\in\Pi_{J_{\lambda}}$ such that $\beta-\alpha\in\Delta^+_{J_{\lambda}}\setminus\Delta_{\mathcal{J}_V}^+$. By the induction hypothesis, $\lambda-\beta+\alpha\in\wt V$. If $\langle\lambda-\beta+\alpha,\alpha^{\vee}\rangle>0$, then by Lemma \ref{L3.1} (with $\lambda-\beta+\alpha$, $\alpha$ in place of $\mu$, $\alpha_i$ in it, respectively),
		\[
		\lambda-\beta\in\big[s_{\alpha}(\lambda-\beta+\alpha),\lambda-\beta+\alpha\big]\subset\wt V,
		\]
		as required. Else, we have \[
		0\geq\langle\lambda-\beta+\alpha,\alpha^{\vee}\rangle\implies \langle\beta,\alpha^{\vee}\rangle\geq2+\langle\lambda,\alpha^{\vee}\rangle.
		\]
		Let $m:=\langle\beta,\alpha
		^{\vee}\rangle$. Note by the definition of $J_{\lambda}$ that $\langle\lambda,\alpha^{\vee}\rangle\geq 0$. This implies $m\geq2$, and so $\height(s_{\alpha}\beta)<\height(\beta)$. Now, suppose $\height_{J_{\lambda}\setminus\mathcal{J}_V}(s_{\alpha}\beta)>0$, or equivalently $s_{\alpha}\beta\in\Delta^+_{J_{\lambda}}\setminus\Delta^+_{\mathcal{J}_V}$. Then by the induction hypothesis, $\lambda-s_{\alpha}\beta\in\wt V$. By Lemma \ref{L3.1},
		\[
		\langle\lambda-s_{\alpha}\beta,\alpha^{\vee}\rangle\geq \langle\beta,\alpha^{\vee}\rangle >0\implies \lambda-\beta\in\big[s_{\alpha}(\lambda-s_{\alpha}\beta), \lambda-s_{\alpha}\beta\big]\subset\wt V.
		\]
		So, we assume for the rest of the proof that $\height_{J_{\lambda}\setminus\mathcal{J}_V}(s_{\alpha}\beta)=0$, or equivalently $s_{\alpha}\beta\in\Delta^+_{\mathcal{J}_V}$. Notice, this assumption forces $\alpha\in \Pi_{J_{\lambda}\setminus\mathcal{J}_V}$, as $\height_{J_{\lambda}\setminus\mathcal{J}_V}(\beta)>0$. Observe that $s_{\alpha}\beta+\alpha$ is a root in $\Delta_{J_{\lambda}}^+\setminus\Delta^+_{\mathcal{J}_V}$. Moreover, $\height(\alpha+s_{\alpha}\beta)<\height(\beta)$, as $\beta=s_{\alpha}\beta+m\alpha$ and $m\geq 2$. So, by the induction hypothesis, $\lambda-s_{\alpha}\beta-\alpha\in\wt V$. Now, $\langle\lambda,\alpha^{\vee}\rangle\geq 0$ leads to two cases below.\medskip\\
		(1) $\langle\lambda,\alpha^{\vee}\rangle\geq 1$: In this case, by Lemma \ref{L3.1},
		\begin{align*}
		\langle\lambda-s_{\alpha}\beta-\alpha,\alpha^{\vee}\rangle=\langle\lambda,\alpha^{\vee}\rangle+&m-2\geq m-1>0,\quad\text{implies}\\ \lambda-\beta=\lambda-s_{\alpha}\beta-\alpha-(m-1)\alpha\in& \big[s_{\alpha}(\lambda-s_{\alpha}\beta-\alpha), \lambda-s_{\alpha}\beta-\alpha\big]\subset\wt V.
		\end{align*}
		(2) $\langle\lambda,\alpha^{\vee}\rangle=0$: $\alpha\in\Pi_{J_{\lambda}\setminus \mathcal{J}_V}$ implies $\alpha\in \Pi_{\mathcal{I}_V}$, and so $\lambda-\alpha\in \wt V$ by the definition of $\mathcal{I}_V$. Recall, $V_{\lambda-\alpha}$ is spanned by the (non-zero) maximal vector $f_{\alpha}v_{\lambda}$, where $f_{\alpha}$ and $v_{\lambda}$ span $\mathfrak{g}_{-\alpha}$ and $V_{\lambda}$, respectively. Observe that $U(\mathfrak{g})(f_{\alpha}v_{\lambda})$ is a highest weight $\mathfrak{g}$-module with highest weight $\lambda-\alpha$. So, $\wt L(\lambda-\alpha)\subset \wt \big(U(\mathfrak{g})(f_{\alpha}v_{\lambda})\big)\subset \wt V$. As $\supp(\alpha)\not\subset I_{L(\lambda-\alpha)}$, observe by the Minkowski difference formula for $\wt L(\lambda-\alpha)$ \big(see \eqref{E2.3}\big) that
		\[\lambda-\alpha-\gamma\in \wt L(\lambda-\alpha)\qquad\text{for all }\text{ }\gamma\in\Delta^+\text{ with }\height_{\supp(\alpha)}(\gamma)>0.\]
		In particular, therefore $(\lambda-\alpha)-\big(s_{\alpha}\beta+(m-1)\alpha\big)=\lambda-\beta\in\wt L(\lambda-\alpha)\subset\wt V$, since \[s_{\alpha}\beta+(m-1)\alpha\in\Delta^+\quad\text{ and }\quad\height_{\supp(\alpha)}\big(s_{\alpha}\beta+(m-1)\alpha\big)=m-1>0.
		\]
		The last assertion that $\lambda-\Delta_{\mathcal{I}_Y\setminus \mathcal{J}_Y,1}\subset \wt V$ for $\emptyset\neq Y\subseteq \wt V$ follows by the above proof, and the definitions of $\mathcal{I}_Y$ and $\mathcal{J}_Y$. Hence, the proof of the proposition is complete. 
	\end{proof}	
 The following corollary is an immediate application of Proposition \ref{L3.6} and Lemma \ref{L3.5}. It extends the parabolic-PSP for $\wt V$---see Subsection 5.2 of \cite{Teja}---to every 212-closed subset $Y$ of $\wt V$ which contains $\lambda$, in the special case $I=\mathcal{I}_Y\setminus\mathcal{J}_Y$. Recall the chain notation in \eqref{E3.3}. 
	\begin{cor}\label{C3.7}
		Let $\mathfrak{g}$ and $V$ be as in Proposition \ref{L3.6}. Let $Y$ be 212-closed in $\wt V$ and $\lambda\in Y$.
		\begin{itemize}
			\item[(a)] Given $\mu\precneqq\lambda\in Y$, there exist weights $\mu_j\in\wt V$, $0\leq j\leq n=\height_{\mathcal{I}_Y\setminus\mathcal{J}_Y}(\lambda-\mu)$, such that
			\[
			\mu=\mu_0\prec\cdots\prec\mu_j\prec\cdots\prec\mu_n=\lambda\in Y,\quad
			\mu_j-\mu_{j-1}\in\Delta_{\mathcal{I}_Y\setminus \mathcal{J}_Y,1},\quad \lambda-(\mu_j-\mu_{j-1})\in Y\text{ }\forall\text{ }1\leq j\leq n.\]
			\item[(b)] Given $i\in \mathcal{I}_Y$, there exists a root $\gamma_i\in\Delta_{\mathcal{I}_Y\setminus\mathcal{J}_Y,1}$ such that $\height_{\{i\}}(\gamma_i)>0$ and $\lambda-\gamma_i\in Y$. In the previous line, if $i\in \mathcal{I}_Y\setminus\mathcal{J}_Y$, then $\height_{\{i\}}(\gamma_i)=1$.
		\end{itemize}
	\end{cor}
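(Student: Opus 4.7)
The plan is to combine the parabolic-PSP of $\wt V$ from \cite[Subsection~5.2]{Teja} with Proposition~\ref{L3.6} and the 212-closedness of $Y$. The parabolic-PSP yields a chain of weights of $V$ with step roots in $\Delta_{I,1}$; Proposition~\ref{L3.6} shows that the refined index set $I = \mathcal{I}_Y \setminus \mathcal{J}_Y$ is admissible for this PSP; and the 212-closedness of $Y$ then propagates $Y$-membership along the chain.

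For part~(a), I would first invoke the parabolic-PSP of $\wt V$ with $I = \mathcal{I}_Y \setminus \mathcal{J}_Y$---valid because Proposition~\ref{L3.6} furnishes $\lambda - \Delta_{\mathcal{I}_Y \setminus \mathcal{J}_Y, 1} \subset \wt V$---to construct a chain $\mu = \mu_0 \prec \cdots \prec \mu_n = \lambda$ in $\wt V$ with $\gamma_j := \mu_j - \mu_{j-1} \in \Delta_{\mathcal{I}_Y \setminus \mathcal{J}_Y, 1}$. Since every element of $\Delta_{\mathcal{I}_Y \setminus \mathcal{J}_Y, 1}$ contributes exactly $1$ to $\height_{\mathcal{I}_Y \setminus \mathcal{J}_Y}$, the chain has length $n = \height_{\mathcal{I}_Y \setminus \mathcal{J}_Y}(\lambda - \mu)$ as required.

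Next I would verify $\mu_j, \lambda - \gamma_j \in Y$ for each $j \in [n]$ by forward induction on $j$; the base case $\mu_0 = \mu \in Y$ is a premise. In the inductive step, assuming $\mu_{j-1} \in Y$, apply the 212-closedness of $Y$ to the equation
\[
(\lambda) + (\mu_{j-1}) = (\lambda - \gamma_j) + (\mu_j),
\]
whose left-hand terms lie in $Y$ and whose right-hand terms lie in $\wt V$ (the former by Proposition~\ref{L3.6}, the latter by chain construction). This simultaneously yields $\lambda - \gamma_j, \mu_j \in Y$; this propagation is essentially the content of Remark~\ref{R3.2}(a).

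For part~(b), given $i \in \mathcal{I}_Y$, I would choose $\mu \in Y$ with $i \in \supp(\lambda - \mu)$ (which exists by the definition of $\mathcal{I}_Y$) and apply part~(a) to get step roots $\gamma_1, \ldots, \gamma_n \in \Delta_{\mathcal{I}_Y \setminus \mathcal{J}_Y, 1}$ summing to $\lambda - \mu$. Since
\[
\sum_{j=1}^n \height_{\{i\}}(\gamma_j) \;=\; \height_{\{i\}}(\lambda - \mu) \;>\; 0,
\]
some $\gamma_{j^*}$ has $\height_{\{i\}}(\gamma_{j^*}) > 0$; set $\gamma_i := \gamma_{j^*}$. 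Then $\lambda - \gamma_i \in Y$ by part~(a). When $i \in \mathcal{I}_Y \setminus \mathcal{J}_Y$, the containment $\{i\} \subseteq \mathcal{I}_Y \setminus \mathcal{J}_Y$ combined with $\height_{\mathcal{I}_Y \setminus \mathcal{J}_Y}(\gamma_i) = 1$ forces $\height_{\{i\}}(\gamma_i) = 1$.

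The principal technical step is the invocation of the parabolic-PSP with the refined index set $I = \mathcal{I}_Y \setminus \mathcal{J}_Y$---that is, showing every partial sum in the decomposition of $\lambda - \mu$ into elements of $\Delta_{\mathcal{I}_Y \setminus \mathcal{J}_Y, 1}$ remains a weight of $V$---for which Proposition~\ref{L3.6} is the key input. The rest of the argument is routine bookkeeping with the 212-closedness of $Y$.
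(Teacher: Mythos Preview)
Your proposal is correct and follows essentially the same approach as the paper. The paper proves part~(a) by induction on $\height(\lambda-\mu)$, at each step picking $\gamma\in\Delta_{\mathcal{I}_Y\setminus\mathcal{J}_Y,1}$ with $\mu+\gamma\in\wt V$ via Lemma~\ref{L3.5} (rather than citing the parabolic-PSP of $\wt V$ from \cite{Teja} as a black box), and then uses the identical equation $(\mu)+(\lambda)=(\mu+\gamma)+(\lambda-\gamma)$ together with Proposition~\ref{L3.6} to place both $\mu+\gamma$ and $\lambda-\gamma$ in $Y$; your forward induction on $j$ along a precomputed chain is just a reorganisation of the same argument, and your part~(b) matches the paper's verbatim.
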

	\begin{proof}[\textnormal{\textbf{Proof}}]
		Let $\mu$ be as in part (a). We prove part (a) by induction on $\height(\lambda-\mu)\geq 1$.\\
		Base step: $\height(\lambda-\mu)=1$ implies $\lambda-\mu\in\Pi_{\mathcal{I}_Y\setminus\mathcal{J}_Y}$, and so the result is trivial.\\
		Induction step: Assume that $\height(\lambda-\mu)>1$. Pick a root $\gamma\in\Delta_{\mathcal{I}_Y\setminus\mathcal{J}_Y,1}$ such that $\mu+\gamma\in \wt V$. Such a root $\gamma$ exists: (i) by the existence of a non-zero weight vector of the form in the statement of Lemma \ref{L3.5}, when $\height_{\mathcal{I}_Y\setminus\mathcal{J}_Y}(\lambda-\mu)>1$; or (ii) as $\lambda-\mu\in\Delta_{\mathcal{I}_Y\setminus\mathcal{J}_Y,1}$ by Lemma \ref{L3.5} when $\height_{\mathcal{I}_Y\setminus\mathcal{J}_Y}(\lambda-\mu)=1$. By Proposition \ref{L3.6}, $\lambda-\gamma\in\wt V$. By the 212-closedness of $Y$, the following equation implies both $\mu+\gamma,\lambda-\gamma\in Y$. 	\begin{equation}\label{E3.6}
		(\mu)+(\lambda)=(\mu+\gamma)+(\lambda-\gamma),
		\end{equation}
		Finally, the induction hypothesis applied to $\mu+\gamma$ completes the proof of part (a).
		
		The proof of part (b) follows upon observing that for each $i\in\mathcal{I}_Y$, there exists a weight $\mu_i\in Y$ such that $i\in\supp(\lambda-\mu_i)$ (by the definition of $\mathcal{I}_Y$), and then applying part (a) to $\mu_i$.
	\end{proof}
	The rest of this section is devoted to understanding the 212-closed subsets of $\wt V$ by looking at their maximal elements. We say that a weight $\mu\in Y$ is a \textit{maximal element} of $Y$ if there does not exist a weight $\mu'\in Y$ such that $\mu\precneqq \mu'$. Our study in the rest of this section is divided into two subsections for the sake of avoiding any confusion. In Subsection \ref{S4.3}, we deal with the situation when $V$ is arbitrary and $\lambda\in Y$ (in which case, $\lambda$ is the unique maximal element in $Y$), and in Subsection \ref{S4.4} when $\wt V=\wt M(\lambda,J)$ for some $J\subseteq J_{\lambda}$ and $Y$ is arbitrary.
	\subsection{On $P(Y)$ for $Y$ 212-closed in $\wt V$, and $Y$ equalling a standard face of $\wt V$}\label{S4.3}
	Throughout this subsection, we assume unless otherwise stated that $M(\lambda)\twoheadrightarrow{ }V$ is an arbitrary highest weight $\mathfrak{g}$-module, and $Y$ is an arbitrary 212-closed subset of $\wt V$ and $Y$ properly contains $\lambda$. 
	
	Our motivation to study the sets $Y$ that contain the highest weight comes from Theorem 3.4 in~\cite{Khare_JA}. In fact, in Steps 1--4 of the proof of Theorem \ref{thmA}, we show for any highest weight module $V$ and any $Y$ 212-closed in $\wt V$ that $Y$ contains a vertex---i.e., there exists an element $w\in W_{I_V}$ such that $w\lambda\in Y$. For instance, this result has been shown by Chari for integrable highest weight modules over semisimple $\mathfrak{g}$, see \cite[Lemma 6.8]{Khare_JA}. The assumption $\lambda\in Y$ has interesting consequences, and this subsection presents the same. Broadly, the two major achievements in this subsection are: (1)~we introduce and motivate an object called $P(Y)$ for $Y\subseteq \wt V$---see equation \eqref{EP(Y)} below for its definition---which is the key object for our line of attack in the proof of Theorem~\ref{thmA}; (2)~we give various necessary and sufficient conditions for $Y$ which contains $\lambda$ to equal a standard face of $\wt V$.
	
	To begin with, observe for any standard face $Z\subseteq\wt V$ that $\lambda-\Pi_{\mathcal{I}_Z\setminus\mathcal{J}_Z}\subset Z$. Given $Y$, we show more generally that this phenomenon holds true inside a suitable $W_{\mathcal{J}_Y}$-translate of $Y$:
	\begin{lemma}\label{L3.9}
		Let $\mathfrak{g}$ be a Kac--Moody algebra, $\lambda\in\mathfrak{h}^*$, and $M(\lambda)\twoheadrightarrow V$. Suppose $Y$ is 212-closed in $\wt V$ and $Y$ properly contains $\lambda$. Then there exists $w\in W_{\mathcal{J}_Y}$ such that $\lambda-\Pi_{\mathcal{I}_Y\setminus \mathcal{J}_Y}\subset wY$.
	\end{lemma}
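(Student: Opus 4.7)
The plan is to proceed by a minimization argument over the $W_{\mathcal{J}_Y}$-translates of $Y$. By Remark \ref{R3.3}, $\mathcal{J}_Y\subseteq\mathcal{J}_V\subseteq I_V$ and $\langle\lambda,\alpha_j^\vee\rangle=0$ for every $j\in\mathcal{J}_Y$, so every $w\in W_{\mathcal{J}_Y}$ fixes $\lambda$ and---by Observation \ref{note1}---the translate $wY$ is 212-closed in $\wt V$ with $\lambda\in wY$. Since $W_{\mathcal{J}_Y}$ affects only the $\Pi_{\mathcal{J}_Y}$-coordinates of weights, a quick check gives $\mathcal{I}_{wY}\subseteq\mathcal{I}_Y$, $\mathcal{J}_{wY}\subseteq\mathcal{J}_Y$, and in particular $\mathcal{I}_Y\setminus\mathcal{J}_Y\subseteq\mathcal{I}_{wY}\setminus\mathcal{J}_{wY}$. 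Applying Corollary \ref{C3.7}(b) to $wY$ for each $i\in\mathcal{I}_Y\setminus\mathcal{J}_Y$ furnishes a minimum-height positive root $\gamma_i^{(w)}\in(\alpha_i+\mathbb{Z}_{\geq 0}\Pi_{\mathcal{J}_Y})\cap\Delta$ with $\lambda-\gamma_i^{(w)}\in wY$.

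Define $\Phi(w):=\sum_{i\in\mathcal{I}_Y\setminus\mathcal{J}_Y}\height(\gamma_i^{(w)})$, a positive integer, and choose $w^*\in W_{\mathcal{J}_Y}$ minimizing $\Phi$. The required conclusion $\lambda-\Pi_{\mathcal{I}_Y\setminus\mathcal{J}_Y}\subset w^*Y$ is equivalent to the claim that $\gamma_i^{(w^*)}=\alpha_i$ for every $i\in\mathcal{I}_Y\setminus\mathcal{J}_Y$. Since the support of each $\gamma_i^{(w^*)}$ is a connected subset of the Dynkin diagram contained in $\mathcal{I}_Y$, and since simple reflections indexed by nodes in distinct connected components of $\mathcal{I}_Y$ act independently on the collection $\{\gamma_i^{(w^*)}\}$, it suffices to verify the claim within each connected component $K$ of $\mathcal{I}_Y$ separately.

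Within such a $K$, suppose for contradiction that $\gamma_{i_0}^{(w^*)}=\alpha_{i_0}+\eta$ for some $i_0\in K\cap(\mathcal{I}_Y\setminus\mathcal{J}_Y)$ and some $0\neq\eta\in\mathbb{Z}_{\geq 0}\Pi_{K\cap\mathcal{J}_Y}$. In the generic case where $\gamma_{i_0}^{(w^*)}$ is a real root, there exists $j\in\supp(\eta)\subseteq K\cap\mathcal{J}_Y$ with $\langle\gamma_{i_0}^{(w^*)},\alpha_j^\vee\rangle>0$, so that $s_j\gamma_{i_0}^{(w^*)}$ is a positive root in $\alpha_{i_0}+\mathbb{Z}_{\geq 0}\Pi_{\mathcal{J}_Y}$ of strictly smaller height. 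Since $\lambda-s_j\gamma_{i_0}^{(w^*)}=s_j(\lambda-\gamma_{i_0}^{(w^*)})\in s_jw^*Y$, this gives $\height(\gamma_{i_0}^{(s_jw^*)})<\height(\gamma_{i_0}^{(w^*)})$. Imaginary $\gamma_{i_0}^{(w^*)}$ (possible only in affine or indefinite type) are heavily constrained by $\height_{\{i_0\}}(\gamma_{i_0}^{(w^*)})=1$ together with $\eta\in\mathbb{Z}_{\geq 0}\Pi_{\mathcal{J}_Y}$, and can be handled separately using Lemma \ref{L3.5}.

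The main obstacle is verifying that this single reflection $s_j$ does not inflate $\height(\gamma_{i'}^{(s_jw^*)})$ for the remaining $i'\in K\cap(\mathcal{I}_Y\setminus\mathcal{J}_Y)$, so that $\Phi(s_jw^*)<\Phi(w^*)$ and the minimality of $w^*$ is contradicted. To handle this, I would exploit the 212-closedness of $w^*Y$ via the equations in Remarks \ref{R3.1} and \ref{R3.2}, combined with Lemma \ref{L3.1} to propagate weights along $\alpha_j$-strings inside $w^*Y$, producing for each affected $i'$ an alternative admissible $\gamma_{i'}^{(w^*)}$ whose $s_j$-image has height no larger than that of the original. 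A careful root-support analysis inside the single component $K$, combined with the parabolic-PSP from (R1), should then deliver $\Phi(s_jw^*)<\Phi(w^*)$ and close the contradiction, yielding the desired $w=w^*\in W_{\mathcal{J}_Y}$.
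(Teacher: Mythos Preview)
Your setup mirrors the paper's: minimize, over $w\in W_{\mathcal{J}_Y}$, the total height $\Phi(w)=\sum_i\height(\gamma_i^{(w)})$ of certain roots $\gamma_i^{(w)}\in\alpha_i+\mathbb{Z}_{\geq 0}\Pi_{\mathcal{J}_Y}$ with $\lambda-\gamma_i^{(w)}\in wY$. The gap is that you identify the ``main obstacle''---that $s_j$ might inflate $\height(\gamma_{i'}^{(s_jw^*)})$ for $i'\neq i_0$---but do not resolve it; the paragraph about ``producing for each affected $i'$ an alternative admissible $\gamma_{i'}$'' via root-support analysis and the parabolic-PSP is a plan, not an argument, and it is not clear it can be made to work in that form.

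The paper's resolution is a single clean stroke you are missing. Having chosen $j\in\mathcal{J}_Y$ with $\gamma_{i_0}^{(w^*)}-\alpha_j\in\Delta_{\mathcal{I}_Y\setminus\mathcal{J}_Y,1}$ (via \cite[Lemma~3.4]{Teja}, which also bypasses your imaginary-root worry), Proposition~\ref{L3.6} gives $\mu_{i_0}+\alpha_j:=\lambda-(\gamma_{i_0}^{(w^*)}-\alpha_j)\in\wt V$, while the minimality of $\height(\gamma_{i_0}^{(w^*)})$ forces $\mu_{i_0}+\alpha_j\notin w^*Y$. Now if \emph{any} $\mu'\in w^*Y$ had $\langle\mu',\alpha_j^\vee\rangle>0$, Lemma~\ref{L3.1} would give $\mu'-\alpha_j\in\wt V$, and then the $212$-closed equation $(\mu_{i_0})+(\mu')=(\mu_{i_0}+\alpha_j)+(\mu'-\alpha_j)$ would force $\mu_{i_0}+\alpha_j\in w^*Y$, a contradiction. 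Hence $\langle y,\alpha_j^\vee\rangle\leq 0$ for \emph{every} $y\in w^*Y$, so $s_j y\succeq y$ for all $y$; in particular $\height(\lambda-s_j\mu_t)\leq\height(\lambda-\mu_t)$ for every $t$ (and strictly for $t=i_0$, since one checks $\langle\mu_{i_0},\alpha_j^\vee\rangle<0$ similarly). This global inequality is exactly what makes $\Phi(s_jw^*)<\Phi(w^*)$ automatic and renders your component-by-component analysis and the separate treatment of imaginary $\gamma_{i_0}$ unnecessary.
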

	Before we give the proof of this lemma, we first show it for an example below. This simple example motivated this lemma and several other results in this paper.
	\begin{example}\label{E3.8}
		Let $\mathfrak{g}=
		\mathfrak{sl}_{3}(\mathbb{C})$, and $\mathcal{I}=\{1,2\}$. Let $V=L(\omega_1)$, where $\omega_1$ is the fundamental dominant weight corresponding to $1\in\mathcal{I}$; i.e., $\omega_1$ satisfies $\langle\omega_1,\alpha_{j}^{\vee}\rangle=\delta_{1,j}$ $\forall$ $j\in\{
		1,2\}$. Check that $\wt V=\{\omega_1, \omega_1-\alpha_1, \omega_1-\alpha_1-\alpha_2\}$. Let $Y=\{\omega_1,\omega_1-\alpha_1-\alpha_2\}$. Observe: 
		\begin{align*}
		\begin{aligned}[t]
		(i)&\text{ } Y\text{ is }212\text{-closed in }\wt V,\\ (ii)&\text{ }1\in \mathcal{I}_Y\setminus\mathcal{J}_Y,\text{ but }\omega_1-\alpha_1\notin Y,\end{aligned}\qquad\qquad
		\begin{aligned}[t]
		(iii)&\text{ }2\in \mathcal{J}_Y\text{ and  }s_2Y=\{\omega_1,\omega_1-\alpha_1\},\\
		(iv)&\text{ }\omega_1-\Pi_{\mathcal{I}_Y\setminus \mathcal{J}_Y}\subset s_2Y.
		\end{aligned}
		\end{align*}
	\end{example}
	\begin{proof}[\textnormal{\textbf{Proof of Lemma \ref{L3.9}}}]
		Let $Y$ be as in the statement. For each $t\in\mathcal{I}_Y\setminus\mathcal{J}_Y$, we fix $\mu_t\in Y$ with least $\height(\lambda-\mu_t)$ such that $\lambda-\mu_t\in\Delta_{\mathcal{I}_Y\setminus\mathcal{J}_Y,1}$ and $\supp_{\mathcal{I}_Y\setminus\mathcal{J}_Y}(\lambda-\mu_t)=\{t\}$; see \eqref{I-Notation} for the notation. Such weights $\mu_t$ exist by Corollary \ref{C3.7}(b). (We assumed $\lambda\in Y$ to apply this corollary.)  \[\text{We define}\quad d(Y):=\left(\sum\limits_{t\in\mathcal{I}_Y\setminus\mathcal{J}_Y}\height(\lambda-\mu_t)\right)-\big|\mathcal{I}_Y\setminus\mathcal{J}_Y\big|.\]
		Given $Y$, note that the choice of $\mu_t$ may be not be unique, yet $d(Y)$ is well defined. We prove the lemma by induction on $d(Y)\geq 0$. In the base step $d(Y)=0$, the lemma is trivial.\\
		Induction step: Assume that $d(Y)>0$. So, there exists a node $i\in \mathcal{I}_Y\setminus\mathcal{J}_Y$ such that the corresponding weight $\mu_i$ satisfies $\height(\lambda-\mu_i)>1$; or equivalently, $\height_{\mathcal{J}_Y}(\lambda-\mu_i)>0$. Fix such an $i$ and $\mu_i$. By \cite[Lemma 3.4]{Teja} applied to $\lambda-\mu_i\in\Delta_{\mathcal{I}_Y\setminus\mathcal{J}_Y,1}$, pick $j\in\mathcal{J}_Y$ such that $\lambda-\mu_i-\alpha_j\in\Delta_{\mathcal{I}_Y\setminus\mathcal{J}_Y,1}$. By Proposition \ref{L3.6}, $\mu_i+\alpha_j\in\wt V$. By the minimality of $\height(\lambda-\mu_i)$ (corresponding to $i$), observe that $\mu_i+\alpha_j\notin Y$. Now, suppose $\langle\mu',\alpha_j^{\vee}\rangle>0$ for some $\mu'\in Y$. Then $\mu'-\alpha_j\in [\mu'-\langle\mu',\alpha_j^{\vee}\rangle\alpha_j,\mu']\subset\wt V$ by Lemma \ref{L3.1}. Further, equation (\ref{E''6.4}) (with $\mu_i$ and $\alpha_j$ replacing $\mu$ and $\alpha$ in it, respectively) implies $\mu_i+\alpha_j\in Y$, which contradicts $\mu_i+\alpha_j\notin Y$. Similarly, if $\langle\mu_i,\alpha_j^{\vee}\rangle\geq0$, then $\langle\mu_i+\alpha_j,\alpha_j^{\vee}\rangle\geq2$ implies $\mu_i-\alpha_j\in\wt V$ by Lemma \ref{L3.1}. Further, by the 212-closedness of $Y$, the equation
		\begin{equation}\label{E'4.7} 2(\mu_i)=(\mu_i+\alpha_j)+(\mu_i-\alpha_j),\end{equation}
		implies $\mu_i+\alpha_j\in Y$, which contradicts $\mu_i+\alpha_j\notin Y$. Therefore, $\langle y,\alpha_j^{\vee}\rangle\leq0$ $\forall$ $y\in Y$ and $\langle\mu_i,\alpha_j^{\vee}\rangle<0$. Recall by Remark \ref{R3.3} that $\wt V$ is invariant under $s_j$, as $j\in \mathcal{J}_Y$. In particular, $s_jY\subset \wt V$. Now, observe: (i)~$s_jY$ is 212-closed (see Observation \ref{note1}); (ii)~$\lambda\in s_jY$ as $s_j\lambda=\lambda$; (iii)~$\mathcal{I}_Y\setminus\mathcal{J}_Y=\mathcal{I}_{s_jY}\setminus\mathcal{J}_{s_jY}$; (iv)~$s_j\mu_t\in s_jY$ and $\lambda-s_j\mu_t\in\Delta_{\mathcal{I}_Y\setminus\mathcal{J}_Y,1}$; (v)~$\height(\lambda-s_j\mu_t)\leq \height(\lambda-\mu_t)$ $\forall$~$t\in\mathcal{I}_Y\setminus\mathcal{J}_Y$, and $\height(\lambda-s_j\mu_i)<\height(\lambda-\mu_i)$. In view of these, for each $t\in\mathcal{I}_Y\setminus\mathcal{J}_Y$, if we fix $\mu'_t\in s_jY$ (similar to $\mu_t\in Y$) with least $\height(\lambda-\mu'_t)$ such that $\lambda-\mu'_t\in\Delta_{\mathcal{I}_Y\setminus\mathcal{J}_Y,1}$ and $\supp_{\mathcal{I}_Y\setminus\mathcal{J}_Y}(\lambda-\mu'_t)=\{t\}$. Then $\height(\lambda-\mu'_t)\leq \height(\lambda-\mu_t)$ $\forall$ $t\in\mathcal{I}_Y\setminus\mathcal{J}_Y$, and $\height(\lambda-\mu'_i)< \height(\lambda-\mu_i)$. This implies $d(s_jY)<d(Y)$. So, the induction hypothesis applied to $s_jY$ completes the proof.
	\end{proof}	  
	Lemma \ref{L3.9} enables us to obtain some necessary and sufficient conditions for $Y$ to be a standard face of $\wt V$, see Proposition \ref{P3.11} below. In order to state and prove this result, we first define
	\begin{align}\label{EP(Y)}
	\begin{aligned} P(Y):=\Bigg\{\lambda-\eta\text{ }\in Y\quad\Bigg|
	\end{aligned}\quad
	\begin{aligned}
	&\eta\in\Delta_{\mathcal{I}_Y\setminus\mathcal{J}_Y,1},\text{ }\eta\text{ is a sum of distinct simple roots, and}\\
	&\lambda-\eta'\in Y\text{ }\text{whenever}\text{ }\eta'\in\Delta_{\mathcal{I}_Y\setminus\mathcal{J}_Y,1}\text{ and }\eta'\preceq \eta
	\end{aligned}
	\begin{aligned}\Bigg\}.
	\end{aligned}
	\end{align}
	$P(Y)$ reveals a lot of information about the 212-closed subset $Y$, and this will be evident for instance from Lemma \ref{L3.10} and Proposition \ref{P3.11} below. 
	As mentioned before, working with $P(Y)$ is the key tool in the proof of Theorem \ref{thmA} in the next section. Therefore, a good understanding of $P(Y)$ is necessary, and the next few lines try to provide the same.
	To begin with, note that $P(Y)$ can possibly be empty. Observe that $P(Y)\subseteq P\left(\wt_{\mathcal{I}_Y}V\right)$. For simplicity, we describe $P(\wt_{\mathcal{I}_Y}V)$, and this should suffice to gain some familiarity about $P(Y)$. Recall from Remark \ref{R3.4} that every connected component of $\mathcal{I}_Y$ contains a node from $\mathcal{I}_Y\setminus \mathcal{J}_Y$. Let $K$ be a non-empty subset of $\mathcal{I}$. Recall, whenever the Dynkin subdiagram on $K$ is connected, the sum $\sum_{t\in K}\alpha_t$ is a root. Next, whenever $K\cap \big(\mathcal{I}_Y\setminus\mathcal{J}_Y\big)\neq\emptyset$ and the Dynkin subdiagram on $K$ is connected, Proposition \ref{L3.6} implies $\lambda-\sum_{t\in K}\alpha_t\in\wt V$. Now, observe that 
	\begin{align*}
	\begin{aligned} P\big(\wt_{\mathcal{I}_Y}V\big)=\Bigg\{\lambda-\sum\limits_{i\in I}\alpha_i\text{ }\Bigg|
	\end{aligned}\text{ }
	\begin{aligned}
	&I\subseteq \mathcal{I}_Y,\text{ }|I\cap(\mathcal{I}_Y\setminus\mathcal{J}_Y)|=1,\text{ and the}\\
	&\text{Dynkin subdiagram on }
	I\text{ is connected}
	\end{aligned}
	\begin{aligned}\Bigg\},
	\end{aligned}
	\text{ }
	\begin{aligned}
	\text{ and }\lambda-\Pi_{\mathcal{I}_Y\setminus\mathcal{J}_Y}\subseteq P(\wt_{\mathcal{I}_Y}V).
	\end{aligned}
	\end{align*}
	Throughout the paper, in particular in the definition of $P(\wt_{\mathcal{I}_Y}V)$ above, we treat a Dynkin diagram with single vertex to be connected. Now $\big|P(\wt_{\mathcal{I}_Y}V)\big|\leq \big|\mathcal{I}_Y\setminus\mathcal{J}_Y\big|2^{|\mathcal{J}_Y|}$, so $P(\wt_{\mathcal{I}_Y}V)$, whence $P(Y)$, is finite. The following lemma shows an important consequence of $P(Y)\neq\emptyset$.
	\begin{lemma}\label{L3.10}
		Let $\mathfrak{g}$, $V$ and $Y$ be as in Lemma \ref{L3.9}. Fix $\emptyset\neq K\subseteq \mathcal{I}$, and suppose $\lambda-\sum\limits_{t\in K}\alpha_t\in P(Y)$. Then $\wt_{K}V\subseteq Y$. More generally, $\wt_{\mathcal{I}_{P(Y)}}V\subseteq Y$.
	\end{lemma}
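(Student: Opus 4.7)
The plan is to induct on $\height(\lambda-\mu)$ for $\mu \in \wt_K V$, proving the specific statement $\wt_K V \subseteq Y$ first and then bootstrapping. From the hypothesis together with the explicit description of $P(\wt_{\mathcal{I}_Y}V)$ given just before the lemma, $K$ is a connected Dynkin subdiagram containing a unique node $i_0 \in \mathcal{I}_Y \setminus \mathcal{J}_Y$, and $K_1 := K \setminus \{i_0\} \subseteq \mathcal{J}_Y \subseteq \mathcal{J}_V \subseteq I_V$. The defining condition of $P(Y)$ hands me $\lambda - \sum_{t \in K'}\alpha_t \in Y$ for every connected $K' \subseteq K$ through $i_0$, and Remark \ref{R3.3} gives $W_{K_1}$-invariance of $\wt V$.

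For the inductive step, fix $\mu \in \wt_K V$ with $\mu \neq \lambda$. By Lemma \ref{L3.5} I pick a non-zero weight vector $\prod_j f_{\gamma_j}^{c_j} v_\lambda \in V_\mu$ with $\gamma_j \in \Delta_{\mathcal{I}_V,1}$ and $\supp\gamma_j \subseteq K$; since $\mathcal{I}_V \cap K = \{i_0\}$, each $\gamma_j$ is a positive root supported in $K$ with $\alpha_{i_0}$-coefficient exactly $1$. Reading off the last factor applied in some non-zero ordering produces a penultimate weight $\mu + \gamma \in \wt_K V$ of strictly smaller depth, so $\mu + \gamma \in Y$ by the induction hypothesis. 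Since Proposition \ref{L3.6} places $\lambda - \gamma \in \wt V$ (as $\gamma \in \Delta^+ \setminus \Delta^+_{\mathcal{J}_V}$), I feed
\[
(\mu+\gamma) + (\lambda-\gamma) \;=\; (\mu) + (\lambda)
\]
into 212-closedness; only the membership $\lambda - \gamma \in Y$ remains to verify.

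The hard part will be the auxiliary assertion that $\lambda - \gamma \in Y$ for \emph{every} $\gamma \in \Delta^+$ with $\supp\gamma \subseteq K$ and $\alpha_{i_0}$-coefficient $1$. I plan to prove it by a secondary induction on $\height(\gamma)$. If $\gamma$ is a sum of distinct simple roots, then its support is a connected subset of $K$ through $i_0$, and $\lambda - \gamma \in Y$ follows directly from the definition of $P(Y)$. Otherwise some $\alpha_j$ with $j \in K_1$ appears in $\gamma$ with coefficient $\geq 2$; a standard Kac--Moody reflection argument yields $j \in K_1$ with $\gamma - \alpha_j \in \Delta^+$ still having $\alpha_{i_0}$-coefficient $1$, so the secondary induction gives $\lambda - (\gamma - \alpha_j) \in Y$. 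I then propagate from $\lambda - (\gamma - \alpha_j) \in Y$ to $\lambda - \gamma \in Y$ by exploiting the $\mathfrak{g}_j$-integrability of $V$ (which places the full $\alpha_j$-string inside $\wt V$), together with the interval-filling of Remarks \ref{R3.1}--\ref{R3.2} and 212-closedness, with $\lambda$ and $\lambda - \alpha_{i_0}$ serving as the anchors needed to close these interval chains.

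Finally, the ``more generally'' clause $\wt_{\mathcal{I}_{P(Y)}} V \subseteq Y$ will follow by a parallel induction on $\height(\lambda - \mu)$: for $\mu \in \wt_{\mathcal{I}_{P(Y)}}V \setminus \{\lambda\}$, Lemma \ref{L3.5} again supplies a descent direction $\gamma$ whose connected support sits inside some $K$ with $\lambda - \sum_{t \in K}\alpha_t \in P(Y)$ (using that each connected root support through a node of $\mathcal{I}_{P(Y)} \cap \mathcal{I}_V$ meets one of the generating $K$'s). Thus $\mu + \gamma \in \wt_{\mathcal{I}_{P(Y)}}V$ is in $Y$ by induction and $\lambda - \gamma \in \wt_K V \subseteq Y$ by the first part, so 212-closedness closes the step.
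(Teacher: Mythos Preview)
Your overall framework---induction on $\height(\lambda-\mu)$---matches the paper's, but you take an unnecessary detour that introduces a genuine gap. The paper steps by a \emph{simple} root: for $\mu\in\wt_K V$, $\mu\neq\lambda$, it picks $j\in K$ with $\mu+\alpha_j\in\wt_K V$, and then uses the pre-built chain $\lambda-\beta_1\succ\cdots\succ\lambda-\beta_{|K|}\in Y$ (from Theorem~D of \cite{Teja} together with the definition of $P(Y)$), in which each simple root of $K$ occurs exactly once as a consecutive difference $\beta_s-\beta_{s-1}=\alpha_j$. The single equation $(\mu+\alpha_j)+(\lambda-\beta_s)=(\mu)+(\lambda-\beta_{s-1})$ then finishes the step. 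By contrast, you step by a full root $\gamma\in\Delta_{\mathcal{I}_V,1}$ via Lemma~\ref{L3.5}, which forces a secondary induction to show $\lambda-\gamma\in Y$ for every such $\gamma$.

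The gap is in your propagation step of that secondary induction. You claim to pass from $\lambda-(\gamma-\alpha_j)\in Y$ to $\lambda-\gamma\in Y$ using ``$\lambda$ and $\lambda-\alpha_{i_0}$ as anchors''. But for $j\in K_1\subseteq\mathcal{J}_V$ with $j$ \emph{not} adjacent to $i_0$, neither anchor admits an $\alpha_j$-move inside $\wt V$: indeed $\lambda-\alpha_j\notin\wt V$ (since $j\in\mathcal{J}_V$), $\lambda+\alpha_j\notin\wt V$, and $\lambda-\alpha_{i_0}\pm\alpha_j\notin\wt V$ (the ``$-$'' case because $[f_j,f_{i_0}]=0$ and $f_jv_\lambda=0$ force $V_{\lambda-\alpha_{i_0}-\alpha_j}=0$). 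So Remarks~\ref{R3.1}--\ref{R3.2} cannot be invoked with these anchors. The repair is to use instead an anchor $\lambda-\beta$ with $\beta$ a multiplicity-free root along a path from $i_0$ to $j$ in $K$, so that both $\lambda-\beta$ and $\lambda-(\beta-\alpha_j)$ lie in $Y$ by the $P(Y)$ condition; then $(\lambda-\gamma+\alpha_j)+(\lambda-\beta)=(\lambda-\gamma)+(\lambda-\beta+\alpha_j)$ closes the step. But this is precisely the paper's chain argument, and once you have it, the simple-root induction suffices and your secondary induction becomes redundant.
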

	\begin{proof}[\textnormal{\textbf{Proof}}]
		Let $K$ be as in the statement. By the definition of $P(Y)$, there exists a unique $i\in K$ such that $i\in\mathcal{I}_Y\setminus\mathcal{J}_Y$. By part (D1) of Theorem D in \cite{Teja}, there exists a sequence of roots $\beta_r$, $1\leq r\leq |K|$, such that \big(see \eqref{E3.3} for the chain notation\big)
		\[ \alpha_i=\beta_1\prec\cdots\prec\beta_r\prec\cdots\prec\beta_{|K|}=\sum\limits_{t\in K}\alpha_t\text{ }\in \Delta_{\mathcal{I}_Y\setminus\mathcal{J}_Y,1},\text{ }\text{ and when }|K|>1,\text{ } \beta_r-\beta_{r-1}\in\Pi_{\mathcal{J}_Y}\text{ } \forall\text{ }r>1.\]
		Now, Proposition \ref{L3.6} and the definition of $P(Y)$ together yield
		\begin{equation}\label{E''3.6} \lambda-\beta_{|K|}\prec\cdots\prec\lambda-\beta_r\prec\cdots\prec\lambda-\beta_1\text{ }\in Y.\end{equation}
		Recall, $\lambda \in Y$. With these observations, we now show via induction on $\height(\lambda-\mu)\geq1$ that $\mu\in\wt_K V$ implies $\mu\in Y$, which proves the first part of the lemma. The proof for $\wt_{\mathcal{I}_{P(Y)}}V\subseteq Y$ is similar.\\
		Base step: $\height(\lambda-\mu)=1$ forces $\mu=\lambda-\alpha_i=\lambda-\beta_1$ \big(see the chain in \eqref{E''3.6}\big), and so $\mu\in Y$.\\
		Induction step: Assume that $\height(\lambda-\mu)>1$. Fix $j\in K$ such that $\mu+\alpha_j\in\wt_KV$. Set $\beta_0=0$ for convenience. Observe that there exists $1\leq s\leq |K|$ such that $\beta_s-\beta_{s-1}=\alpha_j$. By the induction hypothesis, $\mu+\alpha_j\in Y$. Now, the 212-closedness of $Y$, the equation
		\[ (\mu+\alpha_j)+(\lambda-\beta_{s})=(\mu)+(\lambda-\beta_{s-1}),\]
		and also the chain in \eqref{E''3.6} together imply that $\mu\in Y$, completing the proof of the lemma.
	\end{proof}
	Along the same lines of Lemma \ref{L3.10}, we prove another lemma needed in the proof of Theorem~\ref{thmA}.\allowdisplaybreaks
	\begin{lemma}\label{L4.11}
		Let $\mathfrak{g}$, $V$ and $Y$ be as in Lemma \ref{L3.9}. Let $\beta\in\Delta_{\mathcal{I}_Y\setminus\mathcal{J}_Y,1}$ be a sum of distinct simple roots. Assume that $\lambda-\beta\in P(Y)$. Suppose $j\in\mathcal{J}_Y$ is such that $j\notin \supp(\beta)$ and $\beta+\alpha_j$ is a root \big(which happens if and only if the Dynkin subdiagram on $\supp(\beta)\sqcup\{j\}$ is connected\big). Then: \[\lambda-\beta-\alpha_j\in Y\quad \implies \quad \lambda-\beta-\alpha_j\in P(Y).
		\]
	\end{lemma}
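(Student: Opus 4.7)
The plan is to verify directly the two requirements in the definition of $P(Y)$ from \eqref{EP(Y)}: (i) that $\beta+\alpha_j$ is a sum of distinct simple roots lying in $\Delta_{\mathcal{I}_Y\setminus\mathcal{J}_Y,1}$; and (ii) that $\lambda-\eta'\in Y$ for every $\eta'\in\Delta_{\mathcal{I}_Y\setminus\mathcal{J}_Y,1}$ with $\eta'\preceq\beta+\alpha_j$. Condition (i) is immediate: $\beta$ is a sum of distinct simple roots and $j\notin\supp(\beta)$, and $j\in\mathcal{J}_Y$ forces $\height_{\mathcal{I}_Y\setminus\mathcal{J}_Y}(\beta+\alpha_j)=\height_{\mathcal{I}_Y\setminus\mathcal{J}_Y}(\beta)=1$. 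For (ii), if $j\notin\supp(\eta')$ then $\eta'\preceq\beta$, so $\lambda-\eta'\in Y$ by the hypothesis $\lambda-\beta\in P(Y)$. Thus the only case requiring work is $j\in\supp(\eta')$.

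I would treat this main case by induction on $\height(\zeta)\geq 0$, where $\zeta:=\beta+\alpha_j-\eta'$; the base case $\zeta=0$ gives $\eta'=\beta+\alpha_j$ and $\lambda-\eta'\in Y$ by hypothesis. For the inductive step, let $i$ be the unique node in $\supp(\beta)\cap(\mathcal{I}_Y\setminus\mathcal{J}_Y)$. Since $\beta+\alpha_j$ is a sum of distinct simple roots, $\supp(\eta')$ and $\supp(\zeta)$ are disjoint with union $\supp(\beta+\alpha_j)$; as $i,j\in\supp(\eta')$, we have $\supp(\zeta)\subseteq\mathcal{J}_Y\cap\supp(\beta)$. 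Since $\supp(\beta+\alpha_j)$ is connected (it is the support of a root) and both $\supp(\eta')$ and $\supp(\zeta)$ are non-empty, there exists $q\in\supp(\zeta)$ adjacent in the Dynkin diagram to some node $k\in\supp(\eta')$. Because $q\notin\supp(\eta')$, one then computes
\[
\langle \eta',\alpha_q^\vee\rangle\leq \langle\alpha_k,\alpha_q^\vee\rangle\leq -1<0,
\]
which (by the $\mathfrak{sl}_{\alpha_q}$-string through $\eta'$) implies that $\eta'+\alpha_q$ is a root. It lies in $\Delta_{\mathcal{I}_Y\setminus\mathcal{J}_Y,1}$ (its $\alpha_i$-coefficient equals that of $\eta'$) and satisfies $\eta'\precneqq\eta'+\alpha_q\preceq\beta+\alpha_j$, hence the inductive hypothesis yields $\lambda-(\eta'+\alpha_q)\in Y$.

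To close the induction I would invoke Theorem D of \cite{Teja}, exactly as in the proof of Lemma \ref{L3.10}, applied to the root $\beta$: this gives a chain $\alpha_i=\beta_1\prec\cdots\prec\beta_m=\beta$ with each $\beta_r\in\Delta_{\mathcal{I}_Y\setminus\mathcal{J}_Y,1}$ and $\beta_r-\beta_{r-1}\in\Pi_{\mathcal{J}_Y}$. Since $q\in\mathcal{J}_Y\cap\supp(\beta)$, some step $s$ of this chain satisfies $\beta_s-\beta_{s-1}=\alpha_q$; as $\beta_{s-1},\beta_s\preceq\beta$ and lie in $\Delta_{\mathcal{I}_Y\setminus\mathcal{J}_Y,1}$, both $\lambda-\beta_{s-1}$ and $\lambda-\beta_s$ lie in $Y$ by $\lambda-\beta\in P(Y)$. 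Then the 212-equation
\[
(\lambda-(\eta'+\alpha_q))+(\lambda-\beta_{s-1})=(\lambda-\eta')+(\lambda-\beta_s),
\]
whose left-hand side consists of weights in $Y$ and whose right-hand side consists of weights in $\wt V$ (using Proposition \ref{L3.6} for $\eta'$ and $\beta_s$), produces $\lambda-\eta'\in Y$ by 212-closedness of $Y$, finishing the induction.

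The delicate ingredient is the inductive step, specifically producing a node $q\in\supp(\zeta)$ adjacent to $\supp(\eta')$ so that $\eta'+\alpha_q$ is a root to which the inductive hypothesis can be applied; this is handled uniformly by combining the connectedness of $\supp(\beta+\alpha_j)$ with the non-positivity of off-diagonal Cartan entries. Once this is in place, the reduction to a single 212-equation combining $\lambda-(\eta'+\alpha_q)$ with the chain for $\beta$ provided by Theorem D of \cite{Teja} is routine.
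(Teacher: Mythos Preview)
Your proof is correct and follows essentially the same strategy as the paper's: both reduce to the case $j\in\supp(\eta')$, invoke a chain from Theorem~D of \cite{Teja}, and conclude via a single 212-equation of exactly the same shape. The only cosmetic difference is the direction of the induction---you descend on $\height(\beta+\alpha_j-\eta')$ and \emph{add} a simple root $\alpha_q$ to $\eta'$ via a direct connectedness argument, whereas the paper ascends on $\height(\gamma)$ and \emph{strips} a simple root via \cite[Lemma~3.4]{Teja} (and correspondingly builds its chain up to $\beta+\alpha_j$ rather than just $\beta$); the two are dual and equally clean.
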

	\begin{proof}[\textnormal{\textbf{Proof}}]
		Let $Y$, $\beta$ and $j$ be as in the statement. Assume that $\lambda-\beta-\alpha_j\in Y$. By the assumptions on $\beta$ and $j$, observe that $\beta+\alpha_j$ satisfies the first two conditions in the definition of $P(Y)$; namely $\beta+\alpha_j\in\Delta_{\mathcal{I}_Y\setminus\mathcal{J}_Y,1}$ and $\beta+\alpha_j$ is a sum of distinct simple roots. So to complete the proof of the lemma, we show:
		\[ \gamma\in \Delta_{\mathcal{I}_Y\setminus\mathcal{J}_Y,1}\text{ }\text{ and }\text{ }\gamma\precneqq \beta+\alpha_j\quad\implies\quad \lambda-\gamma\in Y,\qquad\text{via induction on }\height(\gamma)\geq 1.\]
		Base step: $\height(\gamma)=1$ implies $\gamma\in \Pi_{\mathcal{I}_Y\setminus\mathcal{J}_Y}$. This implies $\gamma\prec \beta$ since $\gamma\prec \beta+\alpha_j$. Now as $\lambda-\beta\in P(Y)$, the definition of $P(Y)$ implies $\lambda-\gamma\in Y$.\\
		Induction step: Assume that $\height(\gamma)>1$. If $\height_{\{j\}}(\gamma)=0$, then $\gamma\preceq \beta$, and we will be done as in the base step. So, we assume for the rest of the proof that $\height_{\{j\}}(\gamma)>0$. As $\lambda-\beta\in P(Y)$, by the definition of $P(Y)$ and part (D1) of Theorem D in \cite{Teja}, there exists a chain of roots $\beta_r\in \Delta_{\mathcal{I}_Y\setminus\mathcal{J}_Y,1}$, $1\leq r\leq n=\height(\beta)+1$ such that
		\[
		\beta_1\in\Pi_{\mathcal{I}_Y\setminus\mathcal{J}_Y},\text{ }
		\beta_{n-1}=\beta\text{ and }\beta_n=\beta+\alpha_j,\quad\lambda-\beta_r\in Y\text{ }\forall  r,\quad	    \beta_{r-1}\prec\beta_{r}\text{ and }\beta_r-\beta_{r-1}\in \Pi_{\mathcal{J}_Y}\text{ }\forall\text{ } r>1.
		\]
		By \cite[Lemma 3.4]{Teja}, we fix a node $i\in\mathcal{J}_Y$ such that $\gamma-\alpha_i\in\Delta_{\mathcal{I}_Y\setminus\mathcal{J}_Y,1}$. By the induction hypothesis, $\lambda-\gamma+\alpha_i\in Y$. Note, $i\in\supp(\beta+\alpha_j)=\supp(\beta)\sqcup\{j\}$. So, there exists $s\in [n]$ such that $\beta_s-\beta_{s-1}=\alpha_i$. By Proposition \ref{L3.6}, $\lambda-\gamma\in \wt V$. Now, the equation
		\[
		(\lambda-\gamma+\alpha_i)+(\lambda-\beta_s)=(\lambda-\gamma)+(\lambda-\beta_{s-1})\]
		by the 212-closedness of $Y$ implies $\lambda-\gamma\in Y$. Hence, the proof of the lemma is complete.\end{proof}
	We will conclude this subsection by giving some necessary and sufficient conditions for a 212-closed subset $Y$ to equal a standard face of $\wt V$, in the following proposition. The implication (3) $\implies$ (1) in it will be useful in the proof of Theorem \ref{thmA} in the next section. 
	\begin{prop}\label{P3.11}
		Let $\mathfrak{g}$, $V$ and $Y$ be as in Lemma \ref{L3.9}. Then the following are equivalent:
		\begin{enumerate} 
			
			\item[(1)] $Y=\wt_{\mathcal{I}_Y}V$.
			\item[(2)] $\lambda-\left(\Delta_{\mathcal{I}_Y\setminus \mathcal{J}_Y,1}\cap\Delta_{\mathcal{I}_Y}\right)\subset Y$.
			\item[(3)] $P(Y)=P(\wt_{\mathcal{I}_Y}V).$ 
			\item[(4)] $Y$ is $W_{\mathcal{J}_Y}$-invariant.
		\end{enumerate} 
	\end{prop}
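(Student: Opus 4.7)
The plan is to prove the equivalence along the cycle $(1) \Rightarrow (2) \Rightarrow (3) \Rightarrow (1)$ together with $(1) \Rightarrow (4)$ and $(4) \Rightarrow (2)$. Three of these implications follow essentially for free from machinery developed earlier in Section~4. For $(1) \Rightarrow (2)$: any $\gamma \in \Delta_{\mathcal{I}_Y\setminus\mathcal{J}_Y,1}\cap\Delta_{\mathcal{I}_Y}$ has $\lambda - \gamma \in \wt V$ by Proposition \ref{L3.6}, and $\supp(\gamma) \subseteq \mathcal{I}_Y$ forces $\lambda - \gamma \in \wt_{\mathcal{I}_Y}V = Y$. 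For $(2) \Rightarrow (3)$: the inclusion $P(Y) \subseteq P(\wt_{\mathcal{I}_Y}V)$ is automatic from $Y \subseteq \wt_{\mathcal{I}_Y}V$; conversely every $\lambda - \eta \in P(\wt_{\mathcal{I}_Y}V)$ has $\eta \in \Delta_{\mathcal{I}_Y\setminus\mathcal{J}_Y,1}\cap\Delta_{\mathcal{I}_Y}$---as does each sub-root $\eta' \preceq \eta$ in $\Delta_{\mathcal{I}_Y\setminus\mathcal{J}_Y,1}$---so hypothesis~$(2)$ places $\lambda - \eta$ and all such $\lambda - \eta'$ in $Y$, certifying $\lambda - \eta \in P(Y)$. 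For $(3) \Rightarrow (1)$: the equality of $P$-sets forces $\mathcal{I}_{P(Y)} = \mathcal{I}_Y$ (every node of $\mathcal{I}_Y$ appears in the support of some $\lambda - \eta \in P(\wt_{\mathcal{I}_Y}V)$, using Remark \ref{R3.4} to link $\mathcal{J}_Y$-nodes to their neighbors in $\mathcal{I}_Y\setminus\mathcal{J}_Y$), and then Lemma \ref{L3.10} yields $\wt_{\mathcal{I}_Y}V \subseteq Y$. Finally $(1) \Rightarrow (4)$: Remark \ref{R3.3} gives $\mathcal{J}_Y \subseteq I_V$, so $W_{\mathcal{J}_Y}$ preserves $\wt V$, and a direct check (using $\langle \lambda, \alpha_j^\vee\rangle = 0$ for $j \in \mathcal{J}_Y$) shows the slice $\lambda - \mathbb{Z}_{\geq 0}\Pi_{\mathcal{I}_Y}$ is also $W_{\mathcal{J}_Y}$-stable after intersecting with $\wt V$.

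The substantive work is $(4) \Rightarrow (2)$. Lemma \ref{L3.9} combined with the $W_{\mathcal{J}_Y}$-invariance hypothesis gives $\lambda - \Pi_{\mathcal{I}_Y\setminus\mathcal{J}_Y} \subset Y$ at the outset. Now fix $\gamma \in \Delta_{\mathcal{I}_Y\setminus\mathcal{J}_Y,1}\cap\Delta_{\mathcal{I}_Y}$ and let $i$ be the unique node of $\mathcal{I}_Y\setminus\mathcal{J}_Y$ in $\supp(\gamma)$. By part~(D1) of Theorem~D in \cite{Teja}, construct a chain $\alpha_i = \beta_1 \prec \beta_2 \prec \cdots \prec \beta_k = \gamma$ in $\Delta_{\mathcal{I}_Y\setminus\mathcal{J}_Y,1}$ whose increments $\beta_r - \beta_{r-1} = \alpha_{j_r}$ all lie in $\Pi_{\mathcal{J}_Y}$. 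Induct on $r$ to show $\lambda - \beta_r \in Y$. Writing $\beta = \beta_{r-1}$, $j = j_r$, and $c = \langle\beta, \alpha_j^\vee\rangle$, the $W_{\mathcal{J}_Y}$-invariance puts $s_j(\lambda - \beta) = \lambda - \beta + c\alpha_j$ in $Y$, while $\langle \lambda - \beta, \alpha_j^\vee\rangle = -c$ by Remark \ref{R3.3}. When $c \leq -1$, Lemma \ref{L3.1} places the $\alpha_j$-interval $[\lambda - \beta + c\alpha_j,\,\lambda - \beta]$ in $\wt V$, and Remark \ref{R3.1} then places it in $Y$, capturing $\lambda - \beta - \alpha_j$. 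When $c \geq 1$, the dual interval $[\lambda - \beta,\,\lambda - \beta + c\alpha_j] \subset \wt V$; equation \eqref{E''6.4} with $\mu = \lambda - \beta$, $\mu' = \lambda - \beta + c\alpha_j$, $\alpha = \alpha_j$ yields $\lambda - \beta + \alpha_j \in Y$, and then Remark \ref{R3.2}(b), applied with $\mu' = \lambda - \beta + \alpha_j$ and $\mu'' = \lambda - \beta$, pulls $\lambda - \beta - \alpha_j \in \wt V$---present by Proposition \ref{L3.6}---into $Y$.

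The decisive obstacle is the case $c = 0$, where $s_j$ fixes $\lambda - \beta$ and contributes no new element of $Y$. I resolve it by invoking the standard fact that, since $\alpha_j$ is a simple real root, the adjoint action of $\mathfrak{sl}_{\alpha_j}$ on $\mathfrak{g}$ is locally finite, so the $\alpha_j$-string through $\beta$ is an unbroken $\mathfrak{sl}_{\alpha_j}$-string; as $\beta + \alpha_j$ is a root and $\langle\beta, \alpha_j^\vee\rangle = 0$, string symmetry forces $\beta - \alpha_j$ to be a root too. A coefficient check shows this root is \emph{positive}: if the $\alpha_j$-coefficient of $\beta$ vanished then $\beta - \alpha_j$ would carry a lone negative coefficient (at $\alpha_j$) alongside non-negative ones, contradicting rootedness. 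Since $\supp(\beta - \alpha_j)$ still contains $i \notin \mathcal{J}_V$, Proposition \ref{L3.6} places both $\lambda - \beta \pm \alpha_j$ in $\wt V$, and then the 212-closedness of $Y$ applied to $2(\lambda - \beta) = (\lambda - \beta + \alpha_j) + (\lambda - \beta - \alpha_j)$ delivers $\lambda - \beta - \alpha_j \in Y$, closing the induction and completing the proof.
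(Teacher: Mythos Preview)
Your proof is correct. The implications $(1)\Rightarrow(2)\Rightarrow(3)\Rightarrow(1)$ and $(1)\Rightarrow(4)$ match the paper's essentially verbatim. The genuine difference is how you close the cycle from $(4)$: the paper proves $(4)\Rightarrow(1)$ directly, while you prove $(4)\Rightarrow(2)$ and then rely on $(2)\Rightarrow(3)\Rightarrow(1)$.

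The paper's $(4)\Rightarrow(1)$ argument inducts on $\height(\lambda-\mu)$ over \emph{all} $\mu\in\wt_{\mathcal{I}_Y}V$. Given $\mu+\alpha_i\in Y$, it splits on whether $i\in\mathcal{I}_Y\setminus\mathcal{J}_Y$ (use $(\mu+\alpha_i)+(\lambda-\alpha_i)=(\mu)+(\lambda)$) or $i\in\mathcal{J}_Y$ (use $s_i(\mu+\alpha_i)\in Y$ and case on $\langle\mu+\alpha_i,\alpha_i^\vee\rangle$; crucially, when this pairing is $\leq 0$ one has $s_i\mu\succneqq\mu$, so induction gives $s_i\mu\in Y$ and invariance finishes). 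Your $(4)\Rightarrow(2)$ instead inducts along a root chain inside $\Delta_{\mathcal{I}_Y\setminus\mathcal{J}_Y,1}$ toward a fixed $\gamma$, and your $c=0$ case requires the extra $\mathfrak{sl}_2$-string observation that $\beta-\alpha_j$ is a (positive) root. Both work; the paper's route is a bit slicker because the awkward ``reflection does nothing'' case is absorbed by applying induction to $s_i\mu$ rather than by a root-string argument. One minor remark: the chain you need for a general $\gamma\in\Delta_{\mathcal{I}_Y\setminus\mathcal{J}_Y,1}\cap\Delta_{\mathcal{I}_Y}$ (with possibly repeated simple-root increments) is most directly obtained by iterating \cite[Lemma~3.4]{Teja} rather than Theorem~D(D1), which in the paper is invoked only for roots that are sums of distinct simple roots.
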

	\begin{proof}[\textnormal{\textbf{Proof}}]
		Let $Y$ be as in the statement. $(1)\implies (2)$ follows by the definitions of $\mathcal{I}_Y$ and $\wt_{\mathcal{I}_Y}V$ \big(see \eqref{E1.2} and \eqref{defn IV-JY}\big), since $\lambda-\Delta_{\mathcal{I}_Y\setminus \mathcal{J}_Y,1}\subset\wt V$ by Proposition \ref{L3.6}. Next, $(2)\implies(3)$ is obvious as $P(Y)\subseteq P(\wt_{\mathcal{I}_Y}V)\subseteq\lambda-\big(\Delta_{\mathcal{I}_Y\setminus\mathcal{J}_Y,1}\cap\Delta_{\mathcal{I}_Y}\big)$. (3) $\implies$ (1) follows by Lemma \ref{L3.10} via the observation $\mathcal{I}_{P(Y)}=\mathcal{I}_Y$ when $P(Y)=P(\wt_{\mathcal{I}_Y}V)$. Recall, $(1)\implies (4)$ was shown in Remark~\ref{R3.3}.
		
		Finally, we show the non-trivial implication $(4)\implies(1)$. Assume that $Y$ is $W_{\mathcal{J}_Y}$-invariant. By Lemma \ref{L3.9}, $\lambda-\Pi_{\mathcal{I}_Y\setminus\mathcal{J}_Y}\subset Y$. We prove by induction on $\height(\lambda-\mu)\geq 1$ that $\mu \in \wt_{\mathcal{I}_Y}V$ implies $\mu \in Y$. In the base step $\height(\lambda-\mu)=1$, the result follows by $\lambda-\Pi_{\mathcal{I}_Y\setminus\mathcal{J}_Y}\subset Y$.\\
		Induction step:  Assume that $\height(\lambda-\mu)>1$. Fix $i\in \mathcal{I}_Y$ such that $\mu+\alpha_{i}\in \wt_{\mathcal{I}_Y}V$. Recall, $\lambda-\alpha_i\in Y$. By the induction hypothesis, $\mu+\alpha_{i}\in Y$. When $i\in \mathcal{I}_Y\setminus \mathcal{J}_Y$, the equation
		\[
		(\mu+\alpha_i)+(\lambda-\alpha_i)=(\mu)+(\lambda)
		\]
		by the 212-closedness of $Y$ implies $\mu\in Y$. So, we assume for the rest of the proof that $i\in \mathcal{J}_Y$. Note by the $W_{\mathcal{J}_Y}$-invariance of $Y$ that $s_{i}(\mu+\alpha_{i})\in Y$. If $\langle\mu+\alpha_{i},\alpha_{i}^{\vee}\rangle=1$, then $s_{i}(\mu+\alpha_{i})=\mu$, and we are done. Else if $\langle\mu+\alpha_{i},\alpha_{i}^{\vee}\rangle>1$, then Remark \ref{R3.1} applied to $s_{i}(\mu+\alpha_{i})\precneqq \mu+\alpha_{i}\in Y$ (in place of $\mu'\precneqq \mu$ in it) yields $\mu\in Y$. So, we assume now that $\langle\mu+\alpha_{i},\alpha_{i}^{\vee}\rangle\leq 0$. This implies $\langle\mu,\alpha_{i}^{\vee}\rangle\leq-2$, and so $s_{i}\mu\succneqq\mu$. As $s_i\mu\in\wt V$ \big(by the $W_{\mathcal{J}_Y}$-invariance of $\wt V$\big), the induction hypothesis implies $s_{i}\mu \in Y$. Finally, $\mu\in Y$ by the $W_{\mathcal{J}_Y}$-invariance of $Y$, completing the proof of the lemma.
	\end{proof}
	\subsection{On maximal elements of 212-closed subsets of $\wt M(\lambda,J)$}\label{S4.4}
	Throughout this subsection, unless otherwise mentioned, we assume $\wt V=\wt M(\lambda,J)$ for some $J\subseteq J_{\lambda}$, and $Y\subseteq \wt V$ is an arbitrary 212-closed subset. The goal of this subsection is to study the \textit{maximal elements} of $Y$; see Proposition \ref{P3.13}. Recall, $\mu\in Y$ is a maximal element of $Y$ if there does not exist a weight $y\in Y$ such that $\mu\precneqq y$. By our analysis, we now show that the maximal elements of any 212-closed subset of $\wt M(\lambda,J)$ are contained in the set of vertices $W_J\lambda$ of $\wt M(\lambda,J)$ \big(or of $\conv_{\mathbb{R}}(\wt M(\lambda,J))$\big). For this, we first note the following observation, which is somewhat similar to Corollary \ref{C3.7}(a), but much stronger than it in view of the nice Minkowski difference formula \eqref{E2.1} for $\wt M(\lambda,J)$. 
	\begin{observation}\label{O3.12}
		Let $Y\subseteq \wt M(\lambda,J)$ be 212-closed. For some $\gamma_1,\ldots,\gamma_n\in\Delta_{J^c,1}$, suppose $\lambda-\sum_{i=1}^n\gamma_i\in Y$. Then $\lambda-\mathbb{Z}_{\geq0}\{\gamma_i\}_{i=1}^{n}\subseteq Y$. To verify this, consider the system of~equations:
		\begin{equation}\label{E3.9}
		2\left(\lambda-\sum\limits_{j=1}^i\gamma_j\right)=\left(\lambda-\sum\limits_{j=1}^{i-1}\gamma_j\right)+\left(\lambda-2\gamma_i-\sum\limits_{j=1}^{i-1}\gamma_j\right) \qquad
		\text{for each }i\in[n].
		\end{equation}
		When $i=1$, treat $\sum_{j=1}^{i-1}\gamma_j$ as 0. For each $i$, notice by the Minkowski difference formula for $\wt M(\lambda,J)$ that both terms on the right hand side of the corresponding $i^{th}$ equation in the system in \eqref{E3.9} are weights of $M(\lambda,J)$. By the 212-closedness of $Y$, the system of equations in \eqref{E3.9} results in $\lambda-\sum_{j=1}^{i}\gamma_j\in Y$ for all $i\in [n],\text{ and also }\lambda\in Y$. Now, observe for each $i\in [n]$,
		\[ 
		\left(\lambda-\sum_{j=1}^{i}\gamma_j\right)+(\lambda)=\left(\lambda-\sum_{j=1}^{i-1}\gamma_j\right)+(\lambda-\gamma_i)\quad\implies\quad \lambda-\gamma_i\in Y.
		\]
		From here, for any sequence $(c_i)_{i=1}^n\in \mathbb{Z}_{\geq 0}$, via an induction argument on $\sum_{i=1}^nc_i$, it can be easily shown that $\lambda-\sum_{i=1}^nc_i\gamma_i\in Y$, which implies the result. 
	\end{observation}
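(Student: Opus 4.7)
The plan is to leverage the Minkowski decomposition $\wt M(\lambda,J) = \lambda - \mathbb{Z}_{\geq 0}\Delta_{J^c,1}$ from \eqref{E2.1}, which guarantees that \emph{every} element of the form $\lambda - \sum_i c_i \gamma_i$ with $c_i \in \mathbb{Z}_{\geq 0}$ lies automatically in the ambient $\wt M(\lambda,J)$. This trivializes the ``$x$-side'' verifications in each 212-closed deduction, and reduces the task to a bootstrapping procedure that propagates $Y$-membership from the single hypothesis $y_0 := \lambda - \sum_{i=1}^n \gamma_i \in Y$ outward to the full cone $\lambda - \mathbb{Z}_{\geq 0}\{\gamma_i\}_{i=1}^n$.

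I would carry this out in stages. First, I would apply 212-closedness of $Y$ to
\[
(y_0) + (y_0) = (\lambda) + \Big(\lambda - 2\sum_{i=1}^n \gamma_i\Big)
\]
to conclude $\lambda \in Y$ (the two right-hand terms lie in $\wt M(\lambda,J)$ by Minkowski). Second, for each fixed $i \in [n]$, apply 212-closedness to
\[
(\lambda) + (y_0) = (\lambda - \gamma_i) + \Big(\lambda - \sum_{j \neq i} \gamma_j\Big)
\]
to deduce $\lambda - \gamma_i \in Y$. At this point one has collected the vertex $\lambda$ and all ``first neighbors'' $\lambda - \gamma_i$ into $Y$.

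Third, I would promote these to the full cone via two further inductions. An induction on $k \geq 0$, using
\[
2(\lambda - k\gamma_i) = (\lambda - (k-1)\gamma_i) + (\lambda - (k+1)\gamma_i),
\]
yields $\lambda - k\gamma_i \in Y$ for every $k \geq 0$ and every $i \in [n]$. Finally, induct on $N = \sum_i c_i$ to establish $\lambda - \sum_i c_i \gamma_i \in Y$ in general: given $N \geq 2$, pick $i$ with $c_i \geq 1$; if all other $c_j$ vanish the previous step applies, otherwise apply 212-closedness to
\[
(\lambda - c_i \gamma_i) + \Big(\lambda - \sum_{j \neq i} c_j \gamma_j\Big) = (\lambda) + \Big(\lambda - \sum_{j=1}^n c_j \gamma_j\Big),
\]
whose left-hand side lies in $Y$ by the single-coordinate step together with the inductive hypothesis.

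There is no serious obstacle, only a bookkeeping concern: at every stage one must choose a 212-closed equation whose ``$y$-side'' consists of elements already known to lie in $Y$, while its ``$x$-side'' pairs a new target with an auxiliary element whose $\wt M(\lambda,J)$-membership is automatic via \eqref{E2.1}. The chosen order---first capture $\lambda$, then the neighbors $\lambda - \gamma_i$, then the rays $\lambda - k\gamma_i$, and finally arbitrary nonnegative integer combinations---guarantees that this hypothesis is satisfied at each stage, so the argument closes cleanly.
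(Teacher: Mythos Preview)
Your proof is correct and follows essentially the same approach as the paper: both rely on the Minkowski decomposition \eqref{E2.1} to guarantee that all relevant terms lie in $\wt M(\lambda,J)$, and then bootstrap via repeated applications of 212-closedness from the single seed $y_0$ out to the full cone. The only cosmetic difference is in the first stage: the paper extracts $\lambda$ by peeling off one $\gamma_i$ at a time through the chain of partial sums $\lambda-\sum_{j=1}^{i}\gamma_j$ (equations \eqref{E3.9}), whereas you jump directly to $\lambda$ via $2y_0=(\lambda)+(\lambda-2\sum_i\gamma_i)$, and your explicit single-ray step $\lambda-k\gamma_i\in Y$ is folded into the paper's unelaborated final induction on $\sum_i c_i$.
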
 
	Next, we generalize the aforementioned result of Chari (see \cite[Lemma 6.8]{Khare_JA}), which says for $\lambda\in P^+$ in the semisimple case that every 212-closed subset of $\wt L(\lambda)$ intersects the orbit $W\lambda$. \begin{prop}\label{P3.13}
		Let $\mathfrak{g}$ be a Kac--Moody algebra, $\lambda\in\mathfrak{h}^*$, and $M(\lambda)\twoheadrightarrow V$ be such that $\wt V=\wt M(\lambda,J)$ for some $J\subseteq J_{\lambda}$.  	   	\begin{itemize}
			\item[(a)] Let $\mu\in\wt V$. Then $\mu\in W_J\lambda$ if and only if for every real root $\beta\in\Delta$, at most one of $\mu\pm\beta$ is a weight. In other words,
			\[\mu\in W_J\lambda \quad\iff\quad \{\mu-\beta,\text{ }\mu+\beta\}\not\subset \wt V\text{ }\text{ for all }\text{real roots }\beta.\]
			\item[(b)] Let $Y\subseteq\wt V$ be 212-closed. If $\mu$ is a maximal element of $Y$, then $\mu\in Y\cap\big( W_J\lambda\big)$. In particular, $Y\cap \big(W_J\lambda\big)\neq \emptyset$.
		\end{itemize}
	\end{prop}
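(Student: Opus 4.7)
The plan is to prove part (a) first via a two-way argument using the $W_J$-invariance of $\wt V$ (which holds by the $\mathfrak{g}_J$-integrability of $M(\lambda,J)$) and the integrable slice decomposition \eqref{E2.2}, and then deduce part (b) as an almost immediate consequence, combining (a) with the $(\{2\};\{1,2\})$-closedness equation $2(\mu) = (\mu+\beta) + (\mu-\beta)$.

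For the forward implication in (a), I would argue as follows. Suppose $\mu = w\lambda$ with $w \in W_J$, and for contradiction that some real root $\beta$ satisfies $\{\mu - \beta,\mu+\beta\} \subset \wt V$. Applying $w^{-1}$ and using the $W_J$-invariance of $\wt V$, we obtain $\lambda \pm w^{-1}\beta \in \wt V$. Since the real root $w^{-1}\beta$ is either positive or negative, one of $\pm w^{-1}\beta$ is positive, which would force a weight of $V$ to lie strictly above the highest weight $\lambda$, a contradiction.

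For the reverse implication, fix $\mu \in \wt V$ such that $\{\mu - \beta, \mu+\beta\} \not\subset \wt V$ for every real root $\beta$. Because $V$ is $\mathfrak{g}_J$-integrable, we may pick $w \in W_J$ so that $w\mu$ is $J$-dominant, i.e.\ $\langle w\mu, \alpha_j^\vee\rangle \geq 0$ for all $j \in J$. The integrable slice decomposition \eqref{E2.2} locates $w\mu \in \wt L_J(\lambda - \xi)$ for a unique $\xi \in \mathbb{Z}_{\geq 0}\Pi_{J^c}$; here $\lambda - \xi$ is $J$-dominant because $\langle \xi,\alpha_j^\vee \rangle \leq 0$ for $j \in J$. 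Since $L_J(\lambda - \xi)$ is an integrable highest weight $\mathfrak{g}_J$-module, its unique $J$-dominant weight is its highest weight, forcing $w\mu = \lambda - \xi$. If $\xi = 0$ we are done; otherwise choose $j \in J^c$ with $\xi - \alpha_j \in \mathbb{Z}_{\geq 0}\Pi_{J^c}$, so that the slice decomposition yields both $w\mu + \alpha_j = \lambda - (\xi - \alpha_j) \in \wt V$ and $w\mu - \alpha_j = \lambda - (\xi + \alpha_j) \in \wt V$. Conjugating back by $w^{-1}$ gives $\mu \pm w^{-1}\alpha_j \in \wt V$, contradicting the hypothesis since $w^{-1}\alpha_j$ is a real root. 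I expect this step---identifying the $J$-dominant representative inside a slice with the slice's highest weight---to be the main technical point, though it rests on a standard property of integrable highest weight modules.

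For part (b), let $\mu$ be a maximal element of $Y$; such elements exist because $Y \subseteq \lambda - \mathbb{Z}_{\geq 0}\Pi$ forces every ascending chain in $(Y,\prec)$ to be finite. If some real root $\beta$ satisfied $\{\mu-\beta,\mu+\beta\} \subset \wt V$, then the equation $2(\mu) = (\mu+\beta) + (\mu-\beta)$ together with the $(\{2\};\{1,2\})$-closedness of $Y$ would place both $\mu \pm \beta$ in $Y$; since $\beta$ is real and nonzero, one of $\pm\beta$ is a positive root, producing an element of $Y$ strictly above $\mu$ and contradicting maximality. Thus the hypothesis of part (a) holds for $\mu$, giving $\mu \in W_J \lambda$, and in particular $Y \cap (W_J\lambda) \neq \emptyset$.
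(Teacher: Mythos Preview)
Your argument for the reverse implication in part (a) has a genuine gap: the assertion that an integrable highest weight $\mathfrak{g}_J$-module $L_J(\lambda-\xi)$ has a \emph{unique} $J$-dominant weight is false in general. For instance, with $\mathfrak{g}_J=\mathfrak{sl}_3$ and highest weight the highest root $\theta$, the adjoint representation has both $\theta$ and $0$ as dominant weights. So from $w\mu$ being $J$-dominant you cannot conclude $w\mu=\lambda-\xi$ without further input. The missing ingredient is precisely the hypothesis on $\mu$: since the condition ``$\{\mu\pm\beta\}\not\subset\wt V$ for all real $\beta$'' is $W_J$-invariant, $w\mu$ inherits it; now if $w\mu\neq\lambda-\xi$ then $w\mu+\alpha_j\in\wt V$ for some $j\in J$, and $J$-dominance gives $\langle w\mu+\alpha_j,\alpha_j^\vee\rangle\geq 2$, whence Lemma~\ref{L3.1} yields $w\mu-\alpha_j\in\wt V$, contradicting the hypothesis applied to $w\mu$ and the real root $\alpha_j$. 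With this repair your slice-decomposition approach works and is a clean alternative to the paper's proof, which instead runs an induction on $\height(\lambda-\mu)$, at each step locating $j\in J$ with $\mu+\alpha_j\in\wt V$, showing $\langle\mu,\alpha_j^\vee\rangle<0$, and passing to $s_j\mu$.

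Your forward implication in (a) and your argument for (b) are correct and match the paper's proof.
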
 
	Note by equation \eqref{E2.3} that this applies to $V=L(\lambda)$ for arbitrary $\lambda\in\mathfrak{h}^*$ and Kac--Moody $\mathfrak{g}$, and hence specializes to the aforementioned result from \cite{Khare_JA}.	\begin{proof}[\textnormal{\textbf{Proof}}]
		We first prove part (a), by the method of contradiction. Recall, $\wt V=\wt M(\lambda,J)$ is $W_J$-invariant. Fix an element $w\in W_J$ and a root $\beta$. Assume that both $w\lambda\pm\beta\in\wt V$. Without loss of generality, assume that $w^{-1}\beta$ is a positive root. By the $W_J$-invariance of $\wt V$, $w^{-1}\big(w\lambda+\beta\big)=\lambda+w^{-1}\beta\in \wt V$. This is absurd as $\lambda\precneqq\lambda +w^{-1}\beta$. Hence, $\{w\lambda-\beta,w\lambda+\beta\}\not\subset\wt V$ $\forall$ $w\in W_J$,~$\beta\in\Delta$. 
		
		For the reverse implication in part (a), assume that $\mu\in\wt V\text{ is such that }\{\mu-\beta,\mu+\beta\}\not\subset\wt V$ for any real root $\beta$. We prove that $\mu\in W_J\lambda$ by induction on $\height(\lambda-\mu)\geq 0$. In the base step, $\mu=\lambda$, and so the result is trivial. \\
		Induction step: Assume that $\height(\lambda-\mu)>0$. Fix $j\in\mathcal{I}$ such that $\mu+\alpha_j\in\wt V$. The condition satisfied by $\mu$ forces $\mu-\alpha_j\notin \wt V$. Observe by the Minkowski difference formula for $\wt V$ in \eqref{E2.1} that $j\in J$. If $\langle\mu+\alpha_j,\alpha_j^{\vee}\rangle\geq2$, then $\mu-\alpha_j\in[s_j(\mu+\alpha_j),\mu+\alpha_j]\subset\wt V$ by Lemma \ref{L3.1}, which cannot happen. So, $\langle\mu+\alpha_j,\alpha_j^{\vee}\rangle\leq1$, and this implies $\langle\mu,\alpha_j^{\vee}\rangle\leq-1$. Since $\mu-\alpha_j\notin\wt V$, $\mathfrak{g}_{-\alpha_j}V_{\mu}=\{0\}$; in other words, every non-zero vector in $V_{\mu}$ must be a minimal vector for the action of $\mathfrak{sl}_{\alpha_j}$. Recall that $e_t$, $\forall$ $t\in\mathcal{I}$ acts locally nilpotently on $V$. In particular, observe that $e_j$ acts nilpotently on $U(\mathfrak{g}_{\alpha_j})V_{\mu}$. So, $U(\mathfrak{g}_{\alpha_j})V_{\mu}$ is an integrable $\mathfrak{sl}_{\alpha_j}$-module. This implies $[\mu,s_j\mu]\subseteq\wt \big(U(\mathfrak{g}_{\alpha_j})V_{\mu}\big)\subseteq\wt V$. Now, observe: (i)~$s_j\mu\succneqq\mu$, and so $\height(\lambda-s_j\mu)<\height(\lambda-\mu)$; (ii)~$s_j\mu$ (similar to $\mu$) satisfies the condition: $\{s_j\mu-\eta,s_j\mu+\eta\}\not\subset\wt V$ for any real root $\eta$. Hence, the induction hypothesis applied to $s_j\mu$ finishes the proof of the reverse implication in part (a). 
		
		To show part (b), let $\mu$ be a maximal element of $Y$. For any root $\gamma$, if $\mu\pm\gamma\in\wt V$, then by the 212-closedness of $Y$, equation \eqref{E'4.7} (with $\mu$, $\gamma$ replacing $\mu_i$, $\alpha_j$, respectively) forces $\mu+\gamma\in Y$. But this contradicts the maximality of $\mu$. Therefore, $\{\mu-\beta,\mu+\beta\}\not\subset\wt V$ for any root $\beta$ (in particular for real $\beta$). So, by part (a), $\mu\in W_J\lambda$. Hence, the maximal elements of $Y$ lie in $Y\cap\big(W_J\lambda\big)$.  	\end{proof}
	\begin{cor}\label{C3.14}
		\begin{itemize}
			\item[(1)] If $\emptyset\neq Y$ is 212-closed in $\wt M(\lambda,J)$, then $\exists$ $\omega\in W_J$ such that $\lambda\in \omega Y$. 
			\item[(2)] Note, $\wt M(0,I^c)=-\mathbb{Z}_{\geq 0}\Delta_{I,1}$ $\forall$ $I\subseteq \mathcal{I}$. Thus, Proposition \ref{P3.13} applied to $\mathbb{Z}_{\geq 0}\Delta_{I,1}$ shows that given $0\neq \mu\in\mathbb{Z}_{\geq 0}\Delta_{I,1}$, there always exists a real root $\beta$ such that $\mu\pm\beta\in\mathbb{Z}_{\geq 0}\Delta_{I,1}$. 
		\end{itemize}
	\end{cor}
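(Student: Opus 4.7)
The plan is to deduce both items directly from Proposition \ref{P3.13}, which already does all the heavy lifting; the corollary is essentially a rephrasing of that proposition's two parts.

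For part (1), the ``in particular'' clause of Proposition \ref{P3.13}(b) yields $Y\cap W_J\lambda\neq\emptyset$: indeed, since $Y\subseteq\wt M(\lambda,J)\subseteq\lambda-\mathbb{Z}_{\geq 0}\Pi$, height-above-any-fixed-element in $Y$ is bounded, so $Y$ admits a $\prec$-maximal element $\mu$, which by that proposition lies in $W_J\lambda$. Writing $\mu=w\lambda$ for some $w\in W_J$, set $\omega:=w^{-1}\in W_J$. By the $W_J$-invariance of $\wt M(\lambda,J)$ (Definition \ref{D2.1}(3)), the translate $\omega Y$ lies in $\wt M(\lambda,J)$ and contains $\omega\mu=\lambda$, giving the desired $\omega$.

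For part (2), apply Proposition \ref{P3.13}(a) to the parabolic Verma module $V:=M(0,I^c)$, whose highest weight is $0$ (so $J_0=\mathcal{I}$ and $I^c\subseteq J_0$) and whose $W_{I^c}$-orbit equals $W_{I^c}\cdot 0=\{0\}$. Combining the Minkowski decomposition \eqref{E2.1} with $\wt L_{I^c}(0)=\{0\}$ gives $\wt M(0,I^c)=-\mathbb{Z}_{\geq 0}\Delta_{I,1}$. For any $0\neq\mu\in\mathbb{Z}_{\geq 0}\Delta_{I,1}$, the weight $-\mu$ lies in $\wt M(0,I^c)\setminus(W_{I^c}\cdot 0)$, so the contrapositive of Proposition \ref{P3.13}(a) applied to $-\mu$ produces a real root $\beta\in\Delta$ with both $-\mu+\beta$ and $-\mu-\beta$ in $-\mathbb{Z}_{\geq 0}\Delta_{I,1}$. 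Negating yields $\mu\pm\beta\in\mathbb{Z}_{\geq 0}\Delta_{I,1}$, as claimed.

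There is no serious obstacle here: both parts reduce immediately to Proposition \ref{P3.13}. The only minor verifications are the existence of a maximal element in part (1) (handled by the height bound on $\wt M(\lambda,J)$) and the identification $\wt M(0,I^c)=-\mathbb{Z}_{\geq 0}\Delta_{I,1}$ in part (2), which is immediate from \eqref{E2.1} together with triviality of $L_{I^c}(0)$.
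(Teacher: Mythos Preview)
Your proof is correct and is exactly the argument the paper intends: the corollary is stated without proof in the paper, as both parts follow immediately from Proposition~\ref{P3.13}, and you have spelled out precisely those deductions (including the existence of a maximal element via the height bound for part~(1), and the identification $\wt M(0,I^c)=-\mathbb{Z}_{\geq 0}\Delta_{I,1}$ via~\eqref{E2.1} for part~(2)).
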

	The reverse implication in part (a) of the above proposition leads to the following question.
	\begin{question}\label{Q2}
		Let $\mathfrak{g}$ be a Kac--Moody algebra, $\lambda\in\mathfrak{h}^*$, and $M(\lambda)\twoheadrightarrow{ }V$. Consider the set $\big\{\mu\in \wt V\text{ }\big|\text{ } \{\mu-\beta,\mu+\beta\}\not\subset\wt V\text{ for any real root }\beta\big\}$. Is there a characterization for this set? 
	\end{question}
	This question might be interesting to explore in the pursuit of better understanding $\wt V$ for arbitrary $M(\lambda)\twoheadrightarrow{ }V$, and also given that this question has a nice answer when $\wt V=\wt M(\lambda,J)$.
	\section{Proof of Theorem \ref{thmA}: Weak faces of $\wt V$}
	Throughout this section, we assume that $\mathfrak{g}$ is a general Kac--Moody algebra, $\lambda\in\mathfrak{h}^*$, $M(\lambda)\twoheadrightarrow{ }V$ is arbitrary, and $Y$ is an arbitrary non-empty 212-closed subset of $\wt V$. The goal of this section is to prove Theorem \ref{thmA}, which at its core says that every 212-closed subset of $\wt V$ can be conjugated by elements of $W_{I_V}$ to some standard face of $\wt V$; which implies the equivalence of all the three notions in the statement of the theorem. Recall the notation $J_{\lambda},\mathcal{I}_V, \mathcal{J}_V, \mathcal{I}_Y \text{ and }\mathcal{J}_Y$ from \eqref{defn IV-JY}. We will use without further mention the fact: $j\in J_{\lambda}$ implies $\langle\mu,\alpha_j^{\vee}\rangle\in\mathbb{Z}$ $\forall$ $\mu\in\wt V$.
	
	Recall, Corollary \ref{C3.14} shows when $\wt V=\wt M(\lambda,J)$ that $Y$ can be conjugated by elements in $W_J$ to a 212-closed subset (of $\wt V$) which contains $\lambda$. In the following theorem, we generalize this phenomenon for arbitrary $\wt V$, which is crucial in the proof of Theorem \ref{thmA}.
	\begin{theorem}\label{T4.1}
		Let $\mathfrak{g}$ be a Kac--Moody algebra, $\lambda\in\mathfrak{h}^*$, and $M(\lambda)\twoheadrightarrow{ }V$. Suppose $\emptyset\neq Y\subseteq \wt V$ is 212-closed and $\lambda\notin Y$. Then there exists a sequence of simple reflections $s_1,\ldots, s_n\in W_{J_{\lambda}}$ such~that \[\left(\prod\limits_{t=r}^n s_t\right)Y\text{ } \text{ is }212\text{-closed in }\wt V\text{ for each }1\leq r\leq n,\quad\text{and}\quad \lambda\in\left(\prod\limits_{t=1}^ns_t\right)Y. 
		\]
	\end{theorem}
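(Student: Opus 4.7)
The plan is to induct on a suitable measure of distance from $Y$ to $\lambda$---for instance $\sum_{\mu}\height(\lambda-\mu)$ summed over the (finitely many) maximal elements $\mu$ of $Y$, which exist because every ascending chain in $\wt V\subseteq\lambda-\mathbb{Z}_{\geq 0}\Pi$ terminates. At each step I would produce a single simple reflection $s_i\in W_{J_\lambda}$ so that $s_iY\subseteq\wt V$, $s_iY$ is 212-closed in $\wt V$, and this measure strictly decreases; iterating yields $s_1,\dots,s_n$, with the construction terminating precisely when $\lambda$ enters the translate of $Y$.

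To produce $s_i$, fix any maximal $\mu\in Y$; since $\lambda\notin Y$ we have $\mu\neq\lambda$. Because $V$ is a highest weight module and $\mu\prec\lambda$, applying $U(\mathfrak{n}^+)$ to a nonzero vector of $V_\mu$ until reaching $V_\lambda$ produces a simple root $\alpha_i$ with $\mu+\alpha_i\in\wt V$. Maximality of $\mu$ together with 212-closedness rules out $\mu-\alpha_i\in\wt V$, for otherwise $2(\mu)=(\mu+\alpha_i)+(\mu-\alpha_i)$ would force $\mu+\alpha_i\in Y$, contradicting maximality. Hence $f_iV_\mu=\{0\}$, and since $e_i$ acts locally nilpotently on every highest weight module, $U(\mathfrak{g}_{\alpha_i})V_\mu$ is integrable over $\mathfrak{sl}_{\alpha_i}$ with every nonzero vector of $V_\mu$ a lowest weight vector. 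Standard $\mathfrak{sl}_2$ representation theory then forces $\langle\mu,\alpha_i^\vee\rangle\in\mathbb{Z}_{\leq-1}$ and places the full $\alpha_i$-string from $\mu$ to $s_i\mu$ inside $\wt V$, so in particular $s_i\mu\succ\mu$ strictly. To secure $i\in J_\lambda$ I would use that $\langle\lambda,\alpha_i^\vee\rangle=\langle\mu,\alpha_i^\vee\rangle+\langle\lambda-\mu,\alpha_i^\vee\rangle$ is automatically an integer, and that $s_i\mu\in\wt V\subseteq\lambda-\mathbb{Z}_{\geq 0}\Pi$ forces the $\alpha_i$-coefficient of $\lambda-\mu$ to be at least $-\langle\mu,\alpha_i^\vee\rangle$; together with the flexibility in choosing $\mu$ among maximal elements, and if needed an auxiliary $W_{\mathcal{J}_Y}$-type reduction in the spirit of Lemma \ref{L3.9} to eliminate pathological candidates, this would arrange $\langle\lambda,\alpha_i^\vee\rangle\geq 0$.

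With such $i$ in hand, the containment $s_iY\subseteq\wt V$ would be established by propagating the $\mathfrak{sl}_{\alpha_i}$-string analysis from $\mu$ to arbitrary $y\in Y$ via 212-closed equations of the form $(y)+(\mu)=(y')+(\mu')$, with Lemma \ref{L3.1} handling those $y$ where $\langle y,\alpha_i^\vee\rangle\geq 0$ automatically. For the 212-closedness of $s_iY$ in $\wt V$: given any decomposition $(s_iy_1)+(s_iy_2)=(x_1)+(x_2)$ with $x_k\in\wt V$, I would show $s_ix_k\in\wt V$ by the same string-propagation argument (now applied to $x_k$ paired against elements of $s_iY$), and then the resulting equation $y_1+y_2=s_ix_1+s_ix_2$ inside $\wt V$ together with 212-closedness of $Y$ yields $s_ix_k\in Y$, i.e.\ $x_k\in s_iY$. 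Since $s_i\mu\succ\mu$ strictly, the measure decreases and induction closes. The main obstacle I anticipate is the joint control of (a) securing $i\in J_\lambda$ and (b) uniformly verifying that $s_i$ preserves $\wt V$ on $Y$ and on every ambient weight paired with $s_iY$ through a 212-relation: both require transporting the $\mathfrak{sl}_{\alpha_i}$-integrability conclusion from the single maximal element $\mu$ to the full set of weights involved, presumably aided by the Minkowski decomposition $\wt L(\lambda)=\wt M(\lambda,J_\lambda)\subseteq\wt V$ and the $W_{J_\lambda}$-invariance of $\wt L(\lambda)$.
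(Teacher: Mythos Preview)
Your overall strategy matches the paper's: find a simple reflection $s_i$ with $i\in J_\lambda$ so that $s_iY\subseteq\wt V$ is again 212-closed and strictly closer to $\lambda$, then iterate. The paper inducts on $d(Y):=\min_{y\in Y}\height(\lambda-y)$ rather than your sum over maximal elements; the minimum-height weight $\mu$ is automatically maximal in $Y$, and this measure transparently decreases because $s_i\mu\succneqq\mu$. The pivotal observation you are circling around but never state is that $y-\alpha_i\notin\wt V$ for \emph{every} $y\in Y$, not just for $\mu$: if $y-\alpha_i\in\wt V$ then $(\mu)+(y)=(\mu+\alpha_i)+(y-\alpha_i)$ forces $\mu+\alpha_i\in Y$, contradicting maximality of $\mu$. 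Once you have this, $f_iV_y=\{0\}$ for all $y\in Y$, each $U(\mathfrak{g}_{\alpha_i})V_y$ is $\mathfrak{sl}_{\alpha_i}$-integrable, $s_iY\subseteq\wt V$ is immediate, and (via Lemma~\ref{L3.1}) $\langle y,\alpha_i^\vee\rangle\leq 0$ for all $y\in Y$---no ``propagation'' is needed.

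There are two genuine gaps. First, your argument for $i\in J_\lambda$ is incomplete: integrality of $\langle\lambda,\alpha_i^\vee\rangle$ is fine, but nothing you wrote forces $\langle\lambda,\alpha_i^\vee\rangle\geq 0$, and the appeals to ``flexibility in choosing $\mu$'' or a Lemma~\ref{L3.9}-style reduction do not help (that lemma presupposes $\lambda\in Y$). The paper closes this with the Minkowski decomposition $\wt V=\big(\wt V\cap(\lambda-\mathbb{Z}_{\geq 0}\Pi_{J_\lambda})\big)-\mathbb{Z}_{\geq 0}\Delta_{J_\lambda^c,1}$ from \cite{Teja}: if $i\notin J_\lambda$ then $\alpha_i\in\Delta_{J_\lambda^c,1}$, so $\mu-\alpha_i\in\wt V$, contradicting the key fact above.

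Second, your plan for the 212-closedness of $s_iY$---show $s_ix_k\in\wt V$ and then invoke 212-closedness of $Y$---fails when $\langle x_k,\alpha_i^\vee\rangle<0$, since then $s_ix_k$ need not lie in $\wt V$ and there is no ``pairing against $s_iY$'' available before you know $x_k\in s_iY$. The paper resolves this by a case split on $k:=\langle s_iy_1+s_iy_2,\alpha_i^\vee\rangle=\langle x_1+x_2,\alpha_i^\vee\rangle\geq 0$. If both $\langle x_t,\alpha_i^\vee\rangle\geq 0$ then Lemma~\ref{L3.1} gives $s_ix_t\in\wt V$ and your argument goes through. If, say, $\langle x_2,\alpha_i^\vee\rangle<0$, then $\langle x_1,\alpha_i^\vee\rangle\geq k+1$, so both $x_1-k\alpha_i$ and $x_1-(k+1)\alpha_i$ lie in $\wt V$; the equation $(y_1)+(y_2)=(x_1-k\alpha_i)+(x_2)$ then places $x_1-k\alpha_i\in Y$, yet $(x_1-k\alpha_i)-\alpha_i\in\wt V$, contradicting the key fact that $y-\alpha_i\notin\wt V$ for all $y\in Y$. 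So this case cannot occur, and the remaining case is exactly the one where your reflection trick works.
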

	Notice, there is no role of $W_{I_V}$ in the assertions of this theorem. However, as an application of this theorem, we give a further refinement of this theorem in terms of $W_{I_V}$, so as to serve our purpose of proving Theorem \ref{thmA}. More precisely, in Steps 1--4 of the proof of Theorem \ref{thmA} we show that there exists $w\in W_{I_V}$ such that $\lambda\in wY$. Observe that the assertions in Theorem \ref{T4.1} are much stronger than just $\lambda$ belonging to some $W_{J_{\lambda}}$-conjugate of $Y$.  
	\begin{proof}[\textnormal{\textbf{Proof of Theorem \ref{T4.1}}}]
		Let $Y$ be as in the statement. We prove the theorem by induction on \[d(Y):=\min\{\height(\lambda-y)\text{ }|\text{ }y\in Y\}\geq 1.\]
		Base step: Assume that $d(Y)=1$. Then for some $i\in\mathcal{I}$, $\lambda-\alpha_i\in Y$. Suppose either $\langle\lambda,\alpha_i^{\vee}\rangle\notin\mathbb{Z}_{\geq 0}$, or $V$ is not $\mathfrak{g}_{\{i\}}$-integrable. Then $\lambda-\mathbb{Z}_{\geq 0}\alpha_i\subseteq\wt V$. Further, the 212-closedness of $Y$ and 		\begin{equation}\label{E''5.1}
		2(\lambda-\alpha_i)=(\lambda)+(\lambda-2\alpha_i)\end{equation}
		together imply $\lambda\in Y$, which contradicts $\lambda\notin Y$. So, we assume for the rest of this step that $\langle\lambda,\alpha_i^{\vee}\rangle\in\mathbb{Z}_{\geq 0}$ and $V$ is $\mathfrak{g}_{\{i\}}$-integrable. This implies $s_i\wt V=\wt V$, and also $s_i Y$ is 212-closed in $\wt V$ (by Observation \ref{note1}). It only remains to show $\lambda\in s_i Y$. If $\langle\lambda,\alpha_i^{\vee}\rangle\geq 2$, then $\lambda-2\alpha_i\in[s_i\lambda, \lambda]\subset \wt V$ by Lemma \ref{L3.1}, and then equation \eqref{E''5.1} once again leads to $\lambda\in Y$ (a contradiction). Next, $\langle\lambda,\alpha_i^{\vee}\rangle=0$ implies $\lambda+\alpha_i=s_i(\lambda-\alpha_i)\in\wt V$ (as $s_i\wt V$ = $\wt V$), which is absurd as $\lambda+\alpha_i\succneqq \lambda$. Therefore $\langle\lambda,\alpha_i^{\vee}\rangle=1$, which implies $s_i\lambda=\lambda-\alpha_i$, and thereby $\lambda \in s_iY$, finishing the proof of base step. \\
		Induction step: Assume that $d(Y)>1$. Fix $\mu\in Y$ such that $\height(\lambda-\mu)=d(Y)$, and also $j\in \mathcal{I}$ such that $\mu+\alpha_j\in\wt V$. For any $\mu'\in Y$, if $\mu'-\alpha_j\in\wt V$, then equation \eqref{E''6.4} (with $\alpha_j$ in place of $\alpha$ in it) implies that $\mu+\alpha_j\in Y$, which contradicts the minimality of $\height(\lambda-\mu)$. So, $y-\alpha_j\notin \wt V$ for any $y\in Y$, and as a result the following points hold true.
		\begin{itemize}
			\item[a)] $f_j V_{y}=\{0\}$ $\forall$ $y\in Y$.
			\item[b)] $j\in J_{\lambda}$, as otherwise the Minkowski difference formula for $\wt V$ in Theorem C in \cite{Teja},\allowdisplaybreaks
			\[\wt V= \big(\wt V \cap \left(\lambda-\mathbb{Z}_{\geq 0}\Pi_{J_{\lambda}}\right)\big)-\mathbb{Z}_{\geq 0}\Delta_{J_{\lambda}^c,1},\]
			results in $\mu-\alpha_j\in\wt V$, which cannot happen as shown above.  
			\item[c)] $\langle y,\alpha_j^{\vee}\rangle\leq 0$ $\forall$ $y\in Y$, as $\langle y',\alpha_j^{\vee}\rangle>0$ for some $y'\in Y$ implies $y'-\alpha_j\in\wt V$ by Lemma \ref{L3.1}.
			\item[d)] $\langle\mu,\alpha_j^{\vee}\rangle<0$, as $\langle\mu,\alpha_j^{\vee}\rangle\geq 0$ implies $\langle\mu+\alpha_j,\alpha_j^{\vee}\rangle\geq 2$, which by Lemma \ref{L3.1} further implies $\mu-\alpha_j\in \wt V$.	 \end{itemize}
		For each $y\in Y$, we define $N_y:= U(\mathfrak{g}_{\alpha_j})V_{y}$. Observe by point a) above that $N_y$ is an integrable $\mathfrak{sl}_{\alpha_j}$-module for each $y\in Y$. Therefore, $\wt N_y$, and more generally $\bigcup\limits_{y\in Y}\wt N_y$ (which is a subset of $\wt V$), is $W_{\{j\}}$-invariant. This shows that $s_jY\subseteq \wt V$, as $Y\subseteq \bigcup\limits_{y\in Y}\wt N_y$. Point d) implies that $s_j\mu\succneqq \mu$, and so $\height(\lambda-s_j\mu)<\height(\lambda-\mu)$. As $s_j\mu\in s_j Y$, we have
		\[d(s_jY)\leq \height(\lambda-s_j\mu)<d(Y)=\height(\lambda-\mu).\] Now, we claim that $s_jY$ is also 212-closed in $\wt V$. Upon showing this claim, observe: i) when $\lambda\in Y$ we are immediately done, or else ii) when $\lambda\notin Y$, by the induction hypothesis applied to $s_jY$ \big(as $d(s_jY)<d(Y)$\big) we will be done. 
		
		To show that $s_jY$ is 212-closed, let $y_1,y_2\in Y$ and $\mu_1,\mu_2\in\wt V$ satisfy the following equation.
		\begin{equation}\label{E4.1}
		(s_jy_1)+(s_jy_2)=(\mu_1)+(\mu_2).
		\end{equation}
		We proceed in several cases below, and prove that both $\mu_1,\mu_2\in s_jY$, which implies the claim.
		\[\text{We define}\quad k:=\langle s_jy_1,\alpha_j^{\vee}\rangle+\langle s_jy_2,\alpha_j^{\vee}\rangle=\langle \mu_1,\alpha_j^{\vee}\rangle+\langle \mu_2,\alpha_j^{\vee}\rangle.\]
		Note, $\langle\mu',\alpha_j^{\vee}\rangle\in\mathbb{Z}$ $\forall$ $\mu'\in \wt V$ (as $j\in J_{\lambda}$). By Point c), $\langle s_jy,\alpha_j^{\vee}\rangle\in\mathbb{Z}_{\geq 0}$ $\forall$ $y\in Y$, and so~$k\in\mathbb{Z}_{\geq 0}$.\\
		Suppose $k=0$, which happens if and only if $\langle s_jy_1,\alpha_j^{\vee}\rangle=\langle s_jy_2,\alpha_j^{\vee}\rangle=0$, if and only if $s_jy_1=y_1$ and $s_jy_2=y_2$. Then equation \eqref{E4.1} implies both $\mu_1,\mu_2\in Y$, as $Y$ is 212-closed. Now, once again by point c), both $\langle\mu_1,\alpha_j^{\vee}\rangle$ and $\langle\mu_2,\alpha_j^{\vee}\rangle$ are non-positive. Finally, 
		\begin{align*}
		\begin{aligned}
		k=\langle\mu_1,\alpha_j^{\vee}\rangle+\langle\mu_2,\alpha_j^{\vee}\rangle=0&\implies \langle\mu_1,\alpha_j^{\vee}\rangle=\langle\mu_2,\alpha_j^{\vee}\rangle=0\\
		&\implies s_j\mu_t=\mu_t\text{ }\forall\text{ }t\in [2]\implies \mu_1,\mu_2\in s_jY.
		\end{aligned}
		\end{align*}
		So, for the rest of the proof, we assume that $k>0$, and also without loss of generality we assume that $\langle \mu_1,\alpha_j^{\vee}\rangle>0$. For $\langle\mu_2,\alpha_j^{\vee}\rangle$ there are two possibilities: either $\langle \mu_2,\alpha_j^{\vee}\rangle\geq 0$, or $\langle \mu_2,\alpha_j^{\vee}\rangle<0$. Accordingly, we proceed in two cases below. Firstly, recall by Lemma \ref{L3.1} that 
		\begin{equation}\label{E4.2} \mu_1-t_1\alpha_j\in\wt V\text{ }\text{ }\forall\text{ }0\leq t_1\leq \langle\mu_1,\alpha_j^{\vee}\rangle,\qquad\mu_2-t_2\alpha_j\in\wt V\text{ }\text{ }\forall\text{ }0\leq t_2\leq \max\{0, \langle\mu_2,\alpha_j^{\vee}\rangle\}.
		\end{equation}
		(1) $\langle\mu_2,\alpha_j^{\vee}\rangle<0$ : Then $\langle\mu_1,\alpha_j^{\vee}\rangle\geq k+1$, and so \eqref{E4.2} implies both $\mu_1-k\alpha_j,\text{ }\mu_1-(k+1)\alpha_j\in\wt V$. Now, by the 212-closedness of $Y$, the equation 
		\[ (y_1)+(y_2)=(\mu_1-k\alpha_j)+(\mu_2),\]
		which is obtained by subtracting $k\alpha_j$ from both sides of equation \eqref{E4.1}, results in $\mu_1-k\alpha_j\in Y$. This means we have $\mu_1-k\alpha_j\in Y$ and $\mu_1-(k+1)\alpha_j\in\wt V$, but this cannot happen as explained in the initial lines of the induction step. So, $\langle\mu_2,\alpha_j^{\vee}\rangle$ cannot be negative.\medskip\\
		(2) $\langle\mu_2,\alpha_j^{\vee}\rangle\geq 0$ : Then $\langle\mu_1,\alpha_j^{\vee}\rangle\leq k$. Recall, we assumed $\langle\mu_1,\alpha_j^{\vee}\rangle>0$. Recall, $s_j\mu_t=\mu_t-\langle\mu_t,\alpha_j^{\vee}\rangle\alpha_j$ $\forall$ $t\in [2]$. By \eqref{E4.2}, both $s_j\mu_1, s_j\mu_2\in\wt V$. Now, the equation
		\[(y_1)+(y_2)=(s_j\mu_1)+(s_j\mu_2),\]
		obtained by applying $s_j$ on both sides of equation \eqref{E4.1}, implies by the 212-closedness of $Y$ that both $s_j\mu_1,s_j\mu_2\in Y$. Therefore, both $\mu_1,\mu_2\in s_j Y$. Hence, the proof of the theorem is complete.
	\end{proof}
	Next, we obtain certain relations between the weights of $L(\lambda)$ and $L(s_i\bullet\lambda)$ for $\lambda\in P^+$. (Below, $\bullet$ denotes the dot-action of $W$ on $\mathfrak{h}^*$.) These relations will be used in the proof of Theorem \ref{thmA}.
	\begin{lemma}\label{L4.2}
		Let $\mathfrak{g}$ be a Kac--Moody algebra, and $\lambda\in P^+$. Fix $i\in\mathcal{I}$, and define $m_i:=\langle\lambda,\alpha_i^{\vee}\rangle+1$, so that $s_i\bullet\lambda=s_i\lambda-\alpha_i=\lambda-\big(\langle\lambda,\alpha_i^{\vee}\rangle+1\big)\alpha_i=\lambda-m_i\alpha_i$. 
		\begin{itemize}
			\item[(a)] If $\mu\in\wt L(\lambda)$ is such that $\height_{\{i\}}(\lambda-\mu)<m_i$, then $\mu-\big(m_i-\height_{\{i\}}(\lambda-\mu)\big)\alpha_i\in\wt L(s_i\bullet\lambda)$.
			\item[(b)] If $\mu\in\wt L(\lambda)$ is such that $\height_{\{i\}}(\lambda-\mu)\geq m_i$, then $\mu\in\wt L(s_i\bullet\lambda)$.
		\end{itemize}
	\end{lemma}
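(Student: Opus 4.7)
My plan is to reduce both parts to a single weight-containment inside the parabolic integrable simple $L_J(s_i\bullet\lambda)$, with $J := \mathcal{I}\setminus\{i\}$, and then establish it by exploiting the $\mathfrak{g}_J$-semisimplicity of $L(\lambda)$ together with two standard dominance comparisons for integrable $\mathfrak{g}_J$-highest weight modules.

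\emph{Reduction via Minkowski.} I would first check that $J_{s_i\bullet\lambda} = J$: for $j \neq i$, $\langle s_i\bullet\lambda, \alpha_j^\vee\rangle = \langle\lambda,\alpha_j^\vee\rangle - m_i a_{ji} \geq 0$ (using $\lambda\in P^+$ and $a_{ji}\leq 0$), while $\langle s_i\bullet\lambda, \alpha_i^\vee\rangle = -m_i - 1 < 0$. Hence $L(s_i\bullet\lambda)$ is $\mathfrak{g}_J$-integrable, and equation \eqref{E2.3} gives
\[
\wt L(s_i\bullet\lambda) \;=\; \wt L_J(s_i\bullet\lambda) \;-\; \mathbb{Z}_{\geq 0}\Delta_{\{i\},1}.
\]
Writing each $\mu\in\wt L(\lambda)$ as $\mu = \lambda - k\alpha_i - \xi$ with $k := \height_{\{i\}}(\lambda-\mu)$ and $\xi \in \mathbb{Z}_{\geq 0}\Pi_J$, the target in part~(a) is $\mu - (m_i - k)\alpha_i = s_i\bullet\lambda - \xi$ at $\alpha_i$-height exactly $m_i$ (so necessarily in the top slice $\wt L_J(s_i\bullet\lambda)$), while in part~(b) it is $s_i\bullet\lambda - \xi - (k-m_i)\alpha_i$ with $(k-m_i)\alpha_i \in \mathbb{Z}_{\geq 0}\Delta_{\{i\},1}$ absorbed by the Minkowski cone. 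Both parts thus reduce to
\[
(\star) \qquad s_i\bullet\lambda - \xi \in \wt L_J(s_i\bullet\lambda).
\]

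\emph{Proof of $(\star)$.} Since $L(\lambda)$ is $\mathfrak{g}_{\{i\}}$-integrable, Lemma~\ref{L3.1} together with $s_i$-invariance of $\wt L(\lambda)$ forces the full $\alpha_i$-string through $\mu$ to lie in $\wt L(\lambda)$, so I may normalise to $k = m_i - 1$ and assume $s_i\lambda - \xi \in \wt L(\lambda)$. Because $\lambda\in P^+$, the module $L(\lambda)$ is $\mathfrak{g}_J$-semisimple; decompose $L(\lambda) = \bigoplus_\nu L_J(\nu)^{\oplus n_\nu}$ over $\mathfrak{g}_J$-highest weights $\nu$. At the $\alpha_i$-slice of height $m_i - 1$, the vector $f_i^{m_i - 1}v_\lambda$ is nonzero and $\mathfrak{g}_J$-highest, generating $L_J(s_i\lambda) \subseteq L(\lambda)$; any other $\mathfrak{g}_J$-highest weight $\nu$ at this slice satisfies $s_i\lambda - \nu \in \mathbb{Z}_{\geq 0}\Pi_J$. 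By the standard weight-set inclusion for $\mathfrak{g}_J$-integrable simples with dominant integral highest weights in the same $\mathbb{Z}\Pi_J$-coset, $\wt L_J(\nu) \subseteq \wt L_J(s_i\lambda)$; summing over summands yields $s_i\lambda - \xi \in \wt L_J(s_i\lambda)$.

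\emph{Shift to $s_i\bullet\lambda$.} Finally, $\langle s_i\bullet\lambda - s_i\lambda, \alpha_j^\vee\rangle = -a_{ji} \geq 0$ for all $j \in J$, so $s_i\bullet\lambda$ is $\mathfrak{h}_J$-more-dominant than $s_i\lambda$. The second standard comparison---that the descent pattern (from the highest weight) of an integrable $\mathfrak{g}_J$-simple enlarges with its highest weight in $\mathfrak{h}_J^*$-dominance---yields $\wt L_J(s_i\lambda) - \alpha_i \subseteq \wt L_J(s_i\bullet\lambda)$. Subtracting $\alpha_i$ from the conclusion of the previous step gives $s_i\bullet\lambda - \xi = (s_i\lambda - \xi) - \alpha_i \in \wt L_J(s_i\bullet\lambda)$, which is $(\star)$. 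The chief obstacle is verifying these two standard weight-set inclusions in arbitrary Kac--Moody generality; both rest on the $W_J$-saturated characterisation of $\wt L_J(\mu)$ via its $\mathfrak{h}_J$-dominant integral weights in the same root-lattice coset, a fact I would draw from Kac's treatment of integrable highest weight modules in \cite[Ch.~11]{Kac}.
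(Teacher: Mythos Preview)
Your reduction to $(\star)$ via the Minkowski decomposition is clean, and for part~(a) your argument actually goes through directly: from $\mu\in\wt L_J(\lambda-k\alpha_i)$ (first comparison) and the $W_J$-dominance of $(s_i\bullet\lambda)-(\lambda-k\alpha_i)=-(m_i-k)\alpha_i$ for $k<m_i$ (second comparison), you get $\xi$ in the descent set of $L_J(s_i\bullet\lambda)$ without ever passing through the slice $k=m_i-1$. This is a genuinely different route from the paper's height induction via Lemma~\ref{L3.1}.

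The gap is in your normalization step for part~(b). You claim that the $\alpha_i$-string through $\mu$ reaches height $m_i-1$, i.e., that $s_i\lambda-\xi\in\wt L(\lambda)$. This is false. Take $\mathfrak{g}=\mathfrak{sl}_4$, $\lambda=\omega_1+\omega_3$ (the adjoint highest weight, so $\lambda=\alpha_1+\alpha_2+\alpha_3$), $i=2$, $m_2=1$, and $\mu=-\lambda$ (the lowest weight). Then $k=2$, $\xi=2\alpha_1+2\alpha_3$, and $s_2\lambda-\xi=\lambda-2\alpha_1-2\alpha_3=-\alpha_1+\alpha_2-\alpha_3$, which is neither a root nor zero, hence not in $\wt L(\lambda)$. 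Indeed $\langle\mu,\alpha_2^\vee\rangle=0$, so the $\alpha_2$-string through $\mu$ is the singleton $\{\mu\}$ at height $2$, never reaching height $0=m_2-1$. For $k\geq m_i$ your second comparison also runs the wrong way: $(\lambda-k\alpha_i)-(s_i\bullet\lambda)=(k-m_i)\alpha_i$ is $W_J$-\emph{anti}-dominant, so the descent set of $L_J(\lambda-k\alpha_i)$ is \emph{larger} than that of $L_J(s_i\bullet\lambda)$, and knowing $\xi$ lies in the former does not place it in the latter. In the example the conclusion $(\star)$ still happens to hold, but your argument does not prove it. Closing this gap seems to require something inductive---e.g., tracking a chain from $\mu$ back toward $\lambda$ through intermediate slices---which is essentially what the paper does.
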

	\begin{proof}[\textnormal{\textbf{Proof}}] Fix $\lambda\in P^+$ and $i\in\mathcal{I}$. Note the following Minkowski difference formula for $\wt L(s_i\bullet \lambda)$.
		\begin{equation}\label{E4.3}
		\begin{aligned}[t]
		\wt L(s_i\bullet\lambda)=\wt M\left(\lambda-m_i\alpha_i,I_{L(\lambda-m_i\alpha_i)}\right)
		\end{aligned}
		\begin{aligned}[t]
		&=\wt M\big(\lambda-m_i\alpha_i, \{i\}^c\big)\\
		&=\wt L_{\{i\}^c}(\lambda-m_i\alpha_i)-\mathbb{Z}_{\geq0}\Delta_{\{i\},1}.
		\end{aligned}
		\end{equation}
		(The above formula holds true by \eqref{E2.3}.) In the above equation(s), $I_{L(\lambda-m_i\alpha_i)}=\{i\}^c$ as $\lambda\in P^+$. Recall, $L_{\{i\}^c}(\lambda-m_i\alpha_i)$ is the $\mathfrak{g}_{\{i\}^c}$-integrable module with highest weight $\lambda-m_i\alpha_i$.
		
		For part (a) of the lemma, we proceed via induction on $\height(\lambda-\mu)\geq 0$. In the base step, $\mu=\lambda$, and so $\lambda-m_i\alpha_i\in\wt L(s_i\bullet\lambda)$ trivially.\\
		Induction step: Let $\mu\neq \lambda$ be as in part (a). Fix $j\in\mathcal{I}$ such that $\mu+\alpha_j\in\wt L(\lambda)$. If $j=i$, then  \[\mu+\alpha_i-\left(m_i-\height_{\{i\}}(\lambda-\mu-\alpha_i)\right)\alpha_i=\mu-\left(m_i-\height_{\{i\}}(\lambda-\mu)\right)\alpha_i\in\wt L(s_i\bullet\lambda)\]   by the induction hypothesis applied to $\mu+\alpha_i$. So, we assume now that $j\neq i$. As $L(\lambda)$ is $\mathfrak{sl}_{\alpha_j}$-integrable and $\mu+\alpha_j\in\wt L(\lambda)$, there must exist a weight $\mu'\in\wt L(\lambda)$ such that 
		\[\mu'\in\mu+\mathbb{Z}_{>0}\alpha_j,\qquad \text{ }\langle\mu',\alpha_j^{\vee}\rangle>0,\qquad \text{ }\mu\in[s_j\mu',\mu']\subset\wt L(\lambda).\] 
		Notice that $\height_{\{i\}}(\lambda-\mu')=\height_{\{i\}}(\lambda-\mu)<m_i$. So, the induction hypothesis applied to $\mu'$ implies $\mu'-\left(m_i-\height_{\{i\}}(\lambda-\mu)\right)\alpha_i\in\wt L(s_i\bullet\lambda)$. Finally by Lemma \ref{L3.1}, \[\left\langle\mu'-\left(m_i-\height_{\{i\}}(\lambda-\mu)\right)\alpha_i,\alpha_j^{\vee}\right\rangle\geq\langle\mu',\alpha_j^{\vee}\rangle>0\quad\text{implies}\]
		\begin{align*}
		 \begin{aligned}\mu-\left(m_i-\height_{\{i\}}(\lambda-\mu)\right)\alpha_i\in&\ \big[s_j\big(\mu'-(m_i-\height_{\{i\}}(\lambda-\mu))\alpha_i\big), \mu'-\left(m_i-\height_{\{i\}}(\lambda-\mu)\right)\alpha_i\big]\\&\ \subset\wt L(s_i\bullet\lambda).
		 \end{aligned}
		 \end{align*}
		This completes the proof of part (a).\smallskip
		
		To show part (b), we prove via induction on $\height(\lambda-\mu)\geq m_i$ that $\mu\in\wt L(\lambda)$ and $\height_{\{i\}}(\lambda-\mu)\geq m_i$ together imply $\mu\in\wt L(s_i\bullet\lambda)$. In the base step, $\mu=\lambda-m_i\alpha_i$, and so $\mu\in\wt L(s_i\bullet\lambda)$ trivially.\\
		Induction step: Let $\mu\in \wt L(\lambda)$ be such that $\height_{\{i\}}(\lambda-\mu)\geq m_i$ and $\height(\lambda-\mu)>m_i$. Fix $j\in\mathcal{I}$ such that $\mu+\alpha_j\in\wt L(\lambda)$. Suppose $j\neq i$. Then clearly $\height_{\{i\}}(\lambda-\mu-\alpha_j)\geq m_i$. As $L(\lambda)$ is $\mathfrak{sl}_{\alpha_j}$-integrable and $\mu+\alpha_j\in\wt L(\lambda)$, there must exist a weight $\mu'\in\wt L(\lambda)$ such that
		\[ \mu'\in\mu+\mathbb{Z}_{>0}\alpha_j,\qquad \langle\mu',\alpha_j^{\vee}\rangle>0,\qquad \mu\in[s_j\mu',\mu']\subset\wt L(\lambda).\] 
		Note, $\height_{\{i\}}(\lambda-\mu')=\height_{\{i\}}(\lambda-\mu)$. The induction hypothesis applied to $\mu'$ implies $\mu'\in\wt L(s_i\bullet\lambda)$. Now by Lemma \ref{L3.1}, $\langle\mu',\alpha_j^{\vee}\rangle>0$ implies $\mu\in [s_j\mu',\mu']\subset\wt L(s_i\bullet\lambda)$.\\
		So, we assume for the rest of proof that $j=i$. If $\height_{\{i\}}(\lambda-\mu)>m_i$, then the induction hypothesis applied to $\mu+\alpha_i$ implies $\mu+\alpha_i\in\wt L(s_i\bullet\lambda)$. Equation \eqref{E4.3} then results in $\mu\in\wt L(s_i\bullet\lambda)$. Else if $\height_{\{i\}}(\lambda-\mu)=m_i$, then by part (a) of the lemma, we get \[\mu+\alpha_i-\left(m_i-\height_{\{i\}}(\lambda-\mu-\alpha_i)\right)\alpha_i=\mu+\alpha_i-(m_i-m_i+1)\alpha_i=\mu\in\wt L(s_i\bullet\lambda)\]
		as required. Hence, finally, the proof of the lemma is complete.
	\end{proof}
	Now we are in a position to prove Theorem \ref{thmA}. 
	\begin{proof}[\textnormal{\textbf{Proof of Theorem \ref{thmA}}}]
		The proof is divided into 11 steps for ease of access. Let $\emptyset\neq Y$ be 212-closed in $\wt V$. In steps 1--4, we show applying Theorem \ref{T4.1} that $\lambda\in \omega Y$ for some $\omega\in W_{I_V}$.\medskip\\
		\textbf{Step 1:} Assume that $\lambda\notin Y$ for this step as well as for the next three steps. Theorem \ref{T4.1} yields a sequence of nodes $i_1,\ldots, i_n\in J_{\lambda}$, $n\in\mathbb{N}$, such that the corresponding sequence of simple reflections $s_{i_1},\ldots,s_{i_n}$ satisfies the conditions in the statement of Theorem \ref{T4.1}. Without loss of generality, we assume that $n$ is the least size for any sequence of simple reflections satisfying the conditions in the statement of Theorem \ref{T4.1}---i.e., if a sequence $s_{l_1},\ldots,s_{l_m}$ satisfies those conditions, then $m\geq n$. If $i_1,\ldots, i_n\in I_V$, then the claim at the beginning of the proof is true. So, we assume that $\{i_1,\ldots,i_n\}\not\subset I_V$ and exhibit a sequence of simple reflections that satisfies the conditions in Theorem~\ref{T4.1} and with the size strictly less than $n$. This will contradict the minimality of $n$.\medskip\\
		\textbf{Step 2:} We define $k$ to be the smallest number in $[n]$ such that $i_k\notin I_V$, which exists by the assumption in the above step that $\{i_1,\ldots ,i_n\}\not\subset I_V$. For convenience, we define 
		\[
		J:=\begin{cases}
		\{i_t\text{ }|\text{ }1\leq t\leq k-1\}\! &\text{if }k>1,\\
		\emptyset\! &\text{if }k=1,
		\end{cases}\quad
		\nu:=
		\begin{cases}
		\prod\limits_{t=1}^{k-1}s_{i_t}\!&\text{if }k>1,\\
		1\in W\!&\text{if }k=1,
		\end{cases}\quad
		Z:=\begin{cases}
		\left(\prod\limits_{t=k+1}^n s_{i_t}\right)Y\!&\text{if }k<n,\\
		Y\!&\text{if }k=n.
		\end{cases}
		\]
		By the definition of $k$, observe that $J\subseteq I_V$ and $i_k\notin J$. Recall by Theorem \ref{T4.1}: (i)~$i_1,\ldots, i_n\in J_{\lambda}$; (ii)~both $Z$ and $s_{i_k}Z$ are 212-closed in $\wt V$; (iii)~when $n>1$, by the minimality of $n$, $\lambda\notin \left(\prod\limits_{t=r}^n s_{i_t}\right)Y$ for any $1<r\leq n$; (iv)~in particular when $k>1$, $\lambda\notin Z\cup s_{i_k} Z$. Note, $\lambda\in \left(\prod\limits_{t=1}^ns_{i_t}\right)Y=\nu s_{i_k}Z$, $\nu^{-1}\lambda\in s_{i_k}Z$ and $s_{i_k}\nu^{-1}\lambda\in Z$. Recall, both $\nu^{-1}\lambda, s_{i_k}\nu^{-1}\lambda\in \wt L(\lambda)$, as $\wt L(\lambda)$ is $W_{J_{\lambda}}$-invariant and $\nu^{-1},s_{i_k}\nu^{-1}\in W_{J_{\lambda}}$. Recall, $W_{J_{\lambda}}\lambda\subseteq \wt L(\lambda)$. With these observations, assumptions, and notation, we now show the claim at the beginning of the proof of the theorem in the next two steps.\medskip\\
	\textbf{Step 3:} Note, $V$ is not $\mathfrak{g}_{\{i_k\}}$-integrable as $i_k\notin I_V$. So, $\lambda-\mathbb{Z}_{\geq 0}\alpha_{i_k}\subseteq \wt V$. More generally, as $V_{s_{i_k}\bullet\lambda}$ is spanned by a maximal vector, $U(\mathfrak{g})V_{s_{i_k}\bullet\lambda}$ is a highest weight $\mathfrak{g}$-module with highest weight $s_{i_k}\bullet\lambda$, and therefore $\wt L(s_{i_k}\bullet \lambda)\subset\wt V$. As $\nu^{-1}\lambda\in \lambda-\mathbb{Z}_{\geq 0}\Pi_{J}$ and $i_k\in J_{\lambda}\setminus J$, 
		Lemma~\ref{L4.2}(a) applied for $\wt L_{J_{\lambda}}(\lambda)$ and $\wt L_{J_{\lambda}}(s_{i_k}\bullet \lambda)$ over $\mathfrak{g}_{J_{\lambda}}$ (with $\nu^{-1}\lambda$, $i_k$ in place of $\mu$, $i$, respectively) implies \[\nu^{-1}\lambda-\big(\langle\lambda,\alpha_{i_k}^{\vee}\rangle+1\big)\alpha_{i_k}\in\wt L_{J_{\lambda}}(s_{i_k}\bullet\lambda)\subseteq \wt L(s_{i_k}\bullet\lambda).\]
		Above, $L_{J_{\lambda}}(\lambda)$ is the integrable highest weight $\mathfrak{g}_{J_{\lambda}}$-module corresponding to $\lambda$; see Definition~\ref{D2.1}(1). Note, $W_{J_{\lambda}}\lambda\subseteq \wt L_{J_{\lambda}}(\lambda)\subseteq \wt L(\lambda)$. Observe that $i_k\notin I_{L\left(s_{i_k}\bullet \lambda\right)}$ \big(the integrability of $L(s_{i_k}\bullet\lambda$)\big), and so the Minkowski difference formula for $\wt L(s_{i_k}\bullet\lambda)$ \big(see \eqref{E4.3}\big) implies
		\begin{equation}\label{E4.5} \nu^{-1}\lambda-\big(\langle\lambda,\alpha_{i_k}^{\vee}\rangle+c\big)\alpha_{i_k}\in\wt L(s_{i_k}\bullet\lambda)\quad\forall\text{ }c\in\mathbb{N}.\end{equation}
		Now, $\nu^{-1}\lambda\in \lambda-\mathbb{Z}_{\geq 0}\Pi_{J}$ and $i_k\in J_{\lambda}\setminus J$ together imply
		\begin{equation}\label{E4.4} \langle \nu^{-1}\lambda,\alpha_{i_k}^{\vee}\rangle\geq \langle\lambda,\alpha_{i_k}^{\vee}\rangle\geq 0,\text{ } \text{ which further implies }\text{ }\height_{\{i_k\}}\left(\lambda-s_{i_k}\nu^{-1}\lambda\right)\geq \langle\lambda,\alpha_{i_k}^{\vee}\rangle. \end{equation}
		Finally, $s_{i_k}\nu^{-1}\lambda=\nu^{-1}\lambda-\langle \nu^{-1}\lambda,\alpha_{i_k}^{\vee}\rangle\alpha_{i_k}$, \eqref{E4.5} and\eqref{E4.4}, and $\wt L(\lambda)\cup\wt L(s_{i_k}\bullet\lambda)\subseteq\wt V$ together yield \begin{equation}\label{E4.6} s_{i_k}\nu^{-1}\lambda-c\alpha_{i_k}\in\wt V\quad\forall\text{ } c\in\mathbb{Z}_{\geq 0}.\end{equation}
		\textbf{Step 4:} Observe by the 212-closedness of $Z$ that \eqref{E4.6} and the following equation imply $\nu^{-1}\lambda\in Z$.
		\[
		2\left(s_{i_k}\nu^{-1}\lambda\right)=(\nu^{-1}\lambda)+\left(s_{i_k}\nu^{-1}\lambda-\langle \nu^{-1}\lambda,\alpha_{i_k}^{\vee}\rangle\alpha_{i_k}\right).
		\]
		Notice, $\nu$ is a product of $k-1$ many simple reflections, and also $\nu Z\subset \wt V$ \big(as $\nu \in W_{J}\subseteq W_{I_V}$\big). This means the sequence $s_{i_1},\ldots,\xcancel{s_{i_k}},\ldots,s_{i_n}$ satisfies the conditions in Theorem \ref{T4.1} with size $n-1$, which contradicts the minimality of $n$. Therefore, our assumption that $i_k\notin I_V$ is invalid. Hence, the proof of the claim at the beginning is complete.\smallskip\smallskip
		
		The rest of the proof is devoted to showing that $\tilde{w}Y=\wt_{\mathcal{I}_{\tilde{w}Y}}V$ for some $\tilde{w}\in W_{I_V}$, which proves Theorem \ref{thmA}. We ignore all the assumptions that were made in the previous steps and start afresh. The standing assumption for the rest of the proof is that $\lambda\in Y$, and this is justified by Steps 1--4 above. Firstly, note that if $\mathcal{J}_Y=\emptyset$, then
		Theorem \ref{thmA} is true by the proof of the implication $(4)\implies(1)$ of Proposition \ref{P3.11}. So, we also assume for the rest of the proof that $\mathcal{J}_Y\neq\emptyset$. With these, we once again proceed in steps below to complete the proof of Theorem \ref{thmA}. Before we start, we point out that the proof involves encountering/observing/listing various elements (or notation) such as $j_l, \gamma, \xi_l$, where $l\in\mathbb{N}$, $j_l\in \mathcal{J}_Y$ and $\gamma,\xi_l\in\Delta_{\mathcal{I}_Y\setminus\mathcal{J}_Y, 1}$, and $\gamma$ is a fixed element for the whole proof. When $Y$ satisfies condition \eqref{*} in Step 5 below and $Y\neq \wt_{\mathcal{I}_Y}V$, working with $Y$ and these elements, we produce a sequence of 212-closed subsets of $\wt V$ which satisfy the same minimality condition \eqref{*} and which do not coincide with any standard face of $\wt V$, and finally we exhibit a contradiction. We stress upon the reader to carefully track these elements in the course of the steps below. Recall, $W_{\mathcal{J}_Y}$ fixes $\lambda$ as $\mathcal{J}_Y\subseteq \mathcal{J}_V$.
		\medskip\\
		\textbf{Step 5:} We begin by recalling the definition of $P(Y)$ and some important properties of it.
		\begin{align*}
		\begin{aligned} P(Y):=\Bigg\{\lambda-\eta\text{ }\in Y\quad\Bigg|
		\end{aligned}\quad
		\begin{aligned}
		&\eta\in\Delta_{\mathcal{I}_Y\setminus\mathcal{J}_Y,1},\text{ }\eta\text{ is a sum of distinct simple roots, and}\\
		&\lambda-\eta'\in Y\text{ whenever }\eta'\in\Delta_{\mathcal{I}_Y\setminus\mathcal{J}_Y,1}\text{ and }\eta'\preceq \eta
		\end{aligned}
		\begin{aligned}\Bigg\}.
		\end{aligned}
		\end{align*}
		\begin{itemize} \item $P(Y)\subseteq P(\wt_{\mathcal{I}_Y}V)$, and $|P(Y)|\leq|P(\wt_{\mathcal{I}_Y}V)|<\infty$.
			\item $\lambda-\eta\in P(Y)$ for some $\eta\in\Delta_{\mathcal{I}_Y\setminus\mathcal{J}_Y,1}$ implies $\wt_{\supp(\eta)}V\subseteq Y$, by Lemma \ref{L3.10}.
			\item In particular, with $\eta$ as in the previous point, if $\eta'\in\Delta_{\mathcal{I}_Y\setminus\mathcal{J}_Y,1}$ and $\eta'\preceq \eta$, then $\lambda-\eta'\in P(Y)$. 
		\end{itemize}
		By Lemma \ref{L3.9}, there exists $v\in W_{\mathcal{J}_Y}$ such that $\lambda-\Pi_{\mathcal{I}_Y\setminus\mathcal{J}_Y}\subset vY$. Note, $\lambda\in vY$ as $W_{\mathcal{J}_Y}$ fixes~$\lambda$. For convenience, we assume that $v=1$; or equivalently, $\lambda-\Pi_{\mathcal{I}_Y\setminus\mathcal{J}_Y}\subset Y$. By the definition of $P(Y)$, this implies $\lambda-\Pi_{\mathcal{I}_Y\setminus\mathcal{J}_Y}\subseteq P(Y)$. 
		\begin{equation}\label{E4.7}
		\text{We define}\quad U_{Y}:=\left\{u\in W_{\mathcal{J}_Y}\text{ }\big|\text{ }\lambda-\Pi_{\mathcal{I}_Y\setminus\mathcal{J}_Y}\subset uY\right\}.
		\end{equation}
		Note that $U_Y\neq\emptyset$ as $1\in U_Y$. Without loss of generality, via replacing $Y$ if necessary by a suitable $U_Y$-conjugate of $Y$, we assume that $Y$ satisfies
		\begin{align*}\label{*}
		\big|P\left(\wt_{\mathcal{I}_Y}V\right)\big|-|P(Y)|\leq \big|P\left(\wt_{\mathcal{I}_{uY}}V\right)\big|-|P(uY)| \quad \forall\text{ } u\in U_Y. 
		\tag{$\ast$}
		\end{align*}
		If $P\left(\wt_{\mathcal{I}_{Y}}V\right)=P(Y)$, then $Y=\wt_{\mathcal{I}_Y}V$ by Proposition \ref{P3.11}, and we are done. Otherwise, we proceed in Step 6--10, and show that $\big|P(\wt_{\mathcal{I}_{Y}}V)\big|>|P(Y)|$ by condition \eqref{*} leads to a contradiction.\medskip\\ 
		\textbf{Step 6:} We fix a root $\gamma\in\Delta_{\mathcal{I}_Y\setminus\mathcal{J}_Y,1}$ with least $\height(\gamma)$ such that $\lambda-\gamma\in P(\wt_{\mathcal{I}_Y}V)\setminus P(Y)$. Note that such a root $\gamma$ may not be unique, and also that $\gamma$ is a sum of distinct simple roots. Now $\lambda-\Pi_{\mathcal{I}_Y\setminus\mathcal{J}_Y}\subset Y$ implies $\height(\gamma)\geq2$. In view of this and also \cite[Lemma 3.4]{Teja}, we fix a node $j_0\in\mathcal{J}_Y$ such that $\gamma-\alpha_{j_0}\in \Delta_{\mathcal{I}_Y\setminus\mathcal{J}_Y,1}$. By Proposition \ref{L3.6}, $\lambda-\gamma+\alpha_{j_0}\in\wt V$. Importantly, $\lambda-\gamma+\alpha_{j_0}\in P(\wt_{\mathcal{I}_Y}V)$, and $\height_{\{j_0\}}(\gamma-\alpha_{j_0})=0$. Now, observe:
		\begin{equation}\label{E5.8}
		\lambda-\gamma+\alpha_{j_0}\in P(Y)\text{ by the minimality of }\height(\gamma),\quad\text{and so by Lemma } \ref{L4.11},\quad \lambda-\gamma\notin Y.\end{equation}
		Next, by Corollary \ref{C3.7}(b), we fix a root $\xi\in\Delta_{\mathcal{I}_Y\setminus\mathcal{J}_Y,1}$ with least $\height(\xi)$ such that $\height_{\{j_0\}}(\xi)>0$ and $\lambda-\xi\in Y$. Once again observe:
		\[ \height(\xi)\geq 2,\quad\text{ and moreover as }\xi\in\Delta_{\mathcal{I}_Y\setminus\mathcal{J}_Y,1},\quad\lambda-\xi+\alpha_p\in\wt V\text{ }\text{ for some }p\in\mathcal{I}\implies p\in\mathcal{J}_Y.\]
		Fix a node $j_1\in\mathcal{J}_Y$ such that $\xi-\alpha_{j_1}\in \Delta_{\mathcal{I}_Y\setminus\mathcal{J}_Y,1}$; this exists by \cite[Lemma 3.4]{Teja}. By Proposition~\ref{L3.6}, $\lambda-\xi+\alpha_{j_1}\in\wt V$.\medskip\\
		\textbf{Step 7:} If $j_1=j_0$, then by the 212-closedness of $Y$, the equation 
		\[ (\lambda-\xi)+(\lambda-\gamma+\alpha_{j_0})=(\lambda-\xi+\alpha_{j_0})+(\lambda-\gamma)\]
		implies $\lambda-\gamma\in Y$, which contradicts $\lambda-\gamma\notin Y$ in \eqref{E5.8}. So, we assume that $j_1\neq j_0$. As $\height_{\{j_0\}}(\xi+\alpha_{j_1})>0$, the minimality of $\height(\xi)$ implies $\lambda-\xi+\alpha_{j_1}\not\in Y$. Now, if there exists $\mu'\in Y$ \big(in particular, if there exists $\mu'\in P(Y)$\big) such that $\mu'-\alpha_{j_1}\in\wt V$, then the 212-closedness of $Y$~and 
		\[(\mu')+(\lambda-\xi)=(\mu'-\alpha_{j_1})+(\lambda-\xi+\alpha_{j_1})\]	 
	together imply $\lambda-\xi+\alpha_{j_1}\in Y$, which contradicts $\lambda-\xi+\alpha_{j_1}\not\in Y$. In view of Lemma \ref{L3.1}, therefore $\langle\mu',\alpha_{j_1}^{\vee}\rangle\leq 0$ $\forall$ $\mu'\in Y$. Similarly, $\langle \lambda-\xi+\alpha_{j_1},\alpha_{j_1}^{\vee}\rangle\geq 2$ implies $ \lambda-\xi-\alpha_{j_1}\in\wt V$, which cannot happen as explained above. This discussion shows: \begin{equation}\label{E'''}
		j_1\neq j_0,\qquad\langle\mu',\alpha_{j_1}^{\vee}\rangle\leq 0\text{ }\text{ }\forall\text{ }\mu'\in Y,\quad\text{ and }\quad-\langle\xi,\alpha_{j_1}^{\vee}\rangle=\langle \lambda-\xi,\alpha_{j_1}^{\vee}\rangle<0.\end{equation}
		Recall, $\mathcal{J}_Y\subseteq \mathcal{J}_V\subseteq I_V$, and so both $\wt V$ and $\wt_{\mathcal{I}_Y}V$ are $W_{\mathcal{J}_Y}$-invariant. So, $s_{j_1}Y\subseteq \wt V$ and it is 212-closed (as is $Y$). Note, $\lambda\in s_{j_1}Y$ as $j_1\in\mathcal{J}_Y$. Importantly, observe that $\langle\mu',\alpha_{j_1}^{\vee}\rangle\leq 0$ implies $\mu'\preceq s_{j_1}\mu'=\lambda-s_{j_1}(\lambda-\mu')$ $\forall$ $\mu'\in Y$. This by the definitions of $P(Y)$ and $P(s_{j_1}Y)$ forces
		\begin{equation}\label{E''''}
		s_{j_1}\big(P(Y)\big)=P(Y)\qquad\text{and}\qquad P(Y)\subseteq P\left(s_{j_1}Y\right).
		\end{equation}
		In particular, $\lambda-\Pi_{\mathcal{I}_Y\setminus\mathcal{J}_Y}\subseteq P(Y)\subseteq s_{j_1}Y$ implies that $s_{j_1}\in U_Y$. \medskip\\
		\textbf{Step 8:} By the analysis in Step 7, observe the following points about the 212-closed subset $s_{j_1}Y$.
		\begin{itemize}
			\item[(a1)] $\mathcal{I}_{s_{j_1}Y}\subseteq \mathcal{I}_Y$, and so $P\left(\wt_{\mathcal{I}_{s_{j_1}Y}}V\right)\subseteq P\left(\wt_{\mathcal{I}_Y}V\right)$. By this and \eqref{E''''}, we have
			\[\Big|P\left(\wt_{\mathcal{I}_{s_{j_1}Y}}V\right)\Big|\leq \big|P\left(\wt_{\mathcal{I}_Y}V\right)\big|\quad\text{and}\quad\big|P\left(s_{j_1}Y\right)\big|\geq \big|P(Y)\big|.\]
			In view of the minimality condition (\ref{*}) satisfied by $Y$, these inequalities force \begin{align*}
			\Big|P\left(\wt_{\mathcal{I}_{s_{j_1}Y}}V\right)\Big|-\big|P\left(s_{j_1}Y\right)\big|=&\big|P\left(\wt_{\mathcal{I}_Y}V\right)\big|-\big|P(Y)\big|,\quad\text{ and thereby}\\
			\Big|P\left(\wt_{\mathcal{I}_{s_{j_1}Y}}V\right)\Big|=\big|P\left(\wt_{\mathcal{I}_Y}V\right)\big|&\quad\text{and}\quad\big|P\left(s_{j_1}Y\right)\big|=\big|P(Y)\big|.\\
			\text{These three equations further force: }\quad P(s_{j_1}Y)&=P(Y),\qquad\mathcal{I}_{s_{j_1}Y}=\mathcal{I}_Y,\qquad\mathcal{J}_{s_{j_1}Y}=\mathcal{J}_Y.
			\end{align*} 
			This implies $s_{j_1}Y$ satisfies the minimality condition \eqref{*} (with $s_{j_1}Y$ in place of $Y$ in it).
			\item[(a2)] $P(s_{j_1}Y)=P(Y)$ implies $\lambda-\gamma+\alpha_{j_0}\in P(s_{j_1}Y)$ and $\lambda-\gamma\notin P(s_{j_1}Y)$. So, Lemma \ref{L4.11} applied to $s_{j_1}Y$ implies $\lambda-\gamma \notin s_{j_1}Y$.
			\item[(a3)] Note that $s_{j_1}\xi\in\Delta_{\mathcal{I}_Y\setminus\mathcal{J}_Y,1}$, $s_{j_1}(\lambda-\xi)=\lambda-s_{j_1}\xi\in s_{j_1}Y$ and $\height_{\{j_0\}}(s_{j_1}\xi)>0$. In view of these, we fix $\xi_1\in\Delta_{\mathcal{I}_Y\setminus\mathcal{J}_Y,1}$---similar to $\xi\in Y$---with least $\height(\xi_1)$ such that $\height_{\{j_0\}}(\xi_1)>0$ and $\lambda-\xi_1\in s_{j_1}Y$. Now, $\height(\xi_1)\leq \height(s_{j_1}\xi)<\height(\xi)$.
		\end{itemize}\medskip
		\textbf{Step 9:} Repeat Steps 6, 7, 8 above with $s_{j_1}Y$ and $\xi_1$ replacing $Y$ and $\xi$, respectively (while keeping $\gamma$ as it is), in these three steps. This yields a node $j_2\in\mathcal{J}_Y$ such that $\xi_1-\alpha_{j_2}\in\Delta_{\mathcal{I}_Y\setminus\mathcal{J}_Y,1}$. If $j_2=j_0$, then by an argument similar to the one in Step 7, we get a contradiction to the minimality of $\height(\xi_1)$. So, we assume that $j_2\neq j_0$. This implies that $\height_{\{j_0\}}(s_{j_2}\xi_1)>0$. As $\lambda- s_{j_2}\xi_1\in s_{j_2}\big(s_{j_1}Y\big)$ (which is also 212-closed in $\wt V$), we will have a root $\xi_2\in\Delta_{\mathcal{I}_Y\setminus\mathcal{J}_Y,1}$ (similar to $\xi_1$ in point (a3)) with least height such that $\lambda-\xi_2\in s_{j_2}s_{j_1}Y$ and $\height_{\{j_0\}}(\xi_2)>0$, and also $\height(\xi_2)<\height(\xi_1)$.  
		
		What we have done so far is: starting with the pair $(Y, \xi)$ we obtained a node $j_1$ and the pair $(s_{j_1}Y, \xi_1)$, and then (in this step) we got another node $j_2$ and the pair $(s_{j_2}s_{j_1}Y, \xi_2)$. So, in this manner, if we repeat Steps 6, 7 and 8 at most ``$\height(\xi)-2$'' times, we will finally have a sequence of nodes $j_1,j_2,\ldots,j_d\in\mathcal{J}_Y$ and also a sequence of roots $\xi_1,\xi_2,\ldots,\xi_d\in\Delta_{\mathcal{I}_Y\setminus\mathcal{J}_Y,1}$ which satisfy:  		\begin{itemize}
			\item For each $1\leq l\leq d$, $j_l\neq j_0$, and $\bigg(\prod\limits_{t=0}^{l-1}s_{j_{l-t}}\bigg)Y$ is 212-closed in $\wt V$ and $\lambda\in\bigg(\prod\limits_{t=0}^{l-1}s_{j_{l-t}}\bigg)Y$.
			\item $\height(\xi_l)$ is least such that $\xi_l\in\Delta_{\mathcal{I}_Y\setminus\mathcal{J}_Y,1}$, $\lambda-\xi_l\in\bigg(\prod\limits_{t=0}^{l-1}s_{j_{l-t}}\bigg)Y$ and $\height_{\{j_0\}}(\xi_l)>0$ $\forall$ $1\leq  l\leq d$. Moreover, $\height(\xi_d)<\cdots<\height(\xi_2)<\height(\xi_1)$.
			\item $\xi_l-\alpha_{j_0}$ is not a root for any $1\leq l\leq d-1$, but $\xi_d-\alpha_{j_0}$ is a root. Note for any $t\in [d]$ that if $\xi_t-\alpha_{j_0}$ is a root, then it belongs to $\Delta_{\mathcal{I}_Y\setminus\mathcal{J}_Y,1}$, as $j_0\in \mathcal{J}_Y$. In the first line of this point, we assumed $\xi_d-\alpha_{j_0}$ to be a root, as there must exist such a stage $d$ given that $\height_{\{j_0\}}(\xi_l)>0$ $\forall$ $l\in [d]$, and also as $\height(\xi_l)$ decreases as $l$ increases.  
			\item The assertions that are analogous to the ones in: points (a1), (a2), (a3), \eqref{E'''}, and $\eqref{E''''}$, also hold for $j_{l+1}$, $\bigg(\prod\limits_{t=0}^{l-1}s_{j_{l-t}}\bigg)Y$ and $\xi_l$ (in place of $j_1$, $Y$ and $\xi$, respectively) $\forall$~$l< d$. 
		\end{itemize}
		\textbf{Step 10:} We define for convenience, $w:=\prod\limits_{t=0}^{d-1}s_{j_{d-t}}$. Observe the following points about the 212-closed subset $wY$ of $\wt V$. 
		\begin{itemize} 
			\item[(b1)]
			$P(wY)=P(Y)$, $\mathcal{I}_{wY}=\mathcal{I}_Y$ and $\mathcal{J}_{wY}=\mathcal{J}_Y$.
			\item[(b2)] $\lambda-\gamma+\alpha_{j_0}\in P(wY)$, and $\lambda-\gamma \notin wY$ by Lemma \ref{L4.11}.
			\item[(b3)] $\lambda-\xi_d\in wY$, and importantly $\xi_d-\alpha_{j_0}\in\Delta_{\mathcal{I}_Y\setminus\mathcal{J}_Y,1}$. So, $\lambda-\xi_d+\alpha_{j_0}\in\wt V$. 
		\end{itemize}
		Finally, by point b3) and by the 212-closedness of $wY$, observe that the equation 
		\[ (\lambda-\xi_d)+(\lambda-\gamma+\alpha_{j_0})=(\lambda-\xi_d+\alpha_{j_0})+(\lambda-\gamma)\] 
		contradicts $\lambda-\gamma\notin wY$ in point (b2). So, our assumption that $|P(\wt_{\mathcal{I}_Y}V)|>|P(Y)|$, which led to the contradiction in the previous line, is invalid. Therefore, $P(\wt_{\mathcal{I}_Y}V)=P(Y)$, and this implies $Y=\wt_{\mathcal{I}_Y}V$ as desired. In the final step below, we show the equivalence of the three notions given at the end of the statement of Theorem \ref{thmA}, which is the only thing that remains to be shown.\medskip\\
		\textbf{Step 11:} Clearly, the standard faces of $\wt V$, and so all of their $W_{I_V}$-conjugates, satisfy the definition of a weak-$\mathbb{A}$-face (for any non-trivial subgroup $\mathbb{A}\subseteq(\mathbb{R},+)$). By Observation \ref{O2.3}, weak-$\mathbb{A}$-faces of $\wt V$ are 212-closed. Now, Steps 1--10 say that 212-closed subsets of $\wt V$ can be conjugated to standard faces by elements of $W_{I_V}$. Hence, finally, the proof of Theorem \ref{thmA} is complete.
	\end{proof}
	\section{Proof of Theorem \ref{thmB}: Weak faces of convex hulls of $\wt V$}
	Throughout this section, we assume that $\mathbb{F}$ is a subfield of $\mathbb{R}$ and $\mathbb{A}$ a non-trivial subgroup of $(\mathbb{F},+)$. The goal of this section is to prove Theorems \ref{thmB} and \ref{T2.3}. We begin by extending Remarks \ref{R3.1} and \ref{R3.2} to arbitrary $\mathbb{F}$-convex sets in the following lemma. 
	\begin{lemma}\label{L5.1}
		Let $\mathbb{F}$ be a subfield of $\mathbb{R}$, and let $X$ be an $\mathbb{F}$-convex subset of an $\mathbb{F}$-vector space. Let $\emptyset\neq Y$ be 212-closed in $X$. Fix two arbitrary points $x, y\in X$, and a number $t\in (0,1)\cap\mathbb{F}$. 
		\begin{itemize}
			\item[(a)] Suppose $x, y\in Y$. Then $rx+(1-r)y\in Y$ $\forall$ $r\in [0,1]\cap\mathbb{F}$. Therefore, 212-closed subsets of convex sets are also convex.
			\item[(b)] Suppose $tx+(1-t)y\in Y$. Then $\{rx+(1-r)y$ $|$ $r\in [0,1]\cap\mathbb{F}\}\subset Y$.
		\end{itemize}
		The analogous assertions about the weak-$\mathbb{A}$-faces (in place of 212-closed subsets) of $X$ for arbitrary non-trivial additive subgroups $\mathbb{A}$ of $(\mathbb{F},+)$ also hold true.
	\end{lemma}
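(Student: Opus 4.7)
The plan is to extract part (a) as a one-line consequence of the 212-closedness axiom, and then bootstrap to part (b) by a finite doubling iteration; the weak-$\mathbb{A}$-face assertions will follow at no extra cost from Observation \ref{O2.3}, which places weak-$\mathbb{A}$-faces inside the class of 212-closed subsets, so once the 212-closed statements are in hand no further work is required.

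For part (a), given $x, y \in Y$ and $r \in [0,1] \cap \mathbb{F}$, I would set $z := rx + (1-r)y$ and its mirror $z' := (1-r)x + ry$; both lie in $X$ by $\mathbb{F}$-convexity. The identity $(x) + (y) = (z) + (z')$ in the ambient vector space is a direct witness to the 212-closedness hypothesis, with $x, y \in Y$ on the left and $z, z' \in X$ on the right, so the axiom forces $z, z' \in Y$. This in particular establishes the $\mathbb{F}$-convexity of $Y$ claimed in the last sentence of (a). For part (b), write $z_r := rx + (1-r)y$ and let $S := \{r \in [0,1] \cap \mathbb{F} \mid z_r \in Y\}$, so $t \in S$ and the goal becomes $S = [0,1] \cap \mathbb{F}$. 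The central tool is the identity
\[
2(z_s) \;=\; (z_r) + (z_{2s - r}), \qquad \text{valid whenever } r,\, 2s - r \in [0,1],
\]
which places $z_r, z_{2s-r} \in X$ by $\mathbb{F}$-convexity and, when $s \in S$, forces both into $Y$ by 212-closedness. Specializing to $s = t$ with (without loss of generality) $t \leq 1/2$ and taking $r = 0$ yields $y \in Y$ and $z_{2t} \in Y$; part (a) then upgrades this to $[0, 2t] \cap \mathbb{F} \subseteq S$.

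The remainder is a finite iteration: each doubling $s \mapsto 2s$ applied to the current right endpoint of $S$ enlarges the interval, and after the smallest $k$ with $2^{k} t \geq 1/2$ one reaches some $s^{\ast} := 2^{k} t$ in $[1/2, 1]$; one further use of the doubling identity at $s^{\ast}$, this time with $r = 1$, yields $1 \in S$, and part (a) applied to $0, 1 \in S$ delivers $[0,1] \cap \mathbb{F} \subseteq S$, finishing (b). The step most in need of care is the termination and bookkeeping of this iteration, particularly the symmetric case $t \geq 1/2$ and the verification that the doubling hits rather than overshoots $[1/2, 1]$; this reduces to a short case split on whether $2^{k} t \leq 1/2$ or $2^{k} t \in (1/2, 1]$, so I do not anticipate any serious obstacle. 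Finally, the weak-$\mathbb{A}$-face analogues of (a) and (b) follow immediately from the 212-closed cases together with Observation \ref{O2.3}.
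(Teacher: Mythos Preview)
Your proposal is correct and follows essentially the same approach as the paper: part (a) via the single identity $(x)+(y)=(z_r)+(z_{1-r})$, and part (b) via a finite doubling iteration based on $2(z_s)=(z_r)+(z_{2s-r})$, terminating once the parameter lands in $[1/2,1]$. The paper organizes (b) slightly differently---it first reduces to showing $y\in Y$, handles $t\le 1/2$ in one step, and for $t\ge 1/2$ iterates the map $t\mapsto 2t-1$ until the parameter drops below $1/2$---but this is the mirror image of your $t\mapsto 2t$ iteration after the WLOG $t\le 1/2$; the two are interchangeable. Your appeal to Observation~\ref{O2.3} for the weak-$\mathbb{A}$-face analogues is likewise fine.
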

	\begin{proof}
		Let $Y$ and $X$ be as in the statement. The proof of part (a) easily follows by the definition of 212-closed subsets and the following equation.
		\[(x)+(y)=\big(rx+(1-r)y\big)+\big((1-r)x+ry\big).\]
		To show part (b), assume that $tx+(1-t)y\in Y$ for some $t\in (0,1)\cap\mathbb{F}$. We prove that $y\in Y$, and the proof for $x\in Y$ is similar. Write $tx+(1-t)y= y+t(x-y)$. Note that $y+r(x-y)\in X$ $\forall$ $r\in [0,1]\cap\mathbb{F}$, as $X$ is $\mathbb{F}$-convex. When $t\leq \frac{1}{2}$, by the 212-closedness of $Y$, the equation 
		\[2\big(y+t(x-y)\big)=\big(y+2t(x-y)\big)+(y)\]
		 implies $y\in Y$. So, we assume for the rest of the proof that $t\geq \frac{1}{2}$. Fix $m\in\mathbb{N}$ such that $1-\frac{1}{2^m}\leq t< 1-\frac{1}{2^{m+1}}$; such a number $m$ exists as $t\geq \frac{1}{2}$. Observe: \[\frac{1}{2}\leq 2^{i}t-2^{i}+1\leq 1\ \  i\in [m-1],\qquad\text{and}\qquad 0\leq 2^{m}t-2^{m}+1< \frac{1}{2}.
		 \]
		 We show that $y+\left(2^{m}t-2^{m}+1\right)(x-y)\in Y$, so that we will be done as above. For this, consider the following system of equations:
		\[
		2\Big(y+\left(2^{i-1}t-2^{i-1}+1\right)(x-y)\Big)=\Big(y+\left(2^{i}t-2^{i}+1\right)(x-y)\Big)+(x),\quad 1\leq i\leq m.
		\]
		For each $i\in [m]$, inductively, it can be observed that the $i^{\text{th}}$ equation in the above system (as $Y$ is 212-closed) implies that $y+\left(2^{i}t-2^{i}+1\right)(x-y)\in Y$. In particular, $y+\left(2^{m}t-2^{m}+1\right)(x-y)\in Y$ as required. Hence, the proof of the lemma is complete.
	\end{proof}
	\begin{proof}[\textnormal{\textbf{Proof of Theorem \ref{T2.3}}}]
		Let $X$, $Y$, $S$ and $D$ be as in the statement. We first prove part (a). Assume that $X=\conv_{\mathbb{F}}S$. We are required to show $Y=\conv_{\mathbb{F}}\big(Y\cap S\big)$. For this, pick an arbitrary element $y\in Y$. Assume that $y=\sum\limits_{i=1}^nr_is_i$ for some $s_i\in S$ and $r_i\in\mathbb{F}_{>0}$ summing up to 1. We will show that $s_i\in Y$ for all $i\in [n]$, which implies the result. If $n=1$, then $r_1=1$ and $y=s_1\in S$, and so we are done. Else when $n>1$, write:
		\[ y= s_1+(1-r_1)\left[\left(\sum\limits_{i=2}^n\frac{r_i}{1-r_1}s_i\right)-s_1\right].\]
		Since $r_1\in (0,1)\cap\mathbb{F}$, Lemma \ref{L5.1}(b) implies $s_1\in Y$ and also $\sum\limits_{i=2}^n\frac{r_i}{1-r_1}s_i\in Y$. Finally, by invoking an induction argument on $n$, it can be shown that $s_i\in Y$ $\forall$ $i\in [n]$. Furthermore, it follows by the definition of 212-closed subsets that $Y\cap S$ is a (non-empty) 212-closed subset of $S$. This completes the proof of part (a).
		
		For part (b), assume that $X=\conv_{\mathbb{F}}S+\mathbb{F}_{\geq 0}D$. We define for convenience, \[Y':=\conv_{\mathbb{F}}(Y\cap S)+\mathbb{F}_{\geq 0}\big\{d\in D\text{ }|\text{ }(d+Y)\cap Y\neq\emptyset\big\}.\]
		We first show $Y\subseteq Y'$. For this, fix an arbitrary element $y\in Y$. Write $y=y_s+\sum\limits_{i=1}^nr_id_i$ for some $y_s\in\conv_{\mathbb{F}}S$, $d_i\in D$ and $r_i\in\mathbb{F}_{\geq 0}$ $\forall$ $i\in [n]$. By the description of $X$ and the 212-closedness of $Y$,
		\begin{equation}\label{E'5.1}
		2\left(y_s+\sum_{i=1}^nr_id_i\right)=(y_s)+\left(y_s+\sum_{i=1}^n2r_id_i\right)
		\end{equation}
		implies $y_s\in Y$. So, $y_s\in \conv_{\mathbb{F}}(Y\cap S)$ by part (a). When $y=y_s$, we are done by the previous line. So, we assume that not all $r_i$ are 0. Now, fix a number $k\in [n]$ such that $r_k>0$. Pick the least $m\in\mathbb{N}\sqcup\{0\}$ such that $2^mr_k\geq 1$. Note by equation \eqref{E'5.1} that $\left(y_s+\sum_{i=1}^n2r_id_i\right)\in Y$. Via writing a system of equations similar to \eqref{E'5.1}, it can be shown inductively that $y_s+\sum\limits_{i=1}^n2^jr_id_i\in Y$ for each $0\leq j\leq m$. So in particular, $y_s+\sum\limits_{i=1}^n2^mr_id_i\in Y$. By the 212-closedness of $Y$, the equation
		\[
		2\left(y_s+\sum_{i=1}^n2^mr_id_i\right)=\left(y_s+d_k\right)+\bigg(y_s+ (2^mr_k-1)d_k+\sum_{i\in [n]\setminus\{k\}} 2^mr_id_i\bigg)
		\]
	 implies $y_s+d_k\in Y$, which proves $(d_k+Y)\cap Y \neq\emptyset$. This shows $Y\subseteq Y'$ as $k$ is arbitrary.
	 
	 Finally, we show $Y'\subseteq Y$ to complete the proof of part (b). Pick an arbitrary $y'\in Y'$, and write $y'=y'_s+\sum_{i=1}^{n'}r'_id'_i$ for some $y'_s\in \conv_{\mathbb{F}}(Y\cap S)$, $d'_i\in \big\{d\in D$ $|$ $(d+Y)\cap Y\neq\emptyset\big\}$ and $r'_i\in \mathbb{F}_{\geq 0}$ $\forall$ $i\in [n']$. Assume that not all $r'_i$ are 0, as there is nothing to prove otherwise. Fix a number $k\in [n']$ such that $r'_k>0$, and correspondingly an element $y\in Y$ such that $y+d'_k\in Y$; such an element $y$ exists as $d'_k\in \big\{d\in D$ $|$ $(d+Y)\cap Y\neq\emptyset\big\}$. We show $y'_s+r'_kd'_k\in Y$; by this (as $k$ is arbitrary) and by using the 212-closedness of $Y$ one can easily prove that $y'\in Y$, which implies the result. Pick the least $m'\in \mathbb{N}\sqcup\{0\}$ such that $2^{m'}\geq r'_k$. Since $y+d'_k\in Y$, by a similar procedure to the one given below \eqref{E'5.1}, it can be easily shown that $y+2^{m'}d'_k\in Y$. Finally, by the 212-closedness~of~$Y$,
	 \[(y'_s)+(y+2^{m'}d'_k)=(y'_s+r'_kd'_k)+\big(y+(2^{m'}-r'_k)d'_k\big)\]
	 implies $y'_s+r'_kd'_k\in Y$. Hence, the proof of the theorem is complete.
	\end{proof}
	Finally, using Theorems \ref{T2.3} and \ref{thmA}, we show Theorem \ref{thmB}.
	\begin{proof}[\textnormal{\textbf{Proof of Theorem \ref{thmB}}}]
	We first prove the implication (4) $\implies$ (1). Fix $J\subseteq \mathcal{I}$ and $w\in W_{I_V}$. Observe that if $J=\mathcal{I}$, then $\conv_{\mathbb{R}}\big(w[(\lambda-\mathbb{Z}_{\geq 0}\Pi_J)\cap\wt V]\big)=\conv_{\mathbb{R}}\wt V$ (which itself equals the maximizer subset for $0\in\mathfrak{h}^*$ in it), and so there is nothing to prove in this case. So we assume that $J\subsetneqq\mathcal{I}$. By the linear independence of $\Pi\subset \mathfrak{h}^*$, we fix elements $\omega^{\vee}_i\in\mathfrak{h}$ $\forall$ $i\in\mathcal{I}$ satisfying $\langle\alpha_j,\omega^{\vee}_i\rangle=\delta_{i,j}$ $\forall$ $i,j\in\mathcal{I}$. We define a linear functional $\psi$ on $\mathfrak{h}^{*}$ by $\psi(\alpha):=\Big\langle\alpha,\sum\limits_{i\in J^c}w\omega_i^{\vee}\Big\rangle$, the evaluation of $\alpha$ against $\sum\limits_{i\in J^c}w\omega_i^{\vee}$.  Observe:	\[
		\conv_{\mathbb{R}}\big(w[(\lambda-\mathbb{Z}_{\geq 0}\Pi_K)\cap\wt V]\big)=w\big[(\lambda-\mathbb{R}_{\geq 0}\Pi_K)\cap(\conv_{\mathbb{R}}\wt V)\big]\quad\forall\text{ }w\in W_{I_V},\text{ }K\subseteq \mathcal{I}.
		\]
		Now, it can be easily checked that $\conv_{\mathbb{R}}\big(w[(\lambda-\mathbb{Z}_{\geq 0}\Pi_J)\cap\wt V]\big)$ maximizes $\psi$ inside $\conv_{\mathbb{R}}\wt V$.\\
		Next, (1) $\implies$ (2) $\implies$ (3) follows from Observation \ref{O2.3}. As mentioned before, (3) $\implies$ (4) follows by Theorem \ref{T2.3} in view of Theorem \ref{thmA}. Hence, the proof of the theorem is complete.
	\end{proof}
	\section{Proof of Theorem \ref{thmC}: Weak faces of root systems}
	Let $\mathfrak{g}$ be a Kac--Moody algebra with root system $\Delta$, and fix a non-trivial additive subgroup $\mathbb{A}\subseteq(\mathbb{R},+)$. The goal of this section is to prove Theorem \ref{thmC}, which: 1)~finds all the 212-closed subsets of $\Delta$ and of $\Delta\sqcup\{0\}$; 2)~shows the equivalence of the notions the 212-closed subsets of $\Delta\sqcup\{0\}$, weak-$\mathbb{A}$-faces of $\Delta$ and weak-$\mathbb{A}$-faces of $\Delta\sqcup\{0\}$; 3)~shows the equivalence of the 212-closed subsets of $\Delta$ and all the notions in the previous point except for just two cases $\mathfrak{g}=\mathfrak{sl}_3(\mathbb{C})\text{ and } \widehat{\mathfrak{sl}_3(\mathbb{C})}$. Recall: i)~the notation $\Delta_s,\Delta_l,\theta$ for $\Delta$ of finite type, and ii)~for $\Delta$ of affine type and $Y\subseteq\Delta$, the definitions of $\mathring{\Delta}$, $\mathring{W}$, $Y_s$ and $Y_l$ from the definitions in \eqref{E2.4} and the paragraph above it.
	\begin{equation}\label{E5.1}
	Y_s:=\begin{cases}
	(Y\cap\mathring{\Delta}_s)+\mathbb{Z}\delta &\text{if } Y\cap \mathring{\Delta}_s\neq\emptyset,\\
	\emptyset &\text{if }Y\cap\mathring{\Delta}_s=\emptyset,
	\end{cases}\qquad\quad  Y_l:=\begin{cases}
	(Y\cap\mathring{\Delta}_l)+r\mathbb{Z}\delta &\text{if } Y\cap \mathring{\Delta}_l\neq\emptyset,\\
	\emptyset &\text{if }Y\cap\mathring{\Delta}_l=\emptyset.
	\end{cases}
	\end{equation}
	
	We begin this section with the following remark on 212-closed subsets of $\Delta$ and of $\Delta\sqcup\{0\}$, which will be referred to later at many places. 
	\begin{rem}\label{R5.2} 
		Let $\Delta$ be an indecomposable root system, and $Y\subseteq \Delta$ be 212-closed in $\Delta$.\\
		(1) Suppose $Y$ contains an imaginary root $\eta$. Then $Y=\Delta$ by the following implications: \begin{align}
		&2(\eta)=(3\eta)+(-\eta)\text{ }\text{ and }\text{ }(\mathbb{Z}\setminus\{0\})\eta\subset\Delta\text{ }\text{ together}\text{ imply } -\eta\in Y.\label{E5.2}\\
		&(\eta)+(-\eta)=(\xi)+(-\xi)\text{ }\text{for any }\xi\in\Delta \text{ }\implies\Delta\subseteq Y\implies \Delta=Y.\label{E5.3}
		\end{align}
		Similarly, if $Y$ is 212-closed in $\Delta\sqcup\{0\}$ and $Y$ contains 0 or an imaginary root, then $Y=\Delta\sqcup\{0\}$. Thus, if $Y$ is proper 212-closed in $\Delta$, or in $\Delta\sqcup\{0\}$, then $Y\subset \Delta^{re}$ the set of real roots of $\Delta$.\\
		(2) Similar to point (1), if there is a root $\gamma$ such that $\pm\gamma\in Y$, then $Y=\Delta$ by \eqref{E5.3}. \\
		(3) Let $Y'$ be 212-closed in $\Delta\sqcup\{0\}$. In view of point (1), $Y'=\Delta\sqcup\{0\}$ if and only if $\Delta\subset Y'$. So, if $Y'\subsetneqq \Delta\sqcup\{0\}$, then by point (1) and the definition of 212-closed subsets, $Y'$ is proper 212-closed in $\Delta$. Similar assertions hold true for the weak-$\mathbb{A}$-faces of $\Delta$ and $\Delta\sqcup\{0\}$. 
	\end{rem}
	Before we prove Theorem \ref{thmC}, we make some observations concerning the 212-closed subsets and weak-$\mathbb{A}$-faces of $\Delta\sqcup\{0\}$ that are common to the proofs of parts (\ref{thmC}1) and (\ref{thmC}2).
	\begin{rem}\label{R7.2}
		Let $\mathfrak{g}$ be of finite type. Recall, $\mathfrak{g}\simeq L(\theta)$, and so $\Delta\sqcup\{0\}=\wt L(\theta)$. Therefore by Theorem \ref{thmA}, the proper 212-closed subsets of $\Delta\sqcup\{0\}$ and the weak-$\mathbb{A}$-faces of $\Delta\sqcup\{0\}$ are the same, and they both coincide with the $W$-conjugates of all the proper standard faces of $\wt L(\theta)=\Delta\sqcup\{0\}$. Recall, the proper standard faces of $\wt L(\theta)$ are the subsets of the form $(\theta-\mathbb{Z}_{\geq 0}\Pi_J)\cap \wt L(\theta)$ for $J\subsetneqq \mathcal{I}$. These sets and their $W$-conjugates also satisfy the definition of a weak-$\mathbb{A}$-face of $\Delta$.
	\end{rem}
	We now show Theorem \ref{thmC}, by proving each of its parts (\ref{thmC}1)--(\ref{thmC}4) separately.
	\begin{proof}[\textnormal{\textbf{Proof for part (\ref{thmC}1):}}]
		Assume that $\mathfrak{g}$ is of finite type and $\mathfrak{g}\neq \mathfrak{sl}_3(\mathbb{C})$. To show the equivalence of the four notions in the statement of part (\ref{thmC}1), we prove the following claim.
		\begin{claim}
			The 212-closed subsets of $\Delta$ and of $\Delta\sqcup\{0\}$ are the same.
		\end{claim}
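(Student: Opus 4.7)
My plan is to establish the claimed equality of (proper) 212-closed subsets of $\Delta$ and of $\Delta\sqcup\{0\}$ as two inclusions. The easy direction---every proper 212-closed subset of $\Delta\sqcup\{0\}$ is a proper 212-closed subset of $\Delta$---is immediate from Remark~\ref{R5.2}(3), which forces such a subset to lie in $\Delta$ (hence not contain $0$) and whose 212-closedness in $\Delta$ is inherited from $\Delta\sqcup\{0\}$ since the defining equations in $\Delta$ form a subfamily of those in $\Delta\sqcup\{0\}$. Granting the reverse inclusion, the explicit description ``$w[(\theta-\mathbb{Z}_{\geq 0}\Pi_I)\cap\Delta]$ for $w\in W$, $I\subsetneqq\mathcal{I}$'' in part~(\ref{thmC}1) then follows from Theorem~\ref{thmA} applied to the adjoint representation $V=\mathfrak{g}=L(\theta)$, together with Remark~\ref{R7.2}.

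For the reverse inclusion, I fix a proper 212-closed subset $Y\subsetneqq\Delta$ and verify that every equation $(y_1)+(y_2)=(x_1)+(x_2)$ in $\Delta\sqcup\{0\}$ with $y_1,y_2\in Y$ forces $x_1,x_2\in Y$. The case $x_1,x_2\in\Delta$ is just 212-closedness of $Y$ in $\Delta$; the case $x_1=x_2=0$ gives $y_2=-y_1\in Y$, whence Remark~\ref{R5.2}(2) yields the contradiction $Y=\Delta$; and the remaining case $x_1=0$, $x_2\in\Delta$ would require $0\in Y\subseteq\Delta$, impossible. Consequently the claim reduces to the key auxiliary statement
\[
(\ast)\qquad\text{for every proper 212-closed }Y\subsetneqq\Delta,\text{ one has }(Y+Y)\cap\Delta=\emptyset.
\]

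To prove $(\ast)$ I argue by contradiction: suppose $y_1,y_2\in Y$ with $\beta=y_1+y_2\in\Delta$, and deduce $Y=\Delta$. The rank-two sub-root system $\Delta'\subseteq\Delta$ generated by $y_1,y_2$ contains $\beta$ and hence has type $A_2$, $B_2$, or $G_2$. In the $B_2$ and $G_2$ cases, the multi-laced extra roots of $\Delta'$ supply an alternative decomposition of the form $(y_1)+(y_2)=(2y_i+y_j)+(-y_i)$ (with both right-hand summands in $\Delta$), so that 212-closedness of $Y$ in $\Delta$ forces $-y_i\in Y$; combined with $y_i\in Y$, Remark~\ref{R5.2}(2) gives $Y=\Delta$. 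When $\Delta'\cong A_2$, the hypothesis $\mathfrak{g}\neq\mathfrak{sl}_3(\mathbb{C})$ together with indecomposability forces $\operatorname{rank}(\mathfrak{g})\geq 3$. Using $W$-equivariance of 212-closedness in $\Delta$ (Observation~\ref{note1} applied to the adjoint representation), I may assume $y_1=\alpha_p$, $y_2=\alpha_q$ are adjacent simple roots; by indecomposability of the Dynkin diagram, there exists a further simple root $\alpha_r\in\Pi\setminus\{\alpha_p,\alpha_q\}$ adjacent to $\alpha_q$. When the $\alpha_q$--$\alpha_r$ edge is single (so that $\alpha_q+\alpha_r,\alpha_p+\alpha_q+\alpha_r\in\Delta$), the three decompositions
\begin{align*}
(\alpha_p)+(\alpha_q)&=(\alpha_p+\alpha_q+\alpha_r)+(-\alpha_r),\\
(\alpha_p)+(-\alpha_r)&=(\alpha_p+\alpha_q)+(-\alpha_q-\alpha_r),\\
(\alpha_p+\alpha_q+\alpha_r)+(-\alpha_q-\alpha_r)&=(\alpha_p+\alpha_q)+(-\alpha_q)
\end{align*}
each have both right-hand summands in $\Delta$ and, applied in order, successively force $\alpha_p+\alpha_q+\alpha_r,\ -\alpha_r,\ \alpha_p+\alpha_q,\ -\alpha_q-\alpha_r$, and finally $-\alpha_q$ to lie in $Y$. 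With $\alpha_q\in Y$ as well, Remark~\ref{R5.2}(2) delivers the contradiction $Y=\Delta$.

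The main obstacle is guaranteeing a suitable $\alpha_r$ always exists and that the displayed decomposition remains valid across all Dynkin types. For simply-laced types ($A_n$ with $n\geq 3$, $D_n$, $E_n$) the argument above is clean. For $B_n$, $C_n$, $F_4$, one must choose $\alpha_r$ so that the adjacency $\alpha_q$--$\alpha_r$ is a single bond (which is possible whenever $\mathcal{I}\setminus\{\alpha_p,\alpha_q\}$ contains a suitable node), or else modify the three-step chain using $\alpha_q+2\alpha_r$ or $2\alpha_q+\alpha_r$ in place of $\alpha_q+\alpha_r$. I expect the remainder of Section~7 to carry out precisely this case-check uniformly, perhaps reducing to a single favorable configuration via the Weyl-group action.
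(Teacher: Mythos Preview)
Your reduction to the auxiliary statement $(\ast)$---that $(Y+Y)\cap\Delta=\emptyset$ for every proper 212-closed $Y\subsetneqq\Delta$---matches the paper exactly. The divergence is in how $(\ast)$ is established, and your treatment of the $B_2$/$G_2$ subsystem case contains a genuine gap.

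You claim that when $\Delta'$ (the rank-two subsystem generated by $y_1,y_2$) has type $B_2$ or $G_2$, the decomposition $(y_1)+(y_2)=(2y_i+y_j)+(-y_i)$ is available. This fails whenever $y_1,y_2$ are not a base of $\Delta'$. Concretely, in $B_2$ with $\alpha_1$ short and $\alpha_2$ long, take $y_1=\alpha_1$ and $y_2=\alpha_1+\alpha_2$: then $y_1+y_2=2\alpha_1+\alpha_2\in\Delta$, yet neither $2y_1+y_2=3\alpha_1+\alpha_2$ nor $y_1+2y_2=3\alpha_1+2\alpha_2$ is a root---indeed $2\alpha_1+\alpha_2$ has a \emph{unique} expression as a sum of two roots in $B_2$. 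The same happens in $G_2$ with $y_1=\alpha_1$, $y_2=2\alpha_1+\alpha_2$. Since these are exactly the rank-two ambient cases you must cover (the hypothesis excludes only $A_2$), this is not a peripheral issue. The paper handles $B_2$ and $G_2$ by direct enumeration of all proper 212-closed subsets, which is unavoidable here.

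For rank $\geq 3$, the paper's argument differs from yours in a way worth noting: rather than conjugating the \emph{pair} $(y_1,y_2)$ to adjacent simple roots (which requires the nontrivial fact that every $A_2$ subsystem is $W$-conjugate to a parabolic one), the paper conjugates only the \emph{sum} $x=y_1+y_2$ to a simple root $\alpha_i$ at a leaf of the Dynkin diagram. With $j$ the unique neighbour of $i$ and $k$ a further neighbour of $j$, one then sets $\xi=\alpha_j$ (or $\xi=\alpha_j+\alpha_k$ if $w y_1=-\alpha_j$) and runs a three-step chain yielding $\pm(\alpha_i+\xi)\in wY$. This avoids any case-split on the type of $\Delta'$ and works uniformly across bond multiplicities---so your worry about double bonds in the $A_2$ case is actually unnecessary, while the real obstruction lies where you thought you were safe.
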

		Assuming the truthfulness of the above claim, one can easily observe: i)~the proper 212-closed subsets of $\Delta$ are the same as the $W$-conjugates of the proper standard faces of $\Delta\sqcup\{0\}$; ii)~as the weak-$\mathbb{A}$-faces of $\Delta$ are also 212-closed in $\Delta$, point i) and the last sentence in Remark \ref{R7.2} imply that the proper 212-closed subsets of $\Delta$, the proper weak-$\mathbb{A}$-faces of $\Delta$ and the $W$-conjugates of proper standard faces of $\Delta\sqcup\{0\}$ are all the same. Finally, once again by Remark \ref{R7.2}, all the four notions are same. Therefore, all that remains to prove is the above claim. For this, first of all notice that if $Y'\subsetneqq \Delta\sqcup\{0\}$ is 212-closed in $\Delta\sqcup\{0\}$, then $Y'$ is proper 212-closed in $\Delta$ by Remark \ref{R5.2}(3). We prove the converse result in four layers: 1)~when the rank of $\Delta$ is $\geq 3$; 2)~when $\Delta$ is of type $A_1$; 3)~when $\Delta$ is of type $B_2$; 4)~when $\Delta$ is of type $G_2$. Observe that these four cases exhaust all $\mathfrak{g}$ of finite type, $\mathfrak{g}\neq \mathfrak{sl}_3(\mathbb{C})$. In the proofs for these four cases, we fix $\emptyset\neq Y\subsetneqq\Delta$ to be 212-closed in $\Delta$.  \medskip\\
		\textbf{For $\Delta$ of rank $\geq 3$:} Assume that the rank of $\mathfrak{g}$ is $\geq 3$. To show $Y$ is proper 212-closed in $\Delta\sqcup\{0\}$, consider the equation:
		\[(\gamma_1)+(\gamma_2)=(x)+(y)\quad \text{ for some }\gamma_1,\gamma_2\in Y,\text{ }x,y\in\Delta\sqcup\{0\}.\]
		Proving $x,y\in Y$ will imply the result. In the above equation, if both $x\text{ and }y$ are roots, then they belong to $Y$ as $Y$ is 212-closed in $\Delta$. Else if $x=y=0$, then $\gamma_2=-\gamma_1$, and this means $\pm\gamma_1\in Y$. Now, \eqref{E5.3} (with $\gamma_1$ in place of $\eta$) implies $Y=\Delta$, which contradicts $Y\subsetneqq\Delta$. So, we assume $x\in \Delta$ and $y=0$ for the rest of the proof, and once again show that this leads to $Y=\Delta$ contradicting $Y\subsetneqq\Delta$.  
		
		Pick a node $i\in\mathcal{I}$ and $w\in W$ such that $i$ is a leaf (a degree one node) in the Dynkin diagram and $wx=\alpha_i$. The previous line can be verified by recalling: (i)~any two roots of the same length are $W$-conjugate, (ii)~there can be at most two lengths in $\Delta$, (iii)~there always exist two leaves in the Dynkin diagram out of which one corresponds to a short simple root and the other to a long simple root. For convenience, call $wY,w\gamma_1\text{ and }w\gamma_2$ as $Z,\beta_1\text{ and }\beta_2$, respectively. Note, $Z$ is proper 212-closed in $\Delta$ (as is $Y$), and $\beta_1,\beta_2\in Z$. As the Dynkin diagram is connected and $|\mathcal{I}|\geq 3$, choose a node $j\in\mathcal{I}\setminus\{i\}$ that is adjacent to $i$ (or equivalently, $\langle\alpha_i,\alpha_j^{\vee}\rangle<0$). As $\alpha_i=\beta_1+\beta_2$, the previous line implies that either $\langle\beta_1,\alpha_j^{\vee}\rangle<0$ or $\langle\beta_2,\alpha_j^{\vee}\rangle<0$. Without loss of generality assume that $\langle\beta_1,\alpha_j^{\vee}\rangle<0$. Pick $k\in\mathcal{I}\setminus\{i,j\}$ such that there is at least one edge joining the node $k$ to a node in $\{i,j\}$, such a node $k$ exists as $|\mathcal{I}|\geq 3$ and as the Dynkin diagram is connected. The two assumptions that $i$ is a leaf, and $i$ and $j$ are adjacent together force $\langle\alpha_{i},\alpha_k^{\vee}\rangle=0$, and so $\langle\alpha_j,\alpha_k^{\vee}\rangle<0$. In particular, this implies $\alpha_j+\alpha_k$ is a root. Now, we define a root $\xi$ as follows: 
		\[ 
		\xi:=\begin{cases}
		\alpha_j\quad &\text{if } \beta_1\neq-\alpha_j,\\
		\alpha_j+\alpha_k\quad&\text{if }\beta_1=-\alpha_j.
		\end{cases}
		\]
		Recall, we assumed that $\langle\beta_1,\alpha_j^{\vee}\rangle<0$. So, $\beta_1+\alpha_j\in\Delta$ when $\beta_1\neq -\alpha_j$. By the way $\xi$ is defined, it can easily seen that both $\alpha_i+\xi\text{ and }\beta_1+\xi$ are roots. Finally, the following implications (by the 212-closedness of $Z$) lead to the desired contradiction, finishing the proof of the claim in this case.
		\begin{align*}
		(\beta_1)+(\beta_2)=\alpha_i=(\alpha_i+\xi)+(-\xi)&\implies \alpha_i+\xi,-\xi\in Z.\\
		(\beta_2)+(-\xi)=(\beta_2+\beta_1)+(-\beta_1-\xi)&\implies \beta_2+\beta_1,-\beta_1-\xi\in Z.\\
		(\beta_1)+(-\beta_1-\xi)=(\alpha_i)+(-\alpha_i-\xi)&\implies \alpha_i,-\alpha_i-\xi\in Z.\\
		\text{By Remark }\ref{R7.2},\quad\pm (\alpha_i+\xi)\in Z\implies Z=&\Delta\Rightarrow\!\Leftarrow Z\subsetneqq\Delta\text{ }(\text{as }Y\subsetneqq \Delta).	                  \end{align*}
		
		To look into the rest of the aforementioned three cases where the rank of $\mathfrak{g}$ is $\leq 2$, we record some necessary notation. Let $\mathcal{I}$ be equal to $\{1\}$, and $\{1,2\}$, in the cases when $\mathfrak{g}$ is of type $A_1$, and respectively when $\mathfrak{g}$ is not of type $A_1$. When $\mathfrak{g}$ is of type either $B_2\text{ or }G_2$, let $2\in \mathcal{I}$ correspond to the long simple root in $\Delta$. We use without further mention the following basic facts: i)~every long root of $\Delta$ belongs to $W\alpha_2$, and ii)~every short root of $\Delta$ belongs to $W\alpha_1$. \medskip\\
		\textbf{For type $A_1$:} In this case, it can be easily verified that $\{\alpha_1\}$ and $\{-\alpha_1\}$ are the only proper 212-closed subsets of $\Delta$, and also of $\Delta\sqcup\{0\}$.\medskip\\
		\textbf{For type $B_2$:} Assume that $\Delta$ is of type $B_2$. Recall:
		\[
		\Delta=\big\{\pm(\alpha_1),\pm(\alpha_2),\pm (\alpha_2+\alpha_1),\pm(\alpha_2+2\alpha_1)\big\}\quad\text{ and }\quad\theta=\alpha_2+2\alpha_1.
		\]
		We proceed in two steps below, and prove that $Y=w[(\theta-\mathbb{Z}_{\geq 0}\Pi_J)\cap\Delta]$ for some $w\in W$ and $J\subseteq \mathcal{I}$, which implies the claim at the beginning of the proof of part (\ref{thmC}1).\smallskip\\
		(1) Assume that $Y$ contains a short root, say $u^{-1}\alpha_1$ for $u\in W$. This implies $\alpha_1\in uY$, and so 
		\[
		2(\alpha_1)=(\alpha_2+2\alpha_1)+(-\alpha_2)\implies \alpha_2+2\alpha_1,-\alpha_2\in uY,
		\]
		since $uY$ is 212-closed in $\Delta$. So, $\{-\alpha_2,\alpha_1,\alpha_2+2\alpha_1\}\subseteq uY$. Now, in view of Remark \ref{R5.2}(2), observe that none of the roots $-\alpha_2-2\alpha_1,-\alpha_1\text{ and }\alpha_2$ fall in $uY$, as $uY$ is proper 212-closed in $\Delta$. Observe once again via the same reasoning and the following implications that $\pm(\alpha_2+\alpha_1)\not\in uY$.
		\[\alpha_2+\alpha_1\in uY\text{ }\text{ and }\text{ } 2(\alpha_2+\alpha_1)=(\alpha_2+2\alpha_1)+(\alpha_2)\implies \alpha_2\in uY\implies\pm\alpha_2\in uY. \text{ }\text{ Similarly,}\]
		\[-\alpha_2-\alpha_1\in uY\text{ }\text{and } 2(-\alpha_2-\alpha_1)=(-\alpha_2-2\alpha_1)+(-\alpha_2)\implies -\alpha_2-2\alpha_1\in uY\implies\pm(\alpha_2+2\alpha_1)\in uY. \]
		Therefore, $uY=\{-\alpha_2,\alpha_1,\alpha_2+2\alpha_1\}$. Finally, notice that we are done by the following equation. 
		\[\{-\alpha_2,\alpha_1,\alpha_2+2\alpha_1\}= s_2\{\alpha_2,\alpha_2+\alpha_1,\alpha_2+2\alpha_1\}=s_2[(\theta-\mathbb{Z}_{\geq 0}\Pi_{\{1\}})\cap\Delta].
		\]
		(2) Assume that $Y$ consists only of some long roots. Recall that $\pm\alpha_2,\pm(\alpha_2+2\alpha_1)$ are the four long roots in $\Delta$, so $Y\subset\{\pm\alpha_2,\pm(\alpha_2+2\alpha_1)\}$. By conjugating $Y$ with a suitable Weyl group element, we may assume that $\theta=\alpha_2+2\alpha_1\in Y$. Observe by Remark \ref{R5.2}(2) that we must have $|Y|\leq 2$, and also $-\alpha_2-2\alpha_1\notin Y$. If $|Y|=1$, then $Y=\{\theta\}$, and we are done. Otherwise we have two choices for $Y$: i)~$Y=\{\alpha_2,\alpha_2+2\alpha_1\}$, or ii)~$Y=\{-\alpha_2,\alpha_2+2\alpha_1\}$. The following implications show that both the cases i) and ii) lead to a short root belonging to $Y$, and hence a contradiction. 
		\begin{align*}
		\begin{aligned}
		(\alpha_2+2\alpha_1)+(\alpha_2)=2(\alpha_2+\alpha_1)&\ \implies \alpha_2+\alpha_1\in Y,\\
		(\alpha_2+2\alpha_1)+(-\alpha_2)=2(\alpha_1)&\ \implies \alpha_1\in Y.
		\end{aligned}
		\end{align*}
		Therefore, if $|Y|>1$, then $Y$ must contain a short root, in which case we are done by Step (1).\medskip\\
		\textbf{For type $G_2$:} Assume that $\Delta$ is of type $G_2$. Recall:
		\[
		\Delta=\big\{\pm(\alpha_1),\pm(\alpha_2),\pm(\alpha_2+\alpha_1),\pm(\alpha_2+2\alpha_1),\pm(\alpha_2+3\alpha_1),\pm(2\alpha_2+3\alpha_1)\big\}\text{ }\text{ and }\text{ }\theta=2\alpha_2+3\alpha_1.
		\] We proceed in several cases below to show that $Y$ is equal to $w[(\theta-\mathbb{Z}_{\geq 0}\Pi_J)\cap\Delta]$ for some $w\in W$ and $J\subseteq \mathcal{I}$, which implies the claim at the beginning. Suppose $Y$ contains a short root, say $u^{-1}\alpha_1$ for $u\in W$. Then by the 212-closedness of $uY$, the implications below lead in view of Remark \ref{R5.2}(2) to $uY=\Delta$, which contradicts $Y\subsetneqq\Delta$.
		\begin{align*}
		2(\alpha_1&)=(\alpha_2+2\alpha_1)+(-\alpha_2)=(\alpha_2+3\alpha_1)+(-\alpha_2-\alpha_1)\\
		&\implies -\alpha_2,-\alpha_2-\alpha_1,\alpha_2+2\alpha_1,\alpha_2+3\alpha_1\in uY. \\
		2(-\alpha_2-\alpha_1)=(-&\alpha_2-2\alpha_1)+(-\alpha_2)\implies -\alpha_2-2\alpha_1\in uY\implies \pm(\alpha_2+2\alpha_1)\in uY.
		\end{align*}
		So, we assume now that $Y$ consists only of long roots, which implies 
		\[
		Y\subset\big\{\pm(\alpha_2),\pm(\alpha_2+3\alpha_1),\pm(2\alpha_2+3\alpha_1)\big\}.\]
		By conjugating $Y$ with a suitable Weyl group element, we may assume that $\theta=2\alpha_2+3\alpha_1\in Y$. In view of Remark \ref{R5.2}(2), observe that $|Y|\leq 3$, and also $-2\alpha_2-3\alpha_1\notin Y$. Since $Y$ is 212-closed and $\alpha_1$ is short, the following show that both $-\alpha_2,-\alpha_2-3\alpha_1\not\in Y$:
		\begin{align*}
		(2\alpha_2+3\alpha_1)+(-\alpha_2)=(\alpha_2+2\alpha_1)+(\alpha_1)&\implies \alpha_2+2\alpha_1,\alpha_1\in Y.\\
		(2\alpha_2+3\alpha_1)+(-\alpha_2-3\alpha_1)=(\alpha_2+\alpha_1)+(-&\alpha_1)\implies \alpha_2+\alpha_1,-\alpha_1\in Y.   \end{align*}
		Similarly, the following implication shows (as $\alpha_2+\alpha_1$ is short) that $\{\alpha_2,\alpha_2+3\alpha_1\}\not\subset Y$.
		\[(\alpha_2)+(\alpha_2+3\alpha_1)=(\alpha_2+2\alpha_1)+(\alpha_2+\alpha_1)\implies \alpha_2+2\alpha_1,\alpha_2+\alpha_1\in Y.\]
		Therefore, $Y$ is equal to either $\{\alpha_2+3\alpha_1,2\alpha_2+3\alpha_1\}$ or $\{\alpha_2,2\alpha_2+3\alpha_1\}$. Now,
		\[
		\{\alpha_2+3\alpha_1,2\alpha_2+3\alpha_1\}=\big(\theta-\mathbb{Z}_{\geq 0}\Pi_{\{2\}}\big)\cap\Delta\quad\text{and} \quad \{\alpha_2,2\alpha_2+3\alpha_1\}=s_1\big[(\theta-\mathbb{Z}_{\geq 0}\Pi_{\{2\}})\cap\Delta\big].
		\]
		By the above two equations, notice that we are done. Hence, the proof of part (\ref{thmC}1) is complete.
	\end{proof}
	\begin{proof}[\textnormal{\textbf{Proof for part (\ref{thmC}2):}}] Assume that $\mathfrak{g}=\mathfrak{sl}_3(\mathbb{C})$. Fix a non-trivial subgroup $\mathbb{A}\subseteq(\mathbb{R},+)$. In this case, we show that there are more 212-closed subsets for $\Delta$ beyond the $W$-conjugates of the proper standard faces of $\Delta\sqcup\{0\}$. We also give a list of these additional 212-closed subsets of $\Delta$. We show that none of these additional 212-closed subsets are weak-$\mathbb{A}$-faces of $\Delta$ over any $\{0\}\subsetneqq \mathbb{A}\subseteq (\mathbb{R},+)$. So, the last sentence in Remark \ref{R7.2} and the previous line together imply that the weak-$\mathbb{A}$-faces of $\Delta$ are the same as the $W$-conjugates of the standard faces of $\Delta\sqcup\{0\}$. This shows that the three notions other than the proper 212-closed subsets of $\Delta$ are all the same as the $W$-conjugates of the standard faces of $\Delta\sqcup\{0\}$. To show the above claims, we begin by recalling
		\[
		\Delta=\big\{\pm\alpha_1,\pm\alpha_2,\pm(\alpha_2+\alpha_1)\big\}\text{ and }\theta=\alpha_1+\alpha_2. 
		\]
		Recall, all the $W$-conjugates of $\{\theta\}$---which are precisely all the singleton subsets of $\Delta$---are 212-closed in $\Delta\sqcup\{0\}$ as well as in $\Delta$. The following is a list of all the proper 212-closed subsets of $\Delta\sqcup\{0\}$---which are the same as all the $W$-conjugates of proper standard faces of $\Delta\sqcup\{0\}$---other than the singleton subsets.
		\begin{equation}\label{list}
		\{\alpha_2,\alpha_2+\alpha_1\}\quad\{-\alpha_2,\alpha_1\}\quad \{-\alpha_1,-\alpha_2-\alpha_1\}\quad\{\alpha_1,\alpha_2+\alpha_1\}\quad \{-\alpha_1,\alpha_2\}\quad \{-\alpha_2,-\alpha_2-\alpha_1\}.
		\end{equation}
		Let $\emptyset\neq Y$ be proper 212-closed in $\Delta$. We assume that $|Y|\geq 2$, as we already discussed the case of singleton subsets. By conjugating $Y$ by a suitable Weyl group element, we may assume that $Y$ contains $\theta=\alpha_2+\alpha_1$. As $|\Delta^+|=|\Delta^-|\leq 3$, Remark \ref{R5.2}(2) implies that $|Y|\leq 3$ and $-\alpha_2-\alpha_1\notin Y$. We now proceed in two cases below.\smallskip\\
		(1) Assume that $|Y|=2$. If either $\alpha_1$ or $\alpha_2$ fall in $Y$, then $Y$ is already in the list in \eqref{list}. Otherwise, $Y$ is either $\{-\alpha_1,\alpha_2+\alpha_1\}$ or $\{-\alpha_2,\alpha_2+\alpha_1\}$. Notice, these subsets are not in the list in \eqref{list}. The following implications show that $\{-\alpha_1,\alpha_2+\alpha_1\}$ is 212-closed in $\Delta$; similarly, it can be shown that $\{-\alpha_2,\alpha_2+\alpha_1\}$ is also 212-closed in $\Delta$. For any $x,y\in\Delta$: \begin{align}\label{E6.3}
		\begin{aligned}
		 &2(\alpha_2+\alpha_1)=(x)+(y)\implies x=y=\alpha_2+\alpha_1.\\
		&2(-\alpha_1)=(x)+(y)\implies x=y=-\alpha_1.\\
		(\alpha_2+\alpha_1)&+(-\alpha_1)=(x)+(y)\implies (x,y)=(\alpha_2+\alpha_1,-\alpha_1)\text{ or }(-\alpha_1,\alpha_2+\alpha_1).
		\end{aligned}
		\end{align}
		Note that the first two implications in \eqref{E6.3} above may be omitted as both the subsets $\{\alpha_2+\alpha_1\}$ and $\{-\alpha_1\}$ (being singleton) are 212-closed in $\Delta$.  
		Further, notice that \[s_1\{-\alpha_1,\alpha_2+\alpha_1\}=\Pi=s_2\{-\alpha_2,\alpha_2+\alpha_1\}.\] This means we have obtained six more 212-closed subsets of $\Delta$, which are precisely the six Weyl group translates of $\Pi$, outside the list in \eqref{list}.  Finally, the equation
		\begin{equation}\label{E6.4}
		2a(\alpha_1)+2a(\alpha_2)=2a(\alpha_2+\alpha_1)+a(\alpha_1)+a(-\alpha_1),\qquad\text{for }a\in \mathbb{A}_{>0}
		\end{equation}
		implies that $\Pi$ is not a weak-$\mathbb{A}$-face of $\Delta$, as both $\alpha_2+\alpha_1,-\alpha_1\notin\Pi$. Also, it can be easily seen that the weak-$\mathbb{A}$-faces of $\Delta$ of size two are precisely all the subsets in the list in \eqref{list}.\smallskip\\
		(2) Assume that $|Y|=3$. Recall, $\alpha_2+\alpha_1\in Y$, and so $-\alpha_2-\alpha_1\notin Y$. The previous line, Remark~\ref{R5.2}(2), and $Y\subsetneqq\Delta$ together imply that we have four choices for $Y$:
		\begin{equation}\label{list2}
		\Delta^+,\quad \{-\alpha_1,\alpha_2,\alpha_2+\alpha_1\}=s_1\Delta^+,\quad \{-\alpha_2,\alpha_1,\alpha_2+\alpha_1\}=s_2\Delta^+,\quad \{-\alpha_1,-\alpha_2,\alpha_2+\alpha_1\}.
		\end{equation}
		Observe that every size two subset of each set in the list in \eqref{list2} is 212-closed in $\Delta$. Thus by the definition of a 212-closed subset, every set in the list in \eqref{list2} is 212-closed in $\Delta$. Check:
		\[u\Delta^+\neq \{-\alpha_1,-\alpha_2,\alpha_2+\alpha_1\} \text{ for any}\text{ }u\in W,\quad\text{as } \sum_{x\in \{-\alpha_1,-\alpha_2,\alpha_2+\alpha_1\}}x=0.\]
		Therefore, all the $W$-conjugates of the two sets $\Delta^+\text{ and } \{-\alpha_1,-\alpha_2,\alpha_2+\alpha_1\}$ are precisely all the size three 212-closed subsets of $\Delta$. Finally, once again by equation \eqref{E6.4}, neither of $\Delta^+$ and $\{-\alpha_1,-\alpha_2,\alpha_2+\alpha_1\}$, and so none of their $W$-conjugates, can be weak-$\mathbb{A}$-faces of $\Delta$ for any $\{0\}\subsetneqq \mathbb{A}\subseteq (\mathbb{R},+)$. Hence, the proof of part (\ref{thmC}2) is complete. 
	\end{proof}	
	\begin{proof}[\textnormal{\textbf{Proof for part (\ref{thmC}3):}}]
		Let $\Delta$ be an affine root system, and $\{0\}\subsetneqq \mathbb{A}\subseteq (\mathbb{R},+)$. We prove this part in four steps. Steps 1 and 2 show the equivalence of statements a) and b) in part (\ref{thmC}3) for the 212-closed subsets of $\Delta\sqcup\{0\}$. Replacing $\Delta\sqcup\{0\}$ and $\mathring{\Delta}\sqcup\{0\}$ by $\Delta$ and $\mathring{\Delta}$, respectively, everywhere in Steps 1 and 2 shows the same result for the 212-closed subsets of $\Delta$. Step 3 shows that the 212-closed subsets and weak-$\mathbb{A}$-faces of $\Delta\sqcup\{0\}$ are the same. Step 4 discusses the equivalence of all the four notions.
		\medskip\\
		\textbf{Step 1: a) $\implies$ b)}. Let $\Delta$ be of type $X_{\ell}^{(r)}$, $r\in [3]$ and $\ell\in\mathbb{N}$---see \cite[Tables Aff 1--Aff 3]{Kac}. Fix $\emptyset\neq Y$ to be proper 212-closed in $\Delta\sqcup\{0\}$. Recall the definitions of $Y_s$ and $Y_l$ from \eqref{E5.1}. By the definition of 212-closed subsets, it can be easily seen that $Y\cap(\mathring{\Delta}\sqcup\{0\})$ is proper 212-closed in $\mathring{\Delta}\sqcup\{0\}$. Therefore, it only remains to show that $Y= Y_s\cup Y_l$. Fix a root $\beta\in Y$. Note in view of Remark \ref{R5.2} that $Y\subset\Delta^{re}$, so by \cite[Proposition 6.3]{Kac} $\beta$ has one of the following forms: 	  \begin{itemize}
			\item[(i)] $\beta=q\delta+\eta_1$ for some $\eta_1\in\mathring{\Delta}_s$ and $q\in\mathbb{Z}$.
			\item[(ii)] $\beta=rq\delta+\eta_2$ for some $\eta_2\in\mathring{\Delta}_l$ and $q\in\mathbb{Z}$.
			\item[(iii)] $\beta=\frac{1}{2}[(2q-1)\delta+\eta_2]$ for some $\eta_2\in\mathring{\Delta}_l$ and $q\in\mathbb{Z}$ (only when $\Delta$ is of type $A_{2\ell}^{(2)}$).
		\end{itemize} 
		Observe that (iii) cannot hold; since by the 212-closedness of $Y$, \[2\left(\frac{1}{2}\Big[(2q-1)\delta+\eta_2\Big]\right)=(\eta_2)+\big((2q-1)\delta\big)\quad\implies\quad (2q-1)\delta\in Y\quad\Rightarrow\!\Leftarrow\quad Y\subset\Delta^{re}.\]
		Now if (i) holds, then the equation $2(q\delta+\eta_1)=(\eta_1)+(2q\delta+\eta_1)$ results in $\eta_1\in Y\cap \mathring{\Delta}_s$. Similarly, if (ii) holds, then the equation $2(rq\delta+\eta_2)=(\eta_2)+(2rq\delta+\eta_2)$ results in $\eta_2\in Y\cap\mathring{\Delta}_l$. These two lines immediately imply that $Y\subseteq Y_s\cup Y_l$, as $\beta\in Y$ is arbitrary. Now we prove the reverse inclusion in the previous line. Note by the definitions of $Y_s$ and $Y_l$ that $Y\neq\emptyset$ implies $Y_s\cup Y_l\neq\emptyset$, which in particular implies $Y\cap\mathring{\Delta}\neq \emptyset$. Assume that $\eta\in Y\cap \mathring{\Delta}$. Then for any $m\in\mathbb{Z}$, the equations \[2(\eta)=(m\delta+\eta)+(-m\delta+\eta)\quad\text{ or resp. }\quad 2(\eta)=(rm\delta+\eta)+(-rm\delta+\eta),\]
		depending on whether $\eta$ is short or respectively long in $\mathring{\Delta}$, imply that $Y_s\cup Y_l\subseteq Y$. So, $Y= Y_s\cup Y_l$.\medskip\\
		\textbf{Step 2: b) $\implies$ a)}. In this step, we show that every 212-closed subset of $\mathring{\Delta}\sqcup\{0\}$ gives rise to a unique 212-closed subset of $\Delta\sqcup\{0\}$. Let $Z$ be 212-closed in $\mathring{\Delta}\sqcup\{0\}$. The result is trivial when $Z=\emptyset$ or $\mathring{\Delta}\sqcup\{0\}$. So we assume for the rest of this step that $\emptyset\neq Z\subsetneqq \mathring{\Delta}\sqcup\{0\}$. We define $Z_s\text{ and }Z_l$ similar to $Y_s\text{ and }Y_l$, respectively, as in \eqref{E5.1}. To show $Z_s\cup Z_l$ is 212-closed in $\Delta\sqcup\{0\}$,~we~prove:
		\begin{equation}\label{E5.4} (\gamma_1)+(\gamma_2)=(\gamma_3)+(\gamma_4)\text{ }\text{ for some }\gamma_1,\gamma_2\in Z_s\cup Z_l\text{ and }\gamma_3,\gamma_4\in \Delta\sqcup\{0\}\text{ }\implies\text{ } \gamma_3,\gamma_4\in Z_s\cup Z_l.
		\end{equation}
		By the choice of $\gamma_1\text{ and }\gamma_2$, and also as each of $\gamma_3$ and $\gamma_4$ either belongs to $\mathbb{Z}\delta$ or is of one of the forms in (i)--(iii) in Step 1, we can uniformly express $\gamma_1,\ldots,\gamma_4$ as follows:
		\begin{equation}\label{E7.9} \gamma_i=m_i\delta+\xi_i
		\quad
		\text{where } \xi_1,\xi_2\in Z,\text{ }\text{ } \xi_3,\xi_4\in\mathring{\Delta}\sqcup\{0\}\sqcup\frac{1}{2}\mathring{\Delta}_l,\text{ }\text{ } m_1,m_2\in \mathbb{Z},\text{ and } m_3,m_4\in\frac{1}{2}\mathbb{Z}.\end{equation}
		Now, observe:
		\begin{itemize}
			\item For $t\in\{ 3,4\}$, $\xi_t=0$ if and only if $ \gamma_t\in\mathbb{Z}\delta$. Similarly, $\xi_t\in\frac{1}{2}\mathring{\Delta}_l$ if and only if $ m_t=\frac{2q-1}{2}\text{ for some }q\in\mathbb{Z}$, if and only if $ \gamma_t$ is of the form in (iii) in Step 1.
			\item $m_1+m_2=m_3+m_4$, as $\height_{\{0\}}(\gamma_1+\gamma_2)=\height_{\{0\}}(\gamma_3+\gamma_4)$. Recall, 0 is a node in the Dynkin diagram, and also $\mathcal{I}\setminus\{0\}$ is the set of vertices in the Dynkin subdiagram for $\mathring{\Delta}$. \end{itemize}
		So, on plugging-in $m_i\delta+\xi_i$ in place of $\gamma_i$ $\forall$ $i\in[4]$ in the equation in \eqref{E5.4}, and then subtracting $(m_1+m_2)\delta$ from both sides of the resulting equation, we get
		\begin{equation}\label{E5.5}
		(\xi_1)+(\xi_2)=(\xi_3)+(\xi_4). \end{equation}
		We now proceed in two cases below.\smallskip\\ 
		(1) $\Delta$ is not of type $A_{2\ell}^{(2)}$: Every real root in $\Delta$ must be of the form either in (i) or (ii) in Step~1. In particular, $\xi_3,\xi_4\in \mathring{\Delta}\sqcup\{0\}$. As $Z$ is proper 212-closed in $\mathring{\Delta}\sqcup\{0\}$, equation \eqref{E5.5} implies $\xi_3,\xi_4\in Z\cap\mathring{\Delta}$. Hence, $\gamma_3,\gamma_4\in Z_s\cup Z_l$. This proves $Z_s\cup Z_l$ is 212-closed in $\Delta\sqcup\{0\}$ in this~case.\smallskip\\
		(2) $\Delta$ is of type $A_{2\ell}^{(2)}$: If $\xi_3,\xi_4\in\mathring{\Delta}\sqcup\{0\}$, then we are done by the previous case. So, we assume without loss of generality that $\xi_3\in\frac{1}{2}\mathring{\Delta}_l$, and show that this assumption contradicts $Z\subsetneqq \mathring{\Delta}\sqcup\{0\}$. Recall, $\mathring{\Delta}$ is of type either $C_{\ell}$ when $\ell\geq 2$, or $A_{1}$ when $\ell=1$. Let the set of nodes in the Dynkin diagram for $\mathring{\Delta}$ be $\{1,\ldots,\ell\}$. When $\mathring{\Delta}$ is of type $C_{\ell}$, let $\ell$ correspond to the unique long simple root of $\mathring{\Delta}$. When $\mathring{\Delta}$ is of type $A_1$, $\alpha_{1}$ must be treated as the long (as well as the short) simple root, so that we get the roots of the form in (iii) in Step 1 inside $\Delta$---see \cite[Proposition 6.3]{Kac}. Observe that the long roots of $\mathring{\Delta}$ can be expressed as $R_{\pm k}$ for some $k\in [\ell]$, where
		\[
		R_{\pm k}:=\pm\left(-\alpha_{\ell}+\sum\limits_{j=k}^{\ell}2\alpha_j\right)\qquad \forall\text{ }k\in[\ell].\]
		In the above notation, let $\xi_3=\frac{1}{2}R_{k_1}$ for some $k_1\in [-\ell,\ell]\setminus\{0\}$. Observe that \[\height_{\{\ell\}}(\xi_3)=\text{sign}(k_1)\frac{1}{2},\quad\text{ and }\quad\height_{\{\ell\}}(\xi_3+\xi_4)=\height_{\{\ell\}}(\xi_1+\xi_2)\in\mathbb{Z}.\] This implies $\xi_4\in\frac{1}{2}\mathring{\Delta}_l$. Let $\xi_4=\frac{1}{2}R_{k_2}$ for some $k_2\in [-\ell,\ell]\setminus\{0\}$. Now, one easily observes that $\xi_3+\xi_4$ is either a root or 0, by checking that $\xi_3+\xi_4$ equals:
		\begin{align*}
		1)\text{ }&0\text{ }(\text{when }k_1=-k_2);\quad & 3)\text{ } R_{k_2}+\text{sign}(k_2)\sum_{j=|k_1|}^{|k_2|-1}\alpha_j\text{ }\big(\text{when }\text{sign}(k_1)=\text{sign}(k_2)\text{ and }|k_1|<|k_2|\big);\\
		2)\text{ }&R_{ k_1}\text{ }(\text{when }k_1=k_2);\quad
		& 4)\text{ }\text{sign}(k_1)\sum_{j=|k_1|}^{|k_2|-1}\alpha_j\text{ }\big(\text{when }\text{sign}(k_1)\neq \text{sign}(k_2)\text{ and }|k_1|<|k_2|\big).\hspace{1cm}    
		\end{align*}
		Notice when $\mathring{\Delta}$ is of type $A_{1}$ that only 1) is possible as $\mathring{\Delta}=\{\pm\alpha_{\ell}\}$. When $\mathring{\Delta}$ is of type $C_{\ell}$, note that the roots of the form in 2) are long (as stated earlier), and those of the forms in 3) and 4) are short. If $\xi_3+\xi_4=0$, then \eqref{E5.3} and equation \eqref{E5.5} lead to $Z=\mathring{\Delta}\sqcup\{0\}$, contradicting $Z\subsetneqq \mathring{\Delta}\sqcup\{0\}$. This finishes the proof of this step when $\mathring{\Delta}$ is of type $A_1$. So, we assume for the rest of the proof that $\ell>1$ and $\mathring{\Delta}$ is of type $C_{\ell}$. Suppose $\xi_3+\xi_4$ is a long root. Pick a Weyl group element $u_1\in \mathring{W}$ such that $u_1(\xi_3+\xi_4)=\alpha_{\ell}$. Note, $u_1Z$ is proper 212-closed in $\mathring{\Delta}\sqcup\{0\}$. Applying $u_1$ to the terms on either side of equation \eqref{E5.5} results in $(u_1\xi_1)+(u_1\xi_2)=\alpha_{\ell}$. In the Dynkin diagram, assume that the node $\ell-1$ is adjacent to $\ell$ (recall $\ell>1$). Observe that the following implications lead by the 212-closedness of $u_1Z$ to the desired contradiction.
		\begin{align*}
		(u_1\xi_1)+(u_1\xi_2)=\alpha_{\ell}=(\alpha_{\ell}+\alpha_{\ell-1})+(-\alpha_{\ell-1})&\implies \alpha_{\ell}+\alpha_{\ell-1},-\alpha_{\ell-1}\in u_1Z.\\
		2(\alpha_{\ell}+\alpha_{\ell-1})=(\alpha_{\ell}+2\alpha_{\ell-1})+(\alpha_{\ell})&\implies \alpha_{\ell}+2\alpha_{\ell-1},\alpha_{\ell}\in u_1Z.\\
		2(-\alpha_{\ell-1})=(-\alpha_{\ell}-2\alpha_{\ell-1})+(\alpha_{\ell})&\implies -\alpha_{\ell}-2\alpha_{\ell-1}\in u_1Z.\\
		\pm(\alpha_{\ell}+2\alpha_{\ell-1})\in u_1Z\implies u_1Z=&\mathring{\Delta}\sqcup\{0\}\Rightarrow\!\Leftarrow Z\subsetneqq \mathring{\Delta}\sqcup\{0\}.
		\end{align*}
		Finally, if $\xi_3+\xi_4$ is a short root in $\mathring{\Delta}$, similar to above, pick a Weyl group element $u_2\in \mathring{W}$ such that $u_2(\xi_3+\xi_4)=\alpha_{\ell-1}$, so that $(u_2\xi_1)+(u_2\xi_2)=\alpha_{\ell-1}$. Observe that the following implications lead by the 212-closedness of $u_2 Z$ to the desired contradiction.
		\begin{align*}            (u_2\xi_1)+(u_2\xi_2)=\alpha_{\ell-1}=(\alpha_{\ell}+\alpha_{\ell-1})+(-\alpha_{\ell})&\implies \alpha_{\ell}+\alpha_{\ell-1},-\alpha_{\ell}\in u_2Z.\\  2(\alpha_{\ell}+\alpha_{\ell-1})=(\alpha_{\ell}+2\alpha_{\ell-1})+(\alpha_{\ell})&\implies \alpha_{\ell}+2\alpha_{\ell-1},\alpha_{\ell}\in u_2Z.\\
		\pm \alpha_{\ell}\in u_2Z\implies u_2Z=\mathring{\Delta}\sqcup\{0\}&\Rightarrow\!\Leftarrow Z\subsetneqq \mathring{\Delta}\sqcup\{0\}.
		\end{align*}
		\textbf{Step 3}. In this step, we show that the 212-closed subsets and weak-$\mathbb{A}$-faces of $\Delta\sqcup\{0\}$ are the same. Let $\emptyset\neq Y$ be proper 212-closed in $\Delta\sqcup\{0\}$. By the implication a) $\implies$ b) in (\ref{thmC}3), $Y=Y_s\cup Y_l$, and $Y\cap \mathring{\Delta}$ is proper 212-closed in $\mathring{\Delta}\sqcup\{0\}$. So, by parts (\ref{thmC}1) and (\ref{thmC}2), $Y\cap\mathring{\Delta}$ is a proper weak-$G$-face of $\mathring{\Delta}\sqcup\{0\}$ for any subgroup $\{0\}\subsetneqq G\subseteq(\mathbb{R},+)$; in particular for $G=\mathbb{A},\frac{1}{2}\mathbb{A}\text{ and }\frac{1}{4}\mathbb{A}$. Here, for $r\in\mathbb{N}$, $\frac{1}{r}\mathbb{A}$ denotes the additive subgroup $\left\{\frac{a}{r}\text{ }\big|\text{ }a\in \mathbb{A}\right\}$ of $\mathbb{R}$. To show that $Y$ is a weak-$\mathbb{A}$-face of $\Delta\sqcup\{0\}$, assume that the following hold.
		\begin{align*}
		\begin{aligned}[c]
		\sum_{p=1}^n a_p y_p=\sum_{q=1}^m b_q x_q\text{ }\text{ and }\text{ }\sum_{p=1}^n a_p =\sum_{q=1}^m b_q>0 
		\end{aligned}\quad\quad
		\begin{aligned}[c]
		&\text{where }y_p\in Y,\text{ }x_q\in\Delta\sqcup\{0\},\text{ }a_p,b_q\in\mathbb{A}_{> 0},\\\
		& n,m\in\mathbb{N}\text{ }\forall\text{ }1\leq p\leq n\text{ and }1\leq q\leq m.
		\end{aligned}
		\end{align*}
		We prove that $x_q\in Y$ $\forall$ $q\in [m]$, which implies the result. Recall firstly that $x_q$ can be either of any of the forms in (i)--(iii) in Step 1, or it belongs to $\mathbb{Z}\delta$; whereas $y_p$ can be of the forms only in (i) and (ii) as $Y\subsetneqq\Delta\sqcup\{0\}$. Recall the procedure in Step 2 by which we got rid of $\delta$ from the terms in equation \eqref{E5.4} \big(upon re-writing as in \eqref{E7.9}\big) to get equation \eqref{E5.5}, which consists of terms from $\mathring{\Delta}\sqcup\{0\}\sqcup\frac{1}{2}\mathring{\Delta}$. By the same procedure, we can clear off $\delta$ from both sides of the equation $\sum_{p=1}^n a_p y_p=\sum_{q=1}^m b_q x_q$ \big(upon re-writing $y_p$ and $x_q$ as in \eqref{E7.9}\big), to obtain the following equation:
		\[\sum_{p=1}^n a_p y'_p=\sum_{q=1}^m b_q x'_q\quad\text{where }y'_p\in Y\cap\mathring{\Delta},\text{ }x'_q\in \mathring{\Delta}\sqcup\{0\}\sqcup\frac{1}{2}\mathring{\Delta}\text{ are such that }y'_p\in \mathbb{Z}\delta+y_p,\text{ }x'_q\in \frac{1}{2}\mathbb{Z}\delta+x_q\]
		for all $p\in [n]$ and $q\in [m]$. For each $q\in [m]$, to work simultaneously with the two cases where $x'_q\in\frac{1}{2}\mathring{\Delta}$ and $x'_q\notin\frac{1}{2}\mathring{\Delta}$, we define for convenience,
		\[\widehat{b}_q:=
		\begin{cases}
		b_q \text{ }&\text{ if }x'_q\notin\frac{1}{2}\mathring{\Delta},\\
		\frac{b_q}{2}\text{ }&\text{ if }x'_q\in\frac{1}{2}\mathring{\Delta}, 
		\end{cases}\qquad\text{and}\qquad
		\widehat{x}_q:=
		\begin{cases}
		x'_q \text{ }&\text{ if }x'_q\notin\frac{1}{2}\mathring{\Delta},\\
		2x'_q\text{ }&\text{ if }x'_q\in\frac{1}{2}\mathring{\Delta}. 
		\end{cases}
		\]
		Notice, $\widehat{b}_q\in\frac{1}{2}\mathbb{A}$, $\widehat{b}_q\leq b_q$, $\widehat{x}_q\in\mathring{\Delta}\sqcup\{0\}$, and $b_qx'_q=\widehat{b}_q\widehat{x}_q$, $\forall$ $q$. Fix a root $\xi\in\mathring{\Delta}$, and observe that
		\begin{equation}\label{E6.1}
		\sum_{p=1}^n a_p y'_p=\sum_{q=1}^m \widehat{b}_q \widehat{x}_q+\frac{1}{2}\left(\sum_{q=1}^m b_q-\sum_{q=1}^m \widehat{b}_q\right)\big(\xi\big)+\frac{1}{2}\left(\sum_{q=1}^{m}b_q-\sum_{q=1}^m \widehat{b}_q\right)\big(-\xi\big)\end{equation}
		implies $\widehat{x}_q\in Y\cap \mathring{\Delta}$ $\forall$ $q\in [m]$, as $\widehat{b}_q>0$, and as $Y\cap \mathring{\Delta}$ is a proper weak-$\mathbb{A}$-face of $\mathring{\Delta}\sqcup\{0\}$. In particular, $\widehat{x}_q\neq 0$ $\forall$ $q$. Now we closely look at $\widehat{x}_q$. If $\widehat{x}_q=x'_q$ (which happens when $x_q\in\mathring{\Delta}$, and which implies $\widehat{b}_q=b_q$) $\forall$ $q\in [m]$, then the definitions of $x'_q$, $Y_s$, $Y_l$ imply $x_q\in Y_s\cup Y_l=Y$ $\forall$ $q\in [m]$ as desired,  and we are done. Else if there exists some $r\in [m]$ such that $x'_r\in \frac{1}{2}\mathring{\Delta}$, then firstly $\left(\sum_{q=1}^m b_q-\sum_{q=1}^m \widehat{b}_q\right)>0$. As $Y\cap\mathring{\Delta}$ is also a weak-$\frac{1}{4}\mathbb{A}$-face of $\mathring{\Delta}\sqcup\{0\}$, equation \eqref{E6.1} implies that $\pm \xi\in Y\cap\mathring{\Delta}$. But by Remark \ref{R5.2}(2) this leads to $\mathring{\Delta}\subset Y$, which by the 212-closedness of $Y$ further leads to $Y=\Delta\sqcup\{0\}$, contradicting $Y\subsetneqq \Delta\sqcup\{0\}$. This completes the proof of the claim at the beginning of this step. \medskip\\
		\textbf{Step 4}. By Step 3, the proper 212-closed subsets of $\Delta\sqcup\{0\}$ and proper weak-$\mathbb{A}$-faces of $\Delta\sqcup\{0\}$ are the same. Suppose $Y$ is a proper weak-$\mathbb{A}$-face of $\Delta$. Then $Y\cap\mathring{\Delta}$ is a proper weak-$\mathbb{A}$-face of $\mathring{\Delta}$, and since $\mathring{\Delta}$ is of finite type, $Y\cap\mathring{\Delta}$ is proper 212-closed in $\mathring{\Delta}\sqcup\{0\}$ by parts (\ref{thmC}1) and (\ref{thmC}2). Now, as $Y$ is also 212-closed in $\Delta$---recall, weak-$\mathbb{A}$-faces of $\Delta$ are always 212-closed---the implication a) $\implies$ b) results in $Y=Y_s\cup Y_l$. The previous two sentences in view of b) $\implies$ a) prove that $Y$ is proper 212-closed in $\Delta\sqcup\{0\}$. Recall, the proper weak-$\mathbb{A}$-faces of $\Delta\sqcup\{0\}$ are also proper weak-$\mathbb{A}$-faces of $\Delta$. By Step 3, the weak-$\mathbb{A}$-faces of $\Delta\sqcup\{0\}$ are the same as the 212-closed subsets of $\Delta\sqcup\{0\}$. The previous three sentences together imply that the following three notions are all the same: proper weak-$\mathbb{A}$-faces of $\Delta$, proper weak-$\mathbb{A}$-faces of $\Delta\sqcup\{0\}$, and proper 212-closed subsets of $\Delta\sqcup\{0\}$. This also proves the last assertion in the statement of part (\ref{thmC}3) that each of these three sets is 212-closed in $\Delta$ (as weak-$\mathbb{A}$-faces of $\Delta$ are 212-closed in $\Delta$). Finally, we are left to show when $\mathfrak{g}\neq\widehat{\mathfrak{sl}_3(\mathbb{C})}$ that all the four notions are the same. For this let $\emptyset\neq Z$ be proper 212-closed in $\Delta$. Now a) $\implies$ b) results in $Z=Z_s\cup Z_l$ and $Z\cap\mathring{\Delta}$ is proper 212-closed in $\mathring{\Delta}$. By part (\ref{thmC}1) \big(since $\mathfrak{g}_{\{0\}^c}\neq \mathfrak{sl}_3(\mathbb{C})$, see \cite[Tables Aff 1--Aff 3]{Kac}\big), $Z\cap\mathring{\Delta}$ is also proper 212-closed in $\mathring{\Delta}\sqcup\{0\}$. By the implication b)~$\implies$~a), $Z$ is proper 212-closed in $\Delta\sqcup\{0\}$. Hence, the proof of part (\ref{thmC}3) is complete.       \end{proof}
	\begin{proof}[\textnormal{\textbf{Proof for part (\ref{thmC}4):}}]
		Assume that $\Delta$ is of indefinite type. We show that $\Delta$ is the only 212-closed subset of $\Delta$; the proof for $\Delta\sqcup\{0\}$ is the only 212-closed subset of $\Delta\sqcup\{0\}$ is very similar. From this, it also follows that $\Delta$ (respectively, $\Delta\sqcup\{0\}$) is the only weak-$\mathbb{A}$-face of $\Delta$ (respectively, of $\Delta\sqcup\{0\}$), and hence all the four notions are the same. Let $\emptyset\neq Y$ be 212-closed in $\Delta$, and fix a root $\beta\in Y$. Note by Remark \ref{R5.2} that $\beta$ must be real. Using $\beta$, we show that $Y$ contains an imaginary root, so that $Y=\Delta$ once again by Remark \ref{R5.2}. Fix an imaginary root $\gamma\in\Delta$ with the property that $\langle\gamma,\alpha_t^{\vee}\rangle<0$ $\forall$ $t\in\mathcal{I}$; such a root exists by \cite[Theorem 5.6 part C]{Kac}. Pick $w\in W$ such that $w\beta\in\Pi$, and correspondingly pick a simple root $\alpha$ such that $\langle w\beta,\alpha^{\vee}\rangle<0$. Such a simple root $\alpha$ exists as $|\mathcal{I}|\geq 2$ and the Dynkin diagram is connected. Note that $\langle\gamma,\alpha^{\vee}\rangle<0$ and $s_{\alpha}(\gamma)$ is imaginary. Now,  \begin{align*}\begin{aligned}\langle\gamma,w\beta^{\vee}\rangle<0\text{ and }\langle\alpha,w\beta^{\vee}\rangle<0&\implies \langle s_{\alpha}(\gamma), w\beta^{\vee}\rangle\leq -2\\
		&\implies s_{\alpha}(\gamma),\text{ }s_{\alpha}(\gamma)+w\beta,\text{ }s_{\alpha}(\gamma)+2w\beta\in \Delta.\end{aligned}\end{align*} 
		\[\text{(By the 212-closedness of } wY)\qquad 2(w\beta)=\big(s_{\alpha}(\gamma)+2w\beta\big)+\big(-s_{\alpha}(\gamma)\big)\implies -s_{\alpha}(\gamma)\in wY.\] 
		Therefore, $Y$ contains the imaginary root $-w^{-1}s_{\alpha}(\gamma)$, as required.
	\end{proof}
	Finally, we provide an answer to Problem \ref{prob1} mentioned at the end of Section 2.
	\subsection{Answer to Problem \ref{prob1}}
	Observe that it suffices to address the problem when $\Delta$ is of finite type, say with highest root $\theta$. Assuming this, given $Z\subseteq \Delta$ and $\alpha\in \Delta$, first define for convenience
	\[W_Z^+:=\{w\in W\text{ }|\text{ }wZ\subseteq \Delta^+\}\quad \text{ and }\quad W_{\alpha}^+:=\{w\in W\text{ }|\text{ } w\alpha\in\Delta^+\}.\]
	Fix $J\subseteq \mathcal{I}$ and $Y=(\theta-\mathbb{Z}_{\geq 0}\Pi_J)\cap\Delta$. Notice that $Y=\wt L_J(\theta)$; $L_J(\theta)$ is the simple highest weight $\mathfrak{g}_J$-module with highest weight $\theta$. Our aim is to prove that  
	\begin{equation}\label{E1}
	W_Y^+=\bigcap\limits_{w\in W_J} W_{\theta}^+w.
	\tag{$\star$}
	\end{equation}
	This shows that the subsets of $\Delta$ of the form $\omega\big[(\theta-\mathbb{Z}_{\geq 0}\Pi_I)\cap\Delta\big]$, for all $I\subseteq\mathcal{I}$ and $\omega\in W_{(\theta-\mathbb{Z}_{\geq 0}\Pi_I)\cap\Delta}$, are precisely all the 212-closed subsets of $\Delta$ that are contained in $\Delta^+$. 
	Before we proceed, observe:
	\begin{itemize}
		\item $\{\theta\}$ is 212-closed in $\Delta$. So, one has to compute $W_{\theta}^+$ in the simplest case $Y=\{\theta\}$.
		\item $W_{\theta}^+$ is precisely the set of minimal length coset representatives of $W/\{1,s_{\theta}\}$, where $1$ is the identity element in $W$ and $s_{\theta}$ is the reflection about the hyperplane perpendicular to $\theta$. 
		\item $W_JY=Y$. So, $W_J\subseteq W_Y^+$. Note when $J=\mathcal{I}$ that $Y=\Delta$, and so $W_Y^+=\emptyset$.\allowdisplaybreaks
		\item Assume that $\emptyset\neq J\subsetneqq \mathcal{I}$. $W_Y^+=W_J\iff |J^c|=1$ and $\height_{J^c}(\theta)=1$. Recall for $\mathfrak{g}$ of types $G_2, F_4, E_6, E_7\text{ and }E_8$ that $\height_{\{i\}}(\theta)>1$ $\forall$ $i\in\mathcal{I}$. So for $\mathfrak{g}$ of these types, $W_J \subsetneqq W_Y^+$.
	\end{itemize}
	To prove \eqref{E1}, recall the standard result $\conv_{\mathbb{R}}\wt L_J(\theta)=\conv_{\mathbb{R}} (W_J \theta)$, and so $\conv_{\mathbb{R}}Y=\conv_{\mathbb{R}}(W_J \theta)$. Therefore: 
	\[W_Y^+=W_{W_J\theta}^+=\bigcap\limits_{w\in W_J} W_{w\theta}^+=\bigcap\limits_{w\in W_J} W_{\theta}^+w\qquad \text{as }W_{w\theta}^+=W_{\theta}^+w^{-1}\text{ }\forall
	\text{ }w\in W_J. \]
	This finishes the proof of \eqref{E1}.	More generally, it can be shown that 
	\begin{itemize} 
		\item $Y= W_J\theta$ if $Y$ contains no short root. 
		\item If $Y$ contains a short root, then $Y=W_J\theta \sqcup W_J\theta_s$, where $\theta_s$ is the highest short root in $\Delta$.
	\end{itemize}
	With all of the above, we conclude the paper here.
	\subsection*{Acknowledgements} The author extends his hearty thanks to his guide, Apoorva Khare, for his invaluable guidance, for introducing the author to the various combinatorial subsets studied in this article, for stimulating and fruitful discussions, and for feedback on preliminary drafts that helped improve the exposition. This work is supported by a scholarship from the National Board for Higher Mathematics (Ref. No. 2/39(2)/2016/NBHM/R{\&}D-II/11431).
	
	\address{\textsc{R-21, Department of Mathematics, Indian Institute of Science, Bangalore 560012, India}}
	
	\textit{E-mail address}: \email{\texttt{tejag@iisc.ac.in}}
	
\end{document}